\documentclass[12pt,reqno]{amsart}
\usepackage[utf8]{inputenc}
\usepackage[margin=2.9cm,headheight=1000pt,textheight=30cm,marginpar=2cm]{geometry}
\usepackage{color, amssymb}
\usepackage{comment}
\usepackage{stmaryrd}
\usepackage{enumerate}

\usepackage{hyperref}
\usepackage{fancyhdr}

\newtheorem{theorem}{Theorem}
\newtheorem{definition}{Definition}
\newtheorem{lemma}{Lemma}
\newtheorem*{theorem*}{Theorem}
\newtheorem*{lemma*}{Lemma}
\newtheorem{prop}{Proposition}

\newtheorem{conjecture}{Conjecture}

\renewcommand{\implies}{\Rightarrow}
\newcommand{\ii}{\mathrm{i}} 
\newcommand{\cc}{\mathrm{c}} 
\newcommand{\re}{{\rm Re } }
\newcommand{\E}{\mathbb E} 
\newcommand{\PP}{\mathbb P}
\newcommand{\1}{{\mathbf 1}}
\newcommand{\OO}{{\rm O}}
\newcommand{\oo}{{\rm o}}
\newcommand{\dd}{{\rm d}}

\makeatletter
\def\@tocline#1#2#3#4#5#6#7{\relax
  \ifnum #1>\c@tocdepth 
  \else
    \par \addpenalty\@secpenalty\addvspace{#2}%
    \begingroup \hyphenpenalty\@M
    \@ifempty{#4}{%
      \@tempdima\csname r@tocindent\number#1\endcsname\relax
    }{%
      \@tempdima#4\relax
    }%
    \parindent\z@ \leftskip#3\relax \advance\leftskip\@tempdima\relax
    \rightskip\@pnumwidth plus4em \parfillskip-\@pnumwidth
    #5\leavevmode\hskip-\@tempdima
      \ifcase #1
       \or\or \hskip 2em \or \hskip 3em \else \hskip 4em \fi%
      #6\nobreak\relax
    \dotfill\hbox to\@pnumwidth{\@tocpagenum{#7}}\par
    \nobreak
    \endgroup
  \fi}
\makeatother

\usepackage[normalem]{ulem} 

\begin{document}

\begin{abstract} 
By analogy with conjectures for random matrices, Fyodorov-Hiary-Keating and Fyodorov-Keating  proposed precise asymptotics for the maximum of the Riemann  zeta function in a typical short interval on the critical line. In this paper, we settle  the upper bound part of their conjecture in a strong form. More precisely, we show that the measure of those $T \leq t \leq 2T$ for which 
$$
\max_{|h| \leq 1} |\zeta(\tfrac 12 + \ii t + \ii h)| > e^y \frac{\log T }{(\log\log T)^{3/4}}
$$
is bounded by $Cy e^{-2y} T$ uniformly in $y \geq 1$ with $C > 0$ an absolute constant. 
This is expected to be optimal for $y= \OO(\sqrt{\log\log T})$. This upper bound is sharper than what is known in the context of random matrices, since it gives (uniform) decay rates in $y$. In a subsequent paper we will obtain matching  lower bounds. 
\vspace{-0.5cm}
\end{abstract}

\title{The Fyodorov-Hiary-Keating Conjecture. I.}

\author{Louis-Pierre Arguin}
\address{Department of Mathematics, Baruch College and Graduate Center, City University of New York, USA}
\email{louis-pierre.arguin@baruch.cuny.edu}
\author{Paul Bourgade}
\address{Courant Institute, New York University, USA}
\email{bourgade@cims.nyu.edu}
\author{Maksym Radziwi\l\l}
\address{Department of Mathematics, Caltech, USA}
\email{maksym.radziwill@gmail.com}

\begingroup
\def\uppercasenonmath#1{} 
\let\MakeUppercase\relax 
~\vspace{-0.1cm}
\maketitle
\endgroup

\thispagestyle{empty}

~\vspace{-0.9cm}

\setcounter{tocdepth}{2}
\tableofcontents

\section{Introduction}
\fancyhead{}
\pagestyle{fancy}
\renewcommand{\headrulewidth}{0pt}
\fancyhead[LE,RO]{\thepage}
\fancyhead[CO]{L.-P. Arguin, P. Bourgade and M. Radziwi\l\l}
\fancyhead[CE]{The Fyodorov-Hiary-Keating Conjecture. I.}
 \fancyfoot{}

Motivated by the problem of understanding the global maximum of the Riemann zeta function on the critical line, Fyodorov-Keating \cite{FyodorovKeating} and Fyodorov-Hiary-Keating \cite{FyodorovHiaryKeating} raised the question of understanding the distribution of the local maxima of the Riemann zeta function  on the  critical line. They made the following conjecture.

\begin{conjecture}[Fyodorov-Hiary-Keating] \label{eq:keating}
  There exists a cumulative distribution function $F$ such that, for any $y$, as $T \rightarrow \infty$,  
  $$
  \frac{1}{T}\, \text{\rm meas} \Big \{ T \leq t \leq 2T : \max_{0 \leq h \leq 1} |\zeta(\tfrac 12 + \ii t + \ii h)| \leq e^y \frac{ \log T}{(\log\log T)^{3/4}} \Big \} \sim F(y).
  $$
  Moreover, as $y \rightarrow \infty$, the right-tail decay is $1-F(y) \sim Cy e^{-2y}$ for some $C>0$.
\end{conjecture}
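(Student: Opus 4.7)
The plan is to combine the upper bound of this paper with a matching lower bound and a uniqueness-of-limit argument borrowed from the theory of log-correlated fields. Both halves of the conjecture rest on the now-standard heuristic that, for $t$ uniform in $[T,2T]$, the field
$$
X(h) := \log|\zeta(\tfrac12 + \ii(t+h))|, \qquad h \in [0,1],
$$
is well approximated by the Dirichlet partial-sum process
$$
S_k(h) = \re \sum_{p \leq T^{\exp(-(K-k))}} \frac{p^{-\ii(t+h)}}{\sqrt{p}}, \qquad 0 \leq k \leq K := \log\log T,
$$
whose increments are approximately Gaussian of variance $\tfrac12$ per step and which branches at scale $k(h,h') \approx \log(1/|h-h'|)$. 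Under this heuristic, $M_T := \max_{h \in [0,1]} X(h) - \log\log T + \tfrac34\log\log\log T$ should behave like the centered maximum of a branching random walk at critical inverse temperature $\beta_c = 2$, whose limit law is known.

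The upper bound $1-F(y) \lesssim y e^{-2y}$ is exactly the main theorem of the abstract. For the matching lower bound $1-F(y) \gtrsim y e^{-2y}$, I would count the number $\mathcal N_T(y)$ of well-separated $h \in [0,1]$ for which simultaneously $X(h) \geq \log\log T - \tfrac34\log\log\log T + y$ and the trajectory $k \mapsto S_k(h)$ stays below the entropic barrier $y - k + C\log(\min(k,K-k)+1)$ at every scale. A Paley--Zygmund inequality then reduces matters to the second-moment bound $\E[\mathcal N_T(y)^2] \leq C\,y\,e^{-2y}$, which is the standard BRW estimate; transferring it to $\zeta$ requires sharp twisted second-moment estimates at two close points on the critical line, obtainable by combining the Dirichlet-polynomial approximation of this paper with a perturbative analysis in the shift $h$.

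With both tails in hand, tightness of $M_T$ is immediate, so only uniqueness of the limit remains. This is the principal obstacle. Following Aïdékon's theorem for branching random walks, one expects $M_T \Rightarrow \tfrac12\log Z_\infty + G$ with $G$ an independent Gumbel variable and $Z_\infty$ the almost-sure limit of the \emph{derivative martingale}
$$
Z_K := \int_0^1 \bigl(K - S_K(h)\bigr)\, e^{\,2 S_K(h) - K}\,\dd h.
$$
The hard step is to prove that the analogous object built directly from primes --- a complex multiplicative chaos naturally associated to the Euler product of $\zeta$ --- has a non-trivial almost-sure limit $Z_\infty^\zeta$ independent of the cutoff. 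This requires uniform moment control plus a uniqueness-of-limit argument of Barral--Jin type, and comparison with the Gaussian model beyond the second moment. Granting this, uniqueness of subsequential limits of $M_T$ and the identification of the constant $C = c_0\,\E[(Z_\infty^\zeta)^{1/2}]$, with $c_0$ an explicit Gumbel constant, follow from the methods of Madaule and Aïdékon--Shi.
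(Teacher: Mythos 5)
The statement you are trying to prove is labelled as a \emph{Conjecture} in the paper, and the paper does not prove it: its only theorem in this direction is the upper bound $\PP(\max_{|h|\le 1}|\zeta(\tfrac12+\ii\tau+\ii h)|>e^y\log T/(\log\log T)^{3/4})\ll y e^{-2y}$, i.e.\ one half of one of the two assertions of the conjecture. The existence of the limiting distribution $F$ and the asymptotic $1-F(y)\sim Cye^{-2y}$ remain open; the authors even point out that the analogous convergence in distribution is open for the random matrix model. So there is no proof in the paper for you to be compared against, and your proposal should be judged as a research program rather than a proof.

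As a program it identifies the right ingredients, but the two steps you defer are exactly the open content of the conjecture, so the proposal has genuine gaps rather than omitted routine details. First, the existence and uniqueness of the limit law hinges on convergence of a derivative-martingale-type object built from the primes and on an Aïdékon/Madaule-style identification of the limit; you write ``Granting this\ldots'', but this is the principal unsolved problem, not a step one can grant. Second, even the matching lower bound $1-F(y)\gtrsim ye^{-2y}$ via Paley--Zygmund is not a transfer of ``standard BRW estimates'': the required two-point (twisted second/fourth moment) estimates involve Dirichlet polynomials supported on primes of size comparable to $T$, where current mean-value technology has error terms too large for a direct barrier computation. This is precisely why the paper's \emph{upper} bound already requires the elaborate multiscale iteration with upper and lower barriers and twisted fourth moments, rather than a one-shot first-moment argument; the lower bound is announced for a subsequent paper and is not available here. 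In short: your outline is a sensible description of how the community expects the conjecture to eventually be proved, but it does not constitute a proof, and the paper under review proves strictly less than the statement you were asked to establish.
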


The striking aspect of this conjecture is the exponent $\tfrac 34$ on the $\log\log T$ and the decay rate $1 - F(y) \ll y e^{-2y}$. This suggests that around the local maximum there is a significant degree of interaction between nearby shifts of the Riemann zeta function (on the scale $1/\log T$). If there were no interactions, one would expect an exponent of $\tfrac 14$ on the $\log\log T$ and a decay rate $e^{-2y}$ (see \cite{HarperSurvey}).

This paper settles the upper bound part of the Fyodorov-Hiary-Keating conjecture in a strong from, with uniform and sharp decay in $y$.
\begin{theorem} \label{thm:main}
There exists $C>0$ such that for any $T \geq 3$ and $y\geq 1$, we have
  $$
\frac{1}{T} \, \text{\rm meas} \Big \{ T \leq t \leq 2T : \max_{|h| \leq 1} |\zeta(\tfrac 12 + \ii t + \ii h)| > e^y\frac{ \log T}{(\log\log T)^{3/4}} \Big \} \leq Cy e^{-2y}.
$$
\end{theorem}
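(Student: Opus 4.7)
The plan is to exploit the log-correlated (branching random walk--like) structure of $t \mapsto \log|\zeta(\tfrac12+\ii t)|$ on scale $1/\log T$, via a multiscale Dirichlet polynomial decomposition together with a barrier (entropic repulsion) argument of Bramson type. First I would pass from the continuous maximum over $|h|\leq 1$ to a discrete maximum over $\{h_j = j/\log T : |j|\leq \log T\}$, losing at most a uniform constant via a standard Sobolev-type estimate (or by convolution with a smooth bump). It then suffices to bound the measure of those $t\in[T,2T]$ for which at least one of the $\lesssim \log T$ samples $|\zeta(\tfrac12+\ii(t+h_j))|$ exceeds $e^y \log T /(\log\log T)^{3/4}$.

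Next, I would pick $K \asymp \log\log T$ and scales $x_k=\exp((\log T)^{k/K})$, and define the partial sums
$$
S_k(t,h) := \re \sum_{p\leq x_k} \frac{1}{p^{1/2+\ii(t+h)}},
$$
so that in a quantitative Selberg sense $\log|\zeta(\tfrac12+\ii(t+h))| \approx S_K(t,h) + \OO(1)$ outside an exceptional set. The increments $S_k - S_{k-1}$ are approximately independent, centred, and Gaussian with variances telescoping to $\tfrac12\log\log T$ by Mertens. In terms of the two-parameter process $(k,h_j)\mapsto S_k(t,h_j)$, this is a log-correlated field to which the analogy with a branching random walk on a tree of depth $K$ and $\log T$ leaves applies.

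I would then impose the barrier
$$
\mathcal B_y(t) = \Big\{ S_k(t,h_j) \leq \tfrac{k}{K}\bigl(\log\log T - \tfrac34\log\log\log T\bigr) - y + L_k \text{ for all } k,j\Big\},
$$
with $L_k$ a small slack. On $\mathcal B_y^c$ a union bound against a ballot/random-walk estimate yields the factor $y$: the probability that the walk $k\mapsto S_k(t,h_j)$ ends above $\log\log T - \tfrac34\log\log\log T + y$ while staying below a linearly sloped barrier is of order $y/\log T$ by classical entropic repulsion, and multiplication by $\log T$ sample points $h_j$ gives $y$. On $\mathcal B_y$, I would close via a twisted second-moment Markov bound on
$$
\int_T^{2T} |\zeta(\tfrac12+\ii(t+h_j))|^{2}\,\1_{\mathcal B_y(t)}\,\dd t,
$$
possibly after mollifying $\zeta$ by a short Dirichlet polynomial; the Gaussian tail at deviation $y$ then produces the $e^{-2y}$ factor.

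The main obstacle is carrying out the ballot step rigorously without exact Gaussianity or exact independence. The increments $S_k-S_{k-1}$ are only approximately independent under Lebesgue measure on $[T,2T]$, and the joint statistics across the $\log T$ sample points $h_j$ involve correlations of size $\log\log T - \log|h_j-h_{j'}|^{-1}$ up to the crossover scale, mimicking the tree structure of a branching random walk. Extracting the sharp $ye^{-2y}$ rate uniformly in $y\geq 1$ requires quantitative ballot estimates for weakly dependent walks, careful truncation and moment control of the $P_k$, and a choice of slack $L_k$ that is large enough to absorb the non-Gaussian errors yet small enough not to inflate either the $y$ or the $e^{-2y}$ factor.
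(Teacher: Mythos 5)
Your outline reproduces the standard high-level strategy (discretization, multiscale prime sums, a Bramson-type barrier, a twisted-moment Markov step), but it stops exactly where the paper's actual difficulty begins, and two of your steps would fail as stated. First, the claim that $\log|\zeta(\tfrac12+\ii(t+h))| \approx S_K(t,h)+\OO(1)$ outside a small exceptional set is not available at the uniformity you need: no unconditional (nor, at this precision, conditional) Selberg-type approximation controls the contribution of primes between $\exp(e^k)$ and $T$ with an $\OO(1)$ error on an event whose complement is small enough to be absorbed into $ye^{-2y}$ uniformly in $y$. The paper replaces this step by the mollifier events $D_\ell$, approximating $e^{-S_k}$ by short mollifiers $\mathcal{M}_{-1}\cdots\mathcal{M}_\ell\mathcal{M}_\ell^{(k)}$ and controlling $\zeta$ times these mollifiers through a twisted \emph{fourth} moment (Lemmas \ref{master lemma twisted} and \ref{le:barrierapproxtwistedbis}); your twisted second moment of $\zeta$ against the bare barrier indicator has no mechanism to decouple $\zeta$ from the long prime sums defining the barrier.

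Second, and more fundamentally, the ballot/union-bound step for the upper barrier cannot be run directly at scales $k$ close to $K$: to get a Gaussian tail at deviation of size $\asymp k$ for $S_k$ one needs moments of order $q_k \asymp k$, and $(S_k)^{2q_k}$ is then a Dirichlet polynomial of length $\exp(2q_k e^k)$, which vastly exceeds $T$ once $e^k$ is comparable to $\log T$; the mean value theorem gives nothing there. This is precisely the obstruction the paper circumvents by introducing, alongside the upper barrier, a \emph{lower} barrier $C_\ell$ (Equation \eqref{eq:bardown}) established iteratively over the scales $n_\ell$. The lower barrier guarantees that at scale $k$ the walk sits within $\OO(n-k)$ of the centering $m(k)$, so extending the upper barrier from $k$ to $k+1$ only requires moments of order $(n-k)^2$ of the single increment $S_{k+1}-S_k$, i.e.\ a Dirichlet polynomial of admissible length; and the iteration (Propositions \ref{prop: step 0}--\ref{prop: step L}) is needed because each barrier extension itself relies on the barriers already established at coarser scales, combined at each stage with the second-moment decoupling of Lemma \ref{master lemma} and the twisted fourth moment. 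Your "choice of slack $L_k$" cannot absorb this: the problem is not the size of the error terms but that the requisite moments are not computable at all without first constraining the walk from below. As written, the proposal is therefore a restatement of the known heuristic rather than a proof.
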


Theorem \ref{thm:main} is expected to be sharp in the range $y =\OO(\sqrt{\log\log T})$. For larger $y$ in the range $y \in [1, \log\log T]$, it is expected that the sharp decay rate is
$$
\ll y e^{-2y} \exp \Big ( - \frac{y^2}{\log\log T} \Big ). 
$$

Conjecture \ref{eq:keating} emerges in \cite{FyodorovHiaryKeating,FyodorovKeating} from the analogous prediction for random matrices, according to which
\begin{equation} \label{eq:conj}
\sup_{|z|=1} \log |X_n(z)| = \log n - \frac{3}{4} \log \log n + M_n,
\end{equation}
with $X_n(z)$ the characteristic polynomial of   a Haar-distributed $n\times n$  unitary matrix, and with $M_n$ converging to a random variable $M$ in distribution. 
Progress on \eqref{eq:conj} was accomplished by Arguin-Belius-Bourgade \cite{ArgBelBou15} and Paquette-Zeitouni \cite{PaqZei16}, culminating in the work of Chhaibi-Madaule-Najnudel \cite{ChaMadNaj16}. In \cite{ChaMadNaj16} it was established for the circular beta ensemble that the sequence of random variables $M_n$ is tight. The convergence of $M_n$ in distribution to a limiting random variable $M$ and the decay rate of $\mathbb{P}(M > y)$ as $y$ increases remain open.
In this regard, Theorem \ref{thm:main} is a rare instance of a result obtained for the Riemann zeta function prior to the analogue for random matrices.
  This type of decay is expected by analogy with branching random walks, but has only been proved for a few processes, notably for the two-dimensional Gaussian free field \cite{DinZei2014, BraDinZei2016}.

Previous results in the direction of Conjecture \ref{eq:keating}  were more limited than for unitary matrices. The first order, that is, 
 $$
 \max_{|h| \leq 1} \log |\zeta(\tfrac 12 + \ii t + \ii h)| \sim \log \log T \ ,  \ T \rightarrow \infty,
 $$
for all $t \in [T, 2T]$ outside of an exceptional set of measure $\oo(T)$, was established conditionally on the Riemann Hypothesis by Najnudel \cite{Naj18}, and unconditionally by the authors with Belius and Soundararajan \cite{ArgBelBouRadSou19}. Harper \cite{Harper2nd} subsequently obtained the upper bound up to second order. More precisely, Harper showed that for $t \in [T, 2T]$ outside of an exceptional subset of measure $\oo(T)$, and for any $g(T)\to\infty$,
\begin{equation}
\label{eqn: harper bound}
\max_{|h| \leq 1} \log |\zeta(\tfrac 12 + \ii t + \ii h)| \leq \log\log T - \frac{3}{4} \log\log\log T + \frac{3}{2}\log\log\log\log T+g(T).
\end{equation}

Progress towards Conjecture \ref{eq:keating} has been made by observing that the large values of $\log |\zeta(\tfrac 12 + \ii t + \ii h)|$ on  a short interval indexed by $h \in [-1,1]$ are akin to the ones of an approximate branching random walk, see for example \cite{arguin2016}. 
This is because, the average of $\log|\zeta(\tfrac 12 + \ii t + \ii h)|$ over a neighborhood of $h$ of width $e^{-k}$ for $k\leq \log\log T$ can be thought of as a Dirichlet sum $S_k$ of $p^{-1/2+\ii t+\ii h}$ up to $p\leq \exp e^k$, see Equation (\ref{eqn: Sk}) below.
The partial sums $S_k$, $k\leq \log\log T$, for different $h$'s have a correlation structure that is approximately the one of a branching random walk. 

For branching random walks, the identification of the maximum up to an error of order one relies on a precise upper barrier for the values of the random walks $S_k$ at every $k\leq \log\log T$, as introduced in the seminal work of Bramson \cite{Bra78}.  
This approach cannot work directly for $\log |\zeta|$ as one needs to control large deviations for Dirichlet polynomials involving prime numbers close to $T$. 
This amounts to computing large moments of long Dirichlet sums, and current number theory techniques cannot access these with a small error.

To circumvent this problem, the proof of Theorem \ref{thm:main} is based on an iteration scheme that recursively constructs {\it upper} and {\it lower barrier} constraints for the values of the partial sums as the scales $k$ approaches $\log\log T$. 
Each step of the iteration relies on elaborate second and twisted fourth moments of the Riemann zeta function, which may be of independent interest.
The lower barrier reduces in effect the number of $h$'s to be considered for the maximum of $\log|\zeta|$.
One upshot is that smaller values for the Dirichlet sums are needed, and thus only moments with good errors are necessary.
Furthermore, the reduction of the number of $h$'s improves the approximation of $\log |\zeta|$ in terms of Dirichlet sums for the subsequent scales in the iteration.
Lower constraints have appeared before in \cite{ArgBovKis2013} to study correlations between extrema of the branching Brownian motion. 
There, they were proved {\it a posteriori} based on the work of Bramson on the maximum.

The paper is organized as follows.
The iterative scheme is described in details in Section \ref{sec:iteratedScheme}. Its initial condition, induction and final step are proved in
Sections \ref{sec:first}, \ref{sec:induction} and \ref{sec:last}. The number-theoretic input of the recursion using second and twisted fourth moments of the Riemann zeta function is the subject of Sections \ref{sec:2nd} and \ref{sec:4th}.

 In a subsequent paper we will complement the upper bound in Theorem \ref{thm:main} with matching lower bounds, for fixed $y  > 1$. This will also rely on the multiscale analysis and on twisted moments.\\

{\noindent \bf Notations.}
We use Vinogradov's notation and write $f(T)\ll g(T)$ to mean $f(T)=\OO(g(T))$ as $T\to\infty$. If the $\OO$-term depends on some parameter $A$, we write $\ll_A$ or $\OO_A$ to emphasize the dependence.
We write $f(T)\asymp g(T)$ when $f(T)\ll g(T)$ and $g(T)\ll f(T)$.\\

{\noindent \bf Acknowledgments.}
The authors are grateful to Frederic Ouimet for several discussions, and to Erez Lapid and Ofer Zeitouni for their careful reading, pointing at a mistake in the initial proof of Lemma \ref{le:moliapprox}.
The research of LPA was supported in part by NSF CAREER
DMS-1653602.
PB acknowledges the support of NSF grant DMS-1812114 and a Poincar\'e chair.
MR acknowledges the support of NSF grant DMS-1902063 and a Sloan Fellowship.

\section{Initial Reductions}

Throughout the paper we will adopt probabilistic notations and conventions. In particular $\tau$ will denote a random variable uniformly distributed in $[T, 2T]$  and $\PP,\E$ the associated probability and expectation. Furthermore we set throughout $$n = \log\log T.$$
This notation will become natural later when $S_k$, $k\leq n$, given in Equation \eqref{eqn: Sk} will be thought of as a random walk.
We will find it convenient to consider $\zeta(\tfrac 12 + \ii \tau + \ii h)$ as a random variable and write for short $\zeta_{\tau}(h) = \zeta(\tfrac 12 + \ii \tau + \ii h)$. 
In this notation, Theorem \ref{thm:main} can be restated as follows. 

\begin{theorem*}
\label{thm: upper0}
Let $\tau$ be a uniformly distributed random variable in $[T, 2T]$. Then uniformly in $T\geq  3$, $y \geq 1$,  one has
$$
\mathbb P\Big(\max_{|h|\leq 1}|\zeta_{\tau}(h)|>e^y\, \frac{e^n}{n^{3/4}}\Big)\ll y e^{-2y}\ .
$$
\end{theorem*}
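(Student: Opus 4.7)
The strategy is a multiscale ballot-type argument combining a branching-random-walk heuristic with iteratively constructed upper and lower barriers, the induction being driven by second and twisted fourth moment computations. First I would discretize $h\in[-1,1]$ into a grid of spacing $\asymp 1/\log T$; a standard argument using $\zeta'/\zeta$ on the critical line shows that the continuous maximum and the grid maximum coincide up to an $O(1)$ constant outside an event of $\tau$-mass $\ll e^{-2y}$, which is absorbable into the target bound. For each grid point $h$ I would introduce the scale-$k$ Dirichlet sums
$$
S_k(h)=\re\sum_{p\leq \exp(e^k)}\frac{1}{p^{1/2+\ii\tau+\ii h}},\qquad k=0,1,\dots,n,
$$
whose block increments $S_k-S_{k-1}$ are supported on primes in $(\exp(e^{k-1}),\exp(e^k)]$. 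Via a mollified approximate functional equation, valid on a set of $\tau$ of almost full measure, the theorem would reduce to the tail estimate
$$
\PP\Big(\max_{h\in\mathrm{grid}} S_n(h)>n-\tfrac34\log n+y\Big)\ll y\,e^{-2y}.
$$

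The sequence $(S_k(h))_{k\leq n}$ behaves, for $\tau$ uniform in $[T,2T]$, like a Gaussian random walk with per-step variance $\sim\tfrac12$, and for two distinct grid points $h,h'$ the walks share their trajectory up to scale $k\approx -\log|h-h'|$; this is the approximate branching structure referenced in the introduction. For a genuine branching random walk the sharp right-tail $\asymp ye^{-2y}$ follows from Bramson's barrier method: after conditioning on staying below a slowly receding upper barrier $S_k\leq k+\psi(k)$, a second moment of ballot type produces both the $y$-prefactor and the Gaussian factor $e^{-2y}$. The plan is to transplant this computation to $\zeta$, with the barrier propagated by moment estimates at each scale.

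The main obstacle is that a naive transplantation needs moment control for Dirichlet polynomials of length close to $T$, i.e.\ for $S_n-S_k$ with $k$ near $n$, which is beyond current techniques. The remedy announced in the introduction, which I would adopt, is to fix a sequence of checkpoints $K_0<K_1<\dots<K_J=n$ and to maintain at stage $j$ a good set $\mathcal G_j$ of grid points satisfying simultaneously an upper barrier $S_k(h)\leq k+\varphi_j(k)$ for $k\leq K_j$ and, crucially, a \emph{lower barrier} $S_{k^\star}(h)\geq -L$ at one or several intermediate checkpoints. The lower barrier drastically thins $|\mathcal G_j|$, so at the next stage one needs to control only \emph{moderate} deviations of the Dirichlet polynomial over the block of primes $p\in(\exp(e^{K_j}),\exp(e^{K_{j+1}})]$. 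This makes a twisted fourth moment for this short polynomial sufficient and keeps its error term manageable. Propagation uses the near-independence of the blocks combined with these twisted moments; at the final step $K_J=n$ a twisted second moment of $\zeta$ itself against a mollifier encoding all accumulated barriers, combined with a Gaussian ballot-type computation, yields the prefactor $y$ and the decay $e^{-2y}$.

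The hardest part will be to quantify the gain from the lower barrier sharply enough that the twisted fourth moments, once summed over the exponentially many grid points, produce error terms uniformly dominated by $ye^{-2y}$. This forces a delicate tuning at every stage of the relation between the upper and lower barrier heights, the checkpoint spacing $K_{j+1}-K_j$, and the length of the associated Dirichlet polynomial, so that the resulting moment has an acceptable remainder and so that errors do not accumulate across the $J\asymp\log n$ iterations. The initial step, where both barriers are seeded and an approximate Gaussianity of $S_{K_0}$ must be established, and the final step, where the $-\tfrac34\log n$ correction materializes, will each demand a separate treatment.
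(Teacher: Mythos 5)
Your outline reproduces the architecture of the paper's argument --- discretization to a grid of spacing $\asymp 1/\log T$, the multiscale sums $S_k$, an upper barrier propagated scale by scale, a lower barrier that thins the set of candidate points, decoupling of disjoint prime blocks, and a Gaussian ballot estimate producing the factor $y e^{-2y}$ --- so at the level of strategy you are describing the right proof. But what you have written is a plan, not a proof: the barrier heights, the checkpoint spacing, the approximation of barrier indicators by Dirichlet polynomials, and the summation over the $\asymp \log n$ iterations are all deferred, and the one place where you commit to a concrete technical choice is a place where the plan as stated would fail.

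The gap is in the allocation of the moment computations. You propose twisted fourth moments for the short block polynomials and only a twisted \emph{second} moment of $\zeta$ against a mollifier at the final step (and, implicitly, wherever the largeness of $\zeta$ is used, e.g.\ to establish the lower barrier). This is backwards. The block polynomials need only second moments (a mean value theorem plus a splitting lemma for disjoint prime supports). What is indispensable is the \emph{fourth} moment of $\zeta$ itself, twisted by the mollifier product approximating $e^{-S_{n_\ell}}$ and by a short Dirichlet polynomial encoding the barrier event: it is the only mechanism that exploits the event ``$|\zeta_\tau(h)|>V$ while $S_k(h)\leq m(k)-20(n-k)+y$''. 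On that event one has $|\zeta_\tau e^{-S_k}|>Ve^{-u-v}$ with $u+v\leq m(k)-20(n-k)+y$, and Chebyshev at the fourth power contributes $e^{4(u+v)}/V^4$, which against the ballot decay $e^{-2(u-m(n_\ell))}$ leaves a sum $\sum_u e^{2u}$ that converges at its upper limit and yields the gain $e^{-40(n-k)}$ coming from the lower-barrier deficit. At the second power the corresponding sum is $\sum_u e^{2u}e^{-2u}$, which produces no gain whatsoever, so neither the propagation of the lower barrier nor the final step closes; the constant $20$ in the lower barrier is tuned precisely to what the fourth moment (the highest computable moment of $\zeta$) can absorb. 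Relatedly, your opening reduction of the theorem to a tail bound for $\max_h S_n(h)$ alone cannot be carried out --- $S_n$ runs over primes up to $T$ and is exactly the object one cannot control --- and the argument must keep $\zeta_\tau e^{-S_k}$ in play throughout, comparing it to $\zeta_\tau$ times mollifiers; you acknowledge this tension later, but the final accounting still has to be performed on $\zeta$ through the twisted fourth moment, not on $S_n$ through an approximate functional equation.
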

Along the proof, we will refer to well-known results, or variations of well-known results. To emphasize the core ideas of the proof, we chose to gather these in the appendix. Appendix \ref{se:moments} deals with estimates on sums of primes and on moments of Dirichlet polynomials. Appendix \ref{se:ballot} presents a version of the ballot theorem for random walks. Finally, tools for discretizing the maximum of Dirichlet polynomial on a short interval are presented in Appendix \ref{se:discretization}.
With this in mind, we first observe that it is easy to establish Theorem \ref{thm:main} for $y > n$.
\begin{lemma}\label{le:largevalues}
  Uniformly in $y > n$ we have
  \begin{equation} \label{eq:prr}
  \mathbb{P} \Big (\max_{|h| \leq 1} |\zeta_{\tau}(h)| > e^y\,\frac{e^n}{n^{3/4}}\Big ) \ll y e^{-2y}.
  \end{equation}
\end{lemma}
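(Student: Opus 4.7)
The range $y>n$ is very slack (the threshold already exceeds $(\log T)^2/(\log\log T)^{3/4}$), so a crude union bound at the fourth moment of $\zeta$ should suffice, with no need for any multiscale machinery. Set $V=e^y e^n/n^{3/4}$, so that $\{ \max_{|h|\leq 1}|\zeta_\tau(h)|>e^y e^n/n^{3/4}\}=\{\max_{|h|\leq 1}|\zeta_\tau(h)|>V\}$.

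First I would discretize: by the tools in Appendix~\ref{se:discretization}, up to an event of probability much smaller than $y e^{-2y}$ one may replace $\max_{|h|\leq 1}|\zeta_\tau(h)|$ by $2\max_{h\in\mathcal D}|\zeta_\tau(h)|$, for some grid $\mathcal D\subset[-1,1]$ of cardinality $|\mathcal D|\ll \log T$. This step uses only that $\zeta_\tau(h)$ is well approximated (via the approximate functional equation) by a Dirichlet polynomial of length $T^{O(1)}$, for which a mesh of spacing $1/\log T$ combined with a Bernstein-type derivative bound is enough.

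Next I would apply a union bound followed by Markov's inequality at the fourth moment, using the uniform-in-$|h|\leq 1$ estimate $\E[|\zeta_\tau(h)|^4]\ll(\log T)^4$ (Ingham's asymptotic, or the version recorded in Appendix~\ref{se:moments}):
\[
\mathbb P\Big(\max_{|h|\leq 1}|\zeta_\tau(h)|>V\Big)\ll |\mathcal D|\cdot \frac{\E[|\zeta_\tau(0)|^4]}{V^4}\ll \frac{(\log T)^5}{V^4}= n^3 e^{n-4y}.
\]
In the regime $y>n$ one has $e^{n-4y}\leq e^{-n-2y}$, so the last quantity is at most $n^3 e^{-n}\cdot e^{-2y}\ll e^{-2y}\leq ye^{-2y}$, since $y\geq 1$ and $n^3 e^{-n}=\oo(1)$. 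This yields \eqref{eq:prr}.

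There is no real obstacle. The only point requiring any care is the discretization: one must check that its loss (a polynomial factor in $\log T$) does not swamp the gain from Markov. But the hypothesis $y>n$ produces an entire factor $e^{-n}$ of slack in the final estimate, which comfortably absorbs any such loss and also shows that the argument is quite robust (in fact, a higher moment would extend the effective range of $y$, but it is not needed here).
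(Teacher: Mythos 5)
Your overall strategy is the same as the paper's: reduce to the fourth moment of $\zeta$, pay a factor of roughly $e^n$ for passing from the interval to finitely many points, apply Markov at the fourth power, and use the slack $e^{n-4y}\leq e^{-n}e^{-2y}$ coming from $y>n$. The numerology $n^3e^{n-4y}\ll ye^{-2y}$ at the end is exactly the paper's.

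However, your discretization step has a genuine gap: you assert that $\max_{|h|\leq 1}|\zeta_\tau(h)|$ can be replaced by a maximum over a grid \emph{up to an event of probability much smaller than $ye^{-2y}$}, uniformly in $y>n$. No standard discretization lemma delivers this. The additive error in such statements (e.g.\ Lemma \ref{le:zetad}) comes from the tail of the sampling kernel $\widehat V$ and from the approximate functional equation, and is of size $\OO_A(e^{-An})$; since $y$ ranges over all of $(n,\infty)$ (and the event is only trivially empty once $y\gg e^n$ by convexity bounds), $ye^{-2y}$ can be as small as $e^{-ce^n}$, which is far below $e^{-An}$ for any fixed $A$. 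This is precisely why the paper treats $y>n$ separately from $y\leq n$: for $y\leq n$ the $\OO_A(e^{-An})$ error is harmless, but for $y>n$ it is not. The paper avoids the issue by bounding the fourth moment of the maximum itself, $\mathbb{E}[\max_{|h|\leq 1}|\zeta_\tau(h)|^4]\ll e^{5n}$ (Lemma \ref{le:zetada}, proved by subharmonicity of $|\zeta|^4$ plus the fourth moment bound), and then applying Chebyshev with no additive error term. Your argument can be repaired either by quoting that lemma, or by comparing the discretization error $\mathcal E(\tau)$ to the threshold $V=e^ye^n n^{-3/4}$ rather than to $1$ (a second-moment bound gives $\mathbb{P}(\mathcal E(\tau)\geq V/2)\ll_A e^{-An}e^{-2y}$, which does suffice); but as written the step fails in the stated uniformity.
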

\begin{proof}
  By Chebyshev's inequality, the probability in \eqref{eq:prr} is 
  $$
  \leq e^{-4y} e^{-4n} n^{3} \mathbb{E} \Big [ \max_{|h| \leq 1} |\zeta_{\tau}(h)|^4 \Big ].
  $$ 
  By Lemma \ref{le:zetada} in Appendix \ref{se:discretization}, the above is
  $$
  \ll e^{-4y} e^{-4n} n^{3} e^{5n} = n^3 e^{n} e^{-4y}.
  $$
  Since $y > n$, this is $\ll y e^{-2y}$ and the claim follows. 
\end{proof}

To handle the remaining values $1 \leq y \leq  n$ it will be convenient to discretize the maximum over $|h| \leq 1$ into a maximum over a set
$$
\mathcal{T}_n=e^{-n-100}\mathbb Z\cap[-2,2].
$$
 To accomplish this, we use the following simple lemma. 
\begin{lemma}
  There exists an absolute constant $C > 1$ such that for any $V > 1$ and $A > 100$, 
  $$
  \mathbb{P}\Big (\max_{|h| \leq 1} |\zeta_{\tau}(h)| > V \Big ) \leq \mathbb{P}\Big ( \max_{h \in \mathcal{T}_n} |\zeta_{\tau}(h)| > V / C \Big ) + \OO_A(e^{-A n}).  
  $$
\end{lemma}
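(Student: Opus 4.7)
The plan is a classical discretization argument. The lattice $\mathcal{T}_n$ has spacing $e^{-n-100}$, a factor $e^{-100}$ below the natural oscillation scale $1/\log T=e^{-n}$ of $\zeta$ on the critical line (set by the density $\asymp\log T$ of its zeros). I would expect this sampling rate to capture the continuous maximum over $[-1,1]$ up to a bounded multiplicative constant, with a rare failure event whose probability can be made $O_A(e^{-An})$ by invoking sufficiently high moments.

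The first step is to approximate $\zeta_\tau$ by a short Dirichlet polynomial $D_\tau$ of length $\asymp\sqrt{T}$, via the approximate functional equation, with deterministic pointwise error $O(T^{-1/4})$ uniformly for $|h|\leq 2$. Viewed as an entire function of complex $h$, $D_\tau$ is of exponential type $\ll\log T=e^n$, so a Bernstein-type inequality on a bounded interval provides
$$
\sup_{|h|\leq 1}|D_\tau'(h)|\ll e^n\sup_{|h|\leq 2}|D_\tau(h)|.
$$
Consequently, for $h_0\in[-1,1]$ and its nearest $h^*\in\mathcal{T}_n$ (with $|h_0-h^*|\leq e^{-n-100}$), the triangle inequality yields
$$
|\zeta_\tau(h_0)|\leq |\zeta_\tau(h^*)|+C'\,e^{-100}\sup_{|h|\leq 2}|D_\tau(h)|+O(T^{-1/4}).
$$

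To close the argument, I would apply the same Bernstein-plus-discretization step on the enlarged interval $[-2,2]$ (which is still covered by $\mathcal{T}_n$), yielding $\sup_{|h|\leq 2}|D_\tau(h)|\leq(1-C'e^{-100})^{-1}\max_{h^*\in\mathcal{T}_n}|D_\tau(h^*)|+O(T^{-1/4})$. Combined with the pointwise identity $D_\tau(h^*)=\zeta_\tau(h^*)+O(T^{-1/4})$, this gives the claimed bound with an absolute constant $C$. The exceptional $O_A(e^{-An})$ event, on which the Bernstein-type estimates would fail because of an atypically large value of some derivative-like norm of $D_\tau$, is handled via Chebyshev applied to a moment of order $\asymp A$ of the relevant supremum; such moment bounds for Dirichlet polynomials are standard and are part of the discretization toolkit of Appendix~\ref{se:discretization}. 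The main technical step is the effective local Bernstein-type inequality on nested bounded intervals, which is classical but requires some care in the choice of the intervals and of the constants.
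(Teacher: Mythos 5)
Your overall architecture (replace $\zeta$ by a Dirichlet polynomial, compare the continuous maximum to the lattice maximum, push failures into an event of probability $\OO_A(e^{-An})$) is the same as the paper's, but the step you lean on is not available. The inequality $\sup_{|h|\le 1}|D_\tau'(h)|\ll e^n\sup_{|h|\le 2}|D_\tau(h)|$ is \emph{not} a classical local Bernstein inequality for Dirichlet polynomials, and as a deterministic statement it is false. Bernstein's inequality with constant equal to the bandwidth $\log N$ requires a bound on all of $\mathbb{R}$; on a fixed interval, an exponential sum $\sum_{n\le N}a_nn^{-\ii h}$ has $N$ frequencies packed into $[0,\log N]$, so its number of degrees of freedom ($N=T$) vastly exceeds the product of bandwidth and interval length ($4\log N$). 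In this regime the sharp derivative bound on a fixed interval scales with the number of terms (Tur\'an--Nazarov/Markov-type behaviour), not with $\log N$: there are exponential sums that are $\OO(1)$ on $[-2,2]$ yet have interior derivative of size comparable to the number of frequencies. So the constant $C'$ in your absorption step $(1-C'e^{-100})^{-1}$ cannot be taken absolute deterministically, and the argument does not close as written.

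Your fallback remark --- that the Bernstein estimate only fails on an exceptional event controlled by moments of a ``derivative-like norm'' --- is the right instinct, but making it precise essentially forces you into the paper's actual proof, which bypasses the derivative entirely. The paper approximates $\zeta$ by a length-$T$ Dirichlet polynomial with error $\OO(T^{-100})$ (Bombieri--Friedlander, rather than the approximate functional equation; note that the length-$\sqrt T$ version has a second sum twisted by the $\chi$-factor and its one-sum form does not have error $\OO(T^{-1/4})$), and then uses an \emph{exact} band-limited sampling identity (Lemma~\ref{lem:basicd}, Poisson summation against a kernel $\widehat V$ supported in frequency on the bandwidth): $D(t+h_0)$ is reconstructed as a rapidly weighted sum of $D(t+h)$ over the lattice, so $|D(t+h_0)|\le C\max_{h\in\mathcal{T}_n}|D(t+h)|+\mathcal{E}(t)$ with no derivative and no absorption. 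The only non-deterministic input is the far tail $\mathcal{E}(t)$ (lattice points with $|h|>2$), whose first moment is $\ll_A e^{-An}$ by the mean value theorem and the superpolynomial decay of $\widehat V$; Markov's inequality then produces exactly the exceptional event in the statement. You should replace the Bernstein step by this sampling identity (or by the subharmonicity trick used in Lemma~\ref{le:zetada}), after which the rest of your outline goes through.
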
\begin{proof}
  This is Lemma \ref{le:zetad} in Appendix \ref{se:discretization}.
  \end{proof}

Combining the above lemma with Lemma \ref{le:largevalues}, it suffices to prove the following result  to establish Theorem \ref{thm:main}. 
Without loss of generality, we state the result for $T\geq \exp(e^{1000})$ and $y > 4000$, which is more convenient for further estimates. 

\begin{theorem} \label{thm: upper}
  Let $\tau$ be a random variable, uniformly distributed in $[T, 2T]$. Then, uniformly in $T\geq \exp(e^{1000})$, $4000 \leq y \leq n$, we have
  $$
  \mathbb{P} \Big ( \max_{h \in \mathcal{T}_n} |\zeta_{\tau}(h)| > e^y \, \frac{\log T}{(\log\log T)^{3/4}} \Big ) \ll y e^{-2 y}.
  $$
\end{theorem}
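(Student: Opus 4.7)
The plan is to follow the approximate branching random walk picture described in the introduction. For scales $k = 1, \dots, n$, set
$$
S_k(h) = \sum_{p \leq \exp(e^k)} \frac{\cos((\tau+h)\log p)}{\sqrt{p}},
$$
so that $\E[S_k(h)^2] \sim \tfrac{k}{2}$ and $k \mapsto S_k(h)$ behaves, at fixed $h$, like a Gaussian random walk, while $S_k(h)$ and $S_k(h')$ decorrelate once $|h-h'| \gtrsim e^{-k}$; moreover $S_k(h)$ should approximate the average of $\log|\zeta_\tau(h+u)|$ over $|u|\leq e^{-k}$. In the analogous branching random walk of depth $n$, the maximum is $n-\tfrac{3}{4}\log n + \OO(1)$, and the ballot theorem shows that $\{\max_h S_n(h) > n - \tfrac{3}{4}\log n + y\}$ has probability $\asymp y e^{-2y}$, realized by trajectories lying beneath a concave upper barrier $U_k$.

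The novelty is to enforce simultaneously a lower barrier $S_k(h) \geq L_k$ tracking $U_k$ from below. This is redundant for branching random walks but essential here: it ensures that at every scale $\log|\zeta_\tau(h+u)|$ remains close to $S_k(h)$, so that mollifiers built out of primes up to $\exp(e^k)$ continue to capture the size of $\zeta$ at later scales. Without it, the next step would require moments of Dirichlet polynomials of length close to $T$, currently out of reach.

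The induction processes scales one at a time. Suppose $(S_j(h))_{j\leq k}$ has been confined to the barrier, a set denoted $\mathcal{B}_k(h)$. Introduce a short mollifier $M_k(h) \approx \exp(-S_k(h))$ over primes $p\leq \exp(e^k)$, and apply the twisted second moment of Section \ref{sec:2nd} to
$$
\E\bigl[|\zeta_\tau(h)|^2 |M_k(h)|^2 \1_{\mathcal{B}_k(h)}\bigr],
$$
obtained with a power-saving error. Summing over $h \in \mathcal{T}_n$ and combining with the ballot estimate of Appendix \ref{se:ballot} (for walks restricted to the barrier) reduces the event at scale $k+1$ to the analogous event at scale $k$, losing only a controlled multiplicative factor. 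At the final scale, the passage from mollified $L^2$ control to the actual sup-norm of $|\zeta_\tau|$ is done via a twisted fourth moment (Section \ref{sec:4th}) together with the discretization of Appendix \ref{se:discretization}.

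The main obstacle is that the cumulative error from the twisted moment identities must stay below the prefactor $y e^{-2y}$, over $n = \log\log T$ iterative steps. The lower barrier is exactly the device making this possible, by capping the mollifier and thereby keeping the effective Dirichlet polynomials short enough for the moment estimates to enjoy power-saving error terms. A further subtlety is that the increments $S_k - S_{k-1}$ are not Gaussian but prime sums, so the ballot theorem has to be applied in a robust form with non-Gaussian increments possessing sufficient moments, which is the content of Appendix \ref{se:ballot}. The simultaneous propagation of upper and lower barriers through all $n$ scales without losing the sharp $y e^{-2y}$ decay is the technical heart of the proof.
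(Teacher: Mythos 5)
Your outline reproduces the strategy announced in the paper's introduction (upper and lower barriers, mollifiers, twisted moments, a ballot theorem), but as a proof it has two concrete gaps. First, you propose to "process scales one at a time" and acknowledge that the cumulative error over $n=\log\log T$ iterative steps must stay below $y e^{-2y}$ --- but you give no mechanism for this, and with $n$ steps each losing even a factor $1+c$ the argument cannot close. The paper avoids this entirely: the induction is not over all $k\leq n$ but over the rapidly accelerating scales $n_\ell = n - 10^6\log_\ell n$ (so only $\OO(\log^* n)$ blocks), and the error contributed by block $\ell$ is $\ll y e^{-2y}/\log_{\ell+1} n$, which is summable in $\ell$. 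The barrier is still imposed at every intermediate $k$, but the probabilistic accounting is done per block, and the choice of $T_\ell$ is calibrated so that the Dirichlet polynomials needed in block $\ell$ have length $\exp(\OO((n-n_\ell)^{10^5} e^{n_{\ell+1}}))\leq T^{1/100}$. Without some such scale structure your error analysis does not converge.

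Second, you never say how the quantity $\E\bigl[|\zeta_\tau(h)|^2|M_k(h)|^2\,\1_{\mathcal{B}_k(h)}\bigr]$ is to be evaluated: the indicator of the barrier event is not a Dirichlet polynomial, so no mean-value or twisted-moment theorem applies to it directly. The paper's essential technical device here is to majorize the indicator of $\{S_j(h)\in[u_j,u_j+\Delta_j^{-1}]\ \forall j\}$ by squares of short Dirichlet polynomials built from Fourier majorants (Lemmas \ref{le:harmo} and \ref{lem: smooth indicator}), after which the splitting lemma factorizes the twisted moment into $\E[|\mathcal{Q}|^2]$ times a probability for a genuinely Gaussian walk, to which the ballot theorem of Appendix \ref{se:ballot} applies. (Your remark that the ballot theorem must handle non-Gaussian increments is also slightly off: the comparison to Gaussian increments is done beforehand, via the mean-value theorem and a Berry--Esseen/saddle-point argument, and the ballot theorem itself is purely Gaussian.) A smaller point: your $S_k$ omits the second-order term $\tfrac12\re\,p^{-2s}$, which the paper needs to control the contribution of small primes at the level of tightness. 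As written, the proposal is a correct description of the intended architecture but does not constitute a proof.
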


\section{Iteration Scheme}\label{sec:iteratedScheme}

\subsection{Notations} In this section, we explain the structure of the proof of Theorem \ref{thm: upper}. 
We start by defining the main objects of study. Consider first the time scales
$$
T_{-1}= \exp(e^{1000}), \qquad T_0 = \exp(\sqrt{\log T}), \qquad T_{\ell} = \exp \Big ( \frac{\log T}{(\log_{\ell + 1} T)^{10^6}} \Big ),
$$
where $\ell\geq 1$ and $\log_\ell$ stands for the logarithm iterated $\ell$ times. We adopt the convention that
$
\log_0 n = n \text{ and } \log_{-1} n = e^n.
$
It is convenient to write the above in the $\log\log$-scale, denoting (remember $n=\log\log T$)
$$
n_{-1} = 1000, \qquad n_0=\tfrac{n}{2},\qquad n_{\ell} = \log\log T_{\ell} = n - 10^6 \log_{\ell} n.
$$

Consider the Dirichlet polynomial
\begin{equation}
\label{eqn: Sk}
S_k(\tfrac 12 + \ii \tau + \ii h):=S_k(h)  = \sum_{e^{1000} \leq \log p \leq e^k} \re \Big( p^{-(\tfrac 12 + \ii \tau + \ii h)} + \tfrac 12 p^{- 2(\tfrac 12 + \ii \tau + \ii h)} \Big ),  \ \ k \leq n,
\end{equation}
with $S_{n_{-1}}(h) = 0$. The above summand consists in the first two terms in the expansion of $-\log|1-p^{-s}|$. The second order may  be essentially ignored on a first reading;
however this additional term is necessary to handle the maximum of $|\zeta|$ up to tightness, due to the contribution of the small primes to $|\zeta(s)|$. 
Moreover,  starting the sum in (\ref{eqn: Sk}) at $e^{1000}$
will be convenient for some estimates in Section \ref{sec:4th}. We also define,
\begin{equation}
\label{eqn: complex S_k}
\widetilde{S}_k(\tfrac 12 + \ii \tau + \ii h):= \widetilde{S}_k(h)  = \sum_{e^{1000} \leq \log p \leq e^k}  \Big( p^{-(\tfrac 12 + \ii \tau + \ii h)} + \tfrac 12 p^{- 2(\tfrac 12 + \ii \tau + \ii h)} \Big ),  \ \ k \leq n,
\end{equation}
so that $S_k(h) = \re \ \widetilde{S}_k(h)$ and $|S_k(h)| \leq |\widetilde{S}_k(h)|$.

We use the probabilistic notation of omitting the dependence on the random $\tau$, and think of $(S_k(h))_{h\in[-2,2]}$ as a stochastic process. The dependence in $h$ will sometimes be omitted when there is no ambiguity.

It will be necessary to control the difference $\log|\zeta| -S_k$ which represents the contribution of primes larger than $e^{e^k}$. To do so, given $\ell \geq 0$, we define the following random mollifiers,
\begin{align*}
    \mathcal{M}_{\ell}(h) & = \sum_{\substack{p | m \implies p \in (T_{\ell-1},T_\ell] \\ \Omega_\ell(m) \leq (n_{\ell} - n_{\ell - 1})^{10^5}}} \frac{\mu(m)}{m^{\tfrac 12 +\ii \tau + \ii h}},
\end{align*}
  where $\Omega_\ell(m)$ stands for the number of prime factors  of $m$ in the interval $(T_{\ell-1}, T_\ell]$, counted with multiplicity, and  $\mu$ denotes the M\"obius function\footnote{We could have also counted the prime factors of $m$ without multiplicity because $m$ has to be square-free, but $\Omega_\ell(m)$ will be more consistent with other constraints appearing along the proof.}. Furthermore we set $\mathcal{M}_{-1}(h)= 1$ for all $h \in \mathbb{R}$. 
Given $\ell \geq 0$ and $k \in [n_{\ell - 1}, n_{\ell}]$, we define the mollifier up to $k$ as
\begin{align*}
    \mathcal{M}^{(k)}_{\ell - 1}(h) & = \sum_{\substack{p | m \implies p \in (T_{\ell - 1},\exp(e^k)] \\\Omega_\ell(m) \leq (n_{\ell} - n_{\ell - 1})^{10^5}} }\frac{\mu(m)}{m^{\tfrac 12 +\ii \tau + \ii h}}.
\end{align*}
This way we have $\mathcal{M}_{\ell - 1}^{(n_{\ell - 1})} = 1$ and $\mathcal{M}_{\ell - 1}^{(n_{\ell})} = \mathcal{M}_{\ell}$. 
The product $\mathcal{M}_{-1} \ldots \mathcal{M}_{\ell - 1} \mathcal{M}_{\ell -1}^{(k)}$ will be 
a good proxy for $\exp(-S_k)$ for most $\tau$,  cf. Lemma \ref{le:moliapprox} in Appendix \ref{se:moments}.

Finally, 
the deterministic centering of the maximum is denoted
$$
m(k) = k \, \Big ( 1 - \frac{3}{4} \frac{\log n}{n} \Big ). 
$$
For a fixed $y\geq 1$, we set the following upper  and lower barriers for the values of $S_k$:
\begin{align}
  \label{eq:barup}U_y(k) &= y+  \begin{cases}
    \infty & \text{ for } 1 \leq k < \lceil y / 4 \rceil, \\
 10^3 \log k & \text{ for } \lceil y / 4 \rceil \leq k \leq n/2, \\ 
10^3  \log(n - k) & \text{ for } n/2<k< n,
  \end{cases}\\
\label{eq:bardown} L_y(k) &= y - \begin{cases}
 \infty & \text{ for } 1 \leq k < \lceil y / 4 \rceil, \\ 
 20 k & \text{ for } \lceil y / 4 \rceil \leq k \leq n/2, \\
  20(n - k) & \text{ for } n/2<k<n.
 \end{cases}
 \end{align}
Note that $U_y(k)-L_y(k)$ is independent of $y$ and that $L_y(y/4)=-4y$ is negative.

\subsection{Iterated good sets}
The proof of Theorem \ref{thm:main} progressively reduces the set of $h$'s for which $\zeta$ is large.
We define iteratively the following decreasing subsets for $\ell \geq 0$:
\begin{align*}
  A_{\ell} & = A_{\ell-1}\cap \{ h \in \mathcal{T}_n:  |\widetilde{S}_k(h)-\widetilde{S}_{n_{\ell-1}}(h)| \leq 10^3(n_{\ell} - n_{\ell - 1}) \text{ for all } k \in (n_{\ell-1},n_{\ell}] \}\\
  B_{\ell} & = B_{\ell-1}\cap\{ h\in  \mathcal{T}_n : S_{k}(h) \leq  m(k) + U_y(k) \text{ for all }k \in (n_{\ell-1},n_{\ell}] \} \\
  C_{\ell} & = C_{\ell-1}\cap\{ h\in \mathcal{T}_n :  S_{k}(h)> m(k)+L_y(k) \text{ for all }k \in (n_{\ell-1},n_{\ell}] \}\\
  D_{\ell} & = D_{\ell-1}\cap\{ h \in \mathcal{T}_n : |(\zeta_{\tau} e^{-S_k})(h)| \leq c_{\ell} |(\zeta_{\tau} \mathcal{M}_{-1} \ldots \mathcal{M}_{\ell - 1} \mathcal{M}_{\ell - 1}^{(k)})(h)| + e^{- 10^4 (n - n_{\ell - 1})} \\ & \qquad \qquad \qquad \qquad \qquad \text{ for all } k \in (n_{\ell-1},n_{\ell}]\}{,}
\end{align*}
where $c_{\ell} := \prod_{i = 0}^{\ell} (1 + e^{-n_{i - 1}})$,  and
where we set $A_{-1} = B_{-1} = C_{-1} = D_{-1} = [-2,2]$.
Define the ``good" sets 
$$
G_\ell=A_\ell\cap B_\ell\cap C_\ell \cap D_\ell, \quad \ell\geq -1,
$$
and the set of interest in Theorem \ref{thm: upper}
$$
H(y)= \Big \{ h\in \mathcal{T}_n : |\zeta_{\tau}(h)| > e^y\frac{ e^n}{n^{3/4}}  \Big \},
$$
where 
$
\zeta_{\tau}(h)  $
stands for $\zeta(\tfrac 12 +\ii \tau + \ii h)$ as before.
We will call the points $h \in \mathcal{T}_n$ belonging to $H(y)$ the ``high points''.
The subsets $A_{\ell}$ and $D_{\ell}$ will be needed as auxiliary steps towards the proof that high points are in $C_{\ell}$, and $C_{\ell}$ will be needed for the proof of $B_{\ell}$.

\subsection{Induction steps}
Theorem \ref{thm: upper} follows from three propositions.
The first one proves that most high points are in the good set $G_0$. 
This control for small primes up to $n_0$ is simple, because the barrier $U_y$ is quite high and the $p^{\ii \tau}$'s show strong decoupling (i.e ``quasi-random'' behavior) for primes small enough with respect to $T$.

\begin{prop} There exists $K>0$ such that for any $4000 \leq y \leq n$,  one has
\label{prop: step 0}
$$
\mathbb P(\exists h\in H(y) \cap G^\cc_0)\leq K e^{-2y}.
$$
\end{prop}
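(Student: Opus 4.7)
The plan is to decompose the complement $G_0^\cc = A_0^\cc \cup (A_0 \cap B_0^\cc) \cup (A_0 \cap B_0 \cap C_0^\cc) \cup (A_0 \cap B_0 \cap C_0 \cap D_0^\cc)$ and bound $\PP(\exists h\in H(y)\cap E)\ll e^{-2y}$ for each event $E$ in this decomposition. For three of the four pieces ($A_0^\cc$, $B_0^\cc$, $D_0^\cc$) the restriction $h\in H(y)$ may be dropped, because each of these is already a rare event for the random walk or the mollifier alone; only the $C_0^\cc$ piece genuinely exploits the hypothesis that $|\zeta_\tau|$ is large.

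The $A_0^\cc$ and $B_0^\cc$ pieces are moment computations for $\widetilde S_k$ and $S_k$ in isolation. For $A_0^\cc$, since all primes involved are bounded by $\exp(e^{n_0})=\exp\sqrt{\log T}$, the diagonal approximation gives $\E|\widetilde S_k(h)|^{2q}\ll (Cqk)^q$ for $q$ as large as $\asymp e^{n/2}$ (Appendix~\ref{se:moments}); Chebyshev with $q\asymp n$ combined with a union bound over $\mathcal T_n$ (of size $\asymp e^n$) and over $k\leq n_0$ produces a probability $\ll e^{-cn}$ for any $c>0$, which is $\ll e^{-2y}$ since $y\leq n$. For $B_0^\cc$ I apply the exponential Markov bound $\PP(S_k(h)>A)\leq e^{-2A}\E[e^{2S_k(h)}]\ll e^{k-2A}$, using $\E[e^{2S_k}] = \E|e^{\widetilde S_k}|^2 \asymp e^k$ from the truncated Euler product. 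Because $S_k(h)$ varies by $O(1)$ on intervals of length $e^{-k}$ (easily verified on $A_0$), one may discretize at that scale at the cost of an $\asymp e^k$ factor in the union bound, and the $10^3\log k$ correction built into $U_y(k)$ yields a summable $k^{-2\cdot 10^3}$ decay per scale that absorbs the lower-order slack from the centering $m(k) = k\bigl(1-\tfrac34 \tfrac{\log n}{n}\bigr)$.

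The key step is the $C_0^\cc$ piece, where the hypothesis $h\in H(y)$ is genuinely used. On this event, $|\zeta_\tau(h)|>e^y e^n/n^{3/4}$ and $S_k(h)\leq m(k)+L_y(k)=m(k)+y-20k$ for some $k\in(n_{-1},n_0]$. The mollifier identity $|\mathcal M_{-1}^{(k)}(h)|\approx e^{-S_k(h)}$ then forces $|\zeta_\tau(h)\mathcal M_{-1}^{(k)}(h)|$ to exceed a very high threshold of order $e^n n^{-3/4}e^{19k}$. Applying Chebyshev's inequality with the twisted second moment $\E|\zeta_\tau \mathcal M_{-1}^{(k)}|^2$ from Section~\ref{sec:2nd}, and exploiting the steep slope $-20k$ of $L_y$, yields per-scale decay that is exponential in $k$ with a large rate; summing over $k\geq\lceil y/4\rceil$ gives $\ll e^{-2y}$. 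Finally, the $D_0^\cc$ piece is controlled directly by the mollifier approximation Lemma~\ref{le:moliapprox}, which confines the exceptional set to negligible measure. The principal obstacle is the $C_0^\cc$ analysis, since it is the only step that couples the analytic event ``$|\zeta_\tau|$ large'' with the combinatorial random-walk event ``$S_k$ small''; turning this coupling into a quantitative Chebyshev bound requires the sharp twisted second-moment estimate cited above together with careful bookkeeping of how the overall excess $y$ in $|\zeta_\tau(h)|$ is partitioned between the Dirichlet piece $S_k(h)$ and the mollified remainder $|\zeta_\tau(h)\mathcal M_{-1}^{(k)}(h)|$.
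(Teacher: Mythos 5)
Your decomposition of $G_0^\cc$ and your treatment of $A_0^\cc$ and $D_0^\cc$ match the paper's. For $B_0^\cc$ the spirit is right, but two details as written would not survive scrutiny: the pointwise claim that ``$S_k(h)$ varies by $\OO(1)$ on intervals of length $e^{-k}$'' is false ($|S_k'(h)|$ can be as large as $\sum_{\log p\leq e^k}\log p/\sqrt p\asymp \exp(e^k/2)$); the correct discretization is the band-limited sampling of Lemma \ref{le:discretization}, which applies to the Dirichlet polynomial $S_k^{q_k}$ but not to $e^{2S_k}$, so your exponential Markov bound has to be rerouted through truncated high moments $\E|S_k|^{2q_k}$ with $q_k\asymp (k+y)^2/k$ as in \eqref{eqn:mom3} --- which is exactly what the paper does.

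The genuine gap is in $C_0^\cc$, and it stems from a structural misreading: at step $0$ the paper never uses $h\in H(y)$ for any of the four pieces. It bounds $\mathbb P(\exists h\in C_0^\cc)$ by a pure lower-tail estimate: $S_k(h)\leq m(k)+y-20k$ forces $|S_k(h)|\geq 19k-y\geq 15k$, the Gaussian tail $\exp(-(20k-y)^2/(k+C))$ beats the entropy $\asymp q_ke^k$ of the length-$\exp(2q_ke^k)$ polynomial, and the sum over $k\geq y/4$ gives $e^{-2y}$ with room to spare. Your route through $|\zeta_\tau\mathcal M_{-1}^{(k)}(h)|> Ve^{-u}$ with $V=e^{y+n}n^{-3/4}$ and $u=m(k)+y-20k$ does not close with a second-moment Chebyshev: per point one gets at best $\E|\zeta_\tau\mathcal M_{-1}^{(k)}|^2e^{2u}V^{-2}\ll e^{n-k}\,e^{2u-2y-2n}\,n^{3/2}\ll e^{-n-39k}n^{3/2}$, and once $\zeta_\tau$ enters you must union over all $\asymp e^n$ points of $\mathcal T_n$, leaving $\ll n^{3/2}e^{-39y/4}$ after summing over $k$. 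This is not $\OO(e^{-2y})$ uniformly in $4000\leq y\leq n$: for $y$ bounded and $n\to\infty$ the factor $n^{3/2}$ --- which comes precisely from the $(\log\log T)^{3/4}$ normalization --- is unbounded. The paper only runs a $\zeta$-based argument for the lower barrier in the induction steps, where it needs the twisted \emph{fourth} moment of Lemma \ref{master lemma twisted} restricted to the barrier event via the ballot theorem (and even there the $n^3$ loss is absorbed only because $n-k$ is large or $e^{4m(k)}n^3\asymp e^{4k}$). The repair is simple: drop $H(y)$ and bound $\mathbb P(\exists h\in C_0^\cc)$ by the moment computation on $S_k$ alone.
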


Second, the proposition below  gives a precise control of the large values of $(S_k(h))_{h\in[-2,2]}$ for all $k$ up to $n_\ell$. 
This proposition is the most involved part of the proof.

\begin{prop} \label{prop: step ell}
There exists $K>0$ such that for any $4000 \leq y \leq n$, 
and
$\ell \geq 0$ such that $
  \exp(10^6 (n - n_{\ell})^{10^5} e^{n_{\ell + 1}}) \leq \exp(\tfrac{1}{100} e^n)
  $,
one has
$$
\mathbb{P} \Big ( \exists h \in H(y)\cap G_{\ell}\Big)\leq  \frac{K ye^{-2y}}{\log_{\ell+1} n}+\mathbb{P} \Big ( \exists h \in H(y)\cap G_{\ell+1}\Big).
$$
\end{prop}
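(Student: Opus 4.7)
The plan is to decompose the complementary event according to which of the four constraints defining $G_{\ell+1}$ is violated:
\begin{equation*}
\mathbb{P}(\exists h \in H(y) \cap G_\ell \cap G_{\ell+1}^c) \leq \sum_{X\in\{A,B,C,D\}} \mathbb{P}(\exists h \in H(y) \cap G_\ell \cap X_{\ell+1}^c),
\end{equation*}
and then to handle the four terms in the order $A_{\ell+1}^c$, $D_{\ell+1}^c$, $C_{\ell+1}^c$, $B_{\ell+1}^c$, since each step feeds into the next. The whole $Kye^{-2y}/\log_{\ell+1}n$ contribution will come from the $C_{\ell+1}^c$ piece; the other three will be shown to be negligible compared to it.

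The $A_{\ell+1}^c$ bound is routine: the increment $\widetilde{S}_k-\widetilde{S}_{n_\ell}$ is a Dirichlet polynomial whose typical size is $\sqrt{n_{\ell+1}-n_\ell}$, so requiring a deviation of size $10^3(n_{\ell+1}-n_\ell)$ has probability smaller than any fixed negative power of $e^n$ by a Chebyshev bound with a large moment of order $(n_{\ell+1}-n_\ell)^{10^5}$. The assumption $\exp(10^6(n-n_\ell)^{10^5}e^{n_{\ell+1}})\le \exp(\tfrac 1{100}e^n)$ ensures that this moment is computable through the mean-value estimates of Appendix \ref{se:moments}, uniformly over $h\in\mathcal{T}_n$ and $k\in(n_\ell,n_{\ell+1}]$. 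The $D_{\ell+1}^c$ bound reduces to Lemma \ref{le:moliapprox}, which formalizes the fact that $\mathcal{M}_{-1}\cdots\mathcal{M}_\ell^{(k)}$ is the truncated Taylor expansion of $\exp(-S_k)$ and hence an excellent pointwise approximation on $G_\ell$, where $S_k$ is under control by $A_\ell$ and $U_y$.

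The heart of the argument is the $C_{\ell+1}^c$ bound. For $h\in H(y)$, the mollifier identity from $D_\ell$ together with the increment control $A_\ell$ and the high-point condition force $S_{n_\ell}(h)\geq y+m(n)-O(n-n_\ell)$; in other words, the walk $k\mapsto S_k(h)$ must end very high. The plan is to bound
\begin{equation*}
\mathbb{P}\bigl(\exists h\in H(y)\cap G_\ell\cap C_{\ell+1}^c\bigr)
\end{equation*}
via the twisted fourth moment of $\zeta_\tau\,\mathcal{M}_{-1}\cdots\mathcal{M}_\ell^{(k)}$ computed in Section \ref{sec:4th}, whose asymptotics reduce the question to a Gaussian random walk analog: the walk is pinned at height $\asymp y+m(n)$ at scale $n$, and must cross below $L_y$ at some scale in $(n_\ell,n_{\ell+1}]$. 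A ballot-type estimate (Appendix \ref{se:ballot}) on such a constrained walk supplies the factor $y/\log_{\ell+1}n$, while $e^{-2y}$ arises from the Gaussian density at height $y$; summation over $h\in\mathcal{T}_n$ yields the claimed bound. The $B_{\ell+1}^c$ step is handled in parallel but only uses the second moment from Section \ref{sec:2nd}, and is smaller because the upper barrier $y+10^3\log(n-k)$ lies well above typical walks. The main obstacle is the $C_{\ell+1}^c$ estimate: the twisted fourth moment must be evaluated with a Gaussian main term uniformly in the barrier-crossing scale, and the ballot input has to capture exactly the $1/\log_{\ell+1}n$ saving that allows $\sum_\ell 1/\log_{\ell+1}n\cdot ye^{-2y}$ to remain $O(ye^{-2y})$, matching the target of Theorem \ref{thm: upper}.
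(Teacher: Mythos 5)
Your decomposition of $G_{\ell+1}^{\cc}$ into the four events $A_{\ell+1}^{\cc}$, $D_{\ell+1}^{\cc}$, $C_{\ell+1}^{\cc}$, $B_{\ell+1}^{\cc}$, treated in that order so that each step feeds into the next, is exactly the paper's strategy, and the tools you assign to each piece (large moments for $A$, mollifier approximation for $D$, the twisted fourth moment for $C$, second moments for $B$) are the right ones. However, you have the bookkeeping of the four contributions backwards, and this is where the plan would stall. The lower-barrier piece $C_{\ell+1}^{\cc}$ is in fact far smaller than the target: dropping to $m(k)+y-20(n-k)$ at scale $k$ while $\zeta_\tau$ stays of size $e^{y}e^{n}n^{-3/4}$ costs $e^{-c(n-k)}$ (the $e^{4(n-k)}$ growth of the twisted fourth moment is overwhelmed by the $e^{-40(n-k)}$ coming from the low value of $S_k$), which with $n-k\geq 10^6\log_{\ell+1}n$ is super-polynomially small in $\log_{\ell+1}n$; the paper records the bound $ye^{-2y}(\log_{\ell}n)^{-1}$. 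The tightest piece is $B_{\ell+1}^{\cc}$: the first passage above $U_y(k)=y+10^3\log(n-k)$ at a scale $k\in[n_\ell,n_{\ell+1})$ costs only $y e^{-2U_y(k)}=ye^{-2y}(n-k)^{-2000}$, against which one loses a factor $(n-k)^{800}$ from the Dirichlet-polynomial approximation of the barrier indicator; since $n-k$ can be as small as $10^6\log_{\ell+1}n$ on this range, the sum over $k$ is only a power of $(\log_{\ell+1}n)^{-1}$. This is the sole source of the $\log_{\ell+1}n$ in the statement. Your justification that the upper barrier ``lies well above typical walks'' is therefore insufficient: it lies only $10^3\log(n-k)$ above $y$, calibrated precisely so that the polynomial saving survives the polynomial loss, and the $B$ step needs the full first-passage/ballot machinery of Lemma \ref{master lemma} (together with the already-extended lower barrier $C_{\ell+1}$, which is what keeps the increments bounded in the indicator approximation), not just a crude second-moment bound.

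Two smaller omissions. First, the $D_{\ell+1}^{\cc}$ step is not purely the pointwise Lemma \ref{le:moliapprox}: to absorb the product of the previous error term with $e^{-(S_k-S_{n_\ell})}$ into the new error $e^{-10^4(n-n_\ell)}$ one must also control the event $|(\zeta_\tau\mathcal{M}_{-1}\cdots\mathcal{M}_\ell)(h)|\geq e^{10^3(n-n_\ell)}$ on $G_\ell$, which again requires the twisted fourth moment of Lemma \ref{master lemma twisted}. Second, for $C_{\ell+1}^{\cc}$ the high-point condition does not deterministically force $S_{n_\ell}(h)$ to be high; the actual argument is a Markov inequality on $|\zeta_\tau\mathcal{M}_{-1}\cdots\mathcal{M}_\ell\mathcal{M}_\ell^{(k)}(h)|^4e^{4(u+v)}/V^4$ over a dyadic decomposition of $(S_{n_\ell},S_k-S_{n_\ell})$. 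None of the four required bounds is unattainable along your route, but as written the proposal mislocates the bottleneck and gives no mechanism that would actually produce the $(\log_{\ell+1}n)^{-1}$ saving for the term that needs it.
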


Finally, one has the following estimate for the remaining points of the set.
\begin{prop} \label{prop: step L}
There exists $K>0$ such that for any 
$4000 \leq y \leq n$, 
and
$\ell \geq 0$ such that $
  \exp(10^6 (n - n_{\ell})^{10^5} e^{n_{\ell + 1}}) \leq \exp(\tfrac{1}{100} e^n)
  $,
one has
$$
\mathbb{P} \Big ( \exists h \in H(y)\cap G_{\ell}\Big)\leq K  y e^{-2 y} e^{10^3 (n - n_{\ell})}.
$$
\end{prop}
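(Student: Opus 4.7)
The approach is a union bound over $h\in\mathcal{T}_n$ followed by Chebyshev's inequality applied to $|\zeta_\tau\mathcal{M}|^2$, where $\mathcal{M}:=\mathcal{M}_{-1}\mathcal{M}_0\cdots\mathcal{M}_\ell$. The arithmetic input is the mollified second moment estimate developed in Section \ref{sec:2nd}; the hypothesis $\exp(10^6(n-n_\ell)^{10^5}e^{n_{\ell+1}})\leq \exp(\tfrac{1}{100}e^n)$ is precisely what keeps the Dirichlet polynomial $\mathcal{M}$ short enough for the $L^2$ estimate over $[T,2T]$ to apply.

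First, I would extract a lower bound on $|\zeta_\tau\mathcal{M}|$ on $H(y)\cap G_\ell$. Combining the high-point condition $|\zeta_\tau(h)|>e^{y+m(n)}$ with the $B_\ell$-barrier at $k=n_\ell$, which yields $S_{n_\ell}(h)\leq m(n_\ell)+y+10^3\log(n-n_\ell)$, and with the $D_\ell$-inequality at $k=n_\ell$, namely $|\zeta_\tau(h)|\leq e^{S_{n_\ell}(h)}\bigl(c_\ell|\zeta_\tau(h)\mathcal{M}(h)|+e^{-10^4(n-n_{\ell-1})}\bigr)$, and using $m(n)-m(n_\ell)\asymp n-n_\ell$, the $y$-dependence cancels and one obtains
\[
|\zeta_\tau(h)\mathcal{M}(h)|\gg \frac{e^{n-n_\ell}}{(n-n_\ell)^{10^3}}.
\]
The factor $ye^{-2y}$ must therefore be recovered later from the restricted second moment, not from this pointwise estimate.

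Second, Chebyshev and the union bound give
\[
\mathbb{P}\bigl(\exists h\in H(y)\cap G_\ell\bigr)\ll \frac{(n-n_\ell)^{2\cdot 10^3}}{e^{2(n-n_\ell)}}\sum_{h\in\mathcal{T}_n}\mathbb{E}\bigl[|\zeta_\tau(h)\mathcal{M}(h)|^2\mathbf{1}_{G_\ell}\bigr].
\]
The polynomial prefactor $(n-n_\ell)^{2\cdot 10^3}$ coming from the $10^3\log(n-n_\ell)$ term in $U_y(n_\ell)$ converts, upon absorption into the exponential with room to spare, into the $e^{10^3(n-n_\ell)}$ of the claimed bound. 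It thus remains to show the essentially optimal estimate
\[
\sum_{h\in\mathcal{T}_n}\mathbb{E}\bigl[|\zeta_\tau(h)\mathcal{M}(h)|^2\mathbf{1}_{G_\ell}\bigr]\ll ye^{-2y}\cdot e^{2(n-n_\ell)}.
\]

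Third, the core step: I would evaluate the restricted second moment by expanding $|\zeta_\tau\mathcal{M}|^2$ as a Dirichlet series and passing to the $\tau$-average via Section \ref{sec:2nd}. This transfers the problem to a random-walk question for $(S_k)_{k\leq n_\ell}$ under the exponential tilt induced by $|\zeta_\tau\mathcal{M}|^2\sim e^{2(S_n-S_{n_\ell})}$, which leaves the law of $(S_k)_{k\leq n_\ell}$ essentially unchanged. The indicator of $G_\ell$ constrains this Gaussian-like walk to the barrier band $m(k)+[L_y(k),U_y(k)]$ for all $k\leq n_\ell$, and the ballot estimate from Appendix \ref{se:ballot}, applied after the tilt, produces the factor $ye^{-2y}/n_\ell$; this then matches the geometric factor $|\mathcal{T}_n|/n_\ell\asymp e^n/n_\ell$ obtained from summing over $h$.

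The main obstacle is step three: one must simultaneously extract the diagonal Dirichlet contribution to $\mathbb{E}|\zeta_\tau\mathcal{M}|^2$, execute the Gaussian tilt cleanly inside the arithmetic estimate, and insert the ballot-type barrier probability uniformly in $y\in[4000,n]$. The active piece of $G_\ell$ is the upper barrier $B_\ell$; the lower barrier $C_\ell$ and the increment control $A_\ell$ enter only indirectly, by ensuring the mollifier identity in $D_\ell$ is in force at the scale $k=n_\ell$.
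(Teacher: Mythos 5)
Your overall architecture (pointwise lower bound on $|\zeta_\tau\mathcal{M}|$ from $H(y)\cap B_\ell\cap D_\ell$, then a union bound and a moment inequality against the barrier event) is the right shape, and your step one is essentially the paper's first paragraph. But the paper does \emph{not} use Chebyshev with the second moment of $\zeta_\tau\mathcal{M}$: it localizes $S_{n_\ell}(h)\in[v,v+1]$ (using \emph{both} barriers), applies Markov with the \emph{fourth} power $e^{4v}/V^4$, and invokes Lemma \ref{master lemma twisted} with $\mathcal{Q}\equiv 1$, i.e.\ the twisted fourth moment $\mathbb{E}[|\zeta_\tau\mathcal{M}|^4\mathbf{1}(S_{n_\ell}\in[v,v+1],\,h\in B_\ell\cap C_\ell)]\ll e^{4(n-n_\ell)}\cdot y(n-n_\ell)e^{-2(v-m(n_\ell))}e^{-n_\ell}$. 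The resulting loss of $e^{O(n-n_\ell)}$ relative to your hoped-for bound is exactly why the proposition carries the factor $e^{10^3(n-n_\ell)}$, which is harmless since the proposition is only invoked at $\ell=L$ where $n-n_L=\OO(1)$.

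The genuine gap is your step three. The quantity $\mathbb{E}[|\zeta_\tau\mathcal{M}(h)|^2\mathbf{1}_{G_\ell}]$ cannot be evaluated "via Section \ref{sec:2nd}": that section (Lemmas \ref{lem: Transition} and \ref{le:splitting}) applies only to genuine short Dirichlet polynomials, whereas $\zeta$ at height $T$ is a Dirichlet polynomial of length $\asymp T$, so after replacing $\mathbf{1}_{G_\ell}$ by the sum $\sum_i|D_i|^2$ of squares of Dirichlet polynomials (Lemma \ref{lem: smooth indicator}), you are facing $\mathbb{E}[|\zeta_\tau|^2|\mathcal{M}D_i|^2]$ — a twisted second moment of $\zeta$ with a twist of length $T^{1/50}$ and large coefficients. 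This requires off-diagonal analysis of Balasubramanian--Conrey--Heath-Brown type, which is neither in the paper nor supplied by you; the mean-value theorem alone cannot decouple $|\zeta_\tau\mathcal{M}|^2$ from $|D_i|^2$. (The paper develops precisely the fourth-moment analogue of this decoupling, Lemma \ref{le:barrierapproxtwistedbis}, via \cite{HeaRadSou2019} and \cite{HughesYoung}, and that is the tool you should be reaching for.) A secondary but real error: your closing claim that $C_\ell$ "enters only indirectly" is wrong. The lower barrier at $k=n_\ell$ is what pins $S_{n_\ell}(h)$ from \emph{below} near $m(n_\ell)+y$; without it the ballot input degenerates to the probability of merely staying below $U_y$, which carries no $e^{-2y}$, and your restricted second moment would be larger by a factor of order $e^{2y}$.
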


Theorem \ref{thm: upper} can be proved assuming Propositions \ref{prop: step 0}, \ref{prop: step ell} and \ref{prop: step L} .
\begin{proof}[Proof of Theorem \ref{thm: upper}]
  Let $L$ be the largest index $\ell$ such that
  $$
  \exp(10^6 (n - n_{\ell})^{10^5} e^{n_{\ell + 1}}) \leq \exp \Big ( \frac{1}{100} e^{n} \Big ),
  $$
  so that in particular $n - n_{L} = \OO(1)$. We clearly have
  $$
  \mathbb{P}(\exists h \in H(y))  \leq \mathbb{P}(\exists h \in H(y) \cap G_0^\cc) + \mathbb{P} (\exists h \in H(y) \cap G_0).
  $$
  By Proposition \ref{prop: step 0} and iterating Proposition \ref{prop: step ell} up to $L$, the above is
  $$
  \leq K e^{-2y} + \sum_{1 \leq \ell \leq L} \frac{K y e^{-2y}}{\log_{\ell} n} + \mathbb{P}(\exists h \in H(y) \cap G_{L}) \ll y e^{-2y} + \mathbb{P}(\exists h \in H(y) \cap G_{L}),
  $$
  since the sum over $\ell$ is rapidly convergent. 
  Finally, Proposition \ref{prop: step L} implies
  $$
  \mathbb{P}(\exists h \in H(y) \cap G_L) \leq K e^{(n - n_{L})^{10^3}} y e^{-2y} \ll y e^{-2y},
  $$
  since $n - n_{L} = \OO(1)$. All the above steps together yield  $
  \mathbb{P}(\exists h \in H(y)) \ll y e^{-2y}$, as expected.
\end{proof}

We note that  to obtain
$\mathbb{P}(\max_{|h| \leq 1} |\zeta(\tfrac 12 + \ii\tau + \ii h)|>e^y(\log T)/(\log\log T)^{3/4})=\oo(1)$
for large $y$, the number of steps in the induction can be lower than $L$. For example if $y$ is of  order $\log_2n$ as in \eqref{eqn: harper bound}, iterating up to $\ell=3$ suffices. Further iterations improves the error by extra logarithms.

\section{Initial Step}\label{sec:first}

This section proves  Proposition \ref{prop: step 0}.
Notice that by a union bound
\begin{align*}
\mathbb{P}(\exists h \in H(y) \cap G_0^{\cc})  \leq \mathbb{P}(\exists h \in A_0^{\cc})  + \mathbb{P}(\exists h \in D_0^{\cc} \cap A_0)  + \mathbb{P}(\exists h \in C_0^{\cc})  + \mathbb{P}(\exists h \in B_0^{\cc}). 
\end{align*}
The first two probabilities on the right-hand side will be bounded by $\ll e^{-7n}$, and the last two by $\ll e^{-2y}$. This will imply the claim. 

For the first probability, a union bound on $h$ and $k\leq n_0$ together with the Gaussian tail \eqref{eqn: gaussian tail cplx} yield
$$
\mathbb{P}(\exists h \in A_0^{\cc}) \ll e^n\,  n_0 \exp(- 10^2 n)\ll e^{-7n}.
$$

We now show that $\mathbb P(\exists h\in B_0^\cc) \ll e^{-2y}$. 
  A union bound on $y / 4 < k\leq n_0$ implies that for any sequence of integers $q_k \geq 1$,
\begin{equation}
\label{eqn: B0}
\begin{aligned}
\mathbb P(\exists h\in B_0^\cc)
&\leq \sum_{y / 4 < k\leq n_0} \mathbb P\Big(\max_{|h|\leq 2} S_k(h)>U_y(k)+k-\frac{3}{4}\log k\Big)\\
&\leq \sum_{y / 4 < k \leq n_0} \mathbb{E} \Big [ \max_{|h| \leq 2} \ \frac{|S_k(h)|^{2q_{k}}}{(y+k+10\log k)^{2q_{k}}} \Big ] ,
\end{aligned}
\end{equation}
where we use the fact that $m(k)\geq k-\frac{3}{4}\log k$ for $k>e$.
We discretize the maximum over $q_k e^k$ points using Lemma \ref{le:discretization} in Appendix \ref{se:discretization} with $N=\exp(2 q_k e^k)$ and $A=1000$. 
We can also apply \eqref{eqn:mom3} on each of these terms, taking $q_k=\lceil (y+k+10\log k)^2/(k+C) \rceil$ with $C > 0$ an absolute constant.
It is easily checked that the condition $2q_k\leq e^{n-k}$ is fulfilled here to get a Gaussian tail, as $y\leq n$ and $k\leq n_0$.

Note  that the second sum on the right-hand side of \eqref{eqn: discretize} is negligible. To see this, all terms up to $\frac{2\pi j}{8e^k}=T/2$ yield the same moment, as the average over $\tau$ could be replaced by an average over $[T/2, 2T]$ which yields the same bounds. The prefactor $1/(1+j^{1000})$ then makes the contribution negligible. For larger $j$'s, that is $j>\frac{2}{\pi}\frac{T}{\sqrt{\log T}}$, we use the deterministic bound $|S_k|^{2q_k}\leq \exp(q_k\cdot (\log T)^{1/2})$, so that the corresponding sum is at most $\sum_{|j|>\frac{T}{\sqrt{\log T}}}|j|^{-1000}\exp(q_k\cdot (\log T)^{1/2})\ll T^{-10}e^{(y+n)^2\sqrt{\log T}}\ll e^{-2y}$ for $y<n$.

Putting this together yields
\begin{align*}
\mathbb P (\exists h\in B_0^\cc) & \ll \sum_{y / 4 < k\leq n_0} e^k\, \frac{(k+y)^3}{k^{3/2}}\,\exp\big(-(k+10\log k+y)^2/(k+C)\big)
\\ & \ll  e^{-2y}\, \sum_{y / 4 < k\leq n_0} (k^{3/2} + y^3 k^{-3/2}) \, k^{-20}  \ll e^{-2y}.
\end{align*}

To bound the probability $\mathbb{P}(\exists h \in C_0^{\cc})$ 
we note that if there exists $h$ in $C_{0}^{\cc}$ then  $S_{k}(h) \leq y-20 k$ for some $h \in \mathcal{T}_n$ and some $y / 4 < k \leq n_0$. Therefore we obtain the bound,
\begin{align} \nonumber
\mathbb{P}(\exists h \in C_0^{\cc}) & \leq \sum_{y / 4 < k \leq n_0} \mathbb{P} \Big ( \max_{|h| \leq 2} |S_k(h)| > 20 k - y \Big ) \\ \label{eq:aa} & \leq \sum_{y / 4 \leq k \leq n_0} \mathbb{E} \Big [ \max_{|h| \leq 2} \frac{|S_k(h)|^{2q_k}}{(20 k - y)^{2 q_k}} \Big ]
\end{align}
for any choice of $q_k \geq 1$. We choose $q_k = \lceil (20 k - y)^2 / k \rceil$.
The length of $S_k(h)^{q_k}$ is $\exp(2 q_k e^k)$. We discretize the maximum over $q_k e^k$ points using Lemma \ref{le:discretization} in Appendix \ref{se:discretization} with $N=\exp(2 q_k e^k)$ and $A=1000$. This shows that \eqref{eq:aa} is
$$
\ll \sum_{y / 4 \leq k \leq n_0} q_k \, e^k \ \mathbb{E} \Big [ \frac{|S_k(0)|^{2q_k}}{(20 k - y)^{2q_k}} \Big ].
$$
By Equation (\ref{eqn:mom3}) from Lemma \ref{lem: Gaussian moments} in Appendix \ref{se:moments}, and the bound $q_k \ll k$ valid in the range $y / 4 \leq k \leq n_0$, this is
$$
\ll \sum_{y / 4 \leq k \leq n_0} k^{3/2} e^k \, \exp \Big ( - \frac{(20 k - y)^2}{k+C} \Big ) \leq \sum_{y / 4 \leq k \leq n_0} k^{3/2} e^k \, e^{-400 k + 20 y} \leq e^{-2y},
$$
with $C$ an absolute constant.
This is the expected result.

Finally, we show that $\mathbb{P}(\exists h \in D_0^{\cc} \cap A_0) \ll e^{-100 n}$. Suppose that we are placed on a $\tau$ for which for all $h \in \mathcal{T}_n$ we have
\begin{equation}\label{eq:trivbound}
|\zeta_{\tau}(h)| \leq e^{100 n}.
\end{equation}
Then for all $h \in A_0$ we have by Lemma \ref{le:moliapprox} in Appendix \ref{se:moments} 
$$
|e^{-S_k(h)}| \leq (1 + e^{-n_{-1}}) |\mathcal{M}_{-1}^{(k)}(h)| + e^{-10^5 (n_0 - n_{-1})}.
$$
It follows that for such $\tau$'s we have for all $h \in A_0$, 
\begin{align*}
|(\zeta_{\tau} e^{-S_k})(h)| & \leq (1 + e^{-n_{-1}}) \, |(\zeta_{\tau} \mathcal{M}_{-1}^{(k)})(h)| + e^{100 n - 10^5 (n_0 - n_{-1})} \\ & \leq (1 + e^{-n_{-1}}) \, |(\zeta_{\tau} \mathcal{M}_{-1}^{(k)})(h)| + e^{ - 10^4 (n - n_{-1})},
\end{align*}
as claimed. Therefore, we are left with the elementary bound
$$
\mathbb{P}(\exists h \in D_0^{\cc} \cap A_0) \leq \mathbb{P}(\exists h : |\zeta_{\tau}(h)| \geq e^{100 n}) \leq \sum_{h \in \mathcal{T}_n} \mathbb{E} \Big [ \frac{|\zeta_{\tau}(h)|^2}{e^{200 n}}\Big ] \ll e^{-100 n},
  $$
  by the second moment bound for the zeta function (Lemma \ref{le:secondmoment}, Appendix \ref{se:moments}).

\section{Induction}\label{sec:induction}

We now prove Proposition \ref{prop: step ell}.
The subsets $A$, $B$, $C$ and $D$'s need to be refined to account for the intermediate increments in the interval $(n_{\ell}, n_{\ell+1}]$:
For $k\in [n_{\ell}, n_{\ell+1}]$, define
\begin{align*}
  A^{(k)}_{\ell} & = A_{\ell}\cap\{ h \in \mathcal{T}_n:  |\widetilde{S}_j(h)-\widetilde{S}_{n_\ell}(h)| \leq 10^3 (n - n_{\ell}) \text{ for all } n_{\ell}<j\leq k \},\\
  B^{(k)}_{\ell} & = B_{\ell}\cap\{ h\in \mathcal{T}_n : S_{j}(h)\leq m(j) + U_y(j) \text{ for all } n_{\ell}<j\leq k \}, \\
  C^{(k)}_{\ell} & = C_{\ell}\cap\{ h\in \mathcal{T}_n :  S_{j}(h)> m(j)+L_y(j) \text{ for all } n_{\ell}<j\leq k \},\\
  D^{(k)}_{\ell} & = D_{\ell}\cap\{ h \in \mathcal{T}_n : |(\zeta_{\tau} e^{-S_{k}})(h)| \leq c_{\ell + 1} |(\zeta_{\tau} \mathcal{M}_{-1} \ldots \mathcal{M}_{\ell} \mathcal{M}_{\ell}^{(k)})(h)| + e^{ - 10^4 (n - n_{\ell})} \\ & \qquad \qquad \qquad \qquad \qquad \text{ for all } n_{\ell}<j\leq k\},
\end{align*}
where $c_{\ell + 1} := \prod_{i = 0}^{\ell + 1} (1 +  e^{-n_{i - 1}})$. 
Note that with this notation $A^{(n_{\ell+1})}_\ell=A_{\ell+1}$.
We also take as a convention that $A_{\ell}^{(n_{\ell})} = A_{\ell}$.
The same holds for $B_{\ell}^{(k)}$, $C_{\ell}^{(k)}$ and $D_{\ell}^{(k)}$. 

The proof of Proposition \ref{prop: step ell} is based on the following two lemmas. We defer the proofs of these lemma to later sections. 

\begin{lemma} \label{master lemma}
  Let $\ell \geq 0$ be such that $\exp(10^6 (n - n_{\ell})^{10^5} e^{n_{\ell + 1}}) \leq \exp(\tfrac{1}{100} e^n)$. Let $k \in (n_{\ell}, n_{\ell + 1}]$. 
  Let $\mathcal{Q}$ be a Dirichlet polynomial of length $N \leq \exp(\tfrac{1}{100} e^n)$. Suppose that $\mathcal{Q}$ is supported on integers all of whose prime
  factors are $> \exp(e^{k})$.
  Then, for $4000 \leq y \leq n$ and $L_y(k)<w - m(k) < U_y(k)$, one has
  \begin{align*}
 \mathbb{E} & \Big [ \max_{|h|\leq 2}|\mathcal{Q}(\tfrac 12 + \ii \tau+\ii h)|^2 \cdot \mathbf{1} \Big ( h \in B_{\ell}^{(k)} \cap C_{\ell}^{(k)} \text{ and }  S_{k}(h) \in (w,w+1] \Big ) \Big ]\\  & \ll
    \mathbb{E} \Big [ |\mathcal{Q}(\tfrac 12 +\ii \tau)|^2 \Big ] \,  \Big ( e^{-k} \log N + (n-k)^{800} \Big ) \, y\, (U_y(k)-w+m(k)+2)\, e^{- 2 (w - m(k))},
  \end{align*}
where the  implicit constant in $\ll$ is absolute and in particular independent of $\ell$ and $k$.
 \end{lemma}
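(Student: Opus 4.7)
The plan is to combine a discretization of the maximum over $h$, an arithmetic-independence argument separating $\mathcal{Q}$ from the low-prime indicator, and a two-barrier ballot estimate. The key structural feature is that $\mathcal{Q}$ depends only on primes $>\exp(e^{k})$ while the event inside the indicator depends only on primes $\leq \exp(e^{k})$, so the two pieces can be decorrelated up to a small off-diagonal error.

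The first step is to remove the maximum. The Dirichlet polynomial $\mathcal{Q}$ varies on the scale $1/\log N$, while $S_k$ and the barrier constraints encoded in $B_\ell^{(k)}\cap C_\ell^{(k)}$ together with the endpoint condition $S_k\in(w,w+1]$ vary on the coarser scale $e^{-k}$. Applying the discretization lemma of Appendix \ref{se:discretization} at the finer of the two scales, together with a smoothing of the sharp indicators $\mathbf{1}(S_j\leq m(j)+U_y(j))$ at a scale polynomial in $(n-k)$, reduces the left-hand side to a sum over a net $\mathcal{H}\subset[-2,2]$. The combined loss is quantified by the prefactor $e^{-k}\log N + (n-k)^{800}$: the first term is the density of the $\mathcal{Q}$-discretization inside a single $S_k$-cell, the second is the cost of smoothing the barrier indicators. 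Next, for each $h\in\mathcal{H}$, I would expand $|\mathcal{Q}(\tfrac12+\ii\tau+\ii h)|^2$ as a Dirichlet series of length at most $N^2\leq\exp(e^n/50)$ and insert the Fourier expansion of the smoothed low-prime event (itself a Dirichlet series in primes $\leq\exp(e^{k})$). Since the two prime supports are disjoint, the twisted second-moment estimates of Section \ref{sec:2nd} yield
\[
\mathbb{E}\bigl[|\mathcal{Q}(h)|^2\,\mathbf{1}(\text{event})\bigr] = \mathbb{E}\bigl[|\mathcal{Q}(h)|^2\bigr]\cdot\mathbb{P}(\text{event}) + O\bigl(T^{-\varepsilon}\mathbb{E}[|\mathcal{Q}(h)|^2]\bigr),
\]
the off-diagonal error being negligible because the hypothesis $\exp(10^6(n-n_\ell)^{10^5}e^{n_{\ell+1}})\leq\exp(\tfrac{1}{100}e^n)$ keeps the total Dirichlet length below $T^{1-\varepsilon}$.

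The surviving probability is that the random walk $(S_j)_{j\leq k}$, whose increments have variance $\tfrac12$ per unit of $k$ by Mertens, stays in the corridor $(m(j)+L_y(j),m(j)+U_y(j))$ for $n_\ell<j\leq k$ and lies in $(w,w+1]$ at $j=k$. Conditioning on the value of $S_{n_\ell}(h)$ (whose size is controlled by membership in $G_\ell$) and applying the ballot theorem of Appendix \ref{se:ballot} together with an exponential tilt shifting the mean of $S_k$ from $0$ to $w$, this probability is of order $y\,(U_y(k)-w+m(k)+2)\,e^{-2(w-m(k))}/e^{k}$: the variance-$\tfrac12$ Girsanov tilt produces $e^{-2(w-m(k))}$, the Bramson factor for the lower barrier $L_y\asymp -y$ produces $y$, the endpoint-to-upper-barrier distance produces $U_y(k)-w+m(k)+2$, and $1/e^{k}$ is the Gaussian normalization at scale $k$. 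Multiplying by $|\mathcal{H}|\asymp(e^{-k}\log N+(n-k)^{800})\cdot e^{k}$ then yields the stated bound. The main obstacle is the separation step when $N$ is close to $\exp(e^n/100)$: here $\mathcal{Q}$ has length approaching $T$, so the factorisation cannot be derived from a naive mean-value theorem and instead requires the twisted-moment inputs of Section \ref{sec:2nd}; this is precisely what makes the length hypothesis on $\mathcal{Q}$ essential and what allows the induction to close uniformly in $\ell$ and $k$.
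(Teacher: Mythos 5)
Your overall architecture coincides with the paper's: the proof majorizes the barrier indicator by a sum of squares of Dirichlet polynomials supported on primes $\leq \exp(e^k)$ (Lemma \ref{lem: smooth indicator}), discretizes the maximum over $h$ at the combined length of $\mathcal{Q}$ times these polynomials via Lemma \ref{le:discretization} (which is exactly where the prefactor $\log N + e^k(n-k)^{800}$ arises), decouples $\mathcal{Q}$ from the low-prime polynomials using the disjoint prime supports, and finishes by comparing to a Gaussian walk and invoking the two-barrier ballot theorem of Appendix \ref{se:ballot}. Your accounting of the factors $y$, $U_y(k)-w+m(k)+2$, $e^{-2(w-m(k))}$ and $e^{-k}$ in the final probability is also correct.

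Two points need correction. First, your closing claim that the separation step is delicate because ``$\mathcal{Q}$ has length approaching $T$'' rests on a misreading: $\exp(\tfrac{1}{100}e^n)=T^{1/100}$, so the product of $\mathcal{Q}$ with the indicator polynomials has length far below $T^{1/2}$, and the factorization is precisely the elementary Montgomery--Vaughan mean value theorem in the form of the Splitting Lemma (Lemma \ref{le:splitting}); no twisted-moment input is needed in this lemma (twisted fourth moments enter only in Lemma \ref{master lemma twisted}, where $\zeta$ itself appears). Second, the step you describe as ``smoothing the sharp indicators at a scale polynomial in $(n-k)$'' is the technical heart of the argument and is under-specified: the paper pins each increment $Y_j=S_j-S_{j-1}$ to a window of width $\Delta_j^{-1}$ with $\Delta_j=\min(j,n-j)^4$, majorizes each window indicator by $|\mathcal{D}_{\Delta_j,A}(Y_j-u_j)|^2$ where $\mathcal{D}_{\Delta_j,A}$ is a truncated exponential series of a band-limited majorant, and crucially uses the barrier constraints themselves to bound $|Y_j-u_j|$ so that the truncation error of this series is controlled; one then still needs the comparison of the moments of these Dirichlet polynomials with the random model (Lemma \ref{lem: Gaussian comparison}) before the ballot theorem applies, since $S_j$ is not literally a Gaussian walk. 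Your sketch asserts the conclusion of these steps without supplying the mechanism, which is where the real work of the lemma lies.
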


\begin{lemma} \label{master lemma twisted}
  Let $\ell \geq 0$ with $\exp(10^6 (n - n_{\ell})^{10^5} e^{n_{\ell + 1}})) \leq \exp(\tfrac{1}{100} e^n)$. Let $k \in [n_{\ell} , n_{\ell + 1}]$. 
  Let $\gamma(m)$ be a sequence of complex coefficients with $|\gamma(m)| \ll \exp(\tfrac{1}{1000} e^n)$ for all $m \geq 1$. 
  Let
  \begin{equation} \label{eq:qle}
\mathcal{Q}_{\ell}^{(k)}(h) := \sum_{\substack{p | m \implies p \in (T_{\ell}, \exp(e^k)] \\ \Omega_{\ell + 1}(m) \leq (n_{\ell + 1} - n_{\ell})^{10^4}}} \frac{\gamma(m)}{m^{\tfrac 12 +\ii \tau + \ii h}}.
  \end{equation}
 Then, for any $h \in [-2,2]$, $4000 \leq y \leq n$ and $L_y(n_{\ell}) < u - m(n_{\ell}) \leq U_{y}(n_{\ell})$,
  \begin{multline*}
  \mathbb{E} \Big [ |(\zeta_{\tau} \mathcal{M}_{-1} \ldots \mathcal{M}_{\ell} \mathcal{M}^{(k)}_{\ell})(h)|^4
  \cdot |\mathcal{Q}_{\ell}^{(k)}(h)|^2 \cdot \mathbf{1} \Big ( h \in B_{\ell} \cap C_{\ell} \text{ and } S_{n_{\ell}}(h) \in (u, u + 1] \Big ) \Big ]\\
  \ll e^{4(n - k)} \, \mathbb{E} \Big [ |\mathcal{Q}^{(k)}_{\ell}(h)|^2 \Big ]  \, e^{-n_{\ell}}\,  y \, (U_y(n_{\ell}) - u + m(n_{\ell}) + 2)\, e^{-2 (u - m(n_{\ell}))},
  \end{multline*}
where the  implicit constant in $\ll$ is absolute and in particular independent of $\ell$ and $k$.
\end{lemma}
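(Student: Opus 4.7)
The plan is to combine the twisted fourth moment machinery from Section \ref{sec:4th} with a ballot theorem from Appendix \ref{se:ballot}, in the spirit of Lemma \ref{master lemma}. The guiding heuristic is as follows: by Lemma \ref{le:moliapprox},
\begin{equation*}
|\zeta_\tau\,\mathcal{M}_{-1}\cdots\mathcal{M}_\ell\mathcal{M}_\ell^{(k)}|^4\;\approx\;e^{4(S_n-S_k)},
\end{equation*}
so this factor is essentially a function of primes $p>\exp(e^k)$ with mean $\asymp e^{4(n-k)}$; the factor $|\mathcal{Q}_\ell^{(k)}|^2$ is supported on integers with prime factors in $(T_\ell,\exp(e^k)]$; and the event $E:=\{h\in B_\ell\cap C_\ell,\;S_{n_\ell}(h)\in(u,u+1]\}$ depends only on primes $p\leq T_\ell$. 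These three factors live at pairwise disjoint prime scales, so one expects their joint expectation to factorize.

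To make the factorization rigorous, I would first rewrite the integrand as
\begin{equation*}
|\zeta_\tau\mathcal{M}_{-1}\cdots\mathcal{M}_\ell\mathcal{M}_\ell^{(k)}|^4\,|\mathcal{Q}_\ell^{(k)}|^2
\;=\;|\zeta_\tau|^4\,\bigl|(\mathcal{M}_{-1}\cdots\mathcal{M}_\ell\mathcal{M}_\ell^{(k)})^2\,\mathcal{Q}_\ell^{(k)}\bigr|^2,
\end{equation*}
exhibiting the quantity as a fourth moment of $\zeta$ twisted by a single Dirichlet polynomial of length at most $\exp(10^6(n-n_\ell)^{10^5}e^{n_{\ell+1}})\leq \exp(\tfrac{1}{100}e^n) \leq T^{1/100}$ by hypothesis. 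The twisted fourth moment estimate from Section \ref{sec:4th} then applies, and its diagonal Motohashi-type asymptotics are multiplicative over the primes in the twist (via an Euler product). The disjointness of prime supports above then makes the diagonal term factorize as
\begin{equation*}
\mathbb{E}\bigl[|\zeta_\tau\mathcal{M}_{-1}\cdots\mathcal{M}_\ell\mathcal{M}_\ell^{(k)}|^4\,|\mathcal{Q}_\ell^{(k)}|^2\,\mathbf{1}_E\bigr]\;\ll\;e^{4(n-k)}\,\mathbb{E}[|\mathcal{Q}_\ell^{(k)}|^2]\,\mathbb{P}(E)
\end{equation*}
up to acceptable errors.

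The remaining probability $\mathbb{P}(E)$ is a ballot-type event for the Gaussian-like random walk $(S_k)_{k\leq n_\ell}$ (variance $\tfrac12$ per step, as made precise by the prime-sum estimates of Appendix \ref{se:moments}) pinned in the strip $[m(k)+L_y(k),\,m(k)+U_y(k)]$ and ending in $(u,u+1]$. The ballot theorem of Appendix \ref{se:ballot} bounds it by $y\,(U_y(n_\ell)-u+m(n_\ell)+2)\,e^{-u^2/n_\ell}$, and the elementary inequality $e^{-u^2/n_\ell}\ll e^{-n_\ell}\cdot e^{-2(u-m(n_\ell))}$, valid on the relevant range by completing the square using $m(n_\ell)=n_\ell(1+O(\log n/n))$, delivers the claimed Gaussian factor. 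The main obstacle lies in the factorization step: since the length of the twist is close to the $T^{1/2}$ threshold that the twisted fourth moment barely tolerates, the Motohashi-type diagonal asymptotics must be controlled very precisely. This relies on the divisor-like constraints $\Omega_{\ell+1}(m)\leq(n_{\ell+1}-n_\ell)^{10^5}$ on the mollifier and $\leq(n_{\ell+1}-n_\ell)^{10^4}$ on $\mathcal{Q}_\ell^{(k)}$, which keep the combined twist close to a genuine Dirichlet polynomial of controlled Euler-product size, and represents the technical heart of Section \ref{sec:4th}.
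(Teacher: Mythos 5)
Your heuristic is the right one (it is essentially the heuristic the paper itself gives after stating the two master lemmas), but the proposal has a genuine gap at the step you yourself flag as the "main obstacle": the factorization
$\mathbb{E}[\,|\zeta_\tau\mathcal{M}_{-1}\cdots\mathcal{M}_\ell\mathcal{M}_\ell^{(k)}|^4\,|\mathcal{Q}_\ell^{(k)}|^2\,\mathbf{1}_E\,]\ll e^{4(n-k)}\,\mathbb{E}[|\mathcal{Q}_\ell^{(k)}|^2]\,\mathbb{P}(E)$.
The indicator $\mathbf{1}_E$ of the barrier event is not a Dirichlet polynomial and cannot be absorbed into the twist of a Motohashi-type fourth moment formula; "disjointness of prime supports" is a statement about Dirichlet coefficients and simply does not apply to an arbitrary bounded functional of $S_j(h)$, $j\le n_\ell$. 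The missing idea --- which is the technical heart of Sections \ref{sec:2nd}--\ref{sec:4th} --- is to first majorize $\mathbf{1}_E$ pointwise by a positive combination $\sum_{i\in\mathcal{I}}|D_i(\tfrac12+\ii\tau+\ii h)|^2$ of squares of short Dirichlet polynomials supported on primes $\le\exp(e^{n_\ell})$ (Lemma \ref{lem: smooth indicator}, built from the Ingham/Beurling--Selberg construction of Lemma \ref{le:harmo}). Only then is each product $\mathcal{Q}_\ell^{(k)}D_i$ a degree-$k$ well-factorable polynomial to which the twisted fourth moment bound (Lemma \ref{le:barrierapproxtwistedbis}) applies, yielding the factor $e^{4(n-k)}$, after which the splitting lemma (Lemma \ref{le:splitting}) separates $\mathbb{E}[|\mathcal{Q}_\ell^{(k)}|^2|D_i|^2]$ into a product of second moments. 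You also misdiagnose where the difficulty lies: the total twist length is at most $\exp(\tfrac1{100}e^n)=T^{1/100}$, comfortably below any $T^{1/2}$ threshold, so the delicacy is not in the length of the twist but in converting the barrier event into something the moment machinery can see.

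A related gap is in your final step: you invoke "the ballot theorem" to bound $\mathbb{P}(E)$ for the arithmetic walk $S_j(h)$ under the $\tau$-measure, but no such ballot theorem is available directly --- that is precisely why the indicator must be replaced by Dirichlet polynomials in the first place. What the paper actually bounds is $\sum_{i\in\mathcal{I}}\mathbb{E}[|D_i|^2]$: the mean value theorem transfers this to the random model $(\mathcal{Y}_j)$, Lemma \ref{lem: Gaussian comparison} compares it with a genuinely Gaussian walk $\mathcal{G}_j$, and only then does Proposition \ref{lem: ballot} produce the factor $y\,(U_y(n_\ell)-u+m(n_\ell)+2)\,e^{-n_\ell}e^{-2(u-m(n_\ell))}$. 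So the shape of your answer is correct, but the two load-bearing steps (factorization and ballot bound) both rest on the Dirichlet-polynomial majorant that your argument never constructs.
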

Note that we allow $k = n_{\ell}$ in which case $\mathcal{Q}^{(k)}= 1$.
Some explanations on the heuristics of Lemmas \ref{master lemma} and  \ref{master lemma twisted} might be in order.
First, one expects the partial sums $S_k(h)$ to be approximately Gaussian. In fact, one can see $S_k(h)$ for a fixed $h$ as a Gaussian random walk of mean $0$ and variance $1/2$ for each of its increment. For such a random walk, the endpoint $S_k$ is independent of the ``bridge" $S_j-\frac{j}{k}S_k$ for all $j\leq k$. Since $S_k\approx m(k)$, the latter is approximately $S_j-m(j)$. With this in mind, the indicator function can be thought of as the restriction of the endpoint $S_k$ being in $w$ and that the walk $S_j-m(j)$ starting at $0$ and ending at $w-m(k)$ stays below the barrier $y+U_y(k)$. Using the ballot theorem, Proposition \ref{lem: ballot} from Appendix \ref{se:ballot}, the probability of this happening for a fixed $h$ is 
$$
\frac{y(U_y(k)-w+m(k))}{k^{3/2}}e^{-\frac{w^2}{k}}
\ll y (U_y(k)-w+m(k)) e^{-k}e^{-2(w-m(k))}.
$$
Since $S_k( h)$ has length $\exp(e^k)$ as a Dirichlet polynomial, one expects that there are approximately $e^k$ independent random walks as $h$ varies in $[-2,2]$. Moreover, the Dirichlet polynomial $\mathcal Q$ is supported on primes larger than $\exp(e^k)$, so its value should be independent of the $e^{k}$ walks, as they are ``supported" on different primes. Also, due to the greatest frequency $\log N$ in the summands of $\mathcal{Q}$,  there should be $\log N$ independent values when discretizing the maximum.  The factor $(n-k)^{800}$ comes from the process of approximating the indicator function by a Dirichlet polynomial. These factors together reproduce the result of Lemma \ref{master lemma}. The heuristics for Lemma \ref{master lemma twisted} is the same with the extra fourth moment. Again, one expects $\log \zeta_\tau(h)-S_{k}(h)$ to be independent of $\mathcal Q_\ell^{(k)}$ and $S_{n_\ell}$. Therefore, the expectation of the fourth moment could formally be factored out. The variable $\log \zeta_\tau(h)-S_{k}(h)$ should be approximately  Gaussian with variance $n-k$. Therefore, $\E[e^{4(\log \zeta_\tau(h)-S_{k}(h))}]\approx e^{4(n-k)}$. The mollifiers $\mathcal M$ are designed to approximate $e^{-S_{n_\ell}}$.

We are now ready to begin the proof of Proposition \ref{prop: step ell}. 
Notice that by a union bound,
$$
\mathbb{P}(\exists h \in H(y) \cap G_{\ell}) \leq \mathbb{P}(\exists h \in H(y) \cap G_{\ell} \cap G_{\ell + 1}^{\cc}) + \mathbb{P}(\exists h \in H(y) \cap G_{\ell + 1}).
$$
The first term can be further split by another union bound,
\begin{align*}
  \mathbb{P}(\exists h \in H(y) \cap G_{\ell} \cap G_{\ell + 1}^{\cc}) & \leq  \mathbb{P}( \exists h \in A_{\ell + 1}^{\cc} \cap H(y) \cap G_{\ell}) \\ &   + \mathbb{P}(\exists h \in D_{\ell + 1}^{\cc} \cap A_{\ell + 1} \cap H(y) \cap G_{\ell}) \\ & + \mathbb{P}(\exists h \in C_{\ell + 1}^{\cc} \cap D_{\ell + 1} \cap A_{\ell + 1} \cap H(y) \cap G_{\ell}) \\ & + \mathbb{P}(\exists h \in B_{\ell + 1}^{\cc} \cap C_{\ell + 1} \cap A_{\ell + 1} \cap H(y) \cap G_{\ell}). 
\end{align*}

It will be shown that each of the above probabilities is bounded by
$$
\ll \frac{y e^{-2y}}{(\log_{\ell + 1} n)^{100}}.
$$
This will conclude the proof. The proof of each bound is broken down into a separate subsection. 
The estimate with $B_{\ell+1}^c$ is the tightest.
We will sometimes drop some events that are not needed to achieve the bound.

\subsection{Bound on increments} We first consider $A_{\ell + 1}^{\cc}$.
This is the simplest bound.
We show by a Markov-type inequality that
$$
\mathbb{P}(\exists h \in A_{\ell + 1}^{\cc} \cap G_{\ell} ) \ll y e^{-2y} (\log_{\ell - 1} n)^{-1}.
$$
(Recall our convention that $\log_{-1} n = e^n$ and $\log_{0} n = n$.) 
If there is a $k\in (n_{\ell}, n_{\ell+1}]$ and an $h$ such that $|\widetilde{S}_k(h) - \widetilde{S}_{n_\ell}(h)| > 10^3 (n - n_{\ell})$, then one has that
  $$
 \sum_{k\in (n_{\ell},n_{\ell+1}]} \max_{|h|\leq 2}\frac{|\widetilde{S}_k(h) - \widetilde{S}_{n_\ell}(h)|^{2q}}{(10^3 (n - n_{\ell}))^{2q}} \geq 1,  \text{    for all $q\geq 1$. }
    $$
  Therefore,  for any choice of $q \geq 1$, the following bound holds
 \begin{align} \label{eqn:random1}
   \mathbb{P} & (\exists h \in A_{\ell+1}^{\cc} \cap G_{\ell}) \leq \sum_{k \in (n_{\ell}, n_{\ell + 1}]} \mathbb{E} \Big [ \max_{|h| \leq 2} \frac{|(\widetilde{S}_{k} - \widetilde{S}_{n_\ell})(h)|^{2q}}{(10^3 (n - n_{\ell}))^{2q}} \cdot \mathbf{1} \Big (h \in G_{\ell} \Big ) \Big ]  .
   \end{align}
We pick $q=\lfloor10^6(n-n_\ell)^2/(k-n_\ell)\rfloor$. The Dirichlet polynomial $(\widetilde{S}_{k} - \widetilde{S}_{n_\ell})^q$ is then of length at most $\exp(2 q e^k)\ll \exp (2 \cdot 10^{6}(n - n_{\ell})^2 e^{n_{\ell+1}})$, which is much smaller than $\exp(e^n/100)$ by the definition of $n_\ell$. 
Lemma \ref{master lemma} thus bounds the right-hand side of (\ref{eqn:random1}) with
$$
 y e^{-2y}\, \sum_{k \in (n_{\ell}, n_{\ell + 1}]} (q + (n - n_{\ell})^{800}) \, e^{100 (n - n_{\ell})} \, \mathbb{E} \Big [ \frac{|\widetilde{S}_k - \widetilde{S}_{n_\ell}|^{2q}}{(10^3 (n - n_{\ell}))^{2q}} \Big ].
$$
For our choice of $q$, we have $2q\ll (n-n_\ell)^2 \leq e^{n-k}$, so that  the estimate in Lemma \ref{lem: Gaussian moments cplx} applies. Together with
Stirling's approximation as in \eqref{eqn: gaussian tail cplx}
we conclude that the above is
     $$
     \ll y e^{-2y} e^{- (n - n_{\ell})} \ll y e^{-2y} (\log_{\ell - 1} n)^{-1}.
     $$

\subsection{Bound with mollifiers} We now estimate $D_{\ell+1}^\cc$.
In this section, we obtain 
\begin{equation}\label{eqn:aim1}
\mathbb{P}(\exists h \in D_{\ell + 1}^{\rm c} \cap A_{\ell + 1} \cap G_\ell) \ll y e^{-2y} (\log_{\ell - 1} n)^{-1}.
\end{equation}
For $h$ in $A_{\ell+1}\cap D_\ell$, we have
\begin{align}
|(\widetilde{S}_k-\widetilde{S}_{n_\ell})(h)|&<10^3(n_{\ell + 1} -n_{\ell})\label{chain1},\\
|(\zeta_{\tau} e^{-S_{n_\ell}})(h)|&< c_{\ell} |(\zeta_{\tau} \mathcal{M}_{-1}\dots\mathcal{M}_\ell)(h)|+e^{-10^4(n-n_{\ell - 1})}\label{chain2},
\end{align}
where $c_{\ell} = \prod_{i = 0}^{\ell} (1 + { e^{-n_{i - 1}}})${.}
If we additionally assume that, for all $h \in A_{\ell + 1} \cap D_{\ell}$, both
\begin{align}
|(\zeta_{\tau} \mathcal{M}_{-1}\dots\mathcal{M}_\ell)(h)|&<e^{ 10^3(n-n_\ell)}\label{chain3}\\
|(e^{-(S_k-S_{n_\ell})})(h)| & \leq (1 + e^{-n_{\ell}}) \, |\mathcal{M}_{\ell}^{(k)}(h)| + e^{-10^5(n_{\ell + 1}-n_{\ell})}\label{chain4},
\end{align}
hold for all $k \in (n_{\ell}, n_{\ell + 1}]$, then
we obtain (where each of the expression below is evaluated at $h$),
\begin{align*}
|\zeta_{\tau} e^{-S_k}|&=|\zeta_{\tau} e^{-S_{n_\ell}}|\, e^{-(S_k-S_{n_\ell})}\\
&{<}\Big(c_{\ell} |\zeta_{\tau} \mathcal{M}_{-1}\ldots\mathcal{M}_\ell|+e^{-10^4(n-n_{\ell - 1})}\Big)e^{-(S_k-S_{n_\ell})}\\
&\leq
c_{\ell} |\zeta_{\tau} \mathcal{M}_{-1}\dots\mathcal{M}_\ell|e^{-(S_k-S_{n_\ell})}+e^{-10^3(n-n_{\ell - 1})}\\
&\leq c_{\ell + 1} |\zeta_{\tau} \mathcal{M}_{-1}\dots\mathcal{M}_\ell \mathcal{M}_{\ell}^{(k)}| + c_{\ell} |\zeta_{\tau} \mathcal{M}_{-1} \ldots \mathcal{M}_{\ell} | \, e^{-10^5(n_{\ell + 1}-n_{\ell})}+e^{-10^3(n-n_{\ell - 1})}\\
&\leq c_{\ell + 1} |\zeta_{\tau} \mathcal{M}_{-1}\dots\mathcal{M}_\ell\mathcal{M}_{\ell}^{(k)}|+e^{-10^4(n-n_{ \ell})}.
\end{align*}
Here, we have successively used the estimates (\ref{chain2}), (\ref{chain1}), (\ref{chain4}), (\ref{chain3}), and the fact that the sequence $c_\ell$, $\ell>-1$, is rapidly convergent.
It remains to verify that the bounds \eqref{chain3} and \eqref{chain4} hold with high probability for $h \in A_{\ell + 1} \cap D_{\ell}$. The bound \eqref{chain4} holds pointwise for all $h \in A_{\ell + 1}$ by Lemma \ref{le:moliapprox} in Appendix \ref{se:moments}.
As for Equation \eqref{chain3}, the probability of the complement of the event is
\begin{align}
\label{eqn: D estimate}
\sum_{h\in\mathcal{T}_n} & \mathbb{P}\Big(|(\zeta_{\tau} \mathcal{M}_{-1}\dots\mathcal{M}_\ell)(h)| \geq e^{10^3(n-n_\ell)},h\in G_\ell\Big)\\ &\ll e^{-4 \cdot 10^3 (n - n_{\ell})} \, e^n \, \mathbb{E} \Big [ |(\zeta_{\tau} \mathcal{M}_{-1} \ldots \mathcal{M}_{\ell})(0)|^4 \cdot \mathbf{1}(0 \in G_{\ell}) \Big ].
\end{align}
Lemma \ref{master lemma twisted}  applied for $\mathcal Q\equiv 1$ and $k=n_\ell$ then implies the expected bound,
$$
\ll y e^{-2 y - 4 \cdot 10^3 (n - n_{\ell})} \,e^{100 (n - n_{\ell})} \ll y e^{-2y} (\log_{\ell -1} n)^{-1}.
$$
Note that  \eqref{eqn: D estimate} can be made small, because the union bound on the random variables $\log |(\zeta_{\tau} \mathcal{M}_{-1}\dots\mathcal{M}_\ell)(h)|$ (which are approximately Gaussian of variance $n-n_\ell$) is effectively on the $h$'s in $G_\ell(0)$. The number of such $h$'s is small enough, of order $e^{n-n_\ell}$.

\subsection{Extension of the lower barrier}
We now want to prove the following bound on $C_{\ell+1}^\cc$:
\begin{equation}\label{eqn:needed}
\mathbb{P}(\exists h\in H(y) \cap C_{\ell + 1}^{\rm c}\cap D_{\ell + 1} \cap A_{\ell + 1} \cap G_\ell) \ll y e^{-2y} (\log_{\ell} n)^{-1}.
\end{equation}
Here, we explicitly make use of the fact that $\zeta_\tau$ is large. 
Let $h \in C_{\ell + 1}^{\cc} \cap D_{\ell + 1} \cap G_{\ell} \cap H(y)$.
By definition of $C_{\ell + 1}^{\rm c}$, there must be a $k$ such that $S_k(h)\leq  m(k) - 20 (n - k) + y$. 
We split $S_k(h)$ according to the value of $S_{n_{\ell}}(h) \in [u, u + 1]$ and $(S_{k} - S_{n_{\ell}})(h) \in [v, v + 1]$,
where $u, v \in \mathbb{Z}$, $|v| \leq 10^3 (n - n_{\ell})$ and $u + v \leq  m(k) - 20 (n - k) + y$.
We notice that since $h \in H(y)$, 
\begin{equation}
\label{eqn: Markov D}
|(\zeta_\tau \, e^{-S_{k}})(h)| > V e^{-u - v},
\end{equation}
where $V = e^y e^n n^{-3/4}$. Since $h \in D_{\ell + 1}$ also, we either have
$$
|(\zeta_\tau \mathcal{M}_{-1} \dots \mathcal{M}_{\ell} \mathcal{M}_{\ell}^{(k)} )(h)| \gg V e^{-u - v}
$$
or $\tfrac 12 V e^{- u - v} \leq e^{-10^4 (n - n_{\ell})}$. However, the second possibility cannot occur since it implies that $e^{u + v} > e^y e^n e^{10^4 (n - n_{\ell})} n^{-3/4}$ and hence $e^u > e^y e^n e^{10^3 (n - n_{\ell})} n^{-3/4}$. This means that $S_{n_{\ell}}(h)$ is above the barrier, and this is impossible because $h \in G_{\ell}$. 

Therefore, with a union bound and \eqref{eqn: Markov D}, the left-hand side of \eqref{eqn:needed} is bounded for any $q \geq 1$ by
\begin{align} \label{eq:asdas}
\sum_{\substack{k \in (n_{\ell}, n_{\ell + 1}]\\ h\in\mathcal{T}_n}}\sum_{\substack{u + v \leq m(k) - 20 (n - k) + y\\ |v| \leq 10^3 (n_{\ell + 1} - n_{\ell}) \\  L_y(n_{\ell}) \leq u - m(n_{\ell}) \leq U_y(n_{\ell}) }} \frac{e^{4 u + 4 v}}{V^4} \cdot \mathbb{E} \Big [  |(\zeta_{\tau} \mathcal{M}_{-1} \ldots \mathcal{M}_{\ell} \mathcal{M}_{\ell}^{(k)})(h) |^4 \cdot \frac{|(S_{k} - S_{n_{\ell}})(h)|^{2q}}{(1 + v^{2q})}\\
 \nonumber \times \mathbf{1} \Big ( S_{n_{\ell}}(h) \in [u, u + 1] \text{ and } h \in A_{\ell} \cap B_{\ell} \cap C_{\ell} \Big ) \Big ].
\end{align}
Pick $q = \lfloor v^2 / (k - n_{\ell}) \rfloor$. Since $q \leq 10^7 (n - n_{\ell})^2$, the Dirichlet polynomial $(S_k - S_{n_{\ell}})^{q}$ can be written in the form \eqref{eq:qle}. In particular, Lemma \ref{master lemma twisted} with $\mathcal Q=(S_k-S_{n_\ell})^q$ is applicable. Lemma \ref{lem: Gaussian moments} and Stirling's approximation also imply
$$
\mathbb{E} \Big [ \frac{|(S_{k} - S_{n_{\ell}})(h)|^{2q}}{(1 + v^{2q})} \Big ] \ll e^{-q} \ll \exp \Big ( - \frac{v^2}{k - n_{\ell}} \Big ).
$$
Therefore, Lemma \ref{master lemma twisted} and the above computation show that \eqref{eq:asdas} is 
$$
\ll e^n \sum_{\substack{ k \in (n_{\ell}, n_{\ell + 1}] \\ u + v \leq m(k) - 20 (n - k) + y \\ |v| \leq 10^3 (n - n_{\ell}) \\ L_y(n_{\ell}) \leq u - m(n_{\ell}) \leq U_y(n_{\ell})}} \frac{e^{4 u +  4 v}}{V^4}  e^{4(n-k)}e^{- \frac{v^2}{k - n_{\ell}}}\   \frac{y  (U_y(n_{\ell}) - u + m(n_{\ell}) + 2))}{e^{n_{\ell}}} \ e^{- 2(u - m(n_{\ell}))}.
$$
  We use the restriction $u-m(n_{\ell}) \in [L_y(n_{\ell}) ,  U_y(n_{\ell})]$ to bound
  $$
  0 \leq U_{y}(n_{\ell}) - u + m(n_{\ell}) \leq U_y(n_{\ell}) - L_y(n_{\ell}) \ll (n - n_{\ell}) \ll \log_{\ell} n \text{ for all $y$.}
  $$
Subsequently we remove this restriction on $u$. After replacing $V$ by $e^y e^n n^{-3/4}$, the above sum is thus bounded by
\begin{align*}
y e^{-4y} \sum_{k\in (n_{\ell},n_{\ell+1}]}e^{n - 4k- n_{\ell}} \, n^3 \sum_{\substack{u + v \leq m(k) - 20 (n - k) + y \\ |v| \leq 10^3 (n - n_{\ell})  \\ u, v \in \mathbb{Z}}} & e^{2 u + 2 m(n_{\ell}) + 4v} \, (\log_{\ell} n)  \, \exp \Big ( - \frac{v^2}{k- n_{\ell}} \Big ).
\end{align*}
Performing the summation over $u$, we get
$$
\ll y e^{-4y}\sum_{k\in (n_{\ell},n_{\ell+1}]}e^{n - 4k- n_{\ell}} \, n^3  \sum_{\substack{|v| \leq 10^3 (n - n_{\ell}) \\ v \in \mathbb{Z}}} e^{2 m (k) + 2 m(n_{\ell}) - 40 (n -k) + 2 v + 2y} \, (\log_{\ell} n) \, \exp \Big ( - \frac{v^2}{k - n_{\ell}} \Big ) .
$$
The sum over $v$ can then be performed and yields the bound
\begin{equation}
\label{eqn: bound C}
\begin{aligned}
& y e^{-2y}\sum_{k\in (n_{\ell},n_{\ell+1}]}e^{n - 4k-n_{\ell}} \, n^3  \, e^{2 m(k) + 2 m(n_{\ell}) + (k - n_{\ell})}\, (\log_{\ell} n)  \, e^{- 40 (n - k)} (k-n_\ell)^{1/2} \\ & \ll y e^{-2y} \sum_{k\in (n_{\ell},n_{\ell+1}]}(\log_{\ell} n)^{3/2} e^{-  9 ( n - k)} \ll y e^{-2y} (\log_{\ell} n)^{-1} ,
\end{aligned}
\end{equation}
since $n-k\geq n - n_{\ell + 1}= 10^6 \log_{\ell + 1} n$.
Notice that in the case $\ell = 0$ we use the fact that we save a large power of $n$ in $e^{- (n - k)}$ to offset the term $n^3$, whereas in the case $\ell \geq 1$ we use the fact that $e^{4m(k)} n^{3} \asymp e^{4k}$ for $k \in (n_{\ell}, n_{\ell + 1}]$.

\subsection{Extension of the upper barrier} We need the following bound on $B_{\ell+1}^\cc$:
$$
\mathbb{P}(\exists h\in H(y) \cap B_{\ell + 1}^{\rm c} \cap A_{\ell + 1}\cap C_{\ell+1} \cap G_\ell) \ll \frac{ye^{-2y}}{(\log_{\ell+1}n)^{100}}.
$$
In fact, we show the stronger estimate 
\begin{equation}
\label{eqn: proof B}
\mathbb{P}(\exists h\in (B_\ell\setminus B_{\ell + 1}) \cap C_{\ell+1})\ll  \frac{ye^{-2y}}{(\log_{\ell+1}n)^{100}}.
\end{equation}
We write $\overline S_j=S_j -m(j)$ for simplicity.

By considering a union bound on $k\in [n_\ell, n_{\ell+1})$ and by partitioning the values of $S_k(h)$ according to $S_k(h) \in [w, w + 1]$ with $w \in \mathbb{Z}$, the above reduces to 
$$
\begin{aligned}
&\ll \sum_{k \in [n_{\ell}, n_{\ell + 1})} \mathbb{P}(\exists h\in (B^{(k)}_\ell\setminus B^{(k+1)}_{\ell}) \cap C^{(k)}_{\ell})\\
&\ll \sum_{\substack{k \in [n_{\ell}, n_{\ell + 1}) \\ w\in [L_y(k),U_y(k))}} \mathbb{P}(\exists h:  \overline S_j(h) < U_y(j) \ \forall j\leq k,  \overline S_{k+1}(h)>U_y(k+1),    \overline S_k(h)\in (w,w+1]).
\end{aligned}
$$
Note that the condition $\overline S_{k+1}>U_y(k+1)$ under the restriction $\overline S_k(h)\in (w,w+1]$ can be rewritten as
$$
 S_{k+1}- S_{k}>U_y(k+1)+m(k+1)-m(k)-\overline S_k>U_y(k+1)-w+\oo(\log n/n).
$$
Write $V_{w,k}=U_y(k+1)- w$. By Markov's inequality, the above sum is bounded by
$$
\ll \sum_{\substack{k \in [n_{\ell}, n_{\ell + 1}) \\ w\in [L_y(k),U_y(k))}}\mathbb E\Big[\max_{|h|\leq 2} \frac{|(S_{k+1}-S_k + 1)(h)|^{2q}}{(V_{w,k}+1)^{2q}} \mathbf 1 \Big (\overline S_j(h)<U_y(j) \ \forall j\leq k,  \overline S_k(h)\in (w,w + 1] \Big ) \Big].
$$
We pick $q=(V_{w,k} + 1)^2 / 10 = (U_y(k+1)- w + 1)^2 / 10 \leq 400 (n-k)^2$, by the bounds on $w$. For this choice, note that the Dirichlet $(S_{k+1}-S_k + 1)^{q}$ has length at most $\exp(2q e^{k+1})\leq \exp(1000 (n-k)^2 e^{k+1})$. In particular, Lemma \ref{master lemma} can be applied (note that the Dirichlet polynomial $S_{k + 1} - S_k + 1$ is supported on integers all of whose prime factors are $> \exp(e^k)$ since $1$ is not a prime!). 
This yields the bound
$$
\ll \sum_{k \in [n_{\ell}, n_{\ell + 1})} (n-k)^{800} \sum_{w\in [L_y(k),U_y(k))} \frac{\mathbb E[|(S_{k+1}-S_k + 1)(0)|^{2q}]}{(V_{w,k}+1)^{2q}} \, y \, e^{-2 w} \, (U_y(k)-w+1).
$$
The expectation is $\ll (2q)! / q! + 4^q \ll 100^q (q / e)^{q}$ by Equation \eqref{eqn: moment+1} of Lemma \ref{lem: Gaussian moments} in Appendix \ref{se:moments}. We then find using Stirling's formula (similarly as in \eqref{eqn: gaussian tail}, but the optimal exponent is not needed here), that
$$
 \frac{\mathbb E[|(S_{k+1}-S_k + 1)(0)|^{2q}]}{(V_{w,k}+1)^{2q}}\ll e^{-(V_{w,k} + 1)^2 / 10}.
$$
Putting this back in the estimate gives the bound
$$
\begin{aligned}
\ll  y \sum_{k \in [n_{\ell}, n_{\ell + 1})} (n-k)^{800} e^{-2U_y(k)} \sum_{w\in [L_y(k),U_y(k))}  (U_y(k)-w+1)  e^{- \frac{1}{10}(U_y(k+1)- w + 1)^2   + 2(U_y(k + 1) - w + 1)} ,
\end{aligned}
$$
where we added $U_y(k + 1)$ and subtracted $U_y(k)$ which is allowed since $U_y(k + 1) - U_y(k) = \OO(1)$. 
Finally $-(U_y(k + 1) - w + 1)^2 / 10 + 2 (U_y(k + 1) - w + 1) = - (1/10)  (U_y(k + 1) - w - 9)^2 + 10$ so the last sum over $w$ is finite.  It remains to recall that $U_y(k)=y +10^3\log (n-k)$ to conclude that
$$
\mathbb{P}(\exists h\in (B_\ell\setminus B_{\ell + 1}) \cap C_{\ell+1})\ll ye^{-2y} \sum_{k\in [n_\ell, n_{\ell+1})} (n-k)^{800} e^{-10^3\log (n-k)}\ll \frac{ye^{-2y}}{(\log_{\ell+1}n)^{100}}.
$$

\section{Final Step}\label{sec:last}

This short section proves Proposition \ref{prop: step L}. We notice that if $h \in H(y) \cap G_{\ell}$, then $S_{n_{\ell}}(h) \in [v, v + 1]$ with $|v - y - m(n_{\ell})| \leq 20 (n - n_{\ell})$, and 
$
|(\zeta_{\tau} e^{-S_{n_{\ell}}})(h)| \geq V e^{-v}
$
where $V = e^{y} e^n / n^{3/4}$. We wish to apply Markov's inequality and Lemma \ref{master lemma twisted}. We first need to compare the expression to the one with mollifiers.
To this end, note that since $h \in D_{\ell}$ and $V e^{-v} > 2e^{-10^4 (n - n_{\ell-1})}$, we have
$
V e^{-v} \ll |(\zeta_{\tau} \mathcal{M}_{-1} \ldots \mathcal{M}_{\ell})(h)|.
$
Therefore, Markov's inequality implies that
\begin{align*}
\mathbb{P} & (\exists h \in H(y) \cap G_{\ell}) \\ & \ll \sum_{\substack{h \in \mathcal{T}_n \\ |v - y - m(n_{\ell})| \leq 20 (n - n_{\ell})}} \frac{e^{4v}}{V^4} \, \mathbb{E} \Big [ |(\zeta_{\tau} \mathcal{M}_{-1} \ldots \mathcal{M}_{\ell})(h)|^4 \cdot \mathbf{1} \Big ( S_{n_{\ell}}(h) \in [v, v + 1] \text{ and } h \in B_{\ell} \cap C_{\ell} \Big ) \Big ] .
\end{align*}
By Lemma \ref{master lemma twisted}, this is
\begin{align*}
\ll e^{-4y} e^{-4n} n^3 \, e^n \sum_{|v - y - m(n_{\ell})| \leq 20 (n - n_{\ell})} { e^{4(n-n_\ell)}}e^{2v} e^{2 m(n_{\ell})} \, y \, (n - n_{\ell}) e^{-n_{\ell}} \ll y e^{-2y} e^{100 (n - n_{\ell})}.
\end{align*}
The last inequality is obtained similarly as in Equation \eqref{eqn: bound C}: when $\ell = 0$ the term $n^3$ is included in $e^{100 (n - n_0)}$, while for $\ell \geq 1$ we have $e^{4m(n_{\ell})} n^{3} \ll e^{4n_{\ell}}$. This concludes the proof of \eqref{eqn: proof B}.

\section{Decoupling and Second Moment}\label{sec:2nd}

\subsection{Lemmas from harmonic analysis}
We will need the following lemmas from harmonic analysis.

\begin{lemma} \label{le:ingham}
  There exists a smooth function $F_0$ such that 
  \begin{enumerate}
  \item For all $x \in \mathbb{R}$, we have $0 \leq F_0(x) \leq 1$ and $\widehat{F}_0(x) \geq 0$. 
  \item $\widehat{F}_0$ is compactly supported on $[-1,1]$.
  \item Uniformly in $x \in \mathbb{R}$, we have
    $$
    F_0(x) \ll e^{-|x| / \log^2 (|x| + 10)}.
    $$
  \end{enumerate}
\end{lemma}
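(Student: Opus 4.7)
The plan is to construct $F_0$ explicitly as an infinite product of normalized squared sinc functions,
\[
F_0(x) = \prod_{n \geq 1} \left( \frac{\sin(a_n x)}{a_n x} \right)^2,
\]
for a suitable sequence $a_n > 0$ with $\sum_n a_n$ finite. Each factor is smooth, takes values in $[0,1]$, and equals $1$ at $x = 0$; its Fourier transform is a scaled Fejér kernel --- a nonnegative, compactly supported ``triangle'' function of support width proportional to $a_n$. The partial products $F_N$ lie in $L^1$, are bounded by $1$, and their Fourier transforms are iterated convolutions of these Fejér kernels, hence nonnegative and supported in an interval of width proportional to $\sum_{n \leq N} a_n$. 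Choosing the $a_n$ so that this total sum is sufficiently small places the supports uniformly inside $[-1, 1]$. Since $\sum_n a_n^2 < \infty$, the infinite product converges uniformly on compact sets to a smooth function $F_0$, and dominated convergence (with majorant provided by the first few rapidly-decaying factors) lets one pass to the limit of Fourier transforms in, for instance, $\mathcal{S}'$, yielding properties (1) and (2).

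The decay bound (3) is the heart of the argument. I would take $a_n = c_0/(n \log^2(n+2))$ for a small absolute constant $c_0$ ensuring the support condition. Then $\sum_n a_n < \infty$, and the pointwise bound $|\sin(t)/t| \leq \min(1, 1/|t|)$ gives, for large $|x|$,
\[
\log F_0(x) \leq -2 \sum_{n \,:\, a_n |x| > 1} \log(a_n |x|).
\]
The condition $a_n |x| > 1$ becomes $n \log^2(n+2) \leq c_0 |x|$, holding precisely for $n \leq N$ with $N \asymp |x|/(\log |x|)^2$. A Stirling-type estimate of $\sum_{n \leq N} \log(a_n|x|) = N \log(c_0 |x|) - \log(N!) - 2\sum_{n \leq N}\log\log(n+2)$ yields a lower bound of order $N \log\log |x|$, hence
\[
F_0(x) \ll \exp\!\left(-c \, \frac{|x| \log\log |x|}{(\log |x|)^2}\right),
\]
which is strictly stronger than the stated $e^{-|x|/\log^2(|x|+10)}$.

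The main technical obstacle is justifying the passage through the infinite-product limit: one must ensure that $\widehat{F_0}$ remains nonnegative and supported in $[-1,1]$ despite being the limit of ever more convolved distributions. This is handled by combining the uniform $L^1$ bound on the $F_N$ with continuity of the Fourier transform on $L^1$, noting that $\operatorname{supp}\widehat{F_N}$ is monotone increasing but still contained in $[-1,1]$ for every $N$, so the same holds in the limit. An alternative, more abstract route would invoke Beurling's general construction of functions in non-quasi-analytic Denjoy--Carleman classes with compactly supported Fourier transform, but the explicit infinite-product form is the most concrete and keeps every constant verifiable.
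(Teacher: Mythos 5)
Your construction is essentially the paper's own proof: the paper disposes of this lemma by citing Ingham's theorem, whose sufficiency half is proved by exactly the infinite product $\prod_n \sin(a_n x)/(a_n x)$ with $\sum_n a_n<\infty$ adapted to the non-quasi-analytic weight, and squaring the factors to force $F_0\ge 0$ and $\widehat F_0\ge 0$ is the ``explicit construction'' the paper alludes to. So the approach is the right one and the structural parts (positivity, compact support via convolution of Fej\'er kernels, passage to the limit) are fine.

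One concrete slip: with $a_n=c_0/(n\log^2(n+2))$ the advertised extra factor $\log\log|x|$ in the exponent is not there. If $N$ is the largest $n$ with $a_n|x|\ge 1$, then $\log(c_0|x|)=\log N+2\log\log(N+2)+\OO(1)$, and in
$$
\sum_{n\le N}\log(a_n|x|)=N\log(c_0|x|)-\log N!-2\sum_{n\le N}\log\log(n+2)
$$
the contribution $2N\log\log N$ coming from $N\log(c_0|x|)$ is cancelled by $-2\sum_{n\le N}\log\log(n+2)=-2N\log\log N+\OO(N/\log N)$, leaving $\sum_{n\le N}\log(a_n|x|)=N+\OO(N/\log N)$. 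This still gives $F_0(x)\le e^{-2N(1+\oo(1))}=\exp(-(2c_0+\oo(1))|x|/\log^2|x|)$, which is exactly the decay the lemma asks for, not a strictly stronger one; and since the statement requires the exponent constant to be at least $1$ (the $\ll$ only absorbs a multiplicative constant, not one inside the exponential), you should add a word checking that $2c_0\ge 1$ is compatible with the support condition, or finish with a harmless rescaling $F_0(x)\mapsto F_0(\mu x)$ to trade support width for a larger decay constant.
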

\begin{proof}
  This follows from the sufficient part of the main theorem of \cite{InghamFourier}. Note that this theorem does not state the positivity conditions on $F_0$ and $\hat F_0$ but these can be obtained from the explicit construction in \cite{InghamFourier}.
\end{proof}

The above lemma allows us to construct a convenient approximation to the indicator function of a small
interval $[0, \Delta^{-1}]$. 

\begin{lemma} \label{le:harmo}
  There exists an absolute constant $C > 0$ such that for any $\Delta, A \geq 3 $ there exists an entire function $G_{\Delta, A}(x) \in L^2(\mathbb{R})$ such that
  \begin{enumerate}
  \item The Fourier transform $\widehat{G}_{\Delta, A}(x)$ is supported on $[-\Delta^{2A}, \Delta^{2A}]$.
  \item We have,
    $
    0 \leq G_{\Delta, A}(x) \leq 1
    $
    for all $x \in \mathbb{R}$.
  \item We have
    $
    \mathbf{1}(x \in [0, \Delta^{-1}]) \leq G_{\Delta, A}(x) \cdot (1 + C e^{-\Delta^{A - 1}}).
    $
  \item We have,
    $
    G_{\Delta, A}(x) \leq \mathbf{1}(x \in [-\Delta^{-A/2} , \Delta^{-1} + \Delta^{-A/2}]) + C e^{-\Delta^{A - 1}}.
    $
\item We have, 
  $
  \int_{\mathbb{R}} |\widehat{G}_{\Delta, A}(x)| \dd x \leq 2\Delta^{2A} .
  $
  \end{enumerate}\begin{proof}
    Let $F = F_0 / \| F_0 \|_1$ so that $\int_{\mathbb R} F(x)\dd x=1$,  where $F_0$ is the function of Lemma \ref{le:ingham}.
  Consider
    \begin{equation} \label{eq:harmo}
    G_{\Delta, A}(x) = \int_{-\Delta^{-A}}^{\Delta^{-1} + \Delta^{-A}} \Delta^{2A} F(\Delta^{2A} (x - t)) \dd t.
    \end{equation}
    Notice that the Fourier transform of $F(\Delta^{2 A} (x - t))$ is compactly supported on $[-\Delta^{2A}, \Delta^{2A}]$, and therefore so is the Fourier transform of $G_{\Delta, A}$. 
 Clearly, $G_{\Delta, A}$  is non-negative. By completing the integral to infinity and a change of variables,  $G_{\Delta, A}$ is bounded by $1$. This proves the first two assertions.
        
For a given $x \in [0, \Delta^{-1}]$, the right-hand side of \eqref{eq:harmo} is at least
    $$
    C_{\Delta, A} = \int_{-\Delta^{-A}}^{\Delta^{-A}} \Delta^{2A} F(\Delta^{2A} x) \dd x = \int_{-\Delta^{A}}^{\Delta^{A}} F(x) \dd x = 1 + \OO(e^{-\Delta^{A - 1}}).
    $$
    Hence, for $x \in [0, \Delta^{-1}]$, we have
    $
    1 \leq G_{\Delta, A}(x) / C_{\Delta, A} = G_{\Delta, A}(x) \, (1 + \OO(e^{-\Delta^{A - 1}})),
    $
    thus proving the third assertion.

    For $x \in [-\Delta^{-A/2}, \Delta^{-1} + \Delta^{-A/2}]$, the upper bound $G_{\Delta, A}(x) \leq 1$ is immediate from completing the integral in \eqref{eq:harmo} to all $t\in \mathbb{R}$. Thus we can assume that $x \not \in [-\Delta^{-A/2}, \Delta^{-1} + \Delta^{-A/2}]$. We want to show that for such $x$ we have $G_{\Delta, A}(x) \ll e^{-\Delta^{A - 1}}$. Assuming first that $x < -\Delta^{-A/ 2}$ we get 
    $$
    G_{\Delta, A}(x) = \int_{-\Delta^{-A}}^{\Delta^{-1} + \Delta^{-A}} \Delta^{2A} F(\Delta^{2A}(x - t)) \dd t \ll e^{-\Delta^{A - 1}},
    $$
    using the decay bound $F(x) \ll e^{-|x| / \log^2 (10 + |x|)}$. The bound for $x > \Delta^{-1} + \Delta^{-A / 2}$ is obtained in the same way.

    Finally, to prove the last claim, we first notice that, since $\widehat{G}_{\Delta, A}(x)$ is supported on $[-\Delta^{2A},\Delta^{2A}]$, the Cauchy-Schwarz inequality and the Plancherel theorem imply that
    \begin{equation}
    \label{eqn: G fourier bound}
    \int_{\mathbb{R}} |\widehat{G}_{\Delta, A}(x)| \dd x \leq \sqrt{2}\Delta^{A} \Big ( \int_{\mathbb{R}} |G_{\Delta, A}(x)|^2 \dd x \Big )^{1/2}.
    \end{equation}
Second, the Cauchy-Schwarz inequality also implies, taking $u=\Delta^{2A} t$ in \eqref{eq:harmo},
    $$
    |G_{\Delta, A}(x)|^2 \leq \Delta^{2A} \int^{\Delta^{2A-1}+\Delta^A}_{- \Delta^{A}} F^2(\Delta^{2A} x - u) \dd u \leq \Delta^{2A}  \int^{\Delta^{2A-1}+\Delta^A}_{- \Delta^{A}} F(\Delta^{2A} x - u) \dd u,
    $$
    since $0\leq F\leq 1$.
    Thus, we have by integrating
    $$
    \int_{\mathbb{R}} |G_{\Delta, A}(x)|^2 \dd x \leq 2\Delta^{2A},
    $$
    giving the desired bound in Equation \eqref{eqn: G fourier bound}.
  \end{proof}

\end{lemma}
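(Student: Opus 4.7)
The plan is to construct $G_{\Delta,A}$ as the convolution of the indicator of a slightly enlarged interval with a scaled, rapidly decaying approximate identity built from the function $F_0$ of Lemma \ref{le:ingham}. Concretely, normalize $F:=F_0/\|F_0\|_1$ so that $F$ is non-negative, integrates to $1$, has super-polynomial decay $F(x)\ll e^{-|x|/\log^2(|x|+10)}$, and has Fourier transform supported in $[-1,1]$. For a smoothing scale $\varepsilon:=\Delta^{-2A}$ and enlargement $\eta:=\Delta^{-A}$, set
$$
G_{\Delta,A}(x):=(\1_{[-\eta,\,\Delta^{-1}+\eta]}*\phi_\varepsilon)(x),\qquad \phi_\varepsilon(u):=\varepsilon^{-1}F(u/\varepsilon).
$$
Since $\widehat{\phi_\varepsilon}(\xi)=\widehat{F}(\varepsilon\xi)$ is supported in $[-\varepsilon^{-1},\varepsilon^{-1}]=[-\Delta^{2A},\Delta^{2A}]$, so is $\widehat{G}_{\Delta,A}$, and $G_{\Delta,A}$ is entire by Paley--Wiener. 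This disposes of (1).

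Property (2) follows from $\phi_\varepsilon\geq 0$ (so $G_{\Delta,A}\geq 0$) and $\|\phi_\varepsilon\|_1=1$ together with $|\1_I|\leq 1$ (so $G_{\Delta,A}\leq 1$). For property (3), if $x\in[0,\Delta^{-1}]$, changing variables to $u=(x-t)/\varepsilon$ yields an integration range on $u$ that always contains $[-\Delta^A,\Delta^A]$ (since $\eta/\varepsilon=\Delta^A$); by the super-polynomial decay of $F$, the mass of $F$ outside this window is $\OO(e^{-\Delta^{A-1}})$, so $G_{\Delta,A}(x)\geq 1-\OO(e^{-\Delta^{A-1}})$, giving (3) after rearrangement. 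For property (4), if $x\notin[-\Delta^{-A/2},\Delta^{-1}+\Delta^{-A/2}]$, then throughout the support of the integrand one has $|x-t|\geq\Delta^{-A/2}-\Delta^{-A}$, hence $|u|\geq\tfrac12\Delta^{3A/2}$ for $\Delta$ large, and the tail decay of $F$ forces $G_{\Delta,A}(x)\ll e^{-\Delta^{A-1}}$ with room to spare.

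Property (5) is a Cauchy--Schwarz and Plancherel computation: since $\widehat{G}_{\Delta,A}$ is supported on an interval of length $2\Delta^{2A}$,
$$
\int_{\mathbb{R}}|\widehat{G}_{\Delta,A}(x)|\,\dd x\leq \sqrt{2\Delta^{2A}}\,\|\widehat{G}_{\Delta,A}\|_2=\sqrt{2\Delta^{2A}}\,\|G_{\Delta,A}\|_2,
$$
and $\|G_{\Delta,A}\|_2^2\leq \|G_{\Delta,A}\|_\infty\cdot\|G_{\Delta,A}\|_1\ll 1$ using (2) and the fact that $G_{\Delta,A}$ is essentially supported on an interval of length $\Delta^{-1}+\OO(\Delta^{-A/2})$ by (4). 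This yields the claimed bound $\int|\widehat{G}_{\Delta,A}|\leq 2\Delta^{2A}$ comfortably.

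The main obstacle is tuning the three scales $\varepsilon$, $\eta$, and the strength of the tail decay of $F$ so that properties (3) and (4) both carry errors of size $e^{-\Delta^{A-1}}$ while the Fourier support remains $[-\Delta^{2A},\Delta^{2A}]$. The enlargement $\eta=\Delta^{-A}$ is dictated by the lower bound: it is precisely large enough, relative to $\varepsilon=\Delta^{-2A}$, for the convolution to recover essentially all of the mass of $F$ on $[0,\Delta^{-1}]$ up to the target exponential error. The larger enlargement $\Delta^{-A/2}$ on the upper side is then forced by the fact that $F$ decays only super-polynomially, so to beat $e^{-\Delta^{A-1}}$ one needs a margin of $\Delta^{-A/2}/\varepsilon=\Delta^{3A/2}$ in the argument of $F$; these slightly mismatched enlargements are what produces the asymmetric intervals appearing in (3) and (4).
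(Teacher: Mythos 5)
Your construction is identical to the paper's: the formula \eqref{eq:harmo} is precisely the convolution $\1_{[-\Delta^{-A},\,\Delta^{-1}+\Delta^{-A}]}*\Delta^{2A}F(\Delta^{2A}\cdot)$, and your verifications of (1)--(4) follow the same steps (Paley--Wiener for the support, positivity and total mass one for $0\leq G\leq 1$, the window $[-\Delta^A,\Delta^A]$ capturing all but $\OO(e^{-\Delta^{A-1}})$ of the mass of $F$ for the lower bound, and the superpolynomial tail of $F$ for the upper bound). The only (cosmetic) divergence is in (5), where the paper bounds $\|G_{\Delta,A}\|_2^2\leq 2\Delta^{2A}$ by Cauchy--Schwarz inside the defining integral while you use $\|G_{\Delta,A}\|_2^2\leq\|G_{\Delta,A}\|_\infty\|G_{\Delta,A}\|_1\leq 1$ (which is cleanest justified by Fubini, $\|G_{\Delta,A}\|_1=\Delta^{-1}+2\Delta^{-A}$, rather than by item (4) whose additive error is not integrable); both give the stated bound.
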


\subsection{Approximation of indicators by Dirichlet polynomials}
We will work throughout with the increments
$$
Y_j := S_j-S_{j-1} \ , \ j \geq 1,
$$
 with $S_j$ as  in Equation \eqref{eqn: Sk}.
For $\ell \geq -1$ and $k \in (n_{\ell}, n_{\ell + 1}]$,  consider the discretization parameter
$$
\Delta_j=(\min(j,n-j))^{4}, \qquad j\in (n_{\ell},k].
$$
We approximate indicator functions of $Y_j$ on intervals of width $\Delta_j^{-1}$.
This choice for $\Delta_j$ is guided by two constraints. First, some summability is used, in particular in (\ref{eqn:summ1}). From the proof it will be clear that we could choose any exponent strictly greater than 1 instead of $4$. Second, the Gaussian approximation of the Dirichlet sums gets worse for very small primes, imposing a decrease down to $\Delta_j\asymp 1$ for $j\asymp 1$, see Equation (\ref{eqn:GaussExp}) below.

Set $r=r(y) = \lceil y / 4 \rceil$.  Since $y > 4000$ we have $r > n_{-1} = 1000$. 
For $L_y(r) \leq v - m(r) \leq U_y(r) $ and $L_y(k) \leq w - m(k) \leq U_y(k)$,
define the set $\mathcal{I}_{r,k}(v,w)\subset \mathbb R^{k-r}$ of $(k-r)$-tuples $(u_{r + 1},\dots, u_k)$ with $u_j\in \Delta_j^{-1}\mathbb Z$, $r<j\leq k$ such that  
 \begin{equation}
 \label{eqn: I def}
 \begin{aligned}
\text{for all $j \in (r, k]$: }  L_y(j) - 1&\leq v +\sum_{i=r+1}^j u_i-m(j)\leq U_y( j) + 1,\\
     \Big | \sum_{i=r+1}^k u_i + v-w \Big | & \leq 1
     \end{aligned}
 \end{equation}

Note that since $U_y(j)-L_y(j)\leq 40 \min(j, n-j)$ the first restriction on the $u_j$'s imply that $|u_j|\leq 100 \Delta_j^{1/4}$ for every  $j\in (r,k]$.

Given $\Delta, A > 1$, we define the following truncated polynomial,
   \begin{equation}
     \label{eqn: Q}
     \mathcal D_{\Delta, A}(x)= \sum_{\ell \leq \Delta^{10A}} \frac{(2\pi \ii x)^{\ell}}{\ell!} \int_{\mathbb{R}} \xi^{\ell} \widehat{G}_{\Delta, A}(\xi) {\rm d}\xi .
   \end{equation}
We will be approximating the indicator function $ \mathbf{1}(Y_j(h)  \in [u_j, u_j + \Delta_j^{-1}])$ by the Dirichlet polynomial  $\mathcal D_{\Delta_j, A}(Y_j-u_j)$.
The following properties of $\mathcal D_{\Delta_j, A}(Y_j-u_j)$ are straightforward from the  definition of $\mathcal D_{\Delta, A}$ and $Y_j$:
\begin{enumerate}
\item It is is supported on integers $n$ whose prime factors lie in $(\exp e^{j-1},\exp(e^j)]$ and such that $\Omega(n) \leq \Delta_j^{10A}$.
\item The length of the Dirichlet polynomial   $\mathcal D_{\Delta_j, A}(Y_j-u_j)$ is at most $\exp(2\Delta_j^{10A}e^j)$ (the factor $2$ in the exponential is due to the second order term $p^{-1-2\ii h}$ in the summands of $S_k$).
\item We have
\begin{equation}
\label{eqn: bound G}
\int_{\mathbb{R}} |\xi|^{\ell} |\widehat{G}_{\Delta,A}(\xi)| {\rm d} \xi \leq \Delta^{2A \ell}\int_{\mathbb{R}} |\widehat{G}_{\Delta,A}(\xi)| {\rm d} \xi \leq 2 \Delta^{2A \ell} \Delta^{2A},
\end{equation}
by properties {\it (1)} and {\it (5)} of Lemma \ref{le:harmo}. In particular, the coefficients of $\mathcal D_{\Delta_j, A}(Y_j-u_j)$ are bounded by $\ll \Delta_j^{2A(\ell+1)}$.
\end{enumerate}
The first lemma successively approximates the indicator functions $\mathbf{1}(Y_j(h)  \in [u_j, u_j + \Delta_j^{-1}])$ by the polynomials $\mathcal D_{\Delta_j, A}(Y_j(h)-u_j)$.

\begin{lemma}
  \label{lem: smooth indicator}
  Let $A > 10$. Let $y > 4000$, $\ell\geq -1$ and $k > r$. 
  Let $w$ be such that $L_y(k)\leq w - m(k) \leq U_y(k)$.
  Then, for any fixed $\tau$, one has
  $$
  \begin{aligned}
 & \mathbf{1} \Big ( h \in  B^{(k)}_{\ell} \cap C^{(k)}_{\ell}: S_{k}(h) \in [w, w + 1] \Big ) \\
 &\hspace{1cm} \leq C \sum_{\substack{v \in  \Delta_{r}^{-1}\mathbb{Z} \\ L_y(r) \leq v - m(r) \leq U_y(r) \\ \mathbf{u} \in \mathcal{I}_{r,k}(v, w)}} |\mathcal{D}_{\Delta_{r}, A}(S_{r}(h) - v)|^2 
  \prod_{j =r+1}^{k} |\mathcal D_{\Delta_j, A}(Y_j(h)-u_j) |^2 .
\end{aligned}
  $$
  with $C > 0$ an absolute constant. 
\end{lemma}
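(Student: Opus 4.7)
My strategy is to discretize $S_r(h)$ and the increments $Y_j(h) := S_j(h) - S_{j-1}(h)$ (for $j > r$) into bins of widths $\Delta_r^{-1}$ and $\Delta_j^{-1}$, smooth each resulting indicator using Lemma~\ref{le:harmo}, and then Taylor-expand the smooth majorant to reach the prescribed polynomials $\mathcal{D}_{\Delta_j, A}$.

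First, I partition each real-valued quantity into intervals of width $\Delta_j^{-1}$, indexed by $v \in \Delta_r^{-1}\mathbb{Z}$ and $u_j \in \Delta_j^{-1}\mathbb{Z}$. On the event $h \in B^{(k)}_{\ell} \cap C^{(k)}_{\ell}$ with $S_k(h) \in [w, w+1]$, the barriers force $L_y(j) \leq S_j(h) - m(j) \leq U_y(j)$, and the unique chosen tuple $(v, \mathbf{u})$ belongs to $\mathcal{I}_{r,k}(v, w)$: the cumulative discretization error $\sum_{i>r}\Delta_i^{-1} \leq \sum_{i>r} i^{-4} \ll r^{-3}$ is much smaller than $1$ (since $r > 1000$), and so it fits within the $\pm 1$ slack built into \eqref{eqn: I def}. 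Thus the indicator is pointwise bounded by
\begin{equation*}
\sum_{\substack{v \in \Delta_r^{-1}\mathbb{Z}\\ L_y(r) \leq v-m(r) \leq U_y(r) \\ \mathbf{u} \in \mathcal{I}_{r,k}(v,w)}} \mathbf{1}(S_r(h) \in [v, v+\Delta_r^{-1}]) \prod_{j=r+1}^{k} \mathbf{1}(Y_j(h) \in [u_j, u_j + \Delta_j^{-1}]).
\end{equation*}

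Next, Lemma~\ref{le:harmo}(3) yields the pointwise bound $\mathbf{1}(Y_j(h) - u_j \in [0, \Delta_j^{-1}]) \leq G_{\Delta_j, A}(Y_j(h) - u_j)(1 + C e^{-\Delta_j^{A-1}})$; squaring (which preserves the inequality since both sides are nonnegative and bounded by $1$) replaces $G$ by $G^2$ while leaving the error harmless. The product over $j$ of the error factors is $O(1)$ because $\sum_j \Delta_j^{-A+1}$ converges rapidly for $A > 10$. It remains to pass from $G^2$ to $|\mathcal{D}|^2$. The polynomial $\mathcal{D}_{\Delta_j, A}$ in \eqref{eqn: Q} is the Taylor expansion of $G_{\Delta_j, A}(x) = \int_{-\Delta_j^{2A}}^{\Delta_j^{2A}} e^{2\pi i x \xi}\widehat{G}_{\Delta_j, A}(\xi)\dd\xi$ truncated at degree $\Delta_j^{10A}$; for $|x| \ll \Delta_j^{1/4}$ (the range forced by the barriers), the tail is controlled via Lemma~\ref{le:harmo}(5) and Stirling by a geometric factor with ratio $\ll \Delta_j^{2A+1/4}\cdot e/\Delta_j^{10A}$, giving $|G_{\Delta_j,A}(x) - \mathcal{D}_{\Delta_j,A}(x)| \ll e^{-c\Delta_j^A}$. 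Since both $|G|$ and $|\mathcal{D}|$ are $O(1)$ on this range, this upgrades to $|G_{\Delta_j,A}(x)|^2 \leq |\mathcal{D}_{\Delta_j,A}(x)|^2 + O(e^{-c\Delta_j^A})$. Multiplying across $j \in [r, k]$ and absorbing the resulting $O(1)$ multiplicative error into an absolute constant gives the stated majorization.

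The most delicate point is the Taylor truncation estimate in the final step: I need the cutoff $\ell \leq \Delta_j^{10A}$ to comfortably dominate the product $|x| \cdot \Delta_j^{2A} \ll \Delta_j^{2A + 1/4}$ coming jointly from the Fourier support of $\widehat{G}_{\Delta_j, A}$ and from the barrier-enforced bound $|Y_j(h) - u_j| \ll \Delta_j^{1/4}$. It is precisely the gap between the exponents $2A$ and $10A$ that makes the truncation error super-exponentially small in $\Delta_j^A$, overwhelming the $\prod_j \Delta_j^{5/4}$-many additive error contributions arising from summing over discretizations.
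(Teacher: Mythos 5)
Your architecture is the same as the paper's: bin $S_{r}(h)$ and the increments $Y_j(h)$ with widths $\Delta_r^{-1},\Delta_j^{-1}$, observe that the barrier constraints force the occupied bin into $\mathcal{I}_{r,k}(v,w)$, majorize each bin indicator by $G_{\Delta_j,A}^2$ via Lemma \ref{le:harmo}, and replace $G_{\Delta_j,A}$ by its Taylor truncation $\mathcal{D}_{\Delta_j,A}$ using the barrier-enforced bound $|Y_j(h)-u_j|\ll\Delta_j^{1/4}$. Up to and including the truncation estimate $|G_{\Delta_j,A}-\mathcal{D}_{\Delta_j,A}|\ll e^{-c\Delta_j^{A}}$ on that range, this matches the paper's proof.

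The final step, however, has a genuine gap. You record the truncation error additively, $|G_{\Delta_j,A}(x)|^2\le|\mathcal{D}_{\Delta_j,A}(x)|^2+\OO(e^{-c\Delta_j^{A}})$, and then ``multiply across $j$, absorbing the resulting $\OO(1)$ multiplicative error.'' But there is no multiplicative error to absorb: the pointwise inequality $\prod_j(|\mathcal{D}_j|^2+\e_j)\le C\prod_j|\mathcal{D}_j|^2$ is false in general, because $|\mathcal{D}_{\Delta_j,A}(Y_j(h)-u_j)|$ has no lower bound --- it can vanish when $Y_j(h)$ is far from the bin $[u_j,u_j+\Delta_j^{-1}]$ --- and then a mixed term such as $\e_{j_0}\prod_{j\neq j_0}|\mathcal{D}_j|^2$ on the left is not dominated by the right-hand side. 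Since the lemma is a deterministic inequality for each fixed $\tau$ and $h$, you cannot rescue this by averaging. The repair is short and is precisely what the paper does: the desired inequality $\mathbf{1}(Y_j(h)\in[u_j,u_j+\Delta_j^{-1}])\le|\mathcal{D}_{\Delta_j,A}(Y_j(h)-u_j)|^2(1+Ce^{-\Delta_j^{A-1}})$ need only be verified at points where the indicator equals $1$; there, part (3) of Lemma \ref{le:harmo} forces $G_{\Delta_j,A}(Y_j(h)-u_j)\ge(1+Ce^{-\Delta_j^{A-1}})^{-1}$, hence $|\mathcal{D}_{\Delta_j,A}(Y_j(h)-u_j)|\ge\tfrac12$ (in fact $1-\OO(e^{-\Delta_j^{A-1}})$), and the additive truncation error becomes a genuine multiplicative factor $1+\OO(e^{-\Delta_j^{A-1}})$ whose product over $j$ is $\OO(1)$. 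With this one observation --- the paper's Equation \eqref{eq:mainineq0} --- your argument closes.
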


The proof of the above lemma is split in two parts. We will first rely on the following claim: For every $j \in (n_{\ell}, k]$ and any $|u_j| \leq 100 \min(j, n - j)$, we have
      \begin{equation} \label{eq:mainineq0}
        \mathbf{1}(Y_j(h)  \in [u_j, u_j + \Delta_j^{-1}]) \leq |\mathcal{D}_{\Delta_j, A}(Y_j (h) - u_j)|^2 \, (1 + C e^{-\Delta_{j}^{A - 1}}),
  \end{equation}
  and for $|v| \leq 100 \min(j, n - j)$

    \begin{equation} \label{eq:Srapprox}
        \mathbf{1}(S_r(h)  \in [v, v + \Delta_r^{-1}]) \leq C|\mathcal{D}_{\Delta_r, A}(S_r (h) - v)|^2,
  \end{equation}
      with $C > 0$ an absolute constant. 
      
\subsubsection{Proof of Equations \eqref{eq:mainineq0} and \eqref{eq:Srapprox}}
We prove Equation \eqref{eq:mainineq0}. Equation \eqref{eq:Srapprox} is done the same way.
Lemma \ref{le:harmo} implies
\begin{align*}
\label{eqn:indicatorDP2}
\mathbf 1(Y_j(h)\in [u_j,u_j+\Delta_j^{-1}]) & \leq |G_{\Delta_j,A}(Y_j-u_j)|^2 \, (1 + C e^{-\Delta_j^{A - 1}}) \\ & = \Big |\int_{\mathbb R} e^{2\pi \ii \xi (Y_j(h)-u_j)} \widehat{G}_{\Delta_j,A}(\xi){\rm d} \xi\Big|^2 \, (1 + C e^{-\Delta_j^{A - 1}}) \ ,
\end{align*}
with $C > 0$ an absolute constant.
Expanding the exponential up to $\nu = \Delta_{j}^{10A}$, the integral in the absolute value is equal to
\begin{equation}
\label{eqn:indicatorDP1}
  \sum_{\ell \leq \nu} \frac{(2\pi \ii)^{\ell}}{\ell!}  (Y_j(h)-u_j)^{\ell}  \int_{\mathbb{R}} \xi^{\ell} \widehat{G}_{\Delta, A}(\xi) {\rm d}\xi 
  + \OO^{\star} \Big ( \frac{(2\pi)^{\nu}}{\nu!} |Y_j(h)-u_j|^{\nu}\int_{\mathbb{R}} |\xi|^{\nu} |\widehat{G}_{\Delta_j, A}(\xi)| {\rm d}\xi \Big)
\end{equation}
where $\OO^{\star}$ means that the implicit constant in the $\OO$ is $ \leq 1$. 

To bound the error term, observe that, since $h \in B_{\ell}^{(k)} \cap C_{\ell}^{(k)}$, the restriction on $u_j$ and on $Y_j(h)$ imposed by the upper and lower barriers imply  $|Y_j(h)-u_j|\leq 10^4 \Delta_j^{1/4}$. 
Together with \eqref{eqn: bound G}, this implies the bound
  \begin{equation}
  \label{eqn: error lemma4}
  \frac{(2\pi)^{\nu}}{\nu!} |Y_j(h)-u_j|^{\nu} \int_{\mathbb{R}} |\xi^{\nu}| |\widehat{G}_{\Delta_j, A}(\xi)| {\rm d}\xi \leq \frac{(10^6)^{\nu}}{\nu !} \Delta_j^{\nu/4}\,\Delta_j^{2A(\nu + 1)} \leq \frac{(10^6)^{\nu}}{\nu !} \Delta_j^{3A\nu},
  \end{equation}
  provided that $A > 5$. 

  The choice $\nu=\Delta_j^{10A}$ ensures that altogether the error is of order $\leq e^{-\Delta_j^{4A}}$. Thus we have shown that,
  $$
  \mathbf{1}(Y_j(h)\in [u_j,u_j+\Delta_j^{-1}]) \leq |\mathcal{D}_{\Delta_j, A}(Y_j(h) - u_j) + \OO^{\star} (e^{-\Delta_j^{4A}})|^2 \, (1 + C e^{-\Delta_j^{A - 1}}).
  $$
  Notice that if the left-hand side is equal to one, then $\mathcal{D}_{\Delta_j, A}(Y_j (h) - u_j)$ is at least $1/2$ in absolute value, therefore we can re-write the above as (\ref{eq:mainineq0})
  for some absolute constant $C > 0$, establishing the claim.

\subsubsection{Conclusion of the proof of Lemma   \ref{lem: smooth indicator}}
We partition the event $L_y(r) \leq S_{r}(h) -m(r) \leq U_y(r)$ into the union of events $S_{r}(h) \in [v,v+\Delta_{r}^{-1}]$ with 
$$
v-m(r)  \in  [L_y(r) , U_y(r)]\cap \Delta_{r}^{-1}\mathbb Z.
$$ 
Moreover, for $h \in B_{\ell}^{(k)} \cap C_{\ell}^{(k)}$, if we assume that for all $j \in (r, k)$ $Y_j(h) \in [u_j, u_j + \Delta_j^{-1}]$, $S_k(h) \in [w, w + 1]$ and $S_{r}(h) \in [v,v+\Delta_{r}^{-1}]$, then one must have
\begin{equation}
\label{eqn: I}
\begin{aligned}
v+\sum_{r + 1 \leq i\leq k}u_i&\leq S_{r}(h)+\sum_{i=r+1}^kY_i(h)\leq w+ 1,\\
 v+\sum_{r + 1 \leq i\leq k}u_i&\geq S_{r}(h)-\Delta_{r}^{-1}+\sum_{i=r+1}^k(Y_i(h)-\Delta^{-1}_i)\geq w-2(\Delta_{k}^{-3/4} + \Delta_{r}^{-3/4}),
\end{aligned}
\end{equation}
and under the same assumption for $j \in (r, k)$,
\begin{equation}
\label{eqn: I2}
\begin{aligned}
v+\sum_{r + 1 \leq i\leq j}u_i&\leq S_{r}(h)+\sum_{i=r+1}^jY_i(h)\leq m(j) + U_y(j),\\
 v+\sum_{r + 1 \leq i\leq j}u_i&\geq S_{r}(h)-\Delta_{r}^{-1}+\sum_{i=r+1}^j(Y_i(h)-\Delta_i^{ -1})\geq m(j)+ L_y(j)-1.
\end{aligned}
\end{equation}
These are the defining properties of the set $\mathcal I_{r,k}(v,w)$ in \eqref{eqn: I def}.
These observations and the inequality \eqref{eq:mainineq0} applied successively to every $Y_j(h)$ and to $S_{r}(h)$ yield
  $$
  \begin{aligned}
 & \mathbf{1} \Big ( h \in B^{(k)}_{\ell} \cap C^{(k)}_{\ell}: S_{k}(h) \in [w, w + 1] \Big ) \\
 &\leq  C \sum_{\substack{v \in \Delta_{r}^{-1} \mathbb{Z} \\ - L_y(r) \leq v - m(r) \leq U_y(r) \\ \mathbf{u} \in \mathcal{I}_{r,k}(v, w)}}  |\mathcal{D}_{\Delta_{r}, A}(S_{r}(h) - v)|^2 
    \prod_{j =r+1}^{k} \Big ( |\mathcal D_{\Delta_j, A}(Y_j(h)-u_j) |^2 \, (1 +  C e^{-\Delta_j^{A - 1}} ) \Big ).
\end{aligned}
$$
  Finally, we have
  $
  \prod_{j = r + 1}^{k} (1 + C e^{-\Delta_j^{A - 1}}) \leq C_0
  $
  for some absolute constant $C_0 > 0$. This proves the lemma.

\subsection{Comparison with a random model}
Define the random variables
\begin{equation}
\label{eqn: gaussian walk}
\mathcal S_{k}(h) = \sum_{e^{1000} \leq \log p \leq e^k} \re \Big ( Z_p \, p^{-(\tfrac 12 +\ii h)} + \tfrac 12 \, Z_p^2 \, p^{-(1 + 2 \ii h)} \Big ) , \qquad \mathcal Y_k(h)=\mathcal S_{k}(h)-\mathcal S_{k-1}(h),
\end{equation}
where $(Z_p, p \text{ prime})$ are independent and identically distributed copies of a random variable uniformly distributed on the unit circle $|z| = 1$. 
Notice that, since the increments $\mathcal Y_k(h)$ are sums of independent variables, one expects that they are approximately Gaussian with mean zero and variance $\tfrac 12$. 
Moreover, denote
\begin{equation}
\label{eqn:trulygaussian}
\mathcal G_{k} = \sum_{1000 \leq \ell \leq k}  \mathcal{N}_{\ell}, 
\end{equation}
where the $\mathcal{N}_{\ell}$'s are centered, independent real Gaussian random variables, with variance $\frac{1}{2}$. Note that $\mathcal G$ does not depend on $h$.

The following lemma shows that one can replace the Dirichlet polynomial $Y_j$ in expectation by the random variables $\mathcal Y_j,\mathcal N_j$ in the approximate indicators with a small error. This uses Lemma \ref{lem: Transition} and Lemma \ref{le:splitting} in Appendix \ref{se:moments}.

\begin{lemma} \label{lem: Gaussian comparison}
  Let $y > 4000$.
Let $A > 10$ and $\ell \geq -1$ with $\exp(10^6 (n - n_{\ell})^{10 A} e^{n_{\ell + 1}}) \leq \exp(\tfrac{1}{100} e^n)$ be given. Let $k \in (n_\ell, n_{\ell + 1}]$. Let $L_y(r) \leq v - m(r) \leq U_y(r)$. 
One has for $h\in[-2,2]$,
\begin{align*}
\mathbb E  \Big [|\mathcal{D}_{\Delta_{r}, A} & (S_{r}(h) - v)|^2 \prod_{j=r + 1}^k|\mathcal D_{\Delta_j, A}(Y_j(h)-u_j)|^2\Big ]\\ & \leq ( 1+ C e^{-c e^n}) \mathbb{E} [ |\mathcal{D}_{\Delta_{r}, A}(\mathcal{S}_{r}(h) - v)|^2 ] \ \prod_{j=r + 1}^k\mathbb E[|\mathcal D_{\Delta_j, A}(\mathcal Y_j(h)-u_j)|^2],
\end{align*}
with $C,c > 0$ absolute constants. 
Furthermore, for $w - m(k) \in [L_y(k), U_y(k)]$, we have 
\begin{equation}\label{eqn:fromDtoP}
 \begin{aligned}
&\sum_{\substack{v \in \Delta_{r}^{-1} \mathbb{Z} \\ v - m(r) \in [L_y(r), U_y(r)] \\ \mathbf u\in \mathcal{I}_{r,k}(v,w)}} \mathbb{E} \Big [ |\mathcal{D}_{\Delta_{r}, A} (\mathcal S_{r}(h) - v)|^2 \Big ] \prod_{j =r+1}^{k} \mathbb E\Big[ |\mathcal D_{\Delta_j, A}(\mathcal Y_j(h)-u_j) |^2\Big] \\
&\hspace{0.1cm}\leq C \sum_{\substack{v \in \Delta_{r}^{-1}  \mathbb{Z} \\ v - m(r) \in [L_y(r), U_y(r)] \\ \mathbf u\in \mathcal{I}_{r,k}(v,w)}}\   \mathbb P\big( \mathcal{G}_{r} \in [v, v + \Delta_{r}^{-1}] \text{ and } \mathcal N_j\in [u_j, u_j+\Delta_j^{-1}] \ \forall r<j\leq k\big) ,
 \end{aligned}
\end{equation}
 with $C > 0$ an absolute constant and   $\mathcal{I}_{k,\ell}(v,w)$ defined in \eqref{eqn: I}. 
\end{lemma}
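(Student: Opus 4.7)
The plan is to establish Part 1 via a moment transition from the $\tau$-average to the random $(Z_p)$-model followed by independence, and Part 2 by approximating $|\mathcal{D}_{\Delta_j,A}|^2$ by an indicator and invoking a local Gaussian approximation prime-scale by prime-scale.

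For Part 1, I first verify the length budget. By the properties recorded after \eqref{eqn: Q}, the Dirichlet polynomial in the variable $\tau$
$$|\mathcal{D}_{\Delta_r,A}(S_r(h)-v)|^2 \prod_{j=r+1}^k |\mathcal{D}_{\Delta_j,A}(Y_j(h)-u_j)|^2$$
has length bounded by $\exp\bigl(4\sum_{r\leq j\leq k}\Delta_j^{10A}e^j\bigr)$; using $\Delta_j\leq n^4$ and $\sum_j e^j\ll e^{n_{\ell+1}}$, this is at most $\exp\bigl(C(n-n_\ell)^{cA}e^{n_{\ell+1}}\bigr)\leq\exp(e^n/50)$ by hypothesis. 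This places us in the range of validity of Lemma \ref{lem: Transition}, which replaces $\mathbb{E}$ (the $\tau$-average) by the expectation in the random model with multiplicative error $1+Ce^{-ce^n}$. Once in the random model, $\mathcal{D}_{\Delta_j,A}(\mathcal{Y}_j(h)-u_j)$ depends only on $(Z_p)_{p\in(\exp e^{j-1},\exp e^j]}$, and these sets of primes are pairwise disjoint across $j$ and disjoint from those entering $\mathcal{S}_r$; independence of the $Z_p$'s then factorizes the expectation as claimed.

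For Part 2, I note that by construction \eqref{eqn: Q}, $\mathcal{D}_{\Delta,A}(x)$ is the order-$\Delta^{10A}$ truncation of the Taylor series of $G_{\Delta,A}(x)$, so the computation leading to \eqref{eq:mainineq0} gives $|\mathcal{D}_{\Delta_j,A}(x)|\leq |G_{\Delta_j,A}(x)|+e^{-\Delta_j^{4A}}$. Combined with property (4) of Lemma \ref{le:harmo} this yields
$$|\mathcal{D}_{\Delta_j,A}(x)|^2\leq \mathbf{1}\bigl(x\in[-\Delta_j^{-A/2},\Delta_j^{-1}+\Delta_j^{-A/2}]\bigr)+Ce^{-\Delta_j^{A-1}},$$
and similarly for $\mathcal{D}_{\Delta_r,A}$. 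Taking expectations and invoking Lemma \ref{le:splitting} (a local Gaussian approximation for $\mathcal{Y}_j(h)$ and $\mathcal{S}_r(h)$, valid since these are sums of independent bounded random variables) bounds $\mathbb{E}\bigl[|\mathcal{D}_{\Delta_j,A}(\mathcal{Y}_j(h)-u_j)|^2\bigr]$ by the probability that $\mathcal{N}_j$ lies in the enlarged interval of width $\Delta_j^{-1}+2\Delta_j^{-A/2}$, plus a small additive error. Since the Gaussian density of $\mathcal{N}_j$ is bounded and $\Delta_j^{-A/2}\ll \Delta_j^{-1}$, this enlarged probability is at most a constant times $\mathbb{P}(\mathcal{N}_j\in[u_j,u_j+\Delta_j^{-1}])$. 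The analogous bound for $\mathcal{S}_r$ versus $\mathcal{G}_r$, together with independence of the Gaussians $\mathcal{N}_j, \mathcal{G}_r$, multiplies into the joint probability appearing on the right-hand side of \eqref{eqn:fromDtoP}.

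The main obstacle is the quality of the local Gaussian approximation for small $j$, where $\mathcal{Y}_j$ is a sum of only $\approx e^j/j$ independent terms and the central limit approximation is weak. This is precisely the reason for choosing $\Delta_j=\min(j,n-j)^4$ to decrease to $O(1)$ near the boundary: the discretization scale $\Delta_j^{-1}$ is then wide enough to dominate the Berry--Esseen type error produced by Lemma \ref{le:splitting}. A secondary concern is absorbing the accumulated additive errors $Ce^{-\Delta_j^{A-1}}$ and the multiplicative $1+Ce^{-ce^n}$ into the single absolute constant; this is handled by the rapid convergence of $\sum_j\Delta_j^{-(A-1)}$ for $A>2$ and by the fact that the index set $\mathcal{I}_{r,k}(v,w)$ has at most polynomial-in-$e^n$ cardinality, making the cumulative error negligible compared with the main Gaussian probability term.
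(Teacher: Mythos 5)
Your Part 1 matches the paper's argument: the length count, Lemma \ref{lem: Transition} to pass from the $\tau$-average to the random model, then independence of the $Z_p$'s over disjoint prime ranges. The gap is in Part 2. You assert the pointwise bounds $|\mathcal{D}_{\Delta_j,A}(x)|\leq|G_{\Delta_j,A}(x)|+e^{-\Delta_j^{4A}}$ and hence $|\mathcal{D}_{\Delta_j,A}(x)|^2\leq\mathbf{1}(x\in[-\Delta_j^{-A/2},\Delta_j^{-1}+\Delta_j^{-A/2}])+Ce^{-\Delta_j^{A-1}}$ for all $x$. This is false: by \eqref{eqn: Q}, $\mathcal{D}_{\Delta,A}$ is a polynomial of degree $\Delta^{10A}$ and is unbounded, whereas $G_{\Delta,A}\leq 1$. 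The Taylor-remainder estimate \eqref{eqn: error lemma4} that makes $\mathcal{D}\approx G$ is only valid when the argument is controlled (by $10^4\Delta_j^{1/4}$ in the deterministic setting, where this is guaranteed by $h\in B^{(k)}_\ell\cap C^{(k)}_\ell$; by $\Delta_j^{6A}$ in the random model). In the random model there is no barrier event restricting $\mathcal{Y}_j(h)-u_j$, so $\E[|\mathcal D_{\Delta_j,A}(\mathcal Y_j(h)-u_j)|^2]$ receives a contribution from the tail event $|\mathcal{Y}_j(h)-u_j|>\Delta_j^{6A}$, on which $\mathcal{D}$ can be enormous. The paper treats this event separately, via Cauchy--Schwarz, bounding $\E[|\mathcal D_{\Delta_j,A}(\mathcal Y_j(h)-u_j)|^4]\ll e^{\Delta_j^{5A}}$ using the coefficient bounds \eqref{eqn: bound G} and the exponential moments of $\mathcal{Y}_j$ from Lemma \ref{lem:basicFour}, together with the Chernoff bound $\PP(|\mathcal{Y}_j(h)-u_j|>\Delta_j^{6A})\ll e^{-\frac14\Delta_j^{6A}}$. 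This step is essential and is missing from your argument.

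Two further points. The ``local Gaussian approximation'' you invoke is not Lemma \ref{le:splitting} (that is the splitting lemma for Dirichlet polynomials, already used in Part 1); the relevant inputs are Lemma \ref{lem:compProb} (Berry--Esseen for the increments $\mathcal{Y}_j$) for the factors with $j>r$, and Lemma \ref{le:saddlepoint} (a local CLT in the large-deviation regime) to compare $\mathcal{S}_{r}$ with $\mathcal{G}_{r}$ --- the latter is not just ``the Gaussian density is bounded'', since $v$ may be of order $r$. Finally, the additive errors $Ce^{-\Delta_j^{A-1}}$ must be absorbed factor by factor, i.e.\ one needs $e^{-\Delta_j^{A-1}}\ll\PP(\mathcal N_j\in[u_j,u_j+\Delta_j^{-1}])$, which uses the restriction $|u_j|\leq 100\,\Delta_j^{1/4}$ inherited from the barriers; counting the cardinality of $\mathcal{I}_{r,k}(v,w)$ does not accomplish this, because the right-hand side of \eqref{eqn:fromDtoP} is itself exponentially small and the bound one actually needs is a product over $j$ of per-factor multiplicative estimates.
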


\begin{proof}

  Note that
  $\mathcal{D}_{\Delta_{r}, A}(S_{r}(h) - v) \prod_{j =r+1}^{k} \mathcal D_{\Delta_j, A}({Y_j}(h)-u_j)$
  is a Dirichlet polynomial of length at most
  $$
  \exp \Big (2 \sum_{j = r}^{k} e^j \Delta_j^{10A} \Big ) \leq \exp \Big (10 e^{n_{\ell + 1}} \Delta_{n_{\ell}}^{10 A} \Big ).
  $$
  The first claim then follows from Lemma \ref{lem: Transition} and Lemma \ref{le:splitting}, both in Appendix \ref{se:moments}. Note that the multiplicative error term from these lemmas is $1+N/T$ with $N$ the above degree of the Dirichlet polynomial; this error is bounded thanks to the assumption $\exp(10^6 (n - n_{\ell})^{10A} e^{n_{\ell + 1}}) \leq \exp(\tfrac{1}{100} e^n)$.

  To prove the second assertion, it will suffice to show that for every $j \in (r, k]$ we have,
    \begin{equation} \label{eq:toprove}
\E \Big [|\mathcal D_{\Delta_j, A}(\mathcal Y_j (h) - u_j)|^2 \Big ] \leq \PP (\mathcal N_j \in [u_j, u_j + \Delta_j^{-1}]) \cdot (1 + \OO(\Delta_j^{-A/4})),
\end{equation}
    with an absolute implicit constant in $\OO(\cdot)$, and moreover that,
    \begin{equation} \label{eq:toprove2}
\E \Big [|\mathcal D_{\Delta_{r}, A}(\mathcal S_{r} (h) - v)|^2 \Big ] \leq C \PP (\mathcal G_{r} \in [v, v + \Delta_{r}^{-1}]).    
    \end{equation}
    with $C > 0$ an absolute constant. 
Then taking the product of the above inequalities over all $j \in (r, k]$, we conclude that
  \begin{align*}
\E & \Big [|\mathcal{D}_{\Delta_{r}, A}(\mathcal{S}_{r}(h) - v)|^2 \prod_{j\in (r,k]}|\mathcal D_{\Delta_j, A}(\mathcal Y_j(h) -u_j)|^2 \Big ] \\ & \leq C \mathbb{P}(\mathcal{S}_{r} \in [v, v + \Delta_{r}^{-1}]) \prod_{j\in (r,k]} \PP(\mathcal N_j\in [u_j,u_j+\Delta_j^{-1}]) \cdot ( 1 + \OO(\Delta_j^{-A / 4} ))  .
  \end{align*}
  This gives the claim since $\prod_{j = r}^{k} (1 + \OO(\Delta_j^{-A / 4})) \leq C$ with $C > 0$ an absolute constant.

  It remains to prove \eqref{eq:toprove} and \eqref{eq:toprove2}.
  The first step is to replace $\mathcal D_{\Delta_j, A}$ by $G_{\Delta_j,A}$ with a good error using Equations  \eqref{eqn: Q} and \eqref{eqn:indicatorDP1} (with $\mathcal Y_j$ instead of $Y_j$ and $\mathcal{S}_{r}$ instead of $S_{r}$).
Note that on the event $|\mathcal Y_j(h)-u_j|\leq \Delta_j^{6A}$, the estimate   \eqref{eqn: error lemma4}
still holds. Indeed we have, with $\nu = \Delta_j^{10A}$, 
$$
  \frac{(2\pi)^{\nu}}{\nu!} |\mathcal Y_j(h)-u_j|^{\nu} \int_{\mathbb{R}} |\xi^{\nu}| |\widehat{G}_{\Delta_j, A}(\xi)| {\rm d}\xi \leq \frac{(10^6)^{\nu}}{\nu !} \Delta_j^{6 A \nu} \cdot\Delta_j^{2A (\nu + 1)} \leq \frac{(10^6)^{\nu}}{\nu !} \Delta_j^{9 A\nu} \cdot \Delta_j^{2\nu},
  $$
since $A > 10$. Moreover, since $\nu = \Delta_j^{10A}$, the above is $\leq e^{-\Delta_j^{4A}}$. 
This implies
\begin{equation}
\label{eqn: D<=}
\begin{aligned}
&\E[|\mathcal D_{\Delta_j, A}(\mathcal Y_j(h)-u_j)|^2\cdot\mathbf 1(|\mathcal Y_j(h)-u_j|\leq \Delta_j^{6A})]\\
&=\E[|G_{\Delta_j,A}(\mathcal Y_j(h)-u_j)+\OO(e^{-\Delta_j^{4A}})|^2\cdot\mathbf 1(|\mathcal Y_j(h)-u_j|\leq \Delta_j^{6A})]\\
&\leq \E[|G_{\Delta_j,A}(\mathcal Y_j(h)-u_j)|^2]+\OO(e^{-\Delta_j^{4A}}),
\end{aligned}
\end{equation}
since by Lemma \ref{le:harmo} we have $G_{\Delta_j, A}(\mathcal Y_j(h) - u_j) \in [0,1]$. 
A quick computation shows that $\mathbb{E}[e^{K \mathcal{Y}_j(h)}] \ll_{K} 1$ for any given $K > 1$ and all $j \geq 1$ and $h \in [-2,2]$, see Lemma \ref{lem:basicFour} in Appendix \ref{se:moments}.
Therefore the contribution of the event $|\mathcal Y_j(h)-u_j|> \Delta_j^{6A}$ can be bounded by Chernoff's inequality:
$$
\begin{aligned}
\E[|\mathcal D_{\Delta_j, A}& (\mathcal Y_j(h)-u_j)|^2\cdot\mathbf 1(|\mathcal Y_j(h)-u_j|> \Delta_j^{6A})] \\
&\leq \E[|\mathcal D_{\Delta_j, A}(\mathcal Y_j(h)-u_j)|^4]^{1/2}\ \PP(|\mathcal Y_j(h)-u_j|> \Delta_j^{6A})^{1/2}\\
&\ll \E[|\mathcal D_{\Delta_j, A}(\mathcal Y_j(h)-u_j)|^4]^{1/2}\, e^{- \tfrac 14 \Delta_j^{6A}},
\end{aligned}
$$
where we used $|u_j| \leq  100\Delta_j^{1/4}$ in the Chernoff's inequality.
The fourth moment is easily bounded using an estimate similar to \eqref{eqn: bound G}:
\begin{multline*}
 \E[|\mathcal D_{\Delta_j, A}(\mathcal Y_j(h)-u_j)|^4]
  \leq \E\Big[ \Big ( \sum_{\ell \leq \Delta_j^{10 A}} \frac{(2\pi )^{\ell}}{\ell!}  2\Delta_j^{2A(\ell+1)}  (|\mathcal Y_j(h)|+10^4 \Delta_j^2)^{\ell} \Big )^4\Big ]  \\
  \ll  \Delta_j^{2A} \, \E[\exp( 9\pi   \Delta_j^{2A}  (| \mathcal Y_j(h)|+10^4 \Delta_j^2))] \ll e^{\Delta_j^{5A}},
\end{multline*}
 where we used Lemma \ref{lem:basicFour} together with $e^{c|\mathcal Y|}\leq e^{c\mathcal Y}+e^{-c\mathcal Y}$. 
Putting this together we get
\begin{equation}
\label{eqn: Q to G}
\E[|\mathcal D_{\Delta_j, A}(\mathcal Y_j(h)-u_j)|^2] \leq \E[|G_{\Delta_j,A}(\mathcal Y_j(h)-u_j)|^2]+\OO(e^{-\tfrac 18 \Delta_j^{6A}}).
\end{equation}
Furthermore, by Lemma \ref{le:harmo}, we have
$$
\E[|G_{\Delta_j, A}(\mathcal Y_j(h) - u_j)|^2] \leq \PP(\mathcal Y_j(h) \in [u_j - \Delta_j^{-A / 2}, u_j + \Delta_j^{-1} + \Delta_j^{-A / 2}]) + \OO ( e^{-\Delta_j^{A - 1}}). 
$$
Since $|u_j| \leq 100 \min(j, n - j)$ and $j > y / 4$, we obtain from Lemma \ref{lem:compProb} in Appendix \ref{se:ballot} that for all $h$
\begin{equation}\label{eqn:GaussExp}
\PP(\mathcal Y_j(h) \in [u_j - \Delta_j^{-A / 2}, u_j + \Delta_j^{-1} + \Delta_j^{-A / 2}]) = \PP(\mathcal N_j \in [u_j , u_j + \Delta_j^{-1}]) \cdot (1 + \OO(\Delta_j^{-A/4})). 
\end{equation}
Note that the Gaussian distribution and the restriction on $u_j$ and $j$ are heavily used here to get the error term.
This concludes the proof of \eqref{eq:toprove}.

The proof of \eqref{eq:toprove2} is similar, with the main difference being that we use Lemma \ref{le:saddlepoint} in order to show that
$$
\mathbb{P}(\mathcal{S}_r \in [v - \Delta_{r}^{-A / 2}, v + \Delta^{-1}_{r} + \Delta_{r}^{-A / 2}]) + e^{-\Delta_{r}^{A - 1}} \leq C \mathbb{P}(\mathcal G_{r} \in [v , v + \Delta_{r}^{-1}]).
  $$
\end{proof}

\subsection{Proof of Lemma \ref{master lemma}}
Let $A = 20$.
 By Lemma \ref{lem: smooth indicator}, we have
  \begin{align} \label{eq:mainnnineq} \nonumber
  \mathbf{1} \Big ( h \in B_{\ell}^{(k)} & \cap C_{\ell}^{(k)} \text{ and } S_k(h) \in (w, w + 1] \Big ) 
 \\ & \leq C 
 \sum_{\substack{v \in \Delta_{r}^{-1} \mathbb{Z} \\ L_y(r) \leq v- m(r) \leq U_y(r) \\  \mathbf{u} \in \mathcal{I}_{r,k}(v, w)}} |\mathcal{D}_{\Delta_{r}, A}(S_{r}(h) - v)|^2  
  \prod_{j\in (r,k]} |\mathcal D_{\Delta_j, A}(Y_j(h)-u_j) |^2  ,
  \end{align}
  $C > 0$ an absolute constant.
By the properties of $\mathcal D_{\Delta_j, A}(Y_j(h)-u_j)$, we can write the right-hand side of \eqref{eq:mainnnineq} as 
\begin{equation}
\label{eq:bull}
\begin{aligned}
      \sum_{i \in \mathcal{I}} |D_{i}(\tfrac 12 + \ii \tau + \ii h)|^2
   \end{aligned}
\end{equation}
a linear combination of squares of Dirichlet polynomials $D_i$, each of length
   $$
   \leq \exp \Big ( 2 \sum_{0 \leq j \leq k} e^j \Delta_j^{200} \Big ) \leq \exp(100 e^k (n - k)^{800}).
   $$
   Therefore multiplying \eqref{eq:mainnnineq} by an arbitrary Dirichlet polynomial $\mathcal{Q}$ of length $N \leq \exp(\tfrac {1}{100} n)$ and applying the  discretization in Lemma \ref{le:discretization}, we conclude that
   \begin{equation}
   \begin{aligned} \label{eq:bounddd}
   \mathbb{E} & \Big [\max_{|h| \leq 2} |\mathcal{Q}(\tfrac 12 +\ii \tau + \ii h)|^2 \cdot \mathbf{1}(h \in B_{\ell}^{(k)} \cap C_{\ell}^{(k)} \text{ and } S_k(h) \in (w, w + 1]) \Big ] \\ & \ll \Big ( \log N + e^k (n - k)^{800} \Big ) \sum_{i \in \mathcal{I}} \mathbb{E} \Big [ |\mathcal{Q}(\tfrac 12 + \ii \tau )|^2 \, |D_{i} (\tfrac 12 + \ii \tau)|^2 \Big ].
   \end{aligned}
   \end{equation}
   Here we use the fact that the expectations have the same values (up to negligible factors) for the $\OO( \log N + e^k (n - k)^{800} )$ relevant $h$'s in Lemma \ref{le:discretization},
   and the contribution of the remaining $h$'s associated with very large $j$ in  Lemma \ref{le:discretization} are bounded similarly to the paragraph after (\ref{eqn: B0}).
   All the following expressions are evaluated at $h=0$. 
   The Dirichlet polynomials $D_i$ are all of length $\leq \exp(\tfrac {1}{100} n)$ and supported on integers $n$ all of whose prime factors are in $\leq \exp(e^k)$, while $\mathcal{Q}$ is supported on integers $n$ all of whose prime factors are $> \exp(e^k)$. Therefore, Lemma \ref{le:splitting} can be applied and yields
   $$
   \mathbb{E}[ | \mathcal{Q}(\tfrac 12 +\ii \tau)|^2 \, |D_{i}(\tfrac 12 +\ii \tau)|^2 ] \leq 2 \mathbb{E}[|\mathcal{Q}(\tfrac 12 +\ii \tau)|^2] \, \mathbb{E}[|D_i(\tfrac 12 +\ii \tau)|^2]. 
   $$
   Finally, by the definition of $D_i$ and $\mathcal{I}$ in   \eqref{eq:bull} and Lemma \ref{lem: Gaussian comparison},
 we have
   \begin{equation}
\begin{aligned}
    \label{eq:rest}
 & \sum_{i \in \mathcal{I}} \mathbb{E} \Big [ |D_i(\tfrac 12 +\ii \tau)|^2 \Big ]\leq \\ &  C \sum_{\substack{v \in \Delta_{r}^{-1} \mathbb{Z} \\ L_y(r) \leq v - m(r) \leq U_y(r) \\ \mathbf{u} \in \mathcal{I}_{r,k}(v, w)}} \mathbb{P}(\mathcal G_r \in [v, v + \Delta_{r}^{-1}] \text{ and }\mathcal N_j\in [u_j, u_j + \Delta_{j}^{-1}] \ \forall r < j \leq k),
   \end{aligned}
   \end{equation}
with $C > 0$ an absolute constant. 

 If for every $r < j \leq k$, we have $\mathcal N_{j} \in [u_j, u_j + \Delta_j^{-1}]$  and moreover $\mathcal G_{r} \in [v, v + \Delta_{r}^{-1}]$ and \eqref{eqn: I def} holds, then we have
\begin{equation}\label{eqn:summ1}
  \begin{aligned}
\forall j \in (r, k] :   \mathcal G_{j} & \leq m(j) + U_y(j) + 1 + \sum_{r < i \leq j} \Delta_i^{-1}, \\
  |\mathcal  G_k - w | & \leq 1 + \sum_{r \leq j \leq k} \Delta_j^{-1}, \\
 \mathcal G_{r} &\in [v, v + \Delta_{r}^{-1}].
  \end{aligned}
\end{equation}
As a result after summing over $v \in \Delta_{r}^{-1} \mathbb{Z}$ we can bound \eqref{eq:rest} by 
$$
\leq C \, \mathbb{P} \Big ( \mathcal G_j \leq m(j) + U_y(j) + 2 \text{ for all } r \leq j \leq k \text{ and } {\mathcal G}_{k} \in [w - 2, w + 2] \Big ).
$$
  Consequently, plugging this into \eqref{eq:bounddd}, we obtain for $h\in[-2,2]$,
  \begin{align*}
   \mathbb{E} & \Big [\max_{|h| \leq 2} |\mathcal{Q}(\tfrac 12 +\ii \tau + \ii h)|^2 \cdot \mathbf{1}(h \in B_{\ell}^{(k)} \cap C_{\ell}^{(k)} \text{ and } S_k(h) \in (w, w + 1]) \Big ] \\ & \ll \Big ( \log N + e^k (n - k)^{800} \Big ) \mathbb{E} \Big [ |\mathcal{Q}(\tfrac 12 +\ii \tau )|^2 \Big ] \\ & \times \mathbb{P} \Big (\mathcal G_j \leq m(j) + U_y(j) + 2 \text{ for all } r \leq j \leq k \text{ and } \mathcal G_k(0) \in [w - 2, w + 2] \Big ) .
  \end{align*}
  It remains to apply  the version of the ballot theorem from Proposition \ref{lem: ballot} (with $y$ replaced by $y+2$ and adding the bounds with $w$ replaced by $w+i$, $i \in \{-2,-1,0,1\}$)
  to conclude that
$$
\begin{aligned}
&\mathbb E\Big[\max_{|h|\leq 2}|\mathcal{Q}(\tfrac 12 + \ii \tau+\ii h)|^2 \cdot \mathbf{1} \Big ( h \in B_{\ell}^{(k)} \cap C_{\ell}^{(k)}:  S_{k}(h) \in (w,w+1] \Big ) \Big ]\\ 
&\ll  \mathbb E[|\mathcal{Q}(\tfrac 12 + \ii \tau)|^2] \, \Big ( e^{-k} \, \log N + (n-k)^{800} \Big ) \, y\, (U_y(k)-w+m(k)+2)\, e^{-2(w-m(k))}.
\end{aligned}
$$
This concludes the proof of Lemma \ref{master lemma}.

\section{Decoupling and Twisted Fourth Moment}\label{sec:4th}

We now prove Lemma \ref{master lemma twisted}. We will need the following class of ``well-factorable'' Dirichlet polynomials. 

\begin{definition}
\label{df: well-factorable}
  Given $\ell \geq 0$ and $k \in [n_{\ell}, n_{\ell + 1}]$, we will say that a Dirichlet polynomial
  $\mathcal{Q}$ is degree-$k$ well-factorable if it can be written as
  $$
\Big ( \prod_{0 \leq \lambda \leq \ell} \mathcal{Q}_{\lambda}(s) \Big ) \mathcal{Q}_{\ell}^{(k)}(s),
  $$
where
\begin{align*}
 \mathcal{Q}_\lambda(s) := \sum_{\substack{p | m \implies p \in (T_{\lambda - 1}, T_\lambda] \\ \Omega_{\lambda}(m) \leq 10 (n_{\lambda} - n_{\lambda - 1})^{10^4}}} \frac{\gamma(m)}{m^s} \ \text{ and } \ 
   \mathcal{Q}_{\ell}^{(k)}(s):=\sum_{\substack{p | m \implies p \in (T_{\ell}, \exp(e^k)] \\ \Omega_{\ell}(m) \leq 10 (n_{\ell + 1}-n_{\ell})^{10^4}}} \frac{\gamma(m)}{m^s},
\end{align*}
and $\gamma$ are arbitrary coefficients such that $|\gamma(m)| \ll \exp(\tfrac{1}{500} e^n)$ for every $m \geq 1$.
\end{definition}

The proof of Lemma \ref{master lemma twisted} will rely on the following result on the twisted fourth moment. We postpone the proof of this technical lemma to the next subsection. 

\begin{lemma} \label{le:barrierapproxtwistedbis}
    Let $\ell \geq 0$ be such that $\exp(10^6 (n - n_{\ell})^{10^5} e^{n_{\ell + 1}}) \leq \exp(\tfrac {1}{100} e^n)$. Let $k\in[n_{\ell},n_{\ell + 1}]$. 
  Let $\mathcal{Q}$ be a degree-$k$ well-factorable Dirichlet polynomial as in Definition \ref{df: well-factorable}. Then, we have
  $$
  \mathbb{E} \Big [ |(\zeta_{\tau}  \mathcal{M}_{-1} \ldots  \mathcal{M}_{\ell}\mathcal{M}_{\ell}^{(k)})(\tfrac 12 +\ii \tau )|^4
  \cdot |\mathcal{Q}(\tfrac 12 +\ii \tau)|^2  \Big ] 
  \ll  e^{4(n-k)} \, \mathbb{E} \Big [ |\mathcal{Q}(\tfrac 12 +\ii \tau)|^2\Big ].
  $$
\end{lemma}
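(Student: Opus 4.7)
The key heuristic is that $\zeta(s)\,\mathcal{M}_{-1}(s)\cdots\mathcal{M}_\ell(s)\,\mathcal{M}_\ell^{(k)}(s)$ is a mollified zeta whose Euler factors on primes $p\le\exp(e^k)$ have been cancelled to leading order, leaving an effective object controlled only by primes in $(\exp(e^k),T]$. By Mertens' theorem the fourth moment of such an object should be $\ll\exp(4\sum_{\exp(e^k)<p\le T}p^{-1})\ll e^{4(n-k)}$, matching the Selberg-type variance $(n-k)/2$ of $\log|\zeta|-S_k$, while the twist by $|\mathcal Q|^2$ contributes the expected factor $\mathbb{E}[|\mathcal Q|^2]$.

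\textbf{First step.} I would apply an approximate functional equation to write $\zeta(\tfrac12+\ii\tau)$ as a Dirichlet polynomial of length $\ll T$ (plus a comparable dual piece) up to a negligible error over $\tau\in[T,2T]$. The combined polynomial $(\zeta\,\mathcal{M}_{-1}\cdots\mathcal{M}_\ell\,\mathcal{M}_\ell^{(k)})^2\,\mathcal Q$ then has length $\ll T^2\cdot\exp(\tfrac{1}{100}e^n)\le\exp(\tfrac{1}{50}e^n)$, which by hypothesis falls safely in the regime of Lemma~\ref{le:splitting}. Taking modulus-squared and averaging over $\tau$ picks out the diagonal $\sum_n|\delta(n)|^2/n$, where $\delta$ is the (multiplicative) coefficient sequence of the polynomial.

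\textbf{Second step.} Multiplicativity turns the diagonal into an Euler product factoring across the prime ranges $(T_{\lambda-1},T_\lambda]$ for $\lambda=0,\ldots,\ell$, the incomplete range $(T_\ell,\exp(e^k)]$, and the unmollified tail $(\exp(e^k),T]$. On each mollified range, $\mathcal{M}_\lambda$ is by design the truncated M\"obius approximation of $\prod_{p\in(T_{\lambda-1},T_\lambda]}(1-p^{-s})$, so the local $(\zeta_p\mathcal{M}_{\lambda,p})^2$ collapses to $1+\OO(p^{-1})$ and the twist by $|\mathcal{Q}_\lambda|^2$ integrates locally to $\mathbb{E}[|\mathcal{Q}_\lambda|^2]$. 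The incomplete range $(T_\ell,\exp(e^k)]$ is handled identically with $\mathcal{M}_\ell^{(k)}$ and $\mathcal{Q}_\ell^{(k)}$. On the unmollified tail, the local factor is
\begin{equation*}
\prod_{\exp(e^k)<p\le T}\frac{1+p^{-1}}{(1-p^{-1})^3}\ll\exp\Big(4\sum_{\exp(e^k)<p\le T}p^{-1}\Big)\ll e^{4(n-k)}.
\end{equation*}
Combining these with the splitting $\mathbb{E}[|\mathcal{Q}|^2]=\prod_\lambda\mathbb{E}[|\mathcal{Q}_\lambda|^2]\cdot\mathbb{E}[|\mathcal{Q}_\ell^{(k)}|^2]$ (again Lemma~\ref{le:splitting}) yields the stated bound.

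\textbf{Main obstacle.} The delicate point is handling the $\Omega_\lambda$-truncations built into both $\mathcal{M}_\lambda$ and $\mathcal{Q}_\lambda$: $\mathcal{M}_\lambda$ matches $\prod_{p\in(T_{\lambda-1},T_\lambda]}(1-p^{-s})$ only on integers whose number of prime factors in that range is $\le(n_\lambda-n_{\lambda-1})^{10^5}$, so the key identity $\zeta_p\mathcal{M}_{\lambda,p}\approx 1$ breaks down on a residual ``bad'' set. Controlling this residue requires a Chernoff-type large-deviation estimate on $\Omega_\lambda(m)$, which produces tails of order $\exp(-(n_\lambda-n_{\lambda-1})^{10^4})$, easily absorbing the polynomial-in-$(n_\lambda-n_{\lambda-1})$ losses from the expansion of each local Euler factor. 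A secondary technical point is verifying that the off-diagonal contribution in the $\tau$-average is negligible (a standard bound on $\int_T^{2T}(n/m)^{-\ii\tau}\dd\tau$ for $n\ne m$), and that the multiplicative bookkeeping across the $\ell+2$ scales compounds only an absolute constant.
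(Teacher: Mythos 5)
There is a genuine gap at the heart of your first step. After the approximate functional equation, $\zeta(\tfrac12+\ii\tau)$ is a Dirichlet polynomial of length $\asymp T=\exp(e^n)$, so the object $(\zeta\,\mathcal{M}_{-1}\cdots\mathcal{M}_\ell\mathcal{M}_\ell^{(k)})^2\,\mathcal{Q}$ whose square you average has length $\gg T^2$ -- your claimed bound $T^2\cdot\exp(\tfrac{1}{100}e^n)\le\exp(\tfrac{1}{50}e^n)$ is false, since $T^2=\exp(2e^n)$. This is not a bookkeeping slip: the mean value theorem (Lemma \ref{lem: Transition}) only yields the diagonal up to a factor $1+\OO(N/T)$ when $N\ll T$, and Lemma \ref{le:splitting} requires length $\le T^{1/4}$. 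For the fourth moment of $\zeta$ the off-diagonal terms are \emph{not} negligible -- they contribute at the same order as the diagonal (this is exactly why Ingham's fourth moment, and a fortiori the twisted/mollified fourth moment, requires shifted divisor correlations rather than a diagonal extraction). Your "secondary technical point" that the off-diagonal is handled by a standard bound on $\int_T^{2T}(n/m)^{-\ii\tau}\dd\tau$ is precisely where the argument breaks.

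The paper does not attempt a diagonal computation. It invokes the twisted fourth moment machinery of Heap--Radziwi\l\l--Soundararajan (which in turn rests on the Hughes--Young shifted fourth moment, needed here in a variant allowing coefficients as large as $T^{1/100}$), obtaining the bound \eqref{eqn: twisted 1} in terms of an arithmetic factor $G(z_1,\dots,z_4)$ built from the local functions $B_{\mathbf z}$. Only \emph{after} that input does the argument become the kind of multiplicative bookkeeping you describe: $G$ factors over the prime ranges, the $\Omega$-truncations are removed by Rankin's trick (Lemma \ref{le:boundinglarge}, matching your "main obstacle" paragraph), and a local cancellation $\mathcal{P}_{\mathbf 0,\mathbf 0}(c_1',c_2',p)=0$ plus the estimate $1-\tfrac4p+\cdots$ per prime $p\le\exp(e^k)$ produce the saving $\exp(-\sum_{p\le \exp(e^k)}4/p)\asymp e^{-4k}$ against the trivial $e^{4n}$, giving $e^{4(n-k)}$. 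So your Euler-product heuristic correctly predicts the answer and your second step mirrors the post-processing, but the essential ingredient -- a genuine twisted fourth moment theorem handling the off-diagonal -- is missing from your proposal and cannot be replaced by a diagonal argument.
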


We are now ready to prove Lemma \ref{master lemma twisted}.

\begin{proof}[Proof of Lemma \ref{master lemma twisted}]
As in the proof of Lemma \ref{master lemma}, by Lemma \ref{lem: smooth indicator}, we have for $A=20$
  \begin{align} \label{eq:mainnnineq2} \nonumber
  \mathbf{1} \Big ( h \in B_{\ell} & \cap C_{\ell} \text{ and } S_{n_{\ell}}(h) \in (u, u + 1] \Big ) 
 \\ & \leq C 
 \sum_{\substack{v \in \Delta_{r}^{-1} \mathbb{Z} \\ L_y(r) \leq v- m(r) \leq U_y(r) \\  \mathbf{u} \in \mathcal{I}_{r, n_\ell}(v, w)}} |\mathcal{D}_{\Delta_{r}, A}(S_{r}(h) - v)|^2  
  \prod_{j\in (r,n_{\ell}]} |\mathcal D_{\Delta_j, A}(Y_j(h)-u_j) |^2  ,
  \end{align}
with  $C > 0$ an absolute constant.
By the properties of $\mathcal D_{\Delta_j, A}(Y_j(h)-u_j)$, we can write \eqref{eq:mainnnineq2} as 
\begin{equation}
\label{eq:bull2}
\begin{aligned}
      \sum_{i \in \mathcal{I}} |D_{i}(\tfrac 12 + \ii \tau + \ii h)|^2,
   \end{aligned}
\end{equation}
a linear combination of squares of Dirichlet polynomials of length
   $$
   \leq \exp \Big ( 2 \sum_{0 \leq j \leq k} e^j \Delta_j^{200} \Big ) \leq \exp(100 e^k (n - k)^{800}).
   $$
  We claim that, for every $i \in \mathcal{I}$, the Dirichlet polynomial
   $$
   \mathcal{Q}_{\ell}^{(k)}(\tfrac 12 +\ii \tau + \ii h) D_i(\tfrac 12 +\ii \tau + \ii h)
   $$
   is degree-$k$ well-factorable.
   This follows from the properties of $\mathcal D_{\Delta,A}$ listed after Equation \eqref{eqn: Q}. More precisely, 
   each $D_i$ has length  $$\leq \exp \Big ( 2 \sum_{0 \leq j \leq n_\ell} e^j \Delta_j^{200} \Big )\leq \exp(e^n/100).$$ Moreover, each $D_i$ is supported on the set of integers $m$ such that $p | m \implies p \leq e^{n_{\ell}}$, and for every $j \leq n_\ell$, $\Omega_j(m) \leq \Delta_{j}^{200}$. Furthermore, its coefficients are bounded by $\exp(e^n/500)$. 
   
   It then follows from Lemma \ref{le:barrierapproxtwistedbis} that
   \begin{align*}
  \sum_{i \in \mathcal{I}} \  & \mathbb{E}[|(\zeta_{\tau} \mathcal{M}_{-1} \ldots \mathcal{M}_{\ell} \mathcal{M}_{\ell}^{(k)})(h)|^2 \cdot |\mathcal{Q}_{\ell}^{(k)}(\tfrac 12 + \ii \tau + \ii h)|^2 \cdot |D_i(\tfrac 12 +\ii \tau + \ii h)|^2 ] \\ & \ll e^{4(n - k)} \sum_{i \in \mathcal{I}} \mathbb{E} [|\mathcal{Q}_\ell^{(k)}(\tfrac 12 + \ii \tau + \ii h)|^2 \cdot |D_i(\tfrac 12 + \ii \tau + \ii h)|^2 ].
   \end{align*}
   Moreover, since $\mathcal{Q}_\ell^{(k)}$ is supported on integers $n$ having only prime factors in $(\exp(e^{n_{\ell}}), \exp(e^k)]$, while $D_i$ is supported on integers $n$ all of whose prime factors are $\leq \exp(e^{n_{\ell}})$, and both Dirichlet polynomials have length $\leq \exp(\tfrac{1}{100} n)$, we conclude from Lemma \ref{le:splitting} that
   $$
   \mathbb{E}[|\mathcal{Q}_\ell^{(k)}(\tfrac 12 +\ii \tau + \ii h)|^2 \, |D_i(\tfrac 12 +\ii \tau + \ii h)|^2 ] \ll \mathbb{E} [|\mathcal{Q}_\ell^{(k)}(\tfrac 12 +\ii \tau + \ii h)|^2 ] \, \mathbb{E}[|D_i(\tfrac 12 + \ii \tau + \ii h)|^2].
   $$
   Therefore, we obtain that
   \begin{align*}
     \mathbb{E} & \Big [ |(\zeta_{\tau} \mathcal{M}_{-1} \ldots \mathcal{M}_{\ell} \mathcal{M}^{(k)}_{\ell})(h)|^4
       \cdot |\mathcal{Q}_{\ell}^{(k)}(h)|^2 \cdot \mathbf{1} \Big ( h \in B_{\ell} \cap C_{\ell} \text{ and } S_{n_{\ell}}(h) \in [u, u + 1] \Big ) \Big ]  \\
     & \ll e^{4(n - k)}\, \mathbb{E}[|\mathcal{Q}_{\ell}^{(k)}(\tfrac 12 + \ii \tau)|^2] \, \sum_{i \in \mathcal{I}} \mathbb{E}[|D_i(\tfrac 12 + \ii \tau)|^2].
   \end{align*}
   Now, proceeding exactly as in the proof of Lemma \ref{master lemma} starting from Equation \eqref{eq:rest} one gets\
   $$
   \sum_{i \in \mathcal{I}} \mathbb{E}[|D_i(\tfrac 12 + \ii\tau)|^2] \ll y \,(U_y(n_{\ell}) - u + m(n_\ell) + 2) e^{-2(u - m(n_\ell))} e^{-n_{\ell}}.
   $$
   This concludes the proof. \end{proof}

\subsection{Proof of Lemma \ref{le:barrierapproxtwistedbis}}\label{sec:twisted}
We first need to introduce some notations.
Define for $0 \leq i \leq \ell + 1$, 
  \begin{equation}\label{eqn:betai}
  \beta_{i}(m) := \sum_{\substack{m = abc \\ \Omega_{i}(a), \Omega_{i}(b) \leq (n_i - n_{ i - 1})^{10^5} \\ \Omega_i(c) \leq 10 (n_i - n_{i - 1})^{10^4}}} \mu(a) \mu(b) \gamma(c),
  \end{equation}
where $\Omega_i(m)$ denotes as before the number of prime factors of $m$ in the range $(T_{i-1},T_i]$.
  Given $m$, write $m = m_0 \dots m_{\ell} m_{\ell}^{(k)}$ where $m_j$ with $0 \leq j \leq \ell$ has prime factors in $(T_{j - 1}, T_{j}]$, and $m_{\ell}^{(k)}$ has prime factors in the interval $(T_{\ell}, \exp(e^k)]$.
Let $\beta(m)$ be defined by
  $$
  \sum_{m \geq 1} \frac{\beta(m)}{m^s} = \Big ( \prod_{0 \leq i \leq \ell} \mathcal{M}_{i}^2(s) \mathcal{Q}_{i}(s) \Big ) (\mathcal{M}_{\ell}^{(k)}(s))^2 \mathcal{Q}_{\ell}^{(k)}(s).
  $$
Note that, 
  \begin{equation}
  \label{eqn: beta}
  \beta(m) = \prod_{0 \leq i \leq \ell} \beta_{i}(m_i) \, \beta_{\ell + 1}(m_{\ell}^{(k)}).
  \end{equation}
  It will be convenient to redefine $T_{\ell + 1} := \exp(e^k)$ so that the above can be written as
  $$
  \prod_{0 \leq i \leq \ell + 1} \beta_{i}(m_i),
  $$
  with $m_{\ell + 1}$ defined as the largest divisor of $m$ all of whose prime factors belong to $(T_{\ell}, T_{\ell + 1}]$ and where $T_{\ell + 1} := \exp(e^k)$. 
  Given complex numbers $z_1,z_2,z_3,z_4$ and $n\in\mathbb{N}$, set $\mathbf{z} := (z_1, z_2, z_3, z_4)$ and consider
$$
B_{\mathbf{z}}(n) := B_{(z_1,z_2,z_3,z_4)}(n)=\prod_{p\mid n}
\left(\sum_{j\geq 0}\frac{\sigma_{z_1,z_2}(p^{v_p(n)+j})\sigma_{z_3,z_4}(p^j)}{p^j}\right)
\left(\sum_{j\geq 0}\frac{\sigma_{z_1,z_2}(p^{j})\sigma_{z_3,z_4}(p^j)}{p^j}\right)^{-1},
$$
with $\sigma_{z_1,z_2}(n)=\sum_{n_1n_2=n}n_1^{-z_1}n_2^{-z_2}$, and $v_p(n)$, the greatest integer $k$ such that $p^k\mid n$. We are now ready to start the proof. 

\begin{proof}[Proof of Lemma \ref{le:barrierapproxtwistedbis}]  
As proved in \cite[Section 6]{HeaRadSou2019}, the twisted fourth moment can be bounded by
\begin{equation}
\label{eqn: twisted 1}
\mathbb{E} [ |(\zeta_{\tau} \mathcal{M}_{-1} \ldots \mathcal{M}_{\ell} \mathcal{M}_{\ell}^{(k)})(0)|^4 \cdot |\mathcal{Q}(\tfrac 12 + \ii \tau)|^2 ] \ll e^{4n} \max_{\substack{j = 1,2,3,4 \\ |z_j| = 3^j / e^n}} |G(z_1, z_2, z_3, z_4)|,
  \end{equation}
  where
  \begin{equation}
\label{eqn: G}
  G(z_1, z_2, z_3, z_4) := \sum_{m_1,m_2} \frac{\beta(m_1) \overline{\beta(m_2)}}{[m_1,m_2]} B_{\mathbf{z}} \Big ( \frac{m_1}{(m_1,m_2)} \Big ) B_{\pi \mathbf{z}} \Big ( \frac{m_2}{(m_1,m_2)} \Big ),
  \end{equation}
  and $\mathbf{z} = (z_1, z_2, z_3, z_4)$, $\pi \mathbf{z} = (z_3, z_4, z_1, z_2)$. Equation (\ref{eqn: G}) relies on the reasoning from  \cite[Section 6]{HeaRadSou2019}. It requires a slightly changed version of Proposition 4 in \cite[Section 5]{HeaRadSou2019}, requiring a shorter Dirichlet polynomial with $\theta \leq \tfrac{1}{100}$ but allowing the coefficients to be as large as $T^{1/100}$. 
This change in the assumptions is possible by appealing to  \cite{HughesYoung} instead of \cite{Bettin} in the argument, see the third remark after Theorem 1 in \cite{HughesYoung}.
   
  Allowing  coefficients to be as large as $T^{1/100}$ is necessary because of our assumptions on the coefficients $\gamma$. We notice that by the definition of $\mathcal{M}_i$ and $\mathcal{M}_{i}^{(k)}$ the Dirichlet polynomial $\prod_{0 \leq i \leq \ell} (\mathcal{M}_i \, \mathcal{M}_{\ell}^{(k)})^2$ is of length at most $\exp(2 (n_{\ell + 1} - n_{\ell})^{10^5} e^{n_{\ell + 1}})$. The assumptions of the lemma imply $\exp(2 (n_{\ell + 1} - n_{\ell})^{10^5} e^{n_{\ell + 1}}) \leq \exp( 10^{-4} n)$. Furthermore by the definition of a degree-$k$ well factorable Dirichlet polynomial, $\mathcal{Q}$ is of a length $\leq \exp(\tfrac{1}{500} n)$. Therefore, the total length of the Dirichlet polynomial $\prod_{0 \leq i \leq \ell} (\mathcal{M}_{i} \, \mathcal{M}_{\ell}^{(k)})^2 \, \mathcal{Q}$ is $\leq \exp(\tfrac{1}{100} n)$ as needed.

From Equation \eqref{eqn: beta}, the function $G$ can be written as the product
  \begin{equation} \label{eq:unrealsum}
  \prod_{i \leq \ell  + 1} \Big ( \sum_{p | m_1,m_2 \implies T_{i - 1} < p \leq T_{i}} \frac{\beta_i(m_1) \overline{\beta_i(m_2)}}{[m_1,m_2]} B_{\mathbf{z}} \Big ( \frac{m_1}{(m_1,m_2)} \Big ) B_{\pi \mathbf{z}} \Big ( \frac{m_2}{(m_1,m_2)} \Big ) \Big ). 
  \end{equation}
 
Applying the definition (\ref{eqn:betai}) with the decompositions $m_1 = a_1 b_1 c_1$ and $m_2 = a_2 b_2 c_2$,
the inner sum in (\ref{eq:unrealsum}) at a given $i$ can also be written as
  \begin{equation}
  \label{eq:realsum}
  \begin{aligned}
  &\sum_{\substack{p | c_1, c_2 \implies p \in (T_{i - 1}, T_{i}]\\ \Omega_i(c_1),\Omega_i(c_2) \leq 10 (n_i - n_{i - 1})^{10^4}}} \gamma(c_1) \overline{\gamma(c_2)}\times \\ 
  &\sum_{\substack{p | a_1, a_2 \implies p \in (T_{i - 1}, T_{i}] \\ p | b_1,b_2 \implies p \in (T_{i - 1}, T_{i}]\\  \Omega_{i}(a_1) , \Omega_i(a_2) \leq (n_i - n_{i - 1})^{10^5} \\ \Omega_i(b_1) , \Omega_i(b_2) \leq (n_i - n_{i - 1})^{10^5}}}  \frac{\mu(a_1) \mu(a_2) \mu(b_1) \mu(b_2)}{[a_1 b_1 c_1, a_2 b_2 c_2]}   \, B_{\mathbf{z}} \Big ( \frac{a_1 b_1 c_1}{(a_1 b_1 c_1, a_2 b_2 c_2)} \Big ) B_{\pi \mathbf{z}} \Big ( \frac{a_2 b_2 c_2}{(a_1 b_1 c_1, a_2 b_2 c_2)} \Big ). 
  \end{aligned}
  \end{equation}

  Given an interval $I$, and integers $c_1, c_2 \geq 1$, we define the quantity
  \begin{equation}
  \label{eqn: mathfrak S}
 \mathfrak S_{I}(c_1, c_2) :=      \sum_{\substack{p |u, v \implies p \in I}}  \frac{f(u) f(v)}{[u c_1, v c_2]} B_{\mathbf{z}} \Big ( \frac{u c_1}{(u c_1, v c_2)} \Big ) B_{\pi \mathbf{z}} \Big ( \frac{v c_2}{(u c_1, v c_2)} \Big ),
\end{equation}
where $f$ is the multiplicative function such that $f(p) = -2$, $f(p^2) = 1$ and $f(p^{\alpha}) = 0$ for $\alpha \geq 3$. 
The rest of the argument relies on Lemma \ref{le:boundinglarge}  and Lemma \ref{le:twooftwo}.
Lemma \ref{le:boundinglarge} shows that the restriction on the number of factors for the $a$ and $b$'s can be dropped with a small error. Lemma \ref{le:twooftwo} evaluates the sum of \eqref{eqn: mathfrak S} without these restrictions.

  \begin{lemma}\label{le:boundinglarge}
    For $0 \leq i \leq \ell + 1$ 
    the equation \eqref{eq:realsum} is equal to
    $$
    \sum_{\substack{p | c_1, c_2 \implies p \in (T_{i - 1}, T_{i}] \\ \Omega_i(c_1), \Omega_i(c_2) \leq 10 (n_i - n_{i - 1})^{10^4}}} \gamma(c_1) \overline{\gamma(c_2)} \mathfrak S_{(T_{i - 1}, T_{i}]}(c_1, c_2) + \OO \Big ( e^{-100 (n_i - n_{i - 1})} \sum_{p | c \implies p \in (T_{i - 1}, T_{i}]} \frac{|\gamma(c)|^2}{c} \Big ),
        $$
        with an absolute implicit constant in $\OO(\cdot)$. 
  \end{lemma}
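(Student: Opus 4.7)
The plan is to first identify the main term by dropping the constraints $\Omega_i(a_j), \Omega_i(b_j) \leq K_i$, where $K_i := (n_i - n_{i-1})^{10^5}$, inside the inner fourfold sum of \eqref{eq:realsum}. Once dropped, the sum over squarefree $a_1, a_2, b_1, b_2$ with prime factors in $(T_{i-1}, T_i]$ collapses via the multiplicative identity
\[
\sum_{\substack{ab = u \\ a, b \text{ squarefree}}} \mu(a)\mu(b) = f(u),
\]
where $f$ is the multiplicative function of the lemma (this is immediate at each prime: $f(p) = -2$ from the two ways $v_p(u) = 1$ can split, $f(p^2) = 1$ from the unique way $v_p(u) = 2$ can split, and $f(p^\alpha) = 0$ for $\alpha \geq 3$ since $u$ then has no decomposition into squarefree factors). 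Applying this identity to both $u = a_1 b_1$ and $v = a_2 b_2$ reproduces exactly the definition \eqref{eqn: mathfrak S} of $\mathfrak{S}_{(T_{i-1}, T_i]}(c_1,c_2)$, yielding the main term claimed.

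The second step is to bound the error from reintroducing the constraints. Fix $\alpha > 1$ (say $\alpha = 2$) and, for each of the four variables, invoke Rankin's trick $\mathbf{1}(\Omega_i(x) > K_i) \leq \alpha^{\Omega_i(x) - K_i}$. A union bound and symmetry between $a_1, a_2, b_1, b_2$ reduce the error to $4\alpha^{-K_i}$ times the fourfold sum with one extra weight $\alpha^{\Omega_i(a_1)}$ attached and the other three variables unrestricted. This remaining sum factors into an Euler product over primes $p \in (T_{i-1}, T_i]$. For $p \nmid c_1 c_2$, a direct bookkeeping of the choices $v_p(a_j), v_p(b_j) \in \{0,1\}$ shows that the local factor is $1 + O(\alpha/p)$; here one uses that $B_{\mathbf{z}}(p^k) = O(1)$ uniformly because $|z_j| \leq 3^j e^{-n}$ is minuscule, and the restriction of $f$ to exponents $\leq 2$ keeps the local sum finite and small. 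By Mertens' theorem the product of these factors over $p \in (T_{i-1}, T_i]$ is $\exp(O(\alpha(n_i - n_{i-1})))$. The local factors at $p \mid c_1 c_2$ contribute a bounded amount per such prime; combining this with the AM--GM inequality $|\gamma(c_1)\overline{\gamma(c_2)}| \leq \tfrac 12 (|\gamma(c_1)|^2 + |\gamma(c_2)|^2)$ and symmetry in $c_1, c_2$ reduces the double sum $\sum_{c_1, c_2}$ to the single sum $\sum_c |\gamma(c)|^2/c$ appearing in the statement.

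Combining Steps 1--2 yields a total error of $\alpha^{-K_i} \exp(O(n_i - n_{i-1})) \, \sum_c |\gamma(c)|^2/c$, which is comfortably dominated by $e^{-100(n_i - n_{i-1})} \sum_c |\gamma(c)|^2/c$ since $K_i$ is a huge power of $n_i - n_{i-1}$. The main technical obstacle lies in the Euler factor computation of Step 2: one must verify that, even with the Rankin twist $\alpha^{\Omega_i(a_1)}$ and the non-trivial $[uc_1, vc_2]$ structure feeding into $B_{\mathbf{z}}$ and $B_{\pi\mathbf{z}}$, the local factor at $p \nmid c_1 c_2$ is indeed $1 + O(\alpha/p)$, so that Mertens applies cleanly. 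The cancellation at level $1/p$ is no accident: it is precisely the heuristic identity $\mathcal{M}_i \cdot e^{S_i} \approx 1$ that motivates the construction of the mollifiers $\mathcal{M}_i$ and their truncations.
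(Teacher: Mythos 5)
Your proposal is correct and follows essentially the same route as the paper: the main term comes from dropping the $\Omega_i$-constraints on $a_1,a_2,b_1,b_2$ and using the convolution identity $\sum_{ab=u}\mu(a)\mu(b)=f(u)$ on squarefree supports, and the error is controlled by Rankin's trick plus an Euler-product estimate, with the huge gap between $\alpha^{-(n_i-n_{i-1})^{10^5}}$ and $\exp(\OO((n_i-n_{i-1})^{10^4}))$ absorbing everything. The only place to be slightly careful is at primes $p\mid c_1c_2$, where the local factors are not uniformly bounded (they carry $d_3(p^{v_p(c_j)})\asymp v_p(c_j)^2$, since $|B_{\mathbf z}(p^{\alpha})|\ll \alpha+1$ rather than $\OO(1)$ for large $\alpha$); these polynomial losses must be absorbed using the constraint $\Omega_i(c_j)\leq 10(n_i-n_{i-1})^{10^4}$, exactly as the paper does.
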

  
  We now define
    $$
  \mathcal{S}_{I}= \sum_{\substack{p | c_1, c_2 \implies p \in I \\ \Omega_i(c_1), \Omega_i(c_2) \leq 10 (n_i - n_{i - 1})^{10^4}}} |\gamma(c_1)| \cdot |\gamma(c_2)| \cdot |\mathfrak S_{I}(c_1, c_2)|.
  $$
  
    \begin{lemma} \label{le:twooftwo}
    We have, for $0 \leq i \leq \ell + 1$ and every interval $I \subset [T_{i - 1}, T_{i}]$,
    $$
  \mathcal{S}_{I}\leq \exp \Big ( e^{6000} (n_i - n_{i -1})^{4 \cdot 10^4} e^{n_i - n} \Big ) \exp \Big ( - \sum_{p \in I} \frac{4}{p} \Big ) \sum_{\substack{p | c \implies p \in I}} \frac{|\gamma(c)|^2}{c}. 
      $$
    \end{lemma}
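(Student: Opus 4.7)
The plan is to exploit the multiplicative factorization of $\mathfrak{S}_I(c_1,c_2)$ over primes $p \in I$, combined with an Euler product analysis. Since $B_{\mathbf{z}}$ is multiplicative in its argument and the sum defining $\mathfrak{S}_I(c_1,c_2)$ factors over primes, I first write
$$\mathfrak{S}_I(c_1,c_2) = \prod_{p \in I} \mathfrak{L}_p\bigl(v_p(c_1), v_p(c_2); \mathbf{z}\bigr),$$
where $\mathfrak{L}_p(\alpha,\beta;\mathbf{z})$ is the finite local sum over $v_p(u), v_p(v) \in \{0,1,2\}$ (only these contribute since $f(p^a)=0$ for $a \geq 3$). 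The subsequent analysis splits according to whether $p \mid c_1 c_2$ or not.

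At a prime $p \in I$ with $p \nmid c_1 c_2$, I would enumerate the nine local terms and expand $\sigma_{z_1,z_2}(p^j) = \sum_{a+b=j} p^{-a z_1 - b z_2}$; using $|z_j| \ll e^{-n}$, a short computation gives $\mathfrak{L}_p(0,0;\mathbf{z}) = (1-1/p)^4 \bigl(1 + \OO(|\mathbf{z}|(\log p)/p + 1/p^2)\bigr)$. Taking the product over $p \in I$, the leading factor yields $\exp(-4\sum_{p \in I} 1/p)$ up to a bounded Mertens-type constant, while the multiplicative error combines into $\exp\bigl(\OO(|\mathbf{z}|\sum_{p \in I}(\log p)/p + 1)\bigr)$. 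Using $|\mathbf{z}| \ll e^{-n}$ and $\sum_{p \leq T_i}(\log p)/p \ll \log T_i = e^{n_i}$, this is absorbed into the announced error factor $\exp(e^{6000}(n_i-n_{i-1})^{4 \cdot 10^4} e^{n_i-n})$.

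At a prime $p \mid c_1 c_2$, I would use a crude pointwise bound $|\mathfrak{L}_p(\alpha,\beta;\mathbf{z})| \ll (1-1/p)^4 \cdot p^{-\max(\alpha,\beta)} \cdot C^{\alpha+\beta}$, obtained by the triangle inequality inside the finite local sum (the $p^{-\max(\alpha,\beta)}$ factor comes from the lcm $[uc_1,vc_2]$). Substituting all local factors back into $\mathcal{S}_I$ and pulling out the $\exp(-4\sum_{p \in I} 1/p)$ and error prefactors, the problem reduces to bounding
$$\sum_{\substack{p \mid c_1 c_2 \implies p \in I \\ \Omega_i(c_j) \leq 10(n_i-n_{i-1})^{10^4}}} \frac{|\gamma(c_1)|\,|\gamma(c_2)|}{[c_1,c_2]} \prod_{p \mid c_1 c_2} C^{v_p(c_1)+v_p(c_2)} \ll \sum_{p \mid c \implies p \in I} \frac{|\gamma(c)|^2}{c}.$$
This is handled by Cauchy--Schwarz in the form $|\gamma(c_1)\gamma(c_2)| \leq \tfrac12(|\gamma(c_1)|^2 + |\gamma(c_2)|^2)$, using symmetry between $c_1, c_2$; after symmetrization, the sum over $c_2$ at fixed $c_1$ is reassembled into an Euler product and bounded via the geometric series $\sum_{a \geq 0}(C/p)^a$ at each prime, with the resulting Euler factors absorbed into the error.

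The main obstacle will be keeping the accumulated error across all $p \in I$ at the precise level claimed: each local factor contributes a correction of order $1 + \OO(|\mathbf{z}|(\log p)/p + 1/p^2)$, and the product over $p \leq T_i \leq \exp(e^{n_i})$ must be shown to be dominated by $\exp(e^{6000}(n_i-n_{i-1})^{4 \cdot 10^4} e^{n_i-n})$. This requires carefully combining (i) the Mertens-type estimate $\sum_{p \leq x}(\log p)/p = \log x + \OO(1)$, (ii) the smallness $|z_j| \ll e^{-n}$ coming from the choice of shifts in Lemma \ref{le:barrierapproxtwistedbis}, and (iii) the growth hypothesis $\exp(10^6(n-n_\ell)^{10^5}e^{n_{\ell+1}}) \leq \exp(e^n/100)$, which guarantees $e^{n_i-n}$ is genuinely small. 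The combinatorial factor $(n_i-n_{i-1})^{4 \cdot 10^4}$ in the exponent should emerge from keeping track of the $\Omega_i$ restrictions on $c_1, c_2$ together with the degree of the local factor expansions.
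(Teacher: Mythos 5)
Your overall strategy --- factor $\mathfrak{S}_I(c_1,c_2)$ into local Euler factors, treat the primes with $p\nmid c_1c_2$ as the main term giving $\exp(-4\sum_{p\in I}1/p)$, symmetrize via $|\gamma(c_1)\gamma(c_2)|\le\tfrac12(|\gamma(c_1)|^2+|\gamma(c_2)|^2)$, and resum over $c_2$ as an Euler product --- is the same as the paper's. But the crude triangle-inequality bound $|\mathfrak{L}_p(\alpha,\beta;\mathbf z)|\ll(1-1/p)^4\,p^{-\max(\alpha,\beta)}C^{\alpha+\beta}$ at primes $p\mid c_1c_2$ is fatally lossy. With that bound, the local factor of the $c_2$-resummation at each prime $p\in I$ is $1+\sum_{a\ge1}C^ap^{-a}=1+\Theta(C/p)$, so the resummed product is $\exp(\Theta(C\sum_{p\in I}1/p))=\exp(\Theta(n_i-n_{i-1}))$. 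This cannot be absorbed into the claimed error $\exp(e^{6000}(n_i-n_{i-1})^{4\cdot10^4}e^{n_i-n})$, which is $1+\oo(1)$ because $e^{n_i-n}$ is tiny; your reduced inequality $\sum_{c_1,c_2}|\gamma(c_1)||\gamma(c_2)|[c_1,c_2]^{-1}\prod_pC^{v_p(c_1)+v_p(c_2)}\ll\sum_c|\gamma(c)|^2/c$ is false with an absolute constant. A loss of $e^{\Theta(n_i-n_{i-1})}$ at every scale, multiplied over $i$, would overwhelm the target $e^{4(n-k)}$ in Lemma \ref{le:barrierapproxtwistedbis} and break the whole induction.

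The missing idea is the cancellation isolated in the paper's Lemma \ref{le:estimater}. Writing $c_1=rc_1'$, $c_2=rc_2'$ with $r=(c_1,c_2)$, the local factor at a prime dividing exactly one of $c_1',c_2'$ vanishes \emph{identically} at $\mathbf z=\mathbf 0$: this rests on identities such as $\sum_{0\le k\le2}f(p^k)B_{\mathbf 0}(p^{\alpha-k})=0$, which encode an exact cancellation between the mollifier coefficients $f(p)=-2$, $f(p^2)=1$ and the divisor-type values $B_{\mathbf 0}(p^j)\approx j+1$. A Cauchy-integral argument in the shifts then upgrades the local factor from the crude $\asymp p^{-v_p}$ to $\ll v_p^2(\log p)\,p^{-v_p}e^{-n}$. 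That extra factor $\log p/e^n$ per prime of $c_1'c_2'$ is precisely what makes the $c_2'$-resummation $\prod_{p\in I}\big(1+\OO(e^{5000}\log p/(pe^n))\big)\le\exp(\OO(e^{n_i-n}))$ and, combined with the $\Omega_i$ constraint and $\log p\le e^{n_i}$ on the $c_1$-side, yields the stated factor $\exp(e^{6000}(n_i-n_{i-1})^{4\cdot10^4}e^{n_i-n})$. Without identifying and proving this vanishing (and the accompanying gcd decomposition), the argument does not close.
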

The proof of these lemmas is deferred to the next subsections. We first conclude the proof of Lemma \ref{le:barrierapproxtwistedbis}.

   It follows from \eqref{eqn: twisted 1},  \eqref{eq:unrealsum} and Lemma \ref{le:boundinglarge} that
   \begin{equation} \label{eq:closetogood2}
  \begin{aligned}
    \mathbb{E} [ & |(\zeta_{\tau} \mathcal{M}_{-1} \ldots \mathcal{M}_{\ell} \mathcal{M}_{\ell}^{(k)})(0)|^4 \cdot |\mathcal{Q}(\tfrac 12 + \ii \tau)|^2 ] \\ & \ll e^{4n} 
        \prod_{i = 0}^{\ell + 1} \Big ( \mathcal{S}_{(T_{i - 1}, T_{i}]} +  C e^{-100 (n_i - n_{i - 1})} \sum_{p | c \implies p \in (T_{i}, T_{i + 1}]} \frac{|\gamma(c)|^2}{c} \Big ),
  \end{aligned}
   \end{equation}
   with $C > 0$ an absolute constant.
  Combining \eqref{eq:closetogood2} and Lemma \ref{le:twooftwo}, we conclude that (with $C > 0$ an absolute constant),
     \begin{equation} \label{eq:closetogood}
  \begin{aligned}
    \mathbb{E} [  & |(\zeta_{\tau} \mathcal{M}_{-1} \ldots \mathcal{M}_{\ell} \mathcal{M}_{\ell}^{(k)})(0)|^4 \cdot |\mathcal{Q}(\tfrac 12 + \ii \tau)|^2 ] \\ & \ll e^{4n} \prod_{i = 0}^{\ell + 1} \Big ( \exp(C (n_i - n_{i - 1})^{10^5} e^{n_i - n} \, (1 + C e^{- (n_i - n_{i - 1})}) \Big ) \\ & \qquad \qquad \times \prod_{i = 0}^{\ell + 1} \Big ( \exp \Big ( - \sum_{p \in (T_{i - 1}, T_{i}]} \frac{4}{p} \Big )  \sum_{p | c \implies p \in (T_{i - 1}, T_{i}]} \frac{|\gamma(c)|^2}{c} \Big )  \\ & \ll e^{4 (n - k)} \sum_{c \geq 1} \frac{|\gamma(c)|^2}{c} \ll e^{4(n - k)} \, \mathbb{E} [|\mathcal{Q}(\tfrac 12 + \ii \tau)|^2].
  \end{aligned}
     \end{equation}
     (Recall that $T_{\ell+1}=\exp(e^k)$.)
   In the last line, we used that
           $$
\prod_{i = 0}^{\ell + 1} \Big ( \sum_{p | c \implies p \in (T_{-1}, T_{i}]} \frac{|\gamma(c)|^2}{c} \Big ) = \sum_{c \geq 1} \frac{|\gamma(c)|^2}{c},
       $$
    which is a consequence of the assumption that the Dirichlet polynomial $\mathcal{Q}$ is degree-$k$ well-factorable.
    We also used Lemma \ref{lem: Transition}.
\end{proof}

\subsection{Proof of Lemma \ref{le:boundinglarge}}
We bound the contribution from  $a_j$'s or $b_j$'s, $j=1,2$, such that $\Omega_i(a_j) > (n_i - n_{i - 1})^{10^5}$ or $\Omega_i(b_j) > (n_i - n_{i - 1})^{10^5}$ for $j=1$ or $2$, using Chernoff's bound (also known in this setting as Rankin's trick).
We write down the argument only for $a_1$ as the other cases are dealt with in an identical fashion. 
Note that since $a_1$ is square-free, we have $\Omega_i(a_1) = \omega_i(a_1)$, where $\omega_i$ denotes the number of distinct prime factors in $(T_{i - 1}, T_{i}]$ counted without multiplicity. 
For any $\rho \in (0,2000)$, the contribution of such $a_1$'s is bounded by,
  \begin{equation}
  \begin{aligned} \label{eq:mainqqq}
&e^{-\rho (n_i - n_{i - 1})^{10^5}} \sum_{\substack{p | c_1, c_2 \implies p \in (T_{i - 1}, T_{i}] \\ \Omega_i(c_1), \Omega_i(c_2) \leq 10 (n_i - n_{i - 1})^{10^4}}} |\gamma(c_1) \gamma(c_2)|   \sum_{\substack{p | a_1, b_1 \implies p \in (T_{i- 1}, T_{i}] \\ p | a_2, b_2 \implies p \in (T_{i - 1}, T_{i}] \\ a_1, b_1, a_2, b_2 \leq T^{1/100}}} e^{\rho\omega_i(a_1)} \\
&  \hspace{3cm} \times\frac{\mu^2(a_1) \mu^2(a_2) \mu^2(b_1) \mu^2(b_2)}{[a_1 b_1 c_1, a_2 b_2 c_2]}  \Big | B_{\mathbf{z}} \Big ( \frac{a_1 b_1 c_1}{(a_1 b_1 c_1, a_2 b_2 c_2)} \Big ) B_{\pi \mathbf{z}} \Big ( \frac{a_2 b_2 c_2}{(a_1 b_1 c_2, a_2 b_2 c_2)} \Big ) \Big | . 
  \end{aligned}
  \end{equation}
  We now claim that $|B_{\mathbf{z}}(m)| \ll d_3(m)$ provided that $\mathbf{z} = (z_1, z_2, z_3, z_4)$ are such that $|z_j| =3^j / \log T$ for all $1 \leq j \leq 4$ and $m \leq T$. 
Here $d_k(m)$ denotes the $k$th divisor function: $d_k(n)=\sum_{n=m_1\dots m_k}1$.
  To prove this, from Lemma \ref{le:bzn}, for every $p^\alpha \leq T$ and integer $\alpha \geq 1$, we have
  $$
  |B_{\mathbf{z}}(p^{\alpha})| \leq  d_2(p^{\alpha}) \, \Big ( 1 + \OO \Big ( \frac{\alpha \log p}{\log T} + \frac{1}{p} \Big ) \Big ). 
  $$

  Therefore, by taking the product over all $p | m$, we obtain
  $$
  |B_{\mathbf{z}}(m)| \ll d_2(m) \prod_{p | m} \Big ( 1 + \OO \Big ( \frac{\alpha \log p}{\log T} \Big ) \Big ) \prod_{p | m} \Big ( 1 + \OO \Big ( \frac{1}{p} \Big ) \Big ) \ll d_2(m) \, \Big (\frac{3}{2} \Big )^{\omega(m)} \ll d_3(m),
  $$
  since $\prod_{p | m} (1 + \OO (\alpha \log p / \log T)) \ll \exp(\log m / \log T) \ll 1$ for $m \leq T$ and $\prod_{p | m} (1 + \OO (1/p)) \ll (3/2)^{\omega(m)}$.

  Furthermore, note that the factors $\mu^2$ in \eqref{eq:mainqqq} ensure that only the square-free $a$'s and $b$'s are counted. In particular, we have $v_p(a_jb_j)\leq 2$ for every $j$. 
Grouping $a_1 b_1$ (resp. $a_2 b_2$) as a single variable with $k_1:=v_p(a_1b_1)$ (resp. $k_2:=v_p(a_2b_2)$), we find that the sum over $a_1, a_2, b_1, b_2$ (for fixed $c_1$ and $c_2$) in (\ref{eq:mainqqq}) is bounded by the Euler product
  \begin{align} \label{eq:eulerrr}
 C \prod_{\substack{ p \in (T_{i - 1}, T_{i}]}} & \Big ( \sum_{\substack{0 \leq k_1,k_2 \leq 2}} \frac{e^{\rho \omega_{i}(p^{k_1})} d_2(p^{k_1}) d_3(p^{k_1 + v_p(c_1)}) d_2(p^{k_2}) d_3(p^{k_2 + v_p(c_2)})}{p^{\max(k_1 + v_p(c_1), k_2 + v_p(c_2))}} \Big ),
 \end{align}
  where $d_2(p^{k_1})$ accounts for the number of choices of $(a_1,b_1)$ giving the single variable $a_1b_1$, and the same for $d_2(p^{k_2})$. Note that we have not used the gcd factors and simply bounded $d_3(m / v)\leq d_3(m)$ for $v\mid m$. 
  
  The sum over the powers $k_1$ and $k_2$  in \eqref{eq:eulerrr} can be bounded further by using the inequalities $d_3(p^{k_2 + \alpha_2}) \leq d_3(p^{k_2}) d_3(p^{\alpha_2})$ and $d_2(p^{k_1}) = k_1 + 1$. This shows that the factor in \eqref{eq:eulerrr} is
  \begin{align} \nonumber
    \leq \frac{d_3(p^{v_p(c_1)}) d_3(p^{v_p(c_2)})}{p^{\max(v_p(c_1), v_p(c_2))}} & \sum_{\substack{0 \leq k_1,k_2 \leq 2}}  \frac{e^{\rho \omega_{i}(p^{k_1})} d_2(p^{k_1}) d_3(p^{k_1}) d_2(p^{k_2}) d_3(p^{k_2})}{p^{\max(k_1 + v_p(c_1), k_2 + v_p(c_2))  - \max(v_p(c_1), v_p(c_2))}}
    \\ \label{eq:estimateaa}
    \leq \frac{d_3(p^{v_p(c_1)}) d_3(p^{v_p(c_2)})}{p^{\max(v_p(c_1), v_p(c_2))}} & \cdot
    \begin{cases}
      1 + 100 e^{\rho} / p & \text{ if } v_p(c_1) = v_p(c_2), \\
      100 e^{\rho} & \text{ if } v_p(c_1) \neq v_p(c_2).
    \end{cases}
  \end{align}
  We notice that the contribution to \eqref{eq:eulerrr} of primes $p \in (T_{i - 1}, T_{i}]$ for which $v_p(c_1)=v_p(c_2)= 0$ is bounded by
    $$
    \ll \Big ( \frac{\log T_{i}}{\log T_{i - 1}} \Big )^{100 e^{\rho}} = e^{100 e^{\rho} (n_{i} - n_{i - 1})}.
    $$
    As a result of the last two equations, the Euler product in \eqref{eq:eulerrr} is bounded by
    \begin{equation}
    \label{eqn: bound c1c2 fixed}
    \ll d_3(c_1) d_3(c_2) \, \frac{f(c_1, c_2)}{[c_1, c_2]} \, \Big ( \frac{\log T_{i}}{\log T_{i - 1}} \Big )^{100 e^{\rho}},
    \end{equation}
    where $f(c_1, c_2)$ is a multiplicative function of two variables such that $f(p^{\alpha}, p^{\alpha}) = 1 + 100 e^{\rho} / p$ for all  $\alpha \geq 1$ and $f(p^{\alpha}, p^{\beta}) = 100 e^{\rho}$ for $\alpha\geq 0$ and $\beta\geq 0$ with $\alpha \neq \beta$. 
    Going back to Equation \eqref{eq:mainqqq}, it remains to estimate the sum over $c_1$ and $c_2$ using \eqref{eqn: bound c1c2 fixed}:
    \begin{align*}
      \sum_{\substack{p | c_1, c_2 \implies p \in (T_{i -1}, T_{i}] \\ \Omega_i(c_1) , \Omega_i(c_2) \leq 10 (n_i - n_{i - 1})^{10^4}}} & \frac{|\gamma(c_1) \gamma(c_2)| f(c_1, c_2)}{[c_1, c_2]} \, d_3(c_1) d_3(c_2) \\ 
      & \leq e^{1000 (n_{i} - n_{i - 1})^{10^4}} \sum_{\substack{p | c_1, c_2 \implies p \in (T_{i - 1}, T_{i}] \\ \Omega_i(c_1), \Omega_i(c_2) \leq 10 (n_i - n_{i - 1})^{10^4}}} \frac{|\gamma(c_1) \gamma(c_2)| f(c_1, c_2)}{[c_1, c_2]},
    \end{align*}
    where the restriction on the number of prime factors of $c_1$ and $c_2$ is used to trivially bound $d_3$. Furthermore, using the inequality $|\gamma(c_1) \gamma(c_2)| \leq \frac{1}{2} |\gamma(c_1)|^2 + \frac{1}{2}|\gamma(c_2)|^2$  and dropping the restriction on $\Omega_i(c_1)$, the sum over $c_1, c_2$ above is less than
    \begin{equation}
    \label{eqn: sum gamma c1c2}
    \sum_{\substack{p | c_2 \implies p \in (T_{i - 1}, T_{i}] \\ \Omega_i(c_2) \leq 10 (n_i - n_{i - 1})^{10^4}}} |\gamma(c_2)|^2 \sum_{\substack{p | c_1 \implies p \in (T_{i - 1}, T_{i}]}} \frac{f(c_1, c_2)}{[c_1, c_2]}.
        \end{equation}
        We note now that by the definition of $f$ the sum over $c_1$ with $c_2$ fixed can be bounded by an Euler product
        \begin{align*}
          & \sum_{p | c_1 \implies p \in (T_{i - 1}, T_{i}]} \frac{f(c_1, c_2)}{[c_1, c_2]} = \prod_{p \in (T_{i - 1}, T_{i}]} \Big ( \sum_{k \geq 0} \frac{f(p^k, p^{v_p(c_2)})}{p^{\max(k, v_p(c_2))}} \Big )
              \\ & = \prod_{p \in (T_{i - 1}, T_{i}]} \Big ( \sum_{0 \leq k < v_{p}(c_2)} \frac{f(p^k, p^{v_p(c_2)})}{p^{v_{p}(c_2)}} + \frac{f(p^{v_p(c_2)}, p^{v_p(c_2)})}{p^{v_p(c_2)}} + \sum_{k > v_p(c_2)} \frac{f(p^k, p^{v_p(c_2)})}{p^k} \Big ),
        \end{align*}             
and using the definition of $f$ we conclude that, 
\begin{align*}
  \sum_{p | c_1 \implies p \in (T_{i - 1}, T_{i}]} \frac{f(c_1, c_2)}{[c_1, c_2]} \leq \prod_{\substack{p \in (T_{i - 1}, T_{i}] \\ v_{p}(c_2) = 0}} \Big ( 1 + \frac{200 e^{\rho}}{p} \Big ) \prod_{\substack{p \in (T_{i - 1}, T_{i}] \\ v_{p}(c_2) > 0}} \Big ( \frac{200 e^{\rho} \, v_{p}(c_2)}{p^{v_{p}(c_2)}} \Big ). 
  \end{align*}
          Since $c_2$ has at most $10 (n_i - n_{i - 1})^{10^4}$ prime factors counted with multiplicity, this is 
          \begin{equation}
          \label{eqn: f c1c2}
 \leq         \frac{1}{c_2}  \exp \Big (200 e^{\rho} (n_i - n_{i - 1}) + 10^4 \rho (n_i - n_{i - 1})^{10^4 + 1} \Big ).
          \end{equation}
          As a result, putting together equations \eqref{eqn: f c1c2}, \eqref{eqn: sum gamma c1c2} and \eqref{eqn: bound c1c2 fixed},
          we see that \eqref{eq:mainqqq} is bounded by
            $$
\ll e^{- \rho (n_i - n_{i - 1})^{10^5} +300 e^{\rho} (n_i - n_{i - 1}) +10^4\rho (n_i - n_{i - 1})^{10^4+1}} \sum_{\substack{p | c_2 \implies p \in (T_{i - 1}, T_{i}]}} \frac{|\gamma(c_2)|^2}{c_2} .
  $$
   Here, the condition on $\Omega_i(c_2)$ was dropped.
  Choosing $\rho = 1000$ we see that this is
  $$
  \leq e^{-100 (n_i - n_{i - 1})^{10^5}} \sum_{\substack{p | c \implies p \in (T_{i - 1}, T_{i}]}} \frac{|\gamma(c)|^2}{c},
    $$
    as needed.

  \subsection{Proof of Lemma \ref{le:twooftwo}}
  
       A crucial step in the proof of Lemma \ref{le:twooftwo} will be the following estimate for $\mathfrak S_I(c_1, c_2)$ defined in Equation \eqref{eqn: mathfrak S}.
       \begin{lemma}\label{le:estimater}
         Let $\ell \geq 0$ be such that
         $\exp( 10^6 (n_{\ell + 1} - n_{\ell})^{10^5} e^{n_{\ell + 1}} ) \leq \exp ( \tfrac{1}{100} e^n )$. 
      Let $I \subset [\exp(e^{1000}), \exp(e^{n_{\ell + 1}})]$ be an interval. Let $z_1, \ldots, z_4$ be complex number with $|z_j| = 3^j / e^n$ for $j = 1,2,3,4$. Let $\mathbf{z} = (z_1, z_2, z_3, z_4)$ and $\pi \mathbf{z} = (z_3, z_4, z_1, z_2)$. Given integers $c_1, c_2 \geq 1$ with at most $10 (n_{\ell + 1} - n_{\ell})^{10^4}$ prime factors, consider $\mathfrak S_{I}(c_1, c_2)$ as in Equation (\ref{eqn: mathfrak S}). 

    Write $c_1 = r c_1'$ and $c_2 = r c_2'$ with $r := (c_1, c_2)$. Then, we have
      $$
      |\mathfrak S_I(c_1, c_2)| \leq \prod_{p \in I} \Big ( 1 - \frac{4}{p} + e^{4000}\,  \frac{\log p}{p e^n} + \frac{e^{4000}}{p^2} \Big )  \frac{h(c_1') h(c_2')}{r c_1' c_2'}, 
      $$
      where $h$ is a multiplicative function such that, for all prime $p \geq 2$ and integer $\alpha \geq 1$, 
      $$
      h(p^{\alpha}) = \frac{e^{5000} \alpha^2 \log p}{e^n}.
      $$
    \end{lemma}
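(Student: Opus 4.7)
\medskip

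\noindent\textbf{Proof proposal for Lemma \ref{le:estimater}.} The plan is to exploit the multiplicativity of the summand and factor $\mathfrak{S}_I(c_1,c_2)$ as an Euler product over primes $p \in I$. Write $\alpha_1 = v_p(c_1)$, $\alpha_2 = v_p(c_2)$, and note that since $f$ is supported on $\{p, p^2\}$, only the pairs $(a,b) \in \{0,1,2\}^2$ contribute to the local factor. This gives
$$
\mathfrak{S}_I(c_1,c_2) = \prod_{p \in I} \Lambda_p(\alpha_1,\alpha_2), \qquad \Lambda_p(\alpha_1,\alpha_2) = \sum_{0 \leq a,b \leq 2} \frac{f(p^a) f(p^b)}{p^{\max(a+\alpha_1,\, b+\alpha_2)}} \, B_{\mathbf{z}}(p^{(a+\alpha_1-\min)}) B_{\pi\mathbf{z}}(p^{(b+\alpha_2-\min)}),
$$
where $\min := \min(a+\alpha_1,b+\alpha_2)$. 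The proof then splits by whether $p$ divides $c_1 c_2$ or not.

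For primes $p \in I$ with $\alpha_1 = \alpha_2 = 0$ (the generic case), I would use Lemma \ref{le:bzn} as invoked in the proof of Lemma \ref{le:boundinglarge} to get
$$
B_{\mathbf{z}}(p^k) = d_2(p^k) \bigl(1 + O(k \log p / e^n + 1/p)\bigr),
$$
and expand
$$
\Lambda_p(0,0) = 1 + \frac{f(p)^2 d_2(p)^2}{p} + \frac{2 f(p) f(p^2) d_2(p) d_2(p^2)}{p^2} + \frac{f(p^2)^2 d_2(p^2)^2}{p^2} + \cdots
$$
Using $f(p) = -2$, $f(p^2)=1$, $d_2(p)=2$, $d_2(p^2)=3$, the leading term simplifies to $1 - 4/p$, and the correction from the $B_{\mathbf{z}}$-factors contributes the $O(\log p /(p e^n))$ and $O(1/p^2)$ terms. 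This yields the Euler product in the claim for the $p \nmid c_1c_2$ primes.

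For primes $p$ with $\alpha_1 + \alpha_2 \geq 1$, I would crudely bound each $B_{\mathbf{z}}(p^k)$ by $O(d_2(p^k)) = O(k+1)$ (valid by Lemma \ref{le:bzn}) and take the absolute value of the finite sum over $(a,b) \in \{0,1,2\}^2$. Splitting into $r$-contribution ($r = (c_1,c_2)$) via the observation that $\max(a+\alpha_1, b+\alpha_2) \geq v_p(r) + v_p(c_1') + v_p(c_2')$ when $v_p(c_1') v_p(c_2') = 0$, one extracts the factor $1/(r c_1' c_2')$. The remaining combinatorial sum, which has at most $9$ terms each of absolute size $O((\max(\alpha_1,\alpha_2)+1)^2)$, gives a prefactor bounded by $e^{5000} \alpha_1^2 \alpha_2^2 \log p / e^n$ after accounting for the smallness of $B_{\mathbf{z}}$ relative to the trivial bound: here we use that $B_{\mathbf{z}}(p^0) = 1$ plus a correction of size $O(\log p / e^n + 1/p)$ from Lemma \ref{le:bzn}, so the local factor actually carries an extra saving of $\log p / e^n$ over the trivial size, which is what produces the $h(p^\alpha) = e^{5000} \alpha^2 \log p / e^n$ definition.

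The main obstacle will be the careful bookkeeping for the case $\alpha_1 \neq \alpha_2$, where the $\max$ in the denominator breaks the symmetry and one has to verify that the factor $1/p^{\max(\alpha_1,\alpha_2)}$ combines correctly with the Euler product's non-$(c_1,c_2)$ factors to give exactly $h(p^{\alpha_1}) h(p^{\alpha_2})/(p^{v_p(r)} p^{v_p(c_1')} p^{v_p(c_2')})$; the inequality $[c_1,c_2] = r c_1' c_2'$ and the multiplicativity of $h$ make this a finite combinatorial check. All powers $\alpha_j \leq 10(n_{\ell+1}-n_\ell)^{10^4}$ by the restriction on $\Omega_i(c_j)$, so no loss of control over the combinatorial constants arises, and the assumed bound on $\exp(10^6(n-n_\ell)^{10^5} e^{n_{\ell+1}})$ keeps all $B_{\mathbf{z}}$ error terms admissible.
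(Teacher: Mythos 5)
There is a genuine gap in the treatment of the primes $p \mid c_1'c_2'$, and it is precisely at the point where the lemma's strength comes from. Your plan is to bound each $B_{\mathbf{z}}(p^k)$ crudely by $\OO(d_2(p^k))$ and then take absolute values of the nine-term local sum, hoping to recover the factor $\log p/e^n$ in $h(p^{\alpha})$ from ``the smallness of $B_{\mathbf{z}}$ relative to the trivial bound.'' This cannot work: term-by-term absolute values give a local factor of size $\asymp \alpha^2/p^{v_p(c_1')}$ with \emph{no} factor $\log p/e^n$, and the resulting bound would be larger than the claimed one by $(e^n/\log p)^{\omega(c_1'c_2')}$, which is far too lossy for the application. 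The factor $\log p/e^n$ does not come from any individual $B_{\mathbf{z}}$-value being small; it comes from an exact algebraic cancellation: writing $\mathcal{P}_{\mathbf{z},\pi\mathbf{z}}(c_1',c_2',p)$ for the local factor, one shows that $\mathcal{P}_{\mathbf{0},\mathbf{0}}(c_1',c_2',p)=0$ identically whenever $p$ divides exactly one of $c_1',c_2'$ (which is the only case, since $(c_1',c_2')=1$). This rests on the identity $\sum_{0\le k\le 2} f(p^k)\,B_{\mathbf{0}}(p^{\ell-k})=0$ for $\ell\ge 2$ — the coefficients $(1,-2,1)$ of $f$ act as a second-difference operator and $B_{\mathbf{0}}(p^{\alpha})$ is affine in $\alpha$ up to the common factor $(1-p^{-2})^{-1}$ — together with a separate one-line check for $v_p(c_1')=1$, namely $B_{\mathbf{0}}(p)(1+1/p)=2$. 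Only after this vanishing does the Lipschitz estimate $|B_{\mathbf{z}}(p^j)-B_{\mathbf{0}}(p^j)|\ll \alpha^2(\log p)e^{-n}$ from Lemma \ref{le:bzn} (which you never invoke in this form) convert the local factor into $\OO(e^{4000}\,v_p(c_1')^2(\log p)\,p^{-v_p(c_1')}e^{-n})$, i.e.\ into $h(p^{v_p(c_1')})/p^{v_p(c_1')}$.

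Two smaller points. First, your expansion of the generic local factor $\Lambda_p(0,0)$ is garbled: the $1/p$ coefficient is not ``$f(p)^2d_2(p)^2$''; the correct bookkeeping is $-2B_{\mathbf{z}}(p)/p$ from each of $(a,b)=(1,0)$ and $(0,1)$ plus $f(p)^2/p=4/p$ from $(1,1)$, which with $B_{\mathbf{0}}(p)=2+\OO(1/p)$ gives $1-4/p+\OO(1/p^2)$; the route through comparison with $\mathbf{z}=\mathbf{0}$ is cleaner and is what produces the explicit error $e^{4000}\log p/(pe^n)$. Second, since $(c_1',c_2')=1$, for each prime at most one of $v_p(c_1')$, $v_p(c_2')$ is positive, so a product $\alpha_1^2\alpha_2^2$ never arises; your bookkeeping in the ``$\alpha_1\neq\alpha_2$'' discussion conflates $v_p(c_i)$ with $v_p(c_i')$. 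The factorization $\mathcal{P}_{\mathbf{z},\pi\mathbf{z}}(c_1,c_2,p)=p^{-v_p(r)}\mathcal{P}_{\mathbf{z},\pi\mathbf{z}}(c_1',c_2',p)$ should be extracted first, which reduces everything to the coprime case.
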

The lemma is proved in the next subsection. Assuming it, we prove Lemma \ref{le:twooftwo}. 
We start by writing $c_1 = r c_1'$, $c_2 = r c_2'$
            and then we use the inequality $|\gamma(r c_1') \gamma(r c_2')| \leq \tfrac 12(|\gamma(r c_1')|^2 +|\gamma(r c_2')|^2)$. 
 Lemma \ref{le:estimater} then reduces the proof to evaluating
          $$
           \sum_{\substack{p | r, c'_1 \implies p \in I  \\ \Omega_i(r c_1') \leq 10 (n_{i} - n_{i -1})^{10^4}}} \frac{|\gamma(r c_1')|^2 h(c_1')}{r c'_1} \sum_{\substack{p | c_2' \implies p \in (T_{i - 1}, T_{i}]}} \frac{h(c_2')}{c_2'}.
             $$
        The definition of $h(p^\alpha)$ implies
              \begin{align*}
\sum_{p | c_{2}' \implies p \in I} \frac{h(c_2')}{c_2'} & = \prod_{p \in I} \Big ( 1 + \sum_{\alpha \geq 1} \frac{e^{5000} \alpha^2 \log p}{p^{\alpha} e^n}  \Big ) \leq \exp ( e^{6000} e^{n_i - n} ),
              \end{align*}
              using the fact that $I \subset [T_{i - 1}, T_{i}]$ and the bound
              $
              \sum_{p \leq T_i} \frac{\log p}{p} \leq 2 e^{n_i}.
              $

              Finally, it remains to bound
              \begin{equation}
              \label{eqn: gamma ho}
              \sum_{\substack{p | r \implies p \in I \\ \Omega_i(r c_1') \leq 10 (n_i - n_{i - 1})^{10^4}}} \frac{|\gamma(r c_1')|^2 h(c_1')}{r c_1'} = \sum_{\substack{p | m \implies p \in I \\ \Omega_i(m) \leq 10 (n_i - n_{i - 1})^{10^4}}} \frac{|\gamma(m)|^2}{m} \, g(m),
              \end{equation}
              where $g$ is a multiplicative function such that, 
              $$
              g(p^{\alpha}) := \sum_{r m = p^{\alpha}} h(m) \leq 1 + \frac{e^{5000} \alpha^3 \log p}{e^n},
              $$
              for every prime $p$ and integer $\alpha \geq 1$. Since $m$ has at most $10 (n_i - n_{i - 1})^{10^4}$ prime factors, all of which are less than $\exp(e^{n_i})$, we have
\begin{multline*}
              g(m)  \leq \exp \Big ( e^{5000} \sum_{p | m} \frac{v_p(m)^3 \log p}{e^n} \Big )  \leq
              \exp \Big ( e^{5000} \, 10000 (n_i - n_{i - 1})^{4 \cdot 10^4} \, e^{n_i - n} \Big )\\
               \leq \exp \Big ( e^{6000} (n_i - n_{i - 1})^{4 \cdot 10^4} e^{n_i - n} \Big ). 
\end{multline*}
            Therefore we obtain a final bound for \eqref{eqn: gamma ho}
              $$
              \leq \exp \Big ( e^{6000} (n_i - n_{i - 1})^{4 \cdot 10^4} e^{n_i - n} \Big ) \sum_{\substack{p | c \implies p \in I}} \frac{|\gamma(c)|^2}{c},
              $$
              thereby concluding the proof of the lemma.

\subsection{Proof of Lemma \ref{le:estimater}}

   Using multiplicativity, we can write $\mathfrak S_I(c_1,c_2)$ given in Equation \eqref{eqn: mathfrak S} as a product
   \begin{equation*}
   \begin{aligned}
   \mathfrak S_I(c_1,c_2)= \prod_{p \in I} \mathcal{P}_{\mathbf{z}, \pi \mathbf{z}}(c_1, c_2, p),
   \end{aligned}
   \end{equation*}
   where $\mathcal{P}_{\mathbf{z}, \pi \mathbf{z}}(c_1, c_2, p)$ is defined as
   \begin{align} \label{eq:defofp}
 \sum_{\substack{0 \leq k_1, k_2 \leq 2}} \frac{f(p^{k_1}) f(p^{k_2})}{p^{\max(k_1 + v_p(c_1), k_2 + v_p(c_2))}} B_{\mathbf{z}} \Big ( \frac{p^{k_1 + v_p(c_1)}}{p^{\min(k_1 + v_p(c_1), k_2 + v_p(c_2))}} \Big ) B_{\pi \mathbf{z}} \Big ( \frac{p^{k_2 + v_p(c_2)}}{p^{\min(k_1 + v_p(c_1), k_2 + v_p(c_2))}} \Big ).
   \end{align}
    Writing $c_1 = r c_1'$ and $c_2 = r c_2'$ with $r = (c_1, c_2)$, we begin by noticing that
   $$
   \mathcal{P}_{\mathbf{z}, \pi \mathbf{z}}(c_1, c_2, p) = \frac{1}{p^{v_p(r)}} \, \mathcal{P}_{\mathbf{z}, \pi \mathbf{z}} (c_1', c_2', p). 
   $$
    It therefore remains to understand $\mathcal{P}_{\mathbf{z}, \pi \mathbf{z}}(c_1', c_2', p)$. Since $(c_1', c_2') = 1$ there are only two possibilities to consider: either $(p, c_1' c'_2) = 1$ or $p$ divides only one of $c_1'$, $c_2'$.

 On one hand, if $(p, c_1' c_2') = 1$, Lemma \ref{le:bzn} in Appendix \ref{se:moments} yields
  \begin{equation}
  \label{eqn: miracle 1}
  \begin{aligned}
     \mathcal{P}_{\mathbf{z}, \pi \mathbf{z}}(c_1', c_2', p) & = \sum_{\substack{0 \leq k_1, k_2 \leq 2}} \frac{f(p^{k_1}) f(p^{k_2})}{p^{\max(k_1, k_2)}} B_{\mathbf{z}} \Big ( \frac{p^{k_1}}{p^{\min(k_1, k_2)}} \Big ) B_{\pi \mathbf{z}} \Big ( \frac{p^{k_2}}{p^{\min(k_1, k_2)}} \Big )
     \\ & = \sum_{\substack{0 \leq k_1, k_2 \leq 2}} \frac{f(p^{k_1}) f(p^{k_2})}{p^{\max(k_1, k_2)}} B_{\mathbf{0}} \Big ( \frac{p^{k_1}}{p^{\min(k_1, k_2)}} \Big ) B_{\mathbf{0}} \Big ( \frac{p^{k_2}}{p^{\min(k_1, k_2)}} \Big ) + \OO^{\star} \Big ( e^{4000} \, \frac{\log p}{p e^n} \Big )
       \\ & = 1 - \frac{4}{p} + \OO^{\star} \Big ( e^{4000} \, \frac{\log p}{p e^n} + \frac{e^{4000}}{p^2} \Big ),
 \end{aligned}
  \end{equation}
   where $\OO^{\star}(\cdot)$ means that the implicit constant is $\leq 1$. Note that we have used the simple bound $|f|\leq2$ and that either $k_1\geq 1$ or $k_2\geq 1$ if the above summands differ.

   On the other hand, if $p | c_1' c_2'$ we can assume that $p| c_1'$ and $p \nmid c_2'$ as the case of $p | c_2'$ and $p \nmid c_1'$ is identical. Then we have $v_p(c_2') = 0$ and hence, 
\begin{equation} \label{eq:zxxxx}
   \begin{aligned}
     \mathcal{P}_{\mathbf{z}, \pi \mathbf{z}}(c_1', c_2', p) & = \sum_{1 \leq k_1 \leq 2} \frac{f(p^{k_1})}{p^{k_1 + v_p(c_1')}} \sum_{0 \leq k_2 \leq 2} f(p^{k_2}) B_{\mathbf{z}}(p^{k_1 + v_p(c_1') - k_2}) + \frac{1}{p^{v_p(c_1')}} \sum_{0 \leq k_2 \leq 1} f(p^{k_2}) B_{\mathbf{z}}(p^{v_p(c_1') - k_2})
     \\ & + \begin{cases}
       \frac{1}{p^2} B_{\pi \mathbf{z}}(p) & \text{ if } v_{p}(c_1') = 1, \\
       \frac{1}{p^{v_p(c_1')}} B_{\mathbf{z}}(p^{v_p(c_1') - 2}) & \text{ if } v_{p}(c_1') \geq 2  .
     \end{cases}
   \end{aligned}
\end{equation}
By Lemma \ref{le:bzn}, we have $|B_{\mathbf{z}}(p^j) - B_{\mathbf{0}}(p^j)| \leq 100 \, e^{3000} \, j^2 \,(\log p) e^{-n}$ for all $0 \leq j \leq v_p(c_1') + 2$. Note that this uses that $p^{v_p(c'_1) + 2} \leq \exp(100 (n_{\ell + 1} - n_{\ell})^{10^4} e^{n_{\ell + 1}}) \leq \exp( \tfrac{1}{100} e^n)$. Therefore, using this inequality and \eqref{eq:zxxxx}, we get
\begin{equation} \label{eq:zxxxx3}
\mathcal{P}_{\mathbf{z}, \pi \mathbf{z}}(c_1', c_2', p) = \mathcal{P}_{\mathbf{0}, \mathbf{0}}(c_1', c_2', p) + \OO^{\star} \Big ( e^{4000} \, \frac{v_p(c'_1)^2 \cdot \log p}{p^{v_p(c_1')} e^n}  \Big ),
\end{equation}
where $\OO^{\star}(\cdot)$ is a $\OO(\cdot)$ with implicit constant $\leq 1$. 
We claim that $\mathcal{P}_{\mathbf{0}, \mathbf{0}} (c_1', c_2', p) = 0$. 
For $v_p(c'_1) \geq 2$, this follows from \eqref{eq:zxxxx} and the identity
\begin{equation} \label{eq:zxxxx2}
\sum_{0 \leq k \leq 2} f(p^k) B_{\mathbf{0}}(p^{\ell - k}) = 0 \quad \text{for $\ell \geq 2$.}
\end{equation}
The above identity follows from Lemma \ref{le:bzn} and the identities $\ell - 2 (\ell - 1) + (\ell - 2) = 0$ and $1 - 2 + 1 = 0$. For $v_p(c'_1) = 1$ by \eqref{eq:zxxxx} and \eqref{eq:zxxxx2} it suffices to check that,
$$
\frac{1}{p} \, ( B_{\mathbf{0}}(p) - 2 B_{\mathbf{0}}(1) ) + \frac{1}{p^2} \,B_{\mathbf{0}}(p) = 0.
$$
In particular upon factoring it is enough to check that $B_{\mathbf{0}}(p) \, (1 + 1/p) - 2 = 0$. 
This follows from $B_{\mathbf{0}}(p) = (1 - p^{-2})^{-1} \, (2 - 2/p)$.

We conclude from \eqref{eq:zxxxx3} and $\mathcal{P}_{\mathbf{0}, \mathbf{0}}(c_1', c_2', p) = 0$ that,
\begin{equation}
\label{eqn: miracle 2}
|\mathcal{P}_{\mathbf{z}, \pi \mathbf{z}}(c_1', c_2', p)| \leq e^{4000} \, \frac{1}{p^{v_p(c_1')}} \, \frac{v_p(c_1')^2 \log p}{e^n}.
\end{equation}

Equations \eqref{eqn: miracle 1} and \eqref{eqn: miracle 2} can then be used to get the bound
   \begin{align*}
    |\mathfrak S_{I}(c_1, c_2)| \leq \frac{1}{r c_1' c_2'} \prod_{\substack{p \in I \\ (p, c_1' c_2') = 1}} \Big ( 1 - \frac{4}{p} + e^{4000} \, \frac{\log p}{p e^n} + \frac{e^{4000}}{p^2}  \Big ) \prod_{i = 1}^{2} \prod_{\substack{p \in I \\ p | c'_i}} \Big ( e^{4000} \, \frac{v_p(c_i')^2 \log p}{e^n} \Big )
   \end{align*}
   The restriction $(p, c_1' c_2') = 1$ can be removed by multiplying and dividing by $(1 - 4 / p)$ for primes $p$ with $p | c_1' c_2'$. As a result, the above is bounded by
   \begin{align*}
 |\mathfrak S_I(c_1, c_2)| \leq \frac{1}{r c_1' c_2'} \prod_{\substack{p \in I}} \Big ( 1 - \frac{4}{p} + e^{4000} \, \frac{\log p}{p e^n} + \frac{e^{4000}}{p^2} \Big ) \times \prod_{i = 1}^{2} \prod_{\substack{p \in I \\ p | c'_i}} \Big ( 2 \cdot e^{4000} \, \frac{v_p(c_1')^2 \log p}{e^n} \Big ).
   \end{align*}
This is the claimed bound.

\begin{appendix}

\section{Estimates on Sums over Primes} \label{se:moments}

{\noindent A.1 \ \bf Moments of Dirichlet Polynomials.}
Let $(Z_p, p \text{ prime})$ a sequence of independent and identically distributed random variables, uniformly distributed on the unit circle $|z| = 1$. For an integer $n$ with prime factorization $n = p_1^{\alpha_1} \ldots p_k^{\alpha_k}$ with $p_1, \ldots, p_k$ all distinct, consider 
$$
Z_n := \prod_{i = 1}^{k} Z_{p_i}^{\alpha_i}. 
$$
Then we have $\mathbb{E}[Z_n \overline{Z}_m] = \mathbf{1}_{n = m}$, and therefore, for an arbitrary sequence $a(n)$ of complex numbers, the following holds
$$
\sum_{n \leq N} |a(n)|^2 = \mathbb{E} \Big [ \Big | \sum_{n \leq N} a(n) Z_n \Big |^2 \Big ]. 
$$
The next lemma shows that the mean value of Dirichlet polynomial is close to the one of the above random model. It follows directly from \cite[Corollary 3]{MontgomeryVaughan}.

\begin{lemma}[Mean-value theorem for Dirichlet polynomials] \label{lem: Transition}
  We have,
$$
  \mathbb{E} \Big [ \Big | \sum_{n \leq N} a(n) n^{\ii \tau} \Big |^2 \Big ]  = \Big (1 + \OO \Big ( \frac{N}{T} \Big ) \Big ) \sum_{n \leq N} |a(n)|^2  = \Big ( 1 + \OO \Big ( \frac{N}{T} \Big ) \Big ) \mathbb{E} \Big [ \Big | \sum_{n \leq N} a(n) Z_n \Big |^2 \Big ].
$$
  \end{lemma}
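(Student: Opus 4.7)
The plan is to prove the identity by expanding the square and separating diagonal and off-diagonal contributions, then controlling the off-diagonal by the Montgomery--Vaughan mean value theorem (Hilbert-type inequality); the ``random model'' equality is essentially by orthogonality.

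First, I would write
\[
\mathbb{E}\Big[\Big|\sum_{n\leq N}a(n)n^{\ii\tau}\Big|^2\Big]
=\frac{1}{T}\int_T^{2T}\sum_{m,n\leq N}a(n)\overline{a(m)}\Big(\frac{n}{m}\Big)^{\ii t}\,\dd t.
\]
The diagonal $m=n$ contributes exactly $\sum_{n\leq N}|a(n)|^2$. For the off-diagonal, a direct evaluation of $\int_T^{2T}(n/m)^{\ii t}\dd t$ gives a bound of $2/|\log(n/m)|$ uniformly in $t$, so the contribution of $m\neq n$ is at most
\[
\frac{2}{T}\sum_{\substack{m,n\leq N\\ m\neq n}}\frac{|a(n)||a(m)|}{|\log(n/m)|}.
\]
The main step is to invoke the Montgomery--Vaughan inequality (their Corollary 3 in \cite{MontgomeryVaughan}), which bounds this double sum by $C\cdot N\sum_{n\leq N}|a(n)|^2$ with an absolute constant $C$ (after noting that the spacing $\delta_n=\min_{m\neq n}|\log(n/m)|\gg 1/n$ for integers $n\leq N$). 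Dividing by $T$ yields the claimed multiplicative error $1+\OO(N/T)$.

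For the second equality, I would use the fact that $\mathbb{E}[Z_n\overline{Z_m}]=\mathbf{1}_{n=m}$, which follows from the prime factorization $Z_n=\prod_i Z_{p_i}^{\alpha_i}$ and the independence together with the rotational invariance of each $Z_p$ (so $\mathbb{E}[Z_p^k]=0$ for $k\neq 0$). Expanding the square gives
\[
\mathbb{E}\Big[\Big|\sum_{n\leq N}a(n)Z_n\Big|^2\Big]=\sum_{m,n\leq N}a(n)\overline{a(m)}\,\mathbf{1}_{n=m}=\sum_{n\leq N}|a(n)|^2,
\]
which matches the first expression up to the factor $1+\OO(N/T)$.

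The only real obstacle is the Montgomery--Vaughan step, but it is a black-box input here; the spacing condition for distinct positive integers $\leq N$ is classical ($|\log(n/m)|\geq |n-m|/\max(n,m)\geq 1/N$ for $m\neq n$, refined by summing dyadically over $|n-m|$), and yields precisely the $N$-saving needed to obtain $\OO(N/T)$ after dividing by $T$. No auxiliary constructions are required beyond this citation.
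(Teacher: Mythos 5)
The paper's own proof is a one-line citation: Corollary 3 of Montgomery--Vaughan \emph{is} the mean value theorem $\int_0^T |\sum_{n\le N} a(n)n^{\ii t}|^2\,\dd t = \sum_{n\le N}|a(n)|^2\,(T+\OO(n))$, which immediately gives the stated $1+\OO(N/T)$; the random-model identity is the orthogonality relation $\mathbb{E}[Z_n\overline{Z_m}]=\mathbf{1}_{n=m}$ recorded just before the lemma. Your treatment of the second equality is correct and identical to the paper's.

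Your treatment of the first equality, however, has a genuine gap in the off-diagonal step. After bounding $|\int_T^{2T}(n/m)^{\ii t}\dd t|\le 2/|\log(n/m)|$ you claim that
$\sum_{m\neq n}|a(n)||a(m)|/|\log(n/m)| \ll N\sum_{n\le N}|a(n)|^2$
follows from Montgomery--Vaughan. This is false: the Hilbert-type inequality controls the \emph{signed} bilinear form $\sum_{r\neq s}x_r\overline{x_s}/(\lambda_r-\lambda_s)$ and relies essentially on cancellation, which you destroy by taking absolute values first. Indeed, for $a(n)\equiv 1$ the left-hand side is $\asymp N^2\log N$ (for each $n$ the near-diagonal terms $|m-n|\le n/2$ already contribute $\asymp n\log n$), while the right-hand side is $N^2$; so the absolute-value route only recovers the classical error term $\OO(N\log N/T)$, not the stated $\OO(N/T)$. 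The correct argument evaluates the off-diagonal integral exactly, writes it as a difference of two signed sums of the form $\sum_{m\neq n}a(n)\overline{a(m)}(n/m)^{\ii T_j}/(\ii\log(n/m))$, and applies the generalized Hilbert inequality to each with the weights $\delta_n\gg 1/n$ --- which is precisely the content of Montgomery--Vaughan's Corollary 3, so the cleanest fix is to cite that corollary directly rather than re-derive it. (The weaker $\OO(N\log N/T)$ bound would in fact suffice for every application in the paper, but it is not what the lemma asserts.)
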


The above implies that Dirichlet polynomials that are supported on integers with prime factors in different ranges behave independently to some extent.
\begin{lemma}[Splitting Lemma] \label{le:splitting}
  Let
  $$
  A(s) := \sum_{\substack{n \leq N \\ p | n \implies p \leq w}} \frac{a(n)}{n^s} \text{ and } B(s) := \sum_{\substack{n \leq N \\ p | n \implies p > w}} \frac{b(n)}{n^s}
  $$
  be two Dirichlet polynomials with $N \leq T^{1/4}$. Then, we have
  $$
  \mathbb{E}[|A(\tfrac 12 + \ii\tau)|^2 \, |B(\tfrac 12 + \ii\tau)|^2 ] =(1+\OO(T^{-1/2}))\mathbb{E}[|A(\tfrac 12 + \ii\tau)|^2 ] \, \mathbb{E}[|B(\tfrac 12 + \ii\tau)|^2].
  $$
\end{lemma}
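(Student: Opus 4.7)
The plan is to express the product $A(\tfrac12+\ii\tau)B(\tfrac12+\ii\tau)$ as a single Dirichlet polynomial, apply the mean-value theorem (Lemma \ref{lem: Transition}) once to the product and once to each factor, and then use the disjointness of the prime supports to factor the diagonal sum.

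First, observe that $A(s)B(s)=\sum_m c(m)m^{-s}$, where $c(m)=\sum_{m=n_1n_2}a(n_1)b(n_2)$ with the restrictions that $n_1,n_2\leq N$, $p\mid n_1\Rightarrow p\leq w$, and $p\mid n_2\Rightarrow p>w$. The key point is that, because the prime supports of $n_1$ and $n_2$ are disjoint, the factorization $m=n_1n_2$ (with this support structure) is unique when it exists; hence $c(m)=a(n_1)b(n_2)$ whenever such a factorization exists, and $c(m)=0$ otherwise. In particular $|c(m)|^2=|a(n_1)|^2|b(n_2)|^2$, and $AB$ is a Dirichlet polynomial of length at most $N^2\leq T^{1/2}$.

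Next I apply Lemma \ref{lem: Transition} to $AB$ viewed as a Dirichlet polynomial of length $\leq N^2$. Since $|AB|^2=|AB(\tfrac12+\ii\tau)|^2$, this gives
\[
\mathbb{E}[|A(\tfrac12+\ii\tau)B(\tfrac12+\ii\tau)|^2]=\Bigl(1+\OO(N^2/T)\Bigr)\sum_{m}\frac{|c(m)|^2}{m}.
\]
Using the unique factorization remark above,
\[
\sum_{m}\frac{|c(m)|^2}{m}=\sum_{n_1,n_2}\frac{|a(n_1)|^2|b(n_2)|^2}{n_1n_2}=\Bigl(\sum_{n_1}\frac{|a(n_1)|^2}{n_1}\Bigr)\Bigl(\sum_{n_2}\frac{|b(n_2)|^2}{n_2}\Bigr).
\]
Finally, two more applications of Lemma \ref{lem: Transition}, this time to $A$ and to $B$ separately (both of length $\leq N\leq T^{1/4}$), rewrite each factor as $(1+\OO(N/T))^{-1}\mathbb{E}[|A(\tfrac12+\ii\tau)|^2]$ and $(1+\OO(N/T))^{-1}\mathbb{E}[|B(\tfrac12+\ii\tau)|^2]$. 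Combining all three error factors and using $N\leq T^{1/4}$ to bound $\OO(N^2/T)+\OO(N/T)=\OO(T^{-1/2})$ yields the claim.

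There is no real obstacle here: the only content is the uniqueness of the $n_1n_2$ factorization (immediate from disjoint prime supports) and the bookkeeping of the three error terms; the assumption $N\leq T^{1/4}$ is exactly what is needed so that $AB$ still has length $\leq T^{1/2}$ and the mean-value error from Lemma \ref{lem: Transition} is $\OO(T^{-1/2})$.
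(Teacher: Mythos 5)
Your proof is correct and follows essentially the same route as the paper: both apply the mean-value theorem (Lemma \ref{lem: Transition}) to the product $AB$ of length $\le T^{1/2}$, use the disjointness of the prime supports to force the unique factorization in the diagonal sum (your $|c(m)|^2=|a(n_1)|^2|b(n_2)|^2$ is the paper's $m_1=m_1'$, $m_2=m_2'$), and then convert the two diagonal factors back into expectations via two further applications of the same lemma. The error bookkeeping is also the same.
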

\begin{proof}
Note that $AB$ is a Dirichlet polynomial with length at most $T^{1/2}$, so  Lemma \ref{lem: Transition} gives
  $$
  \mathbb{E}[|A(\tfrac 12 + \ii\tau)|^2 \, |B(\tfrac 12 + \ii\tau)|^2 ] =(1+\OO(T^{-1/2}))\sum_{n} \frac{1}{n}\,  \Big | \sum_{\substack{n = m_1 m_2 \\ m_1,m_2\leq T^{1/4}\\p | m_1 \implies p \leq w \\ p | m_2 \implies p > w}} a(m_1) b(m_2) \Big |^2.
  $$
  Expanding the square we find that the sum over $n$ is equal to
  $$
  \sum_{\substack{m_1 m_2 = m'_1 m'_2 \\ m_1, m_2, m_1', m_2' \leq T^{1/4}  \\ p | m_1, m_1' \implies p \leq w \\ p | m_2, m_2' \implies p > w}} \frac{a(m_1) \overline{a(m'_1)} b(m_2) \overline{b(m'_2)}}{\sqrt{m_1 m'_1 m_2 m'_2}}
=
  \sum_{\substack{m_1 \leq T^{1/4} \\ p | m_1 \implies p \leq w}} \frac{|a(m_1)|^2}{m_1} \, \sum_{\substack{m_2 \leq T^{1/4} \\ p | m_2 \implies p > w}} \frac{|b(m_2)|^2}{m_2},
  $$
  where we have used the condition on the prime factors of $m_1, m'_1, m_2, m'_2$, which implies that $m_1 = m'_1$ and $m_2 = m'_2$.
   By Lemma \ref{lem: Transition}, the above right-hand side is equal to
      $
      (1 + \OO(T^{-3/4})) \mathbb{E} [|A(\tfrac 12 + \ii\tau)|^2] \, \mathbb{E}[|B(\tfrac 12 + \ii\tau)|^2]
      $, which concludes the proof.
\end{proof}

Moments of the Dirichlet polynomials $S_k$ as defined in Equation \eqref{eqn: Sk} are very close to Gaussian ones provided that the moments are not too large compared to their length. 
This is the content of Lemma \ref{lem: Gaussian moments} below.
For the proof of this, it is useful to consider the random variables
\begin{equation}
\label{eqn: Xp}
X_p(h)=\re \Big ( Z_p \, p^{-\tfrac 12-\ii h} + \tfrac 12 \, Z_p^2 \, p^{-1-2\ii h} \Big ), \text{ $p$ prime, $h\in[-2,2]$,}
\end{equation}
where we remind that the variables $Z_p$ are uniform on the unit circle.
We also use a precise form of Mertens' theorem: 
\begin{equation}
\label{eqn: mertens}
\sum_{a<p\leq b}\frac{1}{p}=\log_2b-\log_2a +\OO(e^{-\kappa\sqrt{\log a}}),
\end{equation}
for some $\kappa>0$.
Such estimates are given in \cite[Corollary 2]{Explicit}, for self-containedness we give a short proof below based on the following quantitative prime number theorem \cite{MonVau2003}: There exists $c>0$ such that uniformly in $x\geq 2$,
\begin{equation}
\label{eqn: PNT}
\pi(x)=|\{p\leq x\}|=\int_{2}^x\frac{\dd t}{\log t}+\OO\left(x e^{-c\sqrt{\log x}}\right).
\end{equation}
This implies by integration by parts
\begin{multline*}
\sum_{a<p\leq b}\frac{1}{p}
=\int_{a}^b \frac{\dd \pi(x)}{x}
=
\frac{\pi(b)}{b}-\frac{\pi(a)}{a}
+\int_{a}^b \frac{\pi(x)}{x^2}\dd x\\
=
\int_2^b\frac{\dd t}{b\log t}-\int_2^a\frac{\dd t}{a\log t}
+\int_{a}^b \frac{\dd x}{x^2}\int_2^x\frac{\dd t}{\log t}
+\OO(e^{-\kappa\sqrt{\log a}})
=\log_2b-\log_2 a+\OO(e^{-\kappa\sqrt{\log a}}),
\end{multline*}
where we chose $\kappa=c/2$.
For the proof of Lemma \ref{lem: Gaussian moments} below we will first need some control on the Laplace transform of the $X_p$'s.

\begin{lemma}\label{lem:basicFour} 
There exists an absolute $C>0$ such that for any $\lambda\in\mathbb{R}$ and $0\leq j\leq k$ we have 
\begin{equation}\label{eqn:Laplace2}
\E\Big[\exp\big(\lambda\sum_{e^j<\log p<e^k}X_p\big)\Big]\leq \exp((k-j+C)\lambda^2/4).
\end{equation}
\end{lemma}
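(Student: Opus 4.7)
The plan is as follows. By independence of the $(Z_p)$'s, the expectation factors as
$$
\E\!\left[\exp\!\left(\lambda \sum_{e^j<\log p<e^k} X_p\right)\right]
= \prod_{e^j<\log p<e^k}\E[e^{\lambda X_p}],
$$
so it suffices to bound each factor and apply Mertens' formula \eqref{eqn: mertens}. Since $Z_p$ is uniform on the unit circle, I absorb the $h$-dependent phase by rotational invariance and reduce to $X_p=\cos(\theta_p)/\sqrt{p}+\cos(2\theta_p)/(2p)$ with $\theta_p$ uniform on $[0,2\pi]$.

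For each single prime, the Bessel expansion $e^{x\cos\theta}=I_0(x)+2\sum_{n\geq 1}I_n(x)\cos(n\theta)$ and Parseval give
$$
\E[e^{\lambda X_p}] = I_0\!\bigl(\tfrac{\lambda}{\sqrt{p}}\bigr)I_0\!\bigl(\tfrac{\lambda}{2p}\bigr) + 2\sum_{m\geq 1}I_{2m}\!\bigl(\tfrac{\lambda}{\sqrt{p}}\bigr)I_m\!\bigl(\tfrac{\lambda}{2p}\bigr).
$$
The key analytic ingredient is the sharp inequality $I_0(x)\leq e^{x^2/4}$, which is immediate from term-by-term comparison of $I_0(x)=\sum_n (x/2)^{2n}/(n!)^2$ and $e^{x^2/4}=\sum_n (x/2)^{2n}/n!$, using $n!\leq(n!)^2$. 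Applied to the main term this yields
$$
I_0\!\bigl(\tfrac{\lambda}{\sqrt{p}}\bigr)I_0\!\bigl(\tfrac{\lambda}{2p}\bigr)\leq \exp\!\Bigl(\tfrac{\lambda^2}{4p}+\tfrac{\lambda^2}{16 p^2}\Bigr),
$$
already with the correct constant $1/(4p)$, matching the Gaussian variance. The higher $m$ terms are estimated via $|I_n(x)|\leq (x/2)^n e^{x^2/4}/n!$, which shows that the sum over $m\geq 1$ contributes only a perturbative factor $1+O(\lambda^2/p^2+|\lambda|^3/p^2)$, provided $|\lambda|\leq 2p$. Upon summing $\log \E[e^{\lambda X_p}]$ over the primes in $(e^j,e^k)$ via Mertens' theorem and using $\sum_p 1/p^2<\infty$, the perturbative errors are absorbed into the constant $C$, yielding the claim in this regime.

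For the complementary range where the Bessel bound deteriorates ($|\lambda|$ comparable to or larger than $\sqrt{p}$ for the smallest primes in the window) I fall back on the deterministic bound $|X_p|\leq 1/\sqrt{p}+1/(2p)$: once $|\lambda|\geq 4\sqrt{p}$, one has $\log \E[e^{\lambda X_p}]\leq |\lambda|(1/\sqrt{p}+1/(2p))\leq \lambda^2/(4p)$ with room to spare, so the target inequality holds trivially prime by prime. The main obstacle is maintaining the \emph{sharp} constant $1/4$ uniformly in $\lambda$: since $\cos\theta_p$ and $\cos 2\theta_p$ are dependent, a crude Cauchy--Schwarz decoupling of the two summands in $X_p$ would degrade the constant to $1/2$. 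The Bessel calculation above provides the correct decoupling while preserving the Gaussian constant, essentially because $\E[e^{\lambda\cos\theta}]=I_0(\lambda)$ is itself sub-Gaussian with parameter exactly equal to its variance.
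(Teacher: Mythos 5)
Your argument is correct in substance and follows the same two-regime skeleton as the paper's proof: factor the Laplace transform by independence of the $Z_p$'s, use the trivial deterministic bound $|X_p|\le p^{-1/2}+(2p)^{-1}$ when $|\lambda|$ is large compared to $\sqrt p$, use a refined sub-Gaussian bound with the sharp constant $\lambda^2/(4p)$ in the complementary range, and sum via Mertens, with the convergent error sums absorbed into $C$. Where you genuinely diverge is in the single-prime estimate: the paper Taylor-expands $e^{zX_p}$ to third order and shows $\E[e^{zX_p}]=1+\tfrac{z^2}{4p}+\OO(|z|^3p^{-3/2})$ for $|z|\ll\sqrt p$, whereas you compute the Laplace transform exactly via the Bessel expansion and then invoke $I_0(x)\le e^{x^2/4}$. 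Your identity $\E[e^{\lambda X_p}]=I_0(a)I_0(b)+2\sum_{m\ge1}I_{2m}(a)I_m(b)$ (with $a=\lambda/\sqrt p$, $b=\lambda/(2p)$) is correct, as is $I_n(x)\le (|x|/2)^n e^{x^2/4}/n!$, and the observation that the exact Fourier computation is what preserves the constant $1/4$ despite the dependence of $\cos\theta$ and $\cos2\theta$ is exactly the right point. The Bessel route has the advantage that the main-term bound $I_0(a)I_0(b)\le e^{a^2/4+b^2/4}$ is global in $\lambda$, with only the cross-term tail requiring a size restriction; the paper's Taylor expansion needs $|z|\ll\sqrt p$ from the start but avoids special functions.

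Two small repairs to the constants. First, the trivial bound $|\lambda|(p^{-1/2}+(2p)^{-1})\le\lambda^2/(4p)$ is equivalent to $|\lambda|\ge 4\sqrt p+2$, so the threshold $|\lambda|\ge4\sqrt p$ leaves a thin uncovered band; take the cutoff at, say, $|\lambda|\ge 6\sqrt p$ (or keep the paper's $p<\lambda^2/1000$). Second, the cross-term sum $\sum_{m\ge1}(|\lambda|^3/(16p^2))^m/((2m)!\,m!)$ is $\OO(|\lambda|^3/p^2)$ only when $|\lambda|^3\ll p^2$, not under your stated condition $|\lambda|\le 2p$; in the Bessel regime $|\lambda|\le 6\sqrt p$ this holds once $p$ exceeds an absolute constant, and for the finitely many remaining primes one has $|\lambda|\gg1$ there, so the crude bound $\E[e^{\lambda X_p}]\le e^{|\lambda|}$ contributes $\OO(1)\le C\lambda^2/4$. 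Both issues are absorbed by the absolute constant $C$, so the proof stands after this bookkeeping.
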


\begin{proof}
As a preliminary, we consider the generating function of the increments $X_p$'s. For any $B>0$ there exists $A>0$ such that $|e^w-(w+w^2/2)|\leq A |w|^3$ uniformly in $|w|<B$, so that uniformly in $p\geq 2$ and $|z| \leq C \sqrt{p}$ we have
\begin{multline*}
\E\big[e^{z X_p}\big]=\int e^{z\left(
\frac{e^{\ii\theta}+e^{-\ii\theta}}{2\sqrt{p}}
+
\frac{e^{2\ii\theta}+e^{-2\ii\theta}}{8p}
\right)}\frac{\dd\theta}{2\pi}\\
=1+z^2\int\left(
\frac{e^{\ii\theta}+e^{-\ii\theta}}{2\sqrt{p}}
+
\frac{e^{2\ii\theta}+e^{-2\ii\theta}}{8p}
\right)^2\frac{\dd\theta}{2\pi}+\OO\left(z^3p^{-\frac{3}{2}}\right)=1+\frac{z^2}{4p}+\OO\left(z^3p^{-\frac{3}{2}}\right).
\end{multline*}
As a consequence, for any $C>0$ there exists $C'>0$ such that for any $p\geq 2$ and $|z| \leq C \sqrt{p}$, we have 
\begin{equation}\label{eqn:Laplace1}
\left|\E\big[e^{z X_p}\big]-e^{\frac{z^2}{4p}}\right|\leq C'\frac{|z^3|}{p^{3/2}}.
\end{equation}
To prove (\ref{eqn:Laplace2}), 
first note that for $1\leq p<\lambda^2/1000$, since the $Z_p$'s are bounded, we trivially have
$$
\E\big[e^{\lambda X_p}\big]\leq e^{\frac{|\lambda|}{\sqrt{p}}+\frac{|\lambda|}{2p}}\leq e^{\frac{\lambda^2}{4p}}.
$$
Moreover, for $p>\lambda^2/1000$, from (\ref{eqn:Laplace1}) there is an absolute $A>0$ such that
$$
\E\big[e^{\lambda X_p}\big]\leq e^{\frac{\lambda^2}{4p}+A\frac{|\lambda|^3}{p^{3/2}}}.
$$
We conclude that 
$$
\E\exp\Big(\lambda\sum_{e^j<\log p<e^k}X_p\Big)\leq \E\exp\Big(\lambda^2\sum_{e^j<\log p<e^k}\frac{1}{4p}+A|\lambda|^3\sum_{n>\frac{\lambda^2}{1000}}n^{-3/2}\Big) \leq \exp((k-j+C)\lambda^2/4),
$$
where we have used (\ref{eqn: mertens}).
\end{proof}

\begin{lemma}[Gaussian moments of Dirichlet polynomials]
\label{lem: Gaussian moments}
For any $h \in [-2,2]$ and integers $k,j,q$ satisfying $n_0 \leq j \leq k$,  $2q \leq e^{n-k} $,
and any constant $A > 0$ we have
\begin{align}\label{eqn:mom2}
\mathbb{E} [ |S_k(h) - S_{j}(h)|^{2q} ] &\ll \frac{(2q)!}{2^q q!} \, \Big(\frac{k - j}{2}\Big)^{q},\\
\label{eqn: moment+1}
\mathbb{E} [ | S_k(h) - S_{j}(h) + A|^{2q} ] &\ll \frac{(2q)!}{q!} \, (k - j)^{q} + (2 A)^{2q}.
\end{align}
Moreover, there exists $C>0$ such that for any $0 \leq j \leq k$,  $2q \leq e^{n-k} $, we have
\begin{equation}\label{eqn:mom3}
\mathbb{E} [ |S_k(h) - S_{j}(h)|^{2q} ] \ll q^{1/2} \frac{(2q)!}{2^q q!} \, \Big(\frac{k - j+C}{2}\Big)^{q}. 
\end{equation}
\end{lemma}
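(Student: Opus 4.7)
The plan is to pass from the $\tau$-average to the random model with independent unit-modulus variables $Z_p$ by means of Lemma~\ref{lem: Transition}, and then compute the moments exploiting independence. Write $\tilde U := \tilde S_k(h) - \tilde S_j(h)$, so that $S_k(h) - S_j(h) = \frac{1}{2}(\tilde U + \bar{\tilde U})$ and $(S_k - S_j)^{2q}$ expands binomially as a linear combination of products $\tilde U^\ell \bar{\tilde U}^{2q-\ell}$. Each such product is a Dirichlet-type polynomial in $\tau$ of length at most $\exp(2q e^k) \leq T$ under the hypothesis $2q \leq e^{n-k}$, so the Montgomery--Vaughan mean value theorem behind Lemma~\ref{lem: Transition} (applied to off-diagonal products) yields
\[
\mathbb E\bigl[(S_k(h) - S_j(h))^{2q}\bigr] \ll \mathbb E\bigl[(\mathcal S_k(h) - \mathcal S_j(h))^{2q}\bigr] = \mathbb E\Big[\Big(\sum_{e^j < \log p \leq e^k} X_p(h)\Big)^{2q}\Big],
\]
with the independent random variables $X_p(h)$ from~\eqref{eqn: Xp}.

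For~\eqref{eqn:mom3} I would use the sub-Gaussian estimate of Lemma~\ref{lem:basicFour}, which states $\mathbb E[e^{\lambda \sum_p X_p(h)}] \leq \exp(\lambda^2(k-j+C)/4)$ for every $\lambda \in \mathbb R$. A Chernoff bound upgrades this into the tail estimate $\mathbb P(|\sum_p X_p(h)| > t) \leq 2 e^{-t^2/(k-j+C)}$. Plugging into $\mathbb E[X^{2q}] = \int_0^\infty 2 q\, t^{2q-1} \mathbb P(|X|>t)\,dt$ and changing variables $u = t^2/(k-j+C)$ produces the bound $\mathbb E[(\sum_p X_p(h))^{2q}] \leq \sqrt{2}\, q!\, (k-j+C)^q$, which Stirling rewrites as $\ll q^{1/2} \cdot \frac{(2q)!}{2^q q!} \cdot \big(\tfrac{k-j+C}{2}\big)^q$, giving~\eqref{eqn:mom3}.

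For~\eqref{eqn:mom2} the stronger assumption $j \geq n_0$ permits removing both the $q^{1/2}$ factor and the $+C$ in the variance. Expanding multinomially using independence of the $X_p$'s gives
\[
\mathbb E\Big[\Big(\sum_p X_p(h)\Big)^{2q}\Big] = \sum_{\substack{a_p \geq 0 \\ \sum a_p = 2q}} \frac{(2q)!}{\prod_p a_p!} \prod_p \mathbb E[X_p(h)^{a_p}].
\]
Any term with some odd $a_p$ vanishes by rotational invariance of the $Z_p$'s. The dominant contribution, from configurations with every nonzero $a_p$ equal to $2$, equals $\frac{(2q)!}{2^q q!}\bigl(\sum_p \mathbb E[X_p(h)^2]\bigr)^q$, and the precise Mertens formula~\eqref{eqn: mertens} combined with $j \geq n_0$ (so every prime in the sum exceeds $\exp(\sqrt{\log T})$) forces $\sum_p \mathbb E[X_p(h)^2] = (k-j)/2 + O(e^{-\kappa (\log T)^{1/4}})$, so this main term fits within the required bound. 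Configurations with some $a_p \geq 4$ come with factors $\mathbb E[X_p^{a_p}] \ll p^{-a_p/2}$ and, after a standard combinatorial count using Mertens, contribute only a lower-order correction. Finally,~\eqref{eqn: moment+1} follows at once from~\eqref{eqn:mom2} via the convexity inequality $|x+A|^{2q} \leq 2^{2q-1}(|x|^{2q} + A^{2q})$.

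The principal technical point is the combinatorial estimate for~\eqref{eqn:mom2}: one must show, without any $q$-dependent loss, that partitions with some $a_p \geq 4$ are genuinely subleading compared to the Gaussian main term. The restriction $j \geq n_0$ is essential here because it makes the Mertens error super-polynomially small on the relevant range of primes; for smaller $j$ one has to settle for the Laplace-transform route behind~\eqref{eqn:mom3}, which loses the $q^{1/2}$ factor through Stirling.
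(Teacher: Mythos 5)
Your treatment of \eqref{eqn:mom3} and \eqref{eqn: moment+1} is sound and essentially matches the paper (the paper optimizes the Laplace transform bound of Lemma \ref{lem:basicFour} directly rather than going through a Chernoff tail and integrating, but these are equivalent and both lose the same $q^{1/2}$ through Stirling). The transfer to the random model is also the right idea, though the paper implements it with a majorant $\Phi$ whose Fourier transform is compactly supported, so that the off-diagonal prime products vanish \emph{exactly} once all products are $\leq T^{1/2}$ (hence the hypothesis $2q\leq e^{n-k}$); your appeal to Montgomery--Vaughan for the cross terms $\widetilde U^{\ell}\overline{\widetilde U}^{\,2q-\ell}$ with $\ell\neq q$ would need a little more care since $(S_k-S_j)^q$ is not itself a Dirichlet polynomial.

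The genuine problem is in your proof of \eqref{eqn:mom2}. The claim that every term with an odd $a_p$ vanishes is false. The variable $X_p(h)$ from \eqref{eqn: Xp} contains both a $Z_p$ and a $Z_p^2$ contribution, and these resonate: writing $X_p=\tfrac12(Z_pw_1+\overline{Z_pw_1})+\tfrac14(Z_p^2w_2+\overline{Z_p^2w_2})$ with $w_1=p^{-1/2-\ii h}$, $w_2=p^{-1-2\ii h}$, the configurations $\epsilon_1+\epsilon_2+\epsilon_3=0$ with $\epsilon_i\in\{\pm1,\pm2\}$ give
\begin{equation*}
\mathbb{E}[X_p(h)^3]=\tfrac38\,\re\!\left(w_1^2\,\overline{w_2}\right)=\tfrac38\,p^{-2}\neq 0 .
\end{equation*}
So your decomposition of the multinomial sum is missing all configurations with some $a_p=3$. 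These are small (each carries a factor $p^{-2}$), but they must be bounded, and this feeds into the second, larger issue: the ``standard combinatorial count'' showing that all configurations with some $a_p\geq 3$ are subleading \emph{uniformly in $q$} is exactly the crux, you flag it yourself as the principal technical point, and you do not supply it. A naive count here typically loses a factor $C^q$ or worse. The paper sidesteps the entire issue: it first splits the sum into the $Z_p$-part $S^{(1)}$ and the $Z_p^2$-part $S^{(2)}$ (for each of which the odd moments genuinely do vanish), uses the termwise domination $\mathbb{E}[(\re Z)^{2m}]=\binom{2m}{m}4^{-m}\leq \mathbb{E}[Y^{2m}]$ with $Y$ Gaussian of variance $\tfrac12$ so that \emph{every} term of the multinomial expansion --- including those with $a_p\geq4$ --- is dominated by the corresponding Gaussian term, and then recombines the two pieces by Minkowski's inequality in $L^{2q}$, using $q\sigma_2/\sigma_1\ll 1$ (with $\sigma_2^2\leq Ce^{-e^j}$) to absorb the second-order piece. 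This gives the lossless constant $\frac{(2q)!}{2^qq!}((k-j)/2)^q$ with no combinatorial estimate at all; I would recommend adopting that route rather than trying to repair the partition count.
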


\begin{proof}

  Let $\Phi$ be a smooth function such that $\Phi \geq 0$ for all $x \in \mathbb{R}$, $\Phi(x) \gg 1$ for $x \in [-1,1]$ and $\widehat{\Phi}$ is compactly supported in $[-1,1]$, e.g,
  $$
  \Phi(x) := 
  \Big ( \frac{\sin \pi (t - 1)}{\pi (t - 1)} \Big )^2+
  \Big ( \frac{\sin \pi t}{\pi t} \Big )^2 + 
  \Big ( \frac{\sin \pi (t + 1)}{\pi (t + 1)} \Big )^2.
  $$
  For any two sets of primes $p_1, \ldots, p_k$ and $q_1, \ldots, q_{\ell}$ (with possible multiplicity) such that the products $p_1 \ldots p_k$ and $q_1 \ldots q_{\ell} $ are smaller than $T$, we have
\begin{multline*}
  \int_{\mathbb{R}} \Big ( \frac{p_1 \ldots p_{k}}{q_1 \ldots q_{\ell}} \Big )^{\ii t} \Phi \Big ( \frac{t}{2T} \Big )  = 2T \widehat{\Phi} \Big (2 T \log \frac{p_1 \ldots p_k}{q_1 \ldots q_{\ell}} \Big )  = 2T \widehat{\Phi}(0) \mathbf{1}_{p_1 \ldots p_{k} = q_1 \ldots q_{\ell}}  \\= 2T \widehat{\Phi}(0) \mathbb{E} \Big [ Z_{p_1 \ldots p_{k}} \overline{Z}_{q_1 \ldots q_{\ell}} \Big ]. 
\end{multline*}
Therefore, for any $h \in [-2,2]$ and
for primes $p_1, \ldots, p_{k}$ such that $p_1\dots p_{k} \leq T^{1/2}$, we have by developing the product
  $$
  \int_{\mathbb{R}} \prod_{\ell = 1}^{k} \Big ( \re \Big ( \frac{1}{p_{\ell}^{1/2 + \ii t + \ii h}} + \tfrac 12  \frac{1}{p_{\ell}^{1 + 2 \ii t + 2\ii h}} \Big ) \Big ) \Phi \Big ( \frac{t}{2T} \Big ) \dd t = 2T \widehat{\Phi}(0) \mathbb{E} \Big [ \prod_{i = 1}^{k} X_{p_i}(h) \Big ].
  $$
We therefore find that, for any $h \in [-2,2]$
  \begin{align*}
    \mathbb{E} [ |S_k(h) - S_j(h)|^{2q} ] & \ll \frac{1}{2T} \int_{\mathbb{R}} \Big ( \re \sum_{e^j < \log p \leq e^k}  \Big ( \frac{1}{p^{1/2 + \ii t + \ii h}} + \tfrac 12 \frac{1}{p^{1 + 2 \ii t + 2\ii h}} \Big ) \Big )^{2q} \Phi \Big ( \frac{t}{2T} \Big ) \dd t
    \\ & = \widehat{\Phi}(0) \mathbb{E} \Big [ \Big ( \sum_{e^j < \log p \leq e^k} X_p(h) \Big )^{2q} \Big ],
  \end{align*}
  where we have used $2q \leq e^{n-k}$ to ensure that all prime products in the expansion satisfy $p_1\dots p_{k} \leq T^{1/2}$.

 We now evaluate the above moment. Let $Z$ be a uniform random variable on the unit circle, and $Y$ a centered Gaussian random variable with variance $1/2$. For any integer $m$, we have
$\E [(\re Z)^{2m+1}]=\E [Y^{2m+1}]=0$ and 
\begin{equation}\label{eqn:mom}
\E[(\re Z)^{2m}]=\frac{(2m)!}{2^{2m}(m!)^2}\leq  \frac{(2m)!}{2^{2m} m!}=\E[Y^{2m}].
\end{equation}
Consider also $S_{jk}^{(1)}=\sum_{e^j < \log p \leq e^k} \re\,Z_p\, p^{-\tfrac 12}$, $S_{jk}^{(2)}={\tfrac 12}\sum_{e^j < \log p \leq e^k} \re\, Z_p^2\, p^{-1}$, and
 $G_{jk}^{(1)}=\sum_{e^j < \log p \leq e^k} Y_p\, p^{-\tfrac 12}$, $G_{jk}^{(2)}={\tfrac 12}\sum_{e^j < \log p \leq e^k} Y_p\, p^{-1}$, where $(Y_p)_p$ denote independent centered Gaussian random variables with variance $1/2$.
With expansion through the binomial formula, (\ref{eqn:mom}) implies
$$
\E[(S_{jk}^{(1)})^{2q}]\leq \E[(G_{jk}^{(1)})^{2q}],\ \ \E[(S_{jk}^{(2)})^{2q}]\leq \E[(G_{jk}^{(2)})^{2q}].
$$
Let $\sigma_1^2=\sum_{e^j < \log p \leq e^k}(2p)^{-1}$ and $\sigma_2^2=\sum_{e^j < \log p \leq e^k}(8p)^{-2}$. 
The above equation implies
\begin{multline*}
\E[(S_{jk}^{(1)}+S_{jk}^{(2)})^{2q}]\leq\left(\E[(S_{jk}^{(1)})^{2q}]^{\frac{1}{2q}}+\E[(S_{jk}^{(2)})^{2q}]^{\frac{1}{2q}}\right)^{2q} \\
\leq
\left(\E[(G_{jk}^{(1)})^{2q}]^{\frac{1}{2q}}+\E[(G_{jk}^{(2)})^{2q}]^{\frac{1}{2q}}\right)^{2q}
=(\sigma_1+\sigma_2)^{2q}\, \E(G^{2q})
\end{multline*}
where $G$ is a standard Gaussian random variable.

In the case $j \geq n_0$, the quantitative prime number theorem \eqref{eqn: PNT} implies $\sigma_1^2 = \frac{1}{2}(k - j) + \OO(e^{-\kappa e^{n_0}})$, for some absolute $\kappa>0$. Moreover, we trivially have $\sigma_2^2\leq C e^{-e^j}$, so that $q\sigma_2/\sigma_1\ll 1$ and (\ref{eqn:mom2}) follows.

For (\ref{eqn: moment+1}), we have
$$
  \mathbb{E} [ |S_k(h) - S_j(h) + A|^{2q} ]^{1/2q} \leq (\mathbb{E} [ |S_k(h) - S_j(h)|^{2q} ])^{1/(2q)} + A,
$$
so that
$$
\mathbb{E}[ | S_k(h) - S_j(h) + A]^{2q} ] \leq 2^{2q} \cdot \mathbb{E}[|S_k(h) - S_j(h)|^{2q}] + (2 A)^{2q}
$$
and the claim follows from  (\ref{eqn:mom2}). 

Finally, for (\ref{eqn:mom3}), we rely on (\ref{eqn:Laplace2}) and obtain, for any $\lambda>0$,
$$
 \mathbb{E}[(S_k(h) - S_j(h))^{2q}]\leq \frac{(2q)!}{\lambda^{2q}}\E[\cosh(\lambda(S_k(h) - S_j(h))]\leq
 \frac{(2q)!}{\lambda^{2q}}e^{(k-j+C)\lambda^2/4}.
$$
The choice $\lambda^2=4q/(k-j+C)$ and Stirling's formula give the expected result.
\end{proof}

The Gaussian moments yield Gaussian tails for the probability. Indeed, if a random variable $X$ is such that
$$
\E[X^{2q}]\ll \frac{(2q)!}{2^qq!}\sigma^{2q},
$$
then the Markov inequality together with Stirling's formula and optimization over $q$ yield
$$
\PP(X>V)\ll \exp(-V^2/(2\sigma^2)) \quad \text{for $2q=\lceil V^2/\sigma^2\rceil$.}
$$
This observation applied to $S_k-S_j$ with $j\geq n_0$ yields for any $h\in[-2,2]$ and $V>0$
\begin{equation}
\label{eqn: gaussian tail}
\PP(S_k(h)-S_j(h)>V)\ll \exp(-V^2/(k-j+1))
\end{equation}
as long as $V^2\leq e^{n-k}\cdot (k-j+1)/2$. Note in particular that such large deviation estimates are harder to get as $k$ gets closer to $n$.

We also recall the following analogous bound for the complex partial sums $\widetilde{S}(h)$.
\begin{lemma}\label{lem: Gaussian moments cplx}
  For any $h \in [-2,2]$ and integers $n_0 \leq j \leq k$ and $2 q \leq e^{n - k}$ we have, 
  We have,
  $$
  \mathbb{E}[|\widetilde{S}_k(h) - \widetilde{S}_j(h)|^{2q}] \ll q! (k - j + 1)^{q} 
  $$
\end{lemma}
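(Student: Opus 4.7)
My plan is to mimic, with minor adaptations, the proof of Lemma~\ref{lem: Gaussian moments}. First, the harmonic-analysis identity used there (based on a smooth weight $\Phi$ with compactly supported Fourier transform) translates the $2q$-th moment over $\tau \in [T, 2T]$ into a $2q$-th moment of the random model, up to a constant: under the assumption $2q \leq e^{n-k}$, every product of $2q$ primes lying below $e^{e^k}$ stays below $T^{1/2}$, so the orthogonality relation $\mathbb{E}[Z_{p_1}\cdots Z_{p_r}\overline{Z_{q_1}\cdots Z_{q_s}}]=\mathbf{1}_{p_1\cdots p_r = q_1\cdots q_s}$ for the random variables $Z_p$ on the unit circle aligns perfectly with $\int (p_1\cdots p_r/(q_1\cdots q_s))^{it}\Phi(t/2T)\,\dd t$. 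Thus I would reduce to bounding
$$\mathbb{E}\Big[\Big|\sum_{e^j<\log p\le e^k}\big(Z_p\,p^{-1/2-\ii h}+\tfrac12 Z_p^2\,p^{-1-2\ii h}\big)\Big|^{2q}\Big].$$

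Next, I would split this sum as $W = A+B$ where $A=\sum_p Z_p\, p^{-1/2-\ii h}$ carries the linear contribution and $B=\tfrac12\sum_p Z_p^2\, p^{-1-2\ii h}$ the quadratic one, and apply Minkowski:
$$\mathbb{E}[|W|^{2q}]^{1/(2q)}\le \mathbb{E}[|A|^{2q}]^{1/(2q)}+\mathbb{E}[|B|^{2q}]^{1/(2q)}.$$
The point of this split is that each piece is of the form $X=\sum_p c_p\,\zeta_p$ with $\zeta_p$ independent and uniform on the unit circle (for $B$ one uses that if $Z$ is uniform on the unit circle then so is $Z^2$). For such sums, the core lemma is the Khintchine-type bound
$$\mathbb{E}[|X|^{2q}]\le q!\,\Big(\sum_p |c_p|^2\Big)^q,$$
which follows by expanding $X^q\overline{X}^q$ via the multinomial theorem and using that the surviving diagonal terms have exponents $m_p=n_p$ for each prime $p$, giving
$$\mathbb{E}[|X|^{2q}]=(q!)^2\sum_{(m_p):\sum m_p=q}\prod_p \frac{|c_p|^{2m_p}}{(m_p!)^2};$$
the inequality $(m_p!)^2\ge m_p!$ combined with the multinomial identity then yields the claimed $q!(\sum |c_p|^2)^q$ bound.

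Applying this to $A$ with Mertens' theorem \eqref{eqn: mertens} gives $\mathbb{E}[|A|^{2q}]\le q!(\sum_{e^j<\log p\le e^k}1/p)^q\ll q!(k-j+1)^q$, while for $B$ the second-order coefficients satisfy $\sum |c_p|^2\ll 1$, yielding $\mathbb{E}[|B|^{2q}]\ll q!$. Plugging both into Minkowski produces $\mathbb{E}[|W|^{2q}]\ll q!(k-j+1)^q$, as required. The main (minor) obstacle is that a direct expansion of $|W|^{2q}$ without splitting mixes linear and quadratic contributions, producing many off-diagonal terms that survive independence averaging (for instance $\mathbb{E}[W_p^2\overline{W_p}]\neq 0$); the Minkowski step cleanly decouples these two contributions and reduces everything to the clean unit-circle Khintchine bound.
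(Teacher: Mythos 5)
Your proof is correct in substance, but it takes a different route from the paper: the paper's entire proof of this lemma is a one-line citation of Lemma 3 in Soundararajan's \emph{Moments of the zeta function} paper (the standard mean value theorem $\int_T^{2T}|\sum_p a(p)p^{-\ii t}|^{2q}\,\dd t \ll T\, q! \, (\sum_p |a(p)|^2)^q$ for short prime Dirichlet polynomials), whereas you reprove that result from scratch. Your argument --- transfer to the random model via the smoothed mean value theorem, multinomial expansion with the diagonal orthogonality $\mathbb{E}[\zeta_p^m\overline{\zeta_p^{m'}}]=\mathbf{1}_{m=m'}$, and the inequality $(m_p!)^2\ge m_p!$ recombined by the multinomial identity --- is exactly the standard proof of the cited lemma, and your Minkowski split $W=A+B$ is a sensible way to handle the prime-square terms that Soundararajan's lemma (stated for sums over primes only) does not literally cover; the paper leaves that adaptation implicit. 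So what you gain is a self-contained proof; what the paper gains is brevity. One quantitative point to tighten: for the $B$ term you only record $\sum_p|c_p|^2\ll 1$, and feeding a bounded (rather than tiny) constant into Minkowski yields $q!\,(\sqrt{k-j+1}+O(1))^{2q}\ll q!\,C^q(k-j+1)^q$, i.e.\ an unwanted $C^q$. This is repaired immediately by using $j\ge n_0$: the primes in $B$ satisfy $\log p>e^{n_0}$, so $\sum_p|c_p|^2\le e^{-e^{n/2}}$, and since $2q\le e^{n-k}\le e^{n/2}$ the correction factor $(1+\sqrt{\delta})^{2q}\le e^{2q\sqrt{\delta}}$ is $O(1)$ --- the same mechanism the paper uses for $\sigma_2$ in its proof of the real-part analogue.
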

\begin{proof}
  This follows from \cite[Lemma 3]{Soundararajan}.
\end{proof}
A simple consequence of Lemma \ref{lem: Gaussian moments cplx} is that, 
\begin{equation}\label{eqn: gaussian tail cplx}
\mathbb{P} \Big ( |\widetilde{S}_k(h) - \widetilde{S}_j(h)| > V \Big ) \ll \frac{V + 1}{(k - j + 1)^{1/2}} \exp \Big ( - \frac{V^2}{k - j + 1} \Big ). 
\end{equation}

 {\noindent A.2\ \bf Gaussian approximation.}
Recall the definition of the partial sums in \eqref{eqn: gaussian walk} for $h=0$:
\begin{equation}
\mathcal S_{k}= \sum_{e^{1000}<\log p \leq e^k}X_p.
\end{equation}
We have the following simple estimate for the probability density function of $\mathcal{S}_k$. 
\begin{lemma} \label{le:saddlepoint}
  Let $|v| \leq 100 r$.
  Then, for $r > 1000$ and for all $\Delta \geq 1$, we have
  $$
  \mathbb{P}(\mathcal{S}_r \in [v, v + \Delta^{-1}]) \asymp \frac{1}{\Delta} \cdot \frac{1}{\sqrt{r}} \exp \Big ( - \frac{v^2}{r} \Big ) .
  $$
\end{lemma}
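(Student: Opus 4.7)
The proof combines an exponential change of measure with a local central limit theorem. Given $v$ with $|v|\leq 100r$, set $\lambda:=2v/r$, so $|\lambda|\leq 200$, and introduce the tilted law $\mathbb{Q}_\lambda$ defined by $d\mathbb{Q}_\lambda/d\mathbb{P}=e^{\lambda\mathcal{S}_r}/Z_r(\lambda)$, where
\[
Z_r(\lambda)=\mathbb{E}[e^{\lambda\mathcal{S}_r}]=\prod_{e^{1000}<\log p\leq e^r}\mathbb{E}[e^{\lambda X_p}]
\]
by independence of the $X_p$. Under $\mathbb{Q}_\lambda$ the $X_p$ remain independent. Since $e^{-\lambda(\mathcal{S}_r-v)}$ lies in a bounded range on $[v,v+\Delta^{-1}]$ whenever $\Delta\geq 1$, the tilting identity
\[
\mathbb{P}(\mathcal{S}_r\in[v,v+\Delta^{-1}])=Z_r(\lambda)e^{-\lambda v}\,\mathbb{E}_{\mathbb{Q}_\lambda}\!\bigl[e^{-\lambda(\mathcal{S}_r-v)}\mathbf{1}(\mathcal{S}_r\in[v,v+\Delta^{-1}])\bigr]
\]
reduces the claim to the two estimates $Z_r(\lambda)e^{-\lambda v}\asymp e^{-v^2/r}$ and $\mathbb{Q}_\lambda(\mathcal{S}_r\in[v,v+\Delta^{-1}])\asymp 1/(\Delta\sqrt{r})$.

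Every prime in the sum satisfies $p\geq\exp(e^{1000})$, so \eqref{eqn:Laplace1} applies with enormous room and gives $\log\mathbb{E}[e^{\lambda X_p}]=\lambda^2/(4p)+O(\lambda^3 p^{-3/2})$ uniformly in $|\lambda|\leq 200$. Summing over primes and applying Mertens' theorem \eqref{eqn: mertens},
\[
\log Z_r(\lambda)=\frac{\lambda^2}{4}\!\!\!\sum_{e^{1000}<\log p\leq e^r}\!\!\!\frac{1}{p}+O(1)=\frac{\lambda^2 r}{4}+O(1).
\]
Combined with $\lambda v=2v^2/r=2\cdot\lambda^2 r/4$, this yields $Z_r(\lambda)e^{-\lambda v}\asymp e^{-v^2/r}$. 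Differentiating the same expansion once and twice in $\lambda$ further produces $\mathbb{E}_{\mathbb{Q}_\lambda}[\mathcal{S}_r]=v+O(1)$ and $\mathrm{Var}_{\mathbb{Q}_\lambda}(\mathcal{S}_r)=r/2+O(1)$.

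For the second estimate we prove a local CLT asserting that the density $f_\lambda$ of $\mathcal{S}_r$ under $\mathbb{Q}_\lambda$ satisfies $f_\lambda(x)\asymp 1/\sqrt{r}$ uniformly in $|x-v|\leq 1$. Each $X_p$ inherits a smooth compactly supported density from the uniform law of $Z_p$ on the unit circle, so $\mathcal{S}_r$ has a density under $\mathbb{Q}_\lambda$ given by Fourier inversion
\[
f_\lambda(x)=\frac{1}{2\pi}\int_{\mathbb{R}}e^{-\ii\xi x}\prod_{p}\frac{\mathbb{E}[e^{(\lambda+\ii\xi)X_p}]}{\mathbb{E}[e^{\lambda X_p}]}\,d\xi.
\]
We split into three ranges. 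For $|\xi|\leq r^{-1/3}$, the Taylor expansion above at $\lambda+\ii\xi$ gives an integrand of the form $\exp(\ii\xi(v-x+O(1))-\xi^2 r/4+O(r|\xi|^3))$, producing the Gaussian main contribution of order $1/\sqrt{r}$. For $r^{-1/3}<|\xi|\leq c_0$ with $c_0$ a small absolute constant, the same expansion of the centred log characteristic function at a single prime yields $|\mathbb{E}_{\mathbb{Q}_\lambda}[e^{\ii\xi(X_p-\mathbb{E}_{\mathbb{Q}_\lambda}X_p)}]|\leq 1-c\xi^2/p$, hence $|\phi_\lambda(\xi)|\leq e^{-c\xi^2 r}$. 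For $|\xi|\geq c_0$, a direct Fourier computation on the circle exploiting the explicit form $X_p=\cos(\theta-h\log p)/\sqrt{p}+\tfrac12\cos(2\theta-2h\log p)/p$ gives $|\mathbb{E}_{\mathbb{Q}_\lambda}[e^{\ii\xi X_p}]|\leq 1-c$ for all primes in our range, so $|\phi_\lambda(\xi)|\leq e^{-cr}$. Integrating these three estimates gives $f_\lambda\asymp 1/\sqrt{r}$ uniformly on $[v-1,v+1]$, and integrating $f_\lambda$ over $[v,v+\Delta^{-1}]$ concludes. The main obstacle is the uniform Cramér-type bound in the third regime: the tilted densities of the $X_p$ must satisfy a non-degeneracy condition uniform in both $p$ and $|\lambda|\leq 200$, which requires quantifying the non-lattice nature of the law of $X_p$ before and after tilting by an exponential of bounded amplitude.
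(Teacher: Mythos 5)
Your argument is, at bottom, the same saddle-point argument the paper sketches: tilting by $\lambda=2v/r$ is exactly the contour shift to $\sigma=2x/r$ in the paper's Fourier inversion, your estimate $Z_r(\lambda)e^{-\lambda v}\asymp e^{-v^2/r}$ corresponds to the paper's factorization $\mathbb{E}[e^{z\mathcal{S}_r}]=e^{z^2r/4}H(z)$ with $H$ bounded above and below, and your local CLT for $\mathbb{Q}_\lambda$ is the remaining Gaussian integral. The first two regimes of your Fourier-inversion argument are fine.

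The step that is wrong as stated is the third regime. You claim that for $|\xi|\geq c_0$, with $c_0$ a small absolute constant, one has $|\mathbb{E}_{\mathbb{Q}_\lambda}[e^{\ii\xi X_p}]|\leq 1-c$ for \emph{every} prime in the range. But $X_p$ is supported on an interval of length $O(p^{-1/2})\leq \exp(-e^{1000}/2)$, so each individual (tilted or untilted) characteristic function equals $1-\Theta(\xi^2/p)$ and is exponentially close to $1$ until $|\xi|$ becomes comparable to $\sqrt{p}$; no fixed $c_0$ makes a single factor drop below $1-c$. The repair stays inside your framework: the product bound $|\phi_\lambda(\xi)|\leq e^{-c\xi^2 r}$ from your second regime in fact persists for all $c_0\leq|\xi|\leq \sqrt{p_{\min}}$, since $\sum_p 1/p\asymp r$, and only for $|\xi|\gtrsim\sqrt{p_{\min}}=\exp(e^{1000}/2)$ do you need genuine decay of individual factors. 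There it follows from the fact that the pushforward of the uniform measure on the circle under $\theta\mapsto \cos\theta/\sqrt{p}+\cos(2\theta)/(2p)$ has a density with inverse-square-root singularities, whose Fourier transform decays like $(|\xi|/\sqrt{p})^{-1/2}$ by stationary phase; a product over a handful of primes is then integrable. With that correction your local CLT, and hence the lemma, goes through. This is the same delicate point the paper buries in the (unjustified, and for large imaginary part not literally correct) assertion that $|H(z)|$ is bounded throughout the strip, so you have at least correctly located where the real work is.
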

\begin{proof}
  We will merely sketch the proof of this standard result (see e.g \cite[Theorem 1]{LargeDeviations1} or \cite[Theorem 2.1]{LargeDeviations2} for more detailed accounts).
  The probability density function of $\mathcal{S}_r$ can be written (by inverse Fourier transform and contour deformation)
  $$
  f_r(x) = \frac{1}{2\pi} \int_{- \infty}^{\infty} \mathbb{E}[ \exp \Big ( (\sigma + \ii t ) \mathcal{S}_r \Big ) \Big ] \exp \Big ( - ( \sigma + \ii t ) x \Big ) {\rm d}t\ , \ \sigma = \frac{2x}{r}.
  $$
  It remains therefore to analyze the above integral using the saddle point method.
  First we notice that in the region $0 \leq \re z \leq 200$ we have,
  $$
  \mathbb{E}[e^{z \mathcal{S}_r}] = \exp \Big ( \frac{z^2 r}{4} \Big ) H(z),
  $$
  with $H=H_r$ a function analytic in the strip $0 \leq \re z \leq 200$
  such that $\tfrac 12 \leq |H(z)| \leq 10^3$ and $|H'(z)| \leq 10^{-6}$ uniformly in the strip $0 \leq \re z \leq 200$, and uniformly in $r$. 
(This uses that the $X_p$'s appearing in $\mathcal{S}_r$ have $p > \exp(e^{1000})$).
  The rest of the proof now proceeds by a standard application of the saddle point method. The region $|t| > 100 \log r / \sqrt{r}$ gives a negligible contribution, while the region $|t| \leq 100 \log r  / \sqrt{r}$ contributes,
  $$
  \frac{1}{2\pi} \int_{|t| \leq 100\log r / \sqrt{r}} H \Big ( \frac{2x}{r} + \ii t \Big ) \exp \Big ( \frac{r}{4} \Big (\frac{2x}{r} + \ii t\Big )^2 - \frac{2x^2}{r} - \ii t x \Big ) {\rm d}t.
  $$
  By a Taylor expansion, the above is equal to
  $$
 \exp \Big ( - \frac{x^2}{r} \Big ) \cdot \frac{1}{2\pi} \int_{|t| \leq 100\log r / \sqrt{r}} \Big ( H \Big ( \frac{2x}{r} \Big ) + \OO^{\star} \Big ( 10^{-4} \frac{\log r}{\sqrt{r}} \Big ) \Big ) \exp \Big ( - \frac{t^2 r}{4} \Big ) {\rm d}t\asymp \frac{1}{\sqrt{r}} \exp \Big ( - \frac{x^2}{r} \Big ). 
 $$
 with $\OO^{\star}$ denoting a $\OO$ with implicit constant $\leq 1$. 
 Thus, uniformly in $|x| \leq 100 r$, we have $f_r(x) \asymp r^{-1/2} \exp ( - x^2 / r)$. The result follows upon integrating $x \in [v, v + \Delta^{-1}]$. 
\end{proof}

We now remind the following version of the Berry-Esseen theorem, see for example Corollary 17.2 in \cite{BhaRan1976}. The probability measure $\mathbb{P}$ below is arbitrary, and $\eta_{\mu,\sigma}$ denotes the Gaussian measure with mean $\mu$ and variance $\sigma$.

\begin{lemma}\label{lem:compOne}
Let $W_j$ be a sequence of independent random variables on $(\mathbb{R},\mathcal B(\mathbb{R}),\mathbb{P})$, with associated expectation denoted $\mathbb{E}$, and let $\mathbb{Q}_m$ be the distribution of $W_1+\dots+W_m$. Let
$$
\mu_m=\sum_{j=1}^m\E[W_j], \ \ \ \sigma_m=\sum_{j=1}^m \E[(W_j-\E(W_j))^2],
$$
 and  $\mathcal A$ be the set of intervals in $\mathbb{R}$. There exists an absolute constant $c$ such that 
$$
\sup_{A\in\mathcal A}|\mathbb{Q}_m(A)-\eta_{\mu_m,\sigma_m}(A)|\leq \frac{c}{\sigma_m^{3/2}}\sum_{j=1}^m\E[|W_j-\E(W_j)|^3].
$$
\end{lemma}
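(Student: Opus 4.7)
The plan is to follow the classical proof of Berry--Esseen via Esseen's smoothing inequality applied to characteristic functions. Since any interval can be written as a difference of two half-lines, we have for $A\in\mathcal A$
$$
|\mathbb{Q}_m(A)-\eta_{\mu_m,\sigma_m}(A)|\leq 2\sup_{x\in\mathbb{R}}|\mathbb{Q}_m((-\infty,x])-\eta_{\mu_m,\sigma_m}((-\infty,x])|,
$$
so it suffices to control the Kolmogorov distance (absorbing the factor $2$ into $c$). By the substitution $V_j=(W_j-\mathbb{E} W_j)/\sigma_m^{1/2}$, we reduce to the case where $\sum_j V_j$ has mean $0$ and variance $1$, and where $\rho:=\sum_j\mathbb{E}|V_j|^3=\sigma_m^{-3/2}\sum_j\mathbb{E}|W_j-\mathbb{E} W_j|^3$ is exactly the quantity we want to bound the error by.

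Next I would invoke Esseen's smoothing inequality: for any $T>0$,
$$
\sup_x|F(x)-\Phi(x)|\ll \int_{-T}^{T}\Big|\frac{\hat F(t)-e^{-t^2/2}}{t}\Big|\,\dd t+\frac{1}{T},
$$
where $F$ is the distribution function of $\sum_j V_j$ and $\hat F(t)=\prod_j\phi_j(t)$ with $\phi_j(t)=\mathbb{E}[e^{\ii t V_j}]$. From $|e^{\ii x}-(1+\ii x-x^2/2)|\leq |x|^3/6$ together with $\mathbb{E} V_j=0$, one obtains $\phi_j(t)=1-\tfrac12 t^2\mathbb{E} V_j^2+\OO(t^3\mathbb{E}|V_j|^3)$. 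In the range $|t|\leq 1/(4\rho^{1/3})$, a logarithmic expansion of $\prod_j\phi_j(t)$ then yields
$$
|\hat F(t)-e^{-t^2/2}|\ll |t|^3\rho\, e^{-t^2/4},
$$
so the corresponding portion of the smoothing integral contributes $\ll\rho$. Taking $T=1/\rho$ makes the boundary term $1/T$ of the same order $\rho$.

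The main obstacle will be controlling the intermediate range $1/(4\rho^{1/3})\leq |t|\leq 1/\rho$ of the smoothing integral, where the Taylor expansion of each individual $\phi_j$ no longer suffices. Here one uses the sharper bound $|\phi_j(t)|\leq \exp(-ct^2\mathbb{E} V_j^2)$ valid for $|t|\,\mathbb{E}|V_j|^3\leq 1$, combined with Lyapunov's inequality $(\max_j\mathbb{E}|V_j|^3)^{1/3}\leq \rho^{1/3}$ and the normalization $\sum_j\mathbb{E} V_j^2=1$, to deduce the uniform decay $|\hat F(t)|\ll e^{-ct^2}$ on this intermediate range. This decay is more than enough to bound the corresponding portion of the integral by $\ll\rho$. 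Adding the three contributions gives $\sup_x|F(x)-\Phi(x)|\leq c\rho$, which together with the initial reduction to half-lines completes the proof.
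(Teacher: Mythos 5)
The paper does not prove this lemma at all: it simply cites it as Corollary 17.2 of Bhattacharya--Rao, so your sketch is to be measured against the classical proof rather than against anything in the text. Your overall strategy (reduction to the Kolmogorov distance, normalization, Esseen's smoothing inequality with $T\asymp 1/\rho$, and a logarithmic expansion of $\prod_j\phi_j$ for $|t|\lesssim\rho^{-1/3}$) is indeed the standard route, and the small-range and boundary contributions come out as you say (for the small range one needs the slightly refined bound $\sum_j(\E V_j^2)^2\le\rho^{1/3}\sum_j\E|V_j|^3=\rho^{4/3}$ on the quadratic remainder of the logarithm, rather than the cruder $\sum_j(\E V_j^2)^2\le\max_j\E V_j^2$, but this is available from Lyapunov's inequality).

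The genuine gap is in the intermediate range, which is precisely where the non-identically-distributed case is harder than the iid one. The bound $|\phi_j(t)|\le\exp(-ct^2\,\E V_j^2)$ is \emph{false} under the hypothesis $|t|\,\E|V_j|^3\le 1$: take $V_j=\pm 1$ with probability $p/2$ each and $0$ otherwise, so that $\E V_j^2=p$, $\E|V_j|^3=p$, and $\phi_j(2\pi)=1$ while $2\pi p\le1$ for small $p$. Characteristic functions of lattice variables return to modulus one, so any such exponential decay requires the stronger hypothesis $|t|\,\E|V_j|^3\le c\,\E V_j^2$ (which is what the symmetrization inequality $|\phi_j(t)|^2\le 1-t^2\E V_j^2+\tfrac43|t|^3\E|V_j|^3$ actually delivers). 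With the corrected hypothesis a further argument is needed, because not every index $j$ satisfies it when $\rho^{-1/3}\le|t|\le\rho^{-1}$: one must split the indices into those with $\E|V_j|^3\le c\,\E V_j^2/|t|$ and the rest, observe that the exceptional indices carry total variance at most $|t|\rho/c\le\tfrac12$ once $|t|\le c/(2\rho)$, and conclude $|\hat F(t)|\le\prod_{j\,\text{good}}|\phi_j(t)|\le e^{-ct^2/8}$ from the remaining half of the variance. Your appeal to ``Lyapunov's inequality and the normalization'' does not substitute for this splitting, and as written the step would fail. The gap is repairable by exactly this standard device, but it is the crux of the non-iid Berry--Esseen theorem and cannot be waved through.
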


The following consequence of Lemma \ref{lem:compOne} compares the probabilistic model
$(\mathcal S_i)_{i\geq 1}$ defined previously with a natural Gaussian analogue.
To state this comparison, remember the definitions (\ref{eqn: gaussian walk}) and (\ref{eqn:trulygaussian}). In the statement below we omit the argument $h$ to mean $h=0$.

\begin{lemma}\label{lem:compProb}
There exists a constant $c>0$ such that, for any interval $A$ and $k\geq 1$,
$$
{\mathbb P}\Big({\mathcal Y}_k \in A\Big)
=\mathbb P\Big(\mathcal N_k\in A\Big)+\OO(e^{-c e^{k/2}}).
$$
\end{lemma}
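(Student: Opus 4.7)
The plan is to apply the Berry-Esseen estimate (Lemma \ref{lem:compOne}) to the independent sum
\[
\mathcal Y_k = \sum_{e^{k-1} < \log p \leq e^k} X_p(0),
\]
with $X_p$ as defined in \eqref{eqn: Xp}; each summand is mean zero since $\mathbb{E}[Z_p]=\mathbb{E}[Z_p^2]=0$. First I would compute the total variance using the quantitative Mertens estimate \eqref{eqn: mertens} on the interval $(e^{k-1}, e^k]$:
\[
V_k := \sum_{e^{k-1} < \log p \leq e^k} \Big(\tfrac{1}{2p} + \tfrac{1}{8p^2}\Big) = \tfrac12 + O(e^{-c e^{k/2}}),
\]
where the error comes from the factor $e^{-\kappa\sqrt{\log a}}$ with $a=\exp(e^{k-1})$ in \eqref{eqn: mertens}, plus the trivial tail bound on $\sum 1/p^2$. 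Next, since $|X_p(0)|\ll p^{-1/2}$, the sum of third absolute moments satisfies
\[
\sum_{e^{k-1}<\log p\leq e^k}\mathbb{E}|X_p(0)|^3 \;\ll\; \sum_{p>\exp(e^{k-1})}\frac{1}{p^{3/2}} \;\ll\; e^{-e^{k-1}/2} \;\ll\; e^{-c e^{k/2}}.
\]
Lemma \ref{lem:compOne} then gives, uniformly in intervals $A$,
\[
\bigl|\mathbb{P}(\mathcal Y_k\in A) - \eta_{0,V_k}(A)\bigr| \;\ll\; e^{-ce^{k/2}}.
\]

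The remaining step is to compare $\eta_{0,V_k}$ with $\eta_{0,1/2}$, which is the law of $\mathcal N_k$ by \eqref{eqn:trulygaussian}. Since the Gaussian density $\phi_V(x)=(2\pi V)^{-1/2}e^{-x^2/(2V)}$ satisfies $\int_\mathbb{R}|\partial_V \phi_V(x)|\,\dd x = O(1)$ uniformly on a neighborhood of $V=1/2$, the bound $|V_k-1/2|\ll e^{-ce^{k/2}}$ translates via the mean value theorem into
\[
|\eta_{0,V_k}(A)-\eta_{0,1/2}(A)| \;\ll\; e^{-ce^{k/2}}
\]
uniformly in intervals $A$. Combining with the Berry-Esseen estimate gives the claim. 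For the edge case of very small $k$ where the prime-sum defining $\mathcal Y_k$ is empty, the asserted error $e^{-ce^{k/2}}$ is bounded below by a positive constant, so the inequality holds trivially after adjusting the implicit constant.

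I do not foresee a serious obstacle: the full argument rests only on the quantitative Mertens estimate already recorded in \eqref{eqn: mertens}, on Lemma \ref{lem:compOne}, and on the uniform boundedness of $V_k$ away from $0$, which makes the Berry-Esseen normalization $V_k^{-3/2}$ harmless. The only care needed is to keep track of the doubly-exponential saving $e^{-ce^{k/2}}$ through each step, which is automatic from the $\sqrt{\log a}$ saving in \eqref{eqn: mertens} and from the crude $\sum_{p>N}p^{-3/2}\ll N^{-1/2}$ bound applied at $N=\exp(e^{k-1})$.
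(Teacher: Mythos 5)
Your proof is correct and follows essentially the same route as the paper: apply the Berry--Esseen bound of Lemma \ref{lem:compOne} to $\mathcal Y_k$ against the Gaussian with exactly matching variance, then compare that Gaussian with $\mathcal N_k$ using the quantitative Mertens estimate \eqref{eqn: mertens}. The only (immaterial) difference is in the last step, where the paper bounds the total variation between the two Gaussians via Pinsker's inequality while you differentiate the Gaussian density in the variance parameter; both yield the required $\OO(e^{-ce^{k/2}})$.
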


\begin{proof}
Let 
$$
\mathcal N'_{k} = \sum_{e^{k-1} < \log p \leq e^k}  X_p'
$$
where the $(X'_p, p \text{ prime})$ are centered, independent real Gaussian random variables, with variance $\frac{1}{2p}+\frac{1}{8p^2}$ matching exactly the variance of the summands $X_p$ of $\mathcal S_{k}$.
We apply Lemma \ref{lem:compOne}: All  random variables are centered with matching variances, 
$
C^{-1}\leq \E[({\mathcal Y}_k)^2)]\leq C
$
and (we have $|X_p|<C p^{-1/2}$ deterministically)
$$
\sum_{e^{k-1}<\log p\leq e^k}{\E}[|X_p-{\E}(X_p)|^3]\leq C\sum_{e^{k-1}<\log p\leq e^k}p^{-3/2}\leq C e^{-c e^k},
$$
for some absolute constants $C,c>0$, so that
\begin{equation}\label{eqn:start}
{\mathbb P}\Big({\mathcal Y}_k \in A\Big)
=\mathbb P\Big(\mathcal N'_k\in A\Big)+\OO(e^{-c e^{k}}).
\end{equation}
Moreover, denote $\beta_k=\sum_{e^{k-1}<\log p\leq e^k}\left(\frac{1}{2p}+\frac{1}{8p^2}\right)$.
From Pinsker's inequality and (\ref{eqn: mertens}), we have that the total variation between the distribution of $\mathcal N_k$ and $\mathcal N_k'$ is
\begin{equation}\label{eqn:middle}
2{\rm TV}(\mathcal N_k,\mathcal N_k')^2\leq \int \Big(\log\frac{\dd\eta_{0,1/2}}{\dd\eta_{0,\beta_k}}\Big)\dd\eta_{0,1/2}
=\OO\Big(\big|\beta_k-\frac{1}{2}\big|\Big)=\OO(e^{-c e^{k/2}}).
\end{equation}
Equations (\ref{eqn:start}) and (\ref{eqn:middle}) conclude the proof.
\end{proof}

{\noindent A.3 \ \bf Moments of the Riemann zeta function.}
\begin{lemma}[Second moment of the Riemann zeta function] \label{le:secondmoment}
For all $h \in [-2,2]$, we have
  $$
  \mathbb{E}[|\zeta_{\tau}(h)|^2] \ll e^n.
  $$
\end{lemma}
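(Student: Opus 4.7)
The plan is to reduce the claim to the classical Hardy--Littlewood second moment for the Riemann zeta function, which states that $\int_{0}^{T} |\zeta(\tfrac12 + it)|^2\, dt \sim T \log T$ as $T \to \infty$. Since $|h| \le 2$, a change of variables $t \mapsto t + h$ gives
\[
  \mathbb{E}[|\zeta_\tau(h)|^2] = \frac{1}{T}\int_{T}^{2T} |\zeta(\tfrac12 + it + ih)|^2\, dt \le \frac{1}{T}\int_{T-2}^{2T+2} |\zeta(\tfrac12 + it)|^2\, dt,
\]
so it suffices to bound the latter by $\ll \log T = e^n$, uniformly in $h \in [-2,2]$.

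To obtain this bound without quoting an exterior result, I would use the approximate functional equation (or a smoothed version of it), which gives, for any $t \in [T-2, 2T+2]$,
\[
  \zeta(\tfrac12 + it) \;=\; \sum_{n \le T} \frac{1}{n^{1/2+it}} \;+\; \OO(T^{-1/2}).
\]
Squaring and integrating over $t \in [T-2, 2T+2]$ yields, after Cauchy--Schwarz on the error cross-term, a contribution of size $T \cdot \OO(1)$ from the error, plus the main term
\[
  \int_{T-2}^{2T+2}\Bigl|\sum_{n \le T} \frac{1}{n^{1/2+it}}\Bigr|^2\, dt.
\]
This mean-value integral is exactly the one estimated by the Montgomery--Vaughan inequality, which is precisely Lemma~\ref{lem: Transition} of the paper: with $N = T$ and $a(n) = n^{-1/2}$,
\[
  \int_{T}^{2T} \Bigl|\sum_{n \le T} \frac{a(n)}{n^{it}}\Bigr|^2\, dt \;=\; T\,(1 + \OO(1))\sum_{n \le T} \frac{1}{n} \;\ll\; T \log T.
\]
The minor endpoints $[T-2, T] \cup [2T, 2T+2]$ contribute negligibly by the same mean value bound applied to those intervals.

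Dividing by $T$, we conclude $\mathbb{E}[|\zeta_\tau(h)|^2] \ll \log T = e^n$, uniformly in $h \in [-2,2]$, which is the claim. There is no genuine obstacle here: the only subtle point is the uniformity in $h$, but this is immediate from the trivial translation of the integration domain since $|h|$ is bounded by an absolute constant. Everything else is a standard application of the approximate functional equation combined with the mean value theorem for Dirichlet polynomials already recorded as Lemma~\ref{lem: Transition}.
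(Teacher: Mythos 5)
Your argument is correct. The paper itself gives no argument at all for this lemma: it simply cites the classical Hardy--Littlewood second moment theorem (\cite[Theorem 2.41]{HardyLittlewood}), which is exactly the statement you reduce to in your first display. What you do differently is then \emph{prove} that classical bound rather than quote it, via the truncated approximate functional equation $\zeta(\tfrac12+\ii t)=\sum_{n\le T}n^{-1/2-\ii t}+\OO(T^{-1/2})$ (valid here since the length $T$ exceeds $|t|/(2\pi)$ throughout $[T-2,2T+2]$; note the paper records a smoothed variant of this approximation, from Bombieri--Friedlander, in the proof of Lemma \ref{le:zetad}, which you could equally well have reused) combined with the Montgomery--Vaughan mean value theorem, i.e.\ Lemma \ref{lem: Transition} with $N=T$, where the multiplicative error $1+\OO(N/T)$ is $\OO(1)$ and hence harmless. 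This buys self-containedness at the cost of a slightly longer argument; the only loose point is your treatment of the endpoint intervals $[T-2,T]\cup[2T,2T+2]$, which is most cleanly handled by simply enlarging the domain to, say, $[T/2,3T]$ before applying the mean value theorem, since you only need an upper bound $\ll T\log T$ for the whole integral. Uniformity in $h$ is indeed immediate from the translation, as you say.
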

\begin{proof}
  See \cite[Theorem 2.41]{HardyLittlewood}.
\end{proof}

\begin{lemma}[Fourth moment of the Riemann zeta function] \label{le:fourthmoment}
For all $h \in [-2,2]$, we have
  $$
  \mathbb{E}[|\zeta_{\tau}(h)|^4] \ll e^{4n}. 
  $$
  More generally, for real $|\sigma-1/2| \leq \tfrac{1}{100}$, we have 
  $$
  \mathbb{E}[|\zeta(\sigma + \ii \tau + \ii h)|^4 ] \ll \exp(1 + e^n (2 - 4 \sigma)) \, e^{4n}. 
  $$
\end{lemma}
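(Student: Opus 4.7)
The plan is to derive both parts from the classical Ingham fourth moment theorem on the critical line, combined with the functional equation to handle the off-critical regime.

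For the first assertion, the classical Ingham--Heath-Brown bound
$$\int_{0}^{T}|\zeta(\tfrac{1}{2}+\ii t)|^4\,\dd t \ll T(\log T)^4$$
(see e.g.\ Titchmarsh, Thm.~7.5) immediately gives $\mathbb{E}|\zeta(\tfrac{1}{2}+\ii\tau)|^4\ll e^{4n}$. Shifting by $h\in[-2,2]$ replaces the domain of integration $[T,2T]$ by $[T+h,2T+h]\subset[T/2,3T]$, which is absorbed into the implicit constant; this yields the first bound.

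For the second assertion, write $\sigma=\tfrac12+u$ with $|u|\leq 1/100$. When $\sigma\geq\tfrac12$ the claimed bound is (up to the harmless factor $e$) no stronger than $e^{4n}$, so it suffices to have Ingham's bound uniformly in the strip $|\sigma-\tfrac12|\leq 1/100$. When $\sigma<\tfrac12$, I would invoke the functional equation $\zeta(s)=\chi(s)\zeta(1-s)$ together with Stirling's estimate
$$|\chi(\sigma+\ii t)|=(|t|/2\pi)^{\tfrac12-\sigma}\cdot(1+O(1/|t|))$$
valid uniformly for $|\sigma-\tfrac12|\leq 1/100$ and $|t|\geq 1$. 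This gives the pointwise inequality
$$|\zeta(\sigma+\ii(\tau+h))|^4\ll T^{2-4\sigma}\,|\zeta((1-\sigma)+\ii(\tau+h))|^4,$$
and since $1-\sigma\in[\tfrac12,\tfrac12+1/100]$ the strip version of Ingham yields $\mathbb{E}|\zeta(1-\sigma+\ii\tau+\ii h)|^4\ll e^{4n}$, whence
$$\mathbb{E}|\zeta(\sigma+\ii\tau+\ii h)|^4\ll T^{2-4\sigma}e^{4n}=\exp(e^n(2-4\sigma))\,e^{4n},$$
matching the claim.

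The only step requiring actual work is the extension of Ingham's fourth moment to the thin vertical strip $|\sigma-\tfrac12|\leq 1/100$, which is needed to handle both $1-\sigma$ (when $\sigma<\tfrac12$) and $\sigma$ (when $\sigma\geq\tfrac12$). The natural route is the approximate functional equation
$$\zeta(\sigma+\ii t)=\sum_{n\leq\sqrt{t/2\pi}}n^{-\sigma-\ii t}+\chi(\sigma+\ii t)\sum_{n\leq\sqrt{t/2\pi}}n^{-(1-\sigma)-\ii t}+O(t^{-c}),$$
followed by raising to the fourth power and applying the mean-value theorem for Dirichlet polynomials (Lemma~\ref{lem: Transition}). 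The resulting divisor sums $\sum_{m\leq T}d(m)^2/m^{2\sigma}$ and $\sum_{m\leq T}d(m)^2/m^{2(1-\sigma)}$ are each $\ll(\log T)^4$ uniformly in the strip by partial summation from the elementary bound $\sum_{m\leq N}d(m)^2\ll N(\log N)^3$, and the factor $|\chi|^4\asymp T^{2-4\sigma}$ in front of the second sum is precisely what one needs for the uniform $\ll e^{4n}$ bound on the strip. The main technical care is in ensuring that the cross-terms between the two Dirichlet sums and the $O(t^{-c})$ error in the AFE do not spoil the mean value; both are standard and pose no fundamental obstacle.
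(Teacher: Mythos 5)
Your proof follows essentially the same route as the paper: the functional equation handles $\sigma<\tfrac12$ and contributes the factor $T^{2-4\sigma}=\exp(e^n(2-4\sigma))$, and everything then rests on a uniform fourth-moment bound $\ll e^{4n}$ in a thin strip to the right of the critical line, which the paper simply cites (Theorem D of Hardy--Littlewood) where you instead sketch a rederivation via the approximate functional equation. One small caveat, shared with the paper's own argument: for $\sigma$ noticeably larger than $\tfrac12$ the stated bound $\exp(1+e^n(2-4\sigma))\,e^{4n}$ is \emph{smaller} than $e^{4n}$, so establishing $\ll e^{4n}$ there does not literally yield the displayed inequality; this is harmless in practice since the lemma is only invoked with $|\sigma-\tfrac12|\ll e^{-n}$.
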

\begin{proof}
  If $\sigma < \tfrac 12$ the functional equation yields
  $$
  \mathbb{E}[|\zeta(\sigma + \ii \tau + \ii h)|^4 ] \ll \exp(e^n (2 - 4 \sigma)) \mathbb{E}[|\zeta(1 - \sigma + \ii \tau + \ii h)|^4]. 
  $$
  Now uniformly in $\tfrac 12 \leq \sigma \leq \tfrac 34$ by \cite[Theorem D]{HardyLittlewood2} we have
  $$
  \mathbb{E}[|\zeta(\sigma + \ii \tau + \ii h)|^4 ] \ll e^{4n}.
  $$
  The result follows. 
\end{proof}

{\noindent A.4 \ \bf Some useful sums over primes.}
The first lemma justifies the approximation of $e^{-S_k}$ by mollifiers. Recall the definition of $\widetilde S_k$ in \eqref{eqn: complex S_k} and that $\re \ \widetilde S_k=S_k$.
\begin{lemma} \label{le:moliapprox}
  Let $\ell \geq 0$ and $k \in (n_{\ell - 1}, n_{\ell}]$.
  Suppose that $|\widetilde{S}_k(h) - \widetilde{S}_{n_{\ell - 1}}(h)| \leq 10^3 (n_{\ell} - n_{\ell - 1})$. 
  We have,
  \begin{align*}
    e^{-(S_k(h) - S_{n_{\ell - 1}}(h))} & \leq (1 + e^{-n_{\ell - 1}}) \, |\mathcal{M}_{\ell-1}^{(k)}(h)| + e^{-10^5 (n_{\ell} - n_{\ell - 1})} .
  \end{align*}
\end{lemma}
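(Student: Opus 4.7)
The plan is to factor $e^{-(S_k(h)-S_{n_{\ell-1}}(h))}$ as an Euler product times a negligible multiplicative correction, and then approximate that Euler product by its truncation $\mathcal M_{\ell-1}^{(k)}$ via Cauchy's integral formula on the circle $|z|=2$. Write $s=\tfrac12+\ii\tau+\ii h$ throughout, set $M:=10^3(n_\ell-n_{\ell-1})$ and $N:=(n_\ell-n_{\ell-1})^{10^5}$; noting that $\mu(m)$ forces squarefree support and that the primes of $m$ lie in $(T_{\ell-1},\exp(e^k)]\subset (T_{\ell-1},T_\ell]$, the definition of $\mathcal M_{\ell-1}^{(k)}(h)$ is exactly the truncation of $\prod_{T_{\ell-1}<p\leq\exp(e^k)}(1-p^{-s})$ to squarefree $m$ with $\omega(m)\leq N$.

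First I would use $S_k=\re\widetilde S_k$ to write $e^{-(S_k-S_{n_{\ell-1}})}=|e^{-(\widetilde S_k-\widetilde S_{n_{\ell-1}})}|$, and then expand $\log(1-p^{-s})=-\sum_{j\geq 1}p^{-js}/j$ to obtain
\[ -(\widetilde S_k(h)-\widetilde S_{n_{\ell-1}}(h))=\sum_{T_{\ell-1}<p\leq\exp(e^k)}\log(1-p^{-s})+E(s),\qquad E(s):=\sum_{p}\sum_{j\geq 3}\frac{p^{-js}}{j}. \]
The crude bound $|E(s)|\leq 2\sum_{p>T_{\ell-1}}p^{-3/2}\ll\exp(-\tfrac12 e^{n_{\ell-1}})$ coming from the prime number theorem gives $|e^{E(s)}|\leq 1+\tfrac12 e^{-n_{\ell-1}}$, and hence
\[ e^{-(S_k-S_{n_{\ell-1}})}\leq\bigl(1+\tfrac12 e^{-n_{\ell-1}}\bigr)\,|\mathcal P(s)|,\qquad \mathcal P(s):=\prod_{T_{\ell-1}<p\leq\exp(e^k)}(1-p^{-s}). \]

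The main step is to show $|\mathcal P(s)-\mathcal M_{\ell-1}^{(k)}(h)|\leq \tfrac12 e^{-10^5(n_\ell-n_{\ell-1})}$. I would introduce the polynomial $f(z):=\prod_p(1-zp^{-s})$, so that $f(1)=\mathcal P(s)$ and $\mathcal M_{\ell-1}^{(k)}(h)=\sum_{j=0}^{N}[z^j]f(z)$. Cauchy's inequality on $|z|=2$ yields $|[z^j]f|\leq 2^{-j}\max_{|z|=2}|f(z)|$, and summing over $j>N$ gives $|\mathcal P(s)-\mathcal M_{\ell-1}^{(k)}(h)|\leq 2^{-N}\max_{|z|=2}|f(z)|$. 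The hypothesis now enters through $\max_{|z|=2}|f(z)|$: expanding $\sum_p\re\log(1-zp^{-s})=-\re\bigl(z\sum_p p^{-s}+\tfrac{z^2}{2}\sum_p p^{-2s}+\cdots\bigr)$, I would control the first power sum through the identity $\sum_p p^{-s}=(\widetilde S_k-\widetilde S_{n_{\ell-1}})-\tfrac12\sum_p p^{-2s}$ together with Mertens' theorem $|\sum_p p^{-2s}|\leq\sum_p 1/p\leq n_\ell-n_{\ell-1}+\oo(1)$, obtaining $|\sum_p p^{-s}|\leq 2M$; the tail $|z|^j/j\cdot|\sum_p p^{-js}|$ for $j\geq 3$ is $\OO(1)$ by the prime number theorem. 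Altogether $\max_{|z|=2}|f(z)|\leq e^{5M}$, and since $N=(M/10^3)^{10^5}$ dwarfs $152M$ whenever $n_\ell-n_{\ell-1}\geq 2$, the bound $2^{-N}e^{5M}\leq \tfrac12 e^{-10^5(n_\ell-n_{\ell-1})}$ follows. Combining this with the first step via $(1+\tfrac12 e^{-n_{\ell-1}})\bigl(|\mathcal M_{\ell-1}^{(k)}|+\tfrac12 e^{-10^5(n_\ell-n_{\ell-1})}\bigr)\leq (1+e^{-n_{\ell-1}})|\mathcal M_{\ell-1}^{(k)}|+e^{-10^5(n_\ell-n_{\ell-1})}$ delivers the claimed inequality.

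I expect the principal obstacle to be the pointwise estimate $\max_{|z|=2}|f(z)|\leq e^{\OO(M)}$. The naive triangle-inequality bound $|f(z)|\leq\prod_p(1+2/\sqrt p)$ is astronomically large, so one must extract the cancellation encoded in the hypothesis $|\widetilde S_k(h)-\widetilde S_{n_{\ell-1}}(h)|\leq M$ by keeping the logarithmic expansion intact and reducing the leading power sum $\sum_p p^{-s}$ to the hypothesis-controlled $\widetilde S_k-\widetilde S_{n_{\ell-1}}$ through the second-order correction identity above.
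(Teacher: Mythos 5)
Your proof is correct and follows the same overall strategy as the paper's: first peel off the $\alpha\geq 3$ prime-power terms to pass from $e^{-(S_k-S_{n_{\ell-1}})}$ to the Euler product $|\prod_p(1-p^{-s})|$ at the cost of the factor $1+e^{-n_{\ell-1}}$, then bound the error made in truncating the expansion of that product to terms with at most $(n_{\ell}-n_{\ell-1})^{10^5}$ prime factors, using the hypothesis (via $\mathcal{P}(s)=(\widetilde S_k-\widetilde S_{n_{\ell-1}})-\tfrac12\mathcal{P}(2s)$ and Mertens) to control the relevant power sums. The only real difference is the mechanism for the truncation tail: the paper expands the elementary symmetric functions in power sums through the Girard--Newton identities and applies Rankin's trick with weight $e^{\alpha \ell}$, $\alpha=1$, whereas you apply Cauchy's coefficient inequalities on the circle $|z|=2$ to the generating polynomial $f(z)=\prod_p(1-zp^{-s})$ and bound $\max_{|z|=2}|f|$ through $\log f(z)=-\sum_{j\geq 1} z^j\mathcal{P}(js)/j$. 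These are the same device in two guises (evaluate the generating function at modulus greater than $1$ and divide by that modulus raised to the threshold), and your complex-analytic packaging is arguably cleaner; the numerics close for the identical reason in both arguments, namely that $(n_{\ell}-n_{\ell-1})^{10^5}$ dwarfs a fixed multiple of $n_{\ell}-n_{\ell-1}$.
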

\begin{proof}
  Let
  $$
  R_k(h) := \sum_{\substack{e^{n_{\ell -1}} < \log p \leq e^k \\ \alpha \geq 3}} \frac{1}{\alpha} \, \re~p^{- \alpha (\tfrac 12 + \ii \tau + \ii h)}. 
  $$
  Notice that $|R_k(h)| \leq e^{-2 n_{\ell - 1}}$. As a result, we clearly have
  $$
  e^{-(S_k(h) - S_{n_{\ell-1}}(h))} \leq (1 + e^{-n_{\ell - 1}}) \, e^{-(S_k(h) - S_{n_{\ell - 1}}(h)) - R_k(h)}.
  $$
  Set $s := \tfrac 12 + \ii \tau + \ii h$. 
  Notice that, 
  $$
  e^{-(S_k(h) - S_{n_{\ell - 1}}(h)) - R_k(h)} = \Big | \prod_{p \in (T_{\ell - 1}, \exp(e^k)]} \Big (1 - \frac{1}{p^{s}} \Big ) \Big |.
    $$
Furthermore, setting $V := (n_{\ell} - n_{\ell - 1})^{10^5}$, 
    \begin{align*}
      \prod_{p \in (T_{\ell - 1}, \exp(e^k)]} \Big ( 1 - \frac{1}{p^s} \Big ) & = \sum_{\substack{p | n \implies p \in (T_{\ell - 1}, \exp(e^k)]}} \frac{\mu(n)}{n^s} \\ 
              & = \mathcal{M}_{\ell - 1}^{(k)}(h) + \sum_{\substack{p | n \implies p \in (T_{\ell - 1}, \exp(e^k)] \\ \Omega_{\ell - 1}(n) > V}} \frac{\mu(n)}{n^s}
            \end{align*}
    Therefore it remains to show that the second term is identically small. 
    Notice that we can re-write the second term as
            \begin{equation}\label{eq:tobound}
            \sum_{\ell > V} (-1)^{\ell} \Big ( \sum_{T_{\ell - 1} < p_1 < \ldots < p_{\ell} \leq \exp(e^k)} \frac{1}{(p_1 \ldots p_\ell)^s} \Big ).
              \end{equation}
              Furthermore, using the Girard-Newton identities (see for example Equation 2.14' in \cite{Mac95}), we can re-write the inner sum as follows,
              $$
              \sum_{T_{\ell - 1} < p_1 < \ldots < p_{\ell} < \exp(e^k)} \frac{1}{(p_1 \ldots p_{\ell})^s} = (-1)^{\ell} \sum_{\substack{m_1, \ldots, m_{\ell}, \ldots  \geq 0\\m_1 + 2 m_2 + \ldots + \ell m_{\ell} + (\ell + 1) m_{\ell + 1} + \ldots = \ell }} \prod_{1 \leq j} \frac{(-\mathcal{P}(\ell s))^{m_j}}{m_j!\ j^{m_j}}
              $$
              with
              $$
              \mathcal{P}(s) := \sum_{T_{\ell - 1} < p \leq \exp(e^k)} \frac{1}{p^s}.
              $$
              Using this we can bound the absolute value of \eqref{eq:tobound}, for any $\alpha > 0$, by
              \begin{align} \nonumber
              \sum_{m_1, \ldots, m_{\ell}, \ldots \geq 0} & \exp \Big ( - \alpha V + \alpha m_1 + 2 \alpha m_2 + \ldots + \ell \alpha m_{\ell} + \ldots \Big ) \prod_{1 \leq j} \frac{|\mathcal{P}(j s)|^{m_j}}{m_j! \ j^{m_j}} \\ \label{eq:to} & = e^{-\alpha V} \prod_{1 \leq j} \Big ( \sum_{m_j \geq 0} \frac{(e^{j \alpha} |\mathcal{P}(js)| / j)^{m_j}}{m_j!} \Big ) .
              \end{align}
              By assumption we have $|\mathcal{P}(s)| \leq 10^4 (n_{\ell} - n_{\ell - 1})$ and trivially we have $|\mathcal{P}(2s)| \leq n_{\ell} - n_{\ell - 1} + 2$ and $|\mathcal{P}(\ell s)| \leq 10^{-\ell}$ for $\ell \geq 3$. As a result \eqref{eq:to} is (for $\alpha = 1$) bounded by
              $$
   \ll \exp( - (n_{\ell} - n_{\ell - 1})^{10^5} + 10^5 (n_{\ell} - n_{\ell - 1})) 
              $$
              and the claim follows. 
  \end{proof}

   \begin{lemma} \label{le:bzn}
     Let $p > \exp(e^{1000})$ be a prime and $\alpha \geq 1$, an integer. 
     Given $\pi \mathbf{z} = (z_3, z_4, z_1, z_2)$, define
     $$
     B_{\pi \mathbf{z}}(p^{\alpha}) := \frac{\sum_{j = 0}^{\infty} \sigma_{z_1, z_2}(p^{j}) \sigma_{z_3, z_4}(p^{\alpha + j}) p^{-j}}{\sum_{j = 0}^{\infty} \sigma_{z_1, z_2}(p^j)\sigma_{z_3, z_4}(p^j) p^{-j}},
     $$
       where $\sigma_{z,w}(p^{\alpha}) := \sum_{n m = p^{\alpha}} n^{-z} m^{-w}$.
       Then, uniformly in $|z_i| \leq 3^4 / (\alpha \log p)$ for $i = 1,2,3,4$,    we have
     $$
       |B_{\mathbf{z}}(p^{\alpha}) - B_{\mathbf{0}}(p^{\alpha})| \leq e^{3000} \, \alpha^2 \log p\sum_{i = 1}^{4} |z_i| ,
     $$
     where $\mathbf{0} := (0,0,0,0)$.
     Furthermore
     $$
     B_{\mathbf{0}}(p^{\alpha}) = \Big (1 - \frac{1}{p^2} \Big )^{-1} \, \Big ( 1 + \alpha - \frac{2 \alpha}{p} + \frac{\alpha - 1}{p^2} \Big )
     $$
   \end{lemma}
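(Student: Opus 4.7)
The statement splits cleanly into two independent pieces: a closed-form computation of $B_{\mathbf 0}(p^\alpha)$, and a perturbation estimate for $B_{\mathbf z}(p^\alpha)-B_{\mathbf 0}(p^\alpha)$. Both are elementary; my plan avoids any analytic machinery beyond a first-order Taylor expansion of $\sigma_{z_1,z_2}(p^k)$ in $\mathbf z$.

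For the closed form, the plan is to substitute $\sigma_{0,0}(p^k)=k+1$ and evaluate the resulting sums via $\sum_{j\ge 0}(j+1)x^j=(1-x)^{-2}$ and $\sum_{j\ge 0}(j+1)^2 x^j=(1+x)(1-x)^{-3}$ at $x=1/p$. The denominator becomes $p^2(p+1)/(p-1)^3$, the numerator decomposes as $\alpha\sum_{j\ge 0}(j+1)p^{-j}+\sum_{j\ge 0}(j+1)^2 p^{-j}$, and dividing gives $B_{\mathbf 0}(p^\alpha)=1+\alpha(p-1)/(p+1)=((1+\alpha)p+(1-\alpha))/(p+1)$. The factorizations $(1+\alpha)p^2-2\alpha p+(\alpha-1)=(p-1)\bigl((1+\alpha)p+(1-\alpha)\bigr)$ and $p^2-1=(p-1)(p+1)$ then identify this with $(1-p^{-2})^{-1}(1+\alpha-2\alpha/p+(\alpha-1)/p^2)$, as claimed.

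For the perturbation bound, the plan is to write $B_{\mathbf z}(p^\alpha)=N_{\mathbf z}/D_{\mathbf z}$ and use the identity
\begin{equation*}
B_{\mathbf z}-B_{\mathbf 0}=\frac{N_{\mathbf z}-N_{\mathbf 0}}{D_{\mathbf z}}+\frac{N_{\mathbf 0}}{D_{\mathbf z}D_{\mathbf 0}}(D_{\mathbf 0}-D_{\mathbf z}).
\end{equation*}
The two differences are then controlled pointwise via the expansion $\sigma_{z_1,z_2}(p^k)=\sum_{a+b=k}p^{-az_1-bz_2}$; applying $|e^{-w}-1|\le|w|e^{|w|}$ termwise yields
\begin{equation*}
|\sigma_{z_1,z_2}(p^k)|\le(k+1)p^{kM},\qquad|\sigma_{z_1,z_2}(p^k)-(k+1)|\le\tfrac{1}{2}k(k+1)(\log p)p^{kM}(|z_1|+|z_2|),
\end{equation*}
with $M:=\max_i|z_i|$. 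The hypotheses $p>\exp(e^{1000})$ and $M\le 3^4/(\alpha\log p)$ force $\alpha M\log p\le 81$, so $p^{\alpha M}\le e^{81}$, and $2M<1/2$, so $p^{-j(1-2M)}\le p^{-j/2}$.

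To bound $|N_{\mathbf z}-N_{\mathbf 0}|$ and $|D_{\mathbf z}-D_{\mathbf 0}|$, the plan is to split each summand into the two telescoping pieces (e.g.\ $\sigma_{z_1,z_2}(p^{\alpha+j})\sigma_{z_3,z_4}(p^j)-(\alpha+j+1)(j+1)=(\sigma_{z_1,z_2}(p^{\alpha+j})-(\alpha+j+1))\sigma_{z_3,z_4}(p^j)+(\alpha+j+1)(\sigma_{z_3,z_4}(p^j)-(j+1))$), apply the two pointwise bounds above, and sum against $p^{-j}$. The convergent sums $\sum_j(\alpha+j)(\alpha+j+1)(j+1)p^{-j/2}\ll\alpha^2$ and $\sum_j j(j+1)^2p^{-j/2}\ll p^{-1/2}$ yield $|N_{\mathbf z}-N_{\mathbf 0}|\ll e^{81}\alpha^2(\log p)\sum_i|z_i|$ and $|D_{\mathbf z}-D_{\mathbf 0}|\ll(\log p)\sum_i|z_i|/\sqrt{p}$. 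Since $D_{\mathbf 0}=1+O(1/p)$, the latter gives $|D_{\mathbf z}|\ge 1/2$; combined with $|B_{\mathbf 0}(p^\alpha)|\ll\alpha$ from the closed form, this completes the proof with an absolute constant far smaller than the stated $e^{3000}$. The only non-routine aspect is verifying that the factors $p^{kM}$ from the $\sigma$-expansion are absorbed by the $p^{-j}$ damping, which is exactly what the enormous lower bound on $p$ and the $1/(\alpha\log p)$ decay of the $z_i$'s are designed to guarantee.
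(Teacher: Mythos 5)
Your proof is correct, and it takes a genuinely different route from the paper's. The paper does not work with the series definition at all: it imports the closed-form rational expression for $B_{\mathbf z}(p^{\alpha})$ from Lemma 6.9 of Hughes--Young, reads off the value at $\mathbf z=\mathbf 0$ from that formula (which requires resolving the $0/0$ coming from the factor $p^{-z_3}-p^{-z_4}$ in the denominator), and obtains the perturbation bound by complex analysis --- it uses the explicit formula only to get the crude bound $|B_{\mathbf w}(p^{\alpha})|\leq e^{2000}\alpha$ on circles $|w_i|=200/(\alpha\log p)$, and then differences one variable at a time via Cauchy's integral formula and the maximum modulus principle. You instead compute $B_{\mathbf 0}$ directly from the defining series using $\sum_j (j+1)x^j$ and $\sum_j(j+1)^2x^j$, and you get the Lipschitz bound by a first-order termwise expansion of $\sigma_{z_1,z_2}(p^k)$ together with the standard quotient decomposition $N_{\mathbf z}/D_{\mathbf z}-N_{\mathbf 0}/D_{\mathbf 0}$. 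What the paper's route buys is that one never has to track the series termwise --- a single sup bound on the closed form suffices, at the cost of an external citation and a worse constant ($e^{3000}$). What your route buys is self-containedness and a much smaller constant (of order $e^{81}$, coming only from $p^{\alpha M}\leq e^{81}$); the bookkeeping is heavier but entirely routine, and your key reductions ($\alpha M\log p\leq 81$ so $p^{\alpha M}\leq e^{81}$, and $M<1/4$ so $p^{-j(1-2M)}\leq p^{-j/2}$, which makes all the sums converge and be dominated by their first terms) are exactly the places where the hypotheses $p>\exp(e^{1000})$ and $|z_i|\leq 3^4/(\alpha\log p)$ are used, so nothing is hidden. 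Your algebra for the closed form checks out: $1+\alpha(p-1)/(p+1)$ does factor into the stated expression.
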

   \begin{proof}
   We start  with the second claim, Lemma 6.9 of \cite{HughesYoung} (applied at $s=0$) implies
   \begin{equation}\label{eqn:HY}
   B_{\mathbf{z}}(p^{\alpha}) = \frac{B_{\mathbf{z}}^{(0)}(p^{\alpha}) - p^{-1} \, B_{\mathbf{z}}^{(1)}(p^{\alpha}) + p^{-2} \, B_{\mathbf{z}}^{(2)}(p^{\alpha})}{(p^{-z_3} - p^{-z_4}) \, (1 - p^{-2 - z_1 - z_2 - z_3 - z_4})} ,
   \end{equation}
   where
   \begin{align*}
     B_{\mathbf{z}}^{(0)}(p^{\alpha}) & = p^{-z_3 (\alpha + 1)} - p^{-z_4 (\alpha + 1)}, \\
     B_{\mathbf{z}}^{(1)}(p^{\alpha}) & = (p^{-z_1} + p^{-z_2}) \, p^{-z_3 - z_4} \, ( p^{-z_3 \alpha} - p^{-z_4 \alpha}), \\
     B_{\mathbf{z}}^{(2)}(p^{\alpha}) & = p^{-z_1 -z_2 - z_3 - z_4} \, (p^{-z_4 - z_3 \alpha}  - p^{-z_3 - z_4 \alpha}).
   \end{align*}
The second claims follows by estimating this at $\mathbf{z} = \mathbf{0}$.

    Note that, for $|w_i| \leq 200 / (\alpha \log p)$, (\ref{eqn:HY}) gives
     $
     |B_{\mathbf{w}}(p^{\alpha})| \leq e^{2000} \, \alpha
     $.
     Now, by Cauchy's theorem, we have 
     \begin{align*}
     |B_{(z_1, z_2, z_3, z_4)} (p^{\alpha}) - B_{(0,z_2, z_3, z_4)}(p^{\alpha})| & = \Big | \frac{1}{2\pi \ii} \oint_{|w| = 200 / (\alpha \log p)} B_{(w, z_2, z_3, z_4)}(p^{\alpha}) \, \frac{z_1 \dd w}{(w - z_1) w} \Big | \\ & \leq |z_1| \, e^{10} \, \alpha \log p \, \max_{|\mathbf{w}| = 200 / (\alpha \log p)} |B_{\mathbf{w}}(p^{\alpha})|,
     \end{align*}
     where $|\mathbf{w}| = C$ means that $|w_i| = C$ for $i = 1,2,3$, or $4$. Note also that the last bound is true by the maximum modulus principle, since $|z_i| \leq 200 / (\alpha \log p)$. 
     Now, iterating this on each variable $z_2, z_3, z_4$,   using the bound $|B_{\mathbf{w}}(p^{\alpha})| \leq e^{2000} \, \alpha$, and adding the results, we conclude that
     $$
     |B_{\mathbf{z}}(p^{\alpha}) - B_{\mathbf{0}}(p^{\alpha})| \leq e^{3000} \, \alpha^2 \log p  \, \sum_{i = 1}^{4} |z_i|.
     $$
     This proves the first claim. 
\end{proof}

\section{Ballot Theorem}\label{se:ballot}

Recall the definition  (\ref{eqn:trulygaussian}),
$$
\mathcal G_{k} = \sum_{1000 \leq \ell \leq k}  \mathcal{N}_{\ell}, 
$$
where the $\mathcal{N}_{\ell}$'s are centered, independent real Gaussian random variables, with variance $\frac{1}{2}$.
The main result in this section is the following Ballot theorem for the random walk $\mathcal G$, with Gaussian increments. It extends  \cite[Lemma 6.2]{Web11} from a linear to a curved barrier,  and our proof relies on this result.

\begin{prop}
\label{lem: ballot}
Uniformly in $n\geq 1$, $1\leq y\leq 2n$, $n/2\leq k\leq n$ and $m(k) + L_y(k) - 4 \leq w\leq m(k)+U_y(k)$ (see Equation \eqref{eq:barup}), we have for $r := \lceil y / 4 \rceil$, 
\begin{multline}
\label{eqn: ballot}
 \mathbb P(\{\mathcal G_{k}\in (w,w+1]\} \cap \{ \mathcal G_r - m(r) \in [L_y(r), U_y(r)] \} \cap_{r < j\leq k}\{\mathcal G_j<m(j)+ U_y(j) \})\\
 \ll (y+1)\, (U_y(k)+m(k)-w+1)\, k^{-3/2}\, e^{-\frac{w^2}{k}}.
 \end{multline}
\end{prop}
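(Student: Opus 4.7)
The plan is to decompose the probability via the strong Markov property at time $r$ into three pieces — the Gaussian density at $\mathcal{G}_r$, the conditional transition density to $\mathcal{G}_k \approx w$, and the ballot factor for the walk staying below the barrier on $[r,k]$ — and to reduce the curved barrier on $[r,k]$ to a linear one at the cost of an absolute multiplicative constant, so that Webb's Lemma 6.2 applies.

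Concretely, I condition on $\mathcal{G}_r = m(r)+v$ with $v \in [L_y(r), U_y(r)]$. By the Markov property,
$$
\mathbb{P}\bigl(\cdots\bigr) = \int_{L_y(r)}^{U_y(r)} \rho_r(m(r)+v)\cdot P_v\, dv,
$$
where $\rho_r(m(r)+v) = (\pi r)^{-1/2} e^{-(m(r)+v)^2/r}$ is the Gaussian density, and $P_v$ is the joint probability that $\mathcal{G}_k \in (w,w+1]$ and $\mathcal{G}_j < m(j)+U_y(j)$ for $r<j\le k$, conditioned on $\mathcal{G}_r = m(r)+v$. Writing $a := U_y(r)-v \in [0, U_y(r)-L_y(r)] \subset [0, O(r)]$ for the initial gap, the shifted walk $\mathcal{G}_{r+i}-(m(r)+v)$ starts at $0$ at time $0$, is required to stay below $\Psi(i) := m(r+i)+U_y(r+i)-m(r)-v$ for $1\le i\le k-r$ (with $\Psi(0)=a$ and $\Psi(k-r) = b + O(1)$, $b := U_y(k)+m(k)-w$), and ends in $(w-m(r)-v, w-m(r)-v+1]$. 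The only non-linearity in $\Psi$ is the $10^3\log(\min(j,n-j))$ term in $U_y$, whose total variation on $[r,k]$ is $O(\log n)$; dominating $\Psi$ by a linear barrier plus this absolute constant preserves the endpoint gaps up to $O(1)$, and Webb's Lemma 6.2 yields
$$
P_v \ll \frac{(a+1)(b+1)}{(k-r)^{3/2}}\, \exp\!\left(-\frac{(w-m(r)-v)^2}{k-r}\right).
$$

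Combining the two Gaussian exponentials via the identity
$$
\frac{(m(r)+v)^2}{r} + \frac{(w-m(r)-v)^2}{k-r} = \frac{w^2}{k} + \frac{k\bigl((m(r)+v)-rw/k\bigr)^2}{r(k-r)},
$$
the integrated bound becomes
$$
\ll e^{-w^2/k}\cdot \frac{b+1}{\sqrt{r(k-r)}\,(k-r)^{3/2}}\int_0^{U_y(r)-L_y(r)}(a+1)\,\exp\!\left(-\frac{k(a-\alpha)^2}{r(k-r)}\right)da,
$$
where $\alpha := m(r)+U_y(r)-rw/k \ll y$ (using $w\le m(k)+U_y(k)$, $r = \lceil y/4\rceil$, and $k\asymp n$). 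The Gaussian in $a$ has scale $\sqrt{r(k-r)/k} \asymp \sqrt{r} \le \sqrt{y}$, so the inner integral is $\ll (\alpha + \sqrt{r})\sqrt{r(k-r)/k}\ll (y+1)\sqrt{r(k-r)/k}$. Using $k-r\asymp k$, this produces the claimed bound $(y+1)(b+1)\,k^{-3/2} e^{-w^2/k}$.

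The main obstacle is precisely the log-curvature of $U_y$: classical ballot theorems (Webb's included) require a linear barrier. This is the "extension from linear to curved" highlighted in the statement, and the key is to check that replacing $U_y(j)$ by its linear interpolant plus an absolute constant of size $O(\log n)$ changes the ballot probability only by a bounded multiplicative factor; this is valid precisely because both endpoint gaps $(a+1)$ and $(b+1)$ stay comparable under such a shift, a Bramson-type entropy phenomenon rather than a sharp asymptotic. A secondary care point is that the integration over $v \in [L_y(r), U_y(r)]$ yields a factor of order $y+1$ (and not $y^2$) despite the range having length $\asymp y$: this is because the Gaussian factor $e^{-k(a-\alpha)^2/(r(k-r))}$ localizes $a$ within scale $\sqrt{r}\le\sqrt{y}$ of $\alpha\ll y$, so only the linear weight $(a+1)$ in the ballot integrand is amplified, not the integration length.
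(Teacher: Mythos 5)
Your overall architecture (Markov property at time $r$, the Gaussian convolution identity for the exponents, a ballot factor on $[r,k]$, and localization of the $v$-integral to produce the factor $y+1$) matches the shape of the answer, but the step you yourself flag as ``the key'' --- reducing the concave barrier to a linear one at the cost of an absolute constant --- is exactly where the argument breaks. The deviation of $j\mapsto 10^3\log(\min(j,n-j))$ from the chord joining its values at $j=r$ and $j=k$ is of size $\asymp\log n$ (it peaks near $j=n/2$), so any linear majorant of the barrier must sit $\asymp\log n$ above the true barrier at one of the two endpoints; a concave function cannot be dominated by a line with comparable endpoint gaps. Applying Webb's lemma to the raised linear barrier therefore returns endpoint gaps $(a+C\log n)(b+C\log n)$ rather than $(a+1)(b+1)$. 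Your assertion that the shift ``preserves the endpoint gaps up to $O(1)$'' is false: since $b=U_y(k)+m(k)-w$ can be $O(1)$ and, after integrating over $v$, the first factor becomes $y+\log n$ rather than $y+1$, you lose a factor of order $(\log n)^2$. This loss is fatal downstream --- Theorem \ref{thm:main} requires $\ll ye^{-2y}$ uniformly for $y$ as small as $4000$ while $n\to\infty$, and the sums over $k$ in Section \ref{sec:induction} only save powers of $n-k\asymp\log_{\ell+1}n$, which cannot absorb $(\log n)^2$.

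The paper handles this by proving a genuine curved-barrier ballot theorem, Proposition \ref{lem:GaussianBallot}: pass to the bridge measure, substitute $u_j=v_j+g(j)$ to flatten the barrier, and expand the resulting Hamiltonian. The concavity $g''\le 0$ of the logarithmic correction makes the cross term a nonnegative tilt $\exp(-\sum_j a_j\overline W_j)$ with $0\le a_j\ll \min(j,k-j+1)^{\theta-2}$, which is then controlled by a union bound over downward excursions of the bridge combined with Webb's linear ballot theorem applied after the excursion time. Any repair of your route must likewise exploit the concavity of $U_y$ (not merely its total variation); a purely linear comparison cannot give the stated bound.
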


The above proposition is an immediate consequence of the following one.

\begin{prop}\label{lem:GaussianBallot}
For any fixed $c_1>0,0\leq \theta<1/2$, there exists $C$
such that the following holds. Consider arbitrary $k\geq 1$,$|\alpha|<c_1^{-1}$ and $g$ defined on $[0,k]$
satisfying $g(0)=g(k)=0$,
\begin{align}
|g'(x)|<c_1^{-1}\min(x+1,k-x+1)^{\theta-1},\ \ 0\leq x\leq k,\label{eqn:g1}\\
-c_1\min(x+1,k-x+1)^{\theta-2}<g''(x)\leq 0,\ \ 0\leq x\leq k.\label{eqn:g2}
\end{align}
Let $f_y(x)=g(x)+\alpha x+y$.
Then for any such $f_y$ and  $0<y<c_1^{-1}k$, $-c_1^{-1}k<w<f_y(k)$, we have 
\begin{equation}
\mathbb P\Big(\bigcap_{1\leq j\leq k}\Big\{\sum_{1\leq i\leq j} \mathcal{N}_i \leq f_y(j)\Big\} \cap \Big\{\sum_{1\leq i\leq k} \mathcal{N}_i\in (w,w+1]\Big\}\Big)\leq C \frac{(y+1)\,(f_y(k)-w+1)}{k^{3/2}} \,e^{-\frac{w^2}{k}}\ .\label{eqn:barr}
\end{equation}
\end{prop}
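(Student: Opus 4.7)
The plan is to reduce the curved-barrier estimate to Webb's linear-barrier Lemma~6.2 of \cite{Web11} via a Cameron-Martin change of measure. Set $\phi_j := g(j) - g(j-1)$ and introduce a new probability measure $\mathbb{Q}$ under which the $\mathcal{N}_j$ are independent Gaussians with mean $\phi_j$ and variance $1/2$. Then $\tilde{\mathcal{G}}_j := \mathcal{G}_j - g(j) = \sum_{i \leq j}(\mathcal{N}_i - \phi_i)$ is a standard centered Gaussian random walk under $\mathbb{Q}$, with iid $\mathcal{N}(0,1/2)$ increments. Since $g(k) = 0$, the event in \eqref{eqn:barr} translates under $\mathbb{Q}$ into the linear-barrier event
$$\tilde E := \Big\{\tilde{\mathcal{G}}_j \leq y + \alpha j \text{ for all } 1 \leq j \leq k\Big\} \cap \Big\{\tilde{\mathcal{G}}_k \in (w, w+1]\Big\},$$
to which Webb's lemma directly applies, giving $\mathbb{Q}(\tilde E) \ll (y+1)(f_y(k)-w+1) k^{-3/2} e^{-w^2/k}$. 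The task therefore reduces to showing $\mathbb{P}(\tilde E) \leq C\,\mathbb{Q}(\tilde E)$ for a constant $C = C(c_1, \theta)$.

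The Radon-Nikodym derivative is
$$\frac{\dd\mathbb{P}}{\dd\mathbb{Q}} = \exp\Big(-2\sum_j \eta_j \phi_j - \sum_j \phi_j^2\Big),$$
where $\eta_j := \mathcal{N}_j - \phi_j$ is iid $\mathcal{N}(0, 1/2)$ under $\mathbb{Q}$. The deterministic factor is bounded: $\sum_j \phi_j^2 \ll \int_0^k g'(t)^2\,\dd t \ll_{c_1, \theta} 1$ thanks to \eqref{eqn:g1} and $2\theta - 2 < -1$. For the random factor, decompose $\tilde{\mathcal{G}}_j = (j/k)\tilde{\mathcal{G}}_k + B_j$ with $B_j$ the associated $0$-bridge, which is independent of $\tilde{\mathcal{G}}_k$ under $\mathbb{Q}$. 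Abel summation combined with the telescoping identity $\sum_{j=1}^{k-1}j(\phi_{j+1}-\phi_j) = k\phi_k$ (which uses $g(k) = 0$) produces the key cancellation
$$\sum_{j=1}^k \eta_j \phi_j = -\sum_{j=1}^{k-1}(\phi_{j+1}-\phi_j)\,B_j = \sum_{j=1}^{k-1}\nu_j\, B_j,$$
where $\nu_j := -(\phi_{j+1}-\phi_j) \geq 0$ by the concavity hypothesis $g'' \leq 0$ in \eqref{eqn:g2}. Crucially, this expression depends only on the bridge, not on the endpoint $\tilde{\mathcal{G}}_k$.

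It remains to prove $\mathbb{E}_{\mathbb{Q}}[\exp(-2\sum_j\nu_j B_j)\,\mathbf{1}_{\tilde E}] \leq C\,\mathbb{Q}(\tilde E)$. The random variable $X := -2\sum_j \nu_j B_j$ is Gaussian under $\mathbb{Q}$ with $\mathrm{Var}(X) = 4\sum_{i,j}\nu_i\nu_j\,\mathrm{Cov}(B_i, B_j)$, which is bounded by a $(c_1,\theta)$-dependent constant because $\nu_j \asymp |g''(j)|$ concentrates near $j \in \{0, k\}$, where $\mathrm{Cov}(B_i, B_j) = \min(i,j)(k-\max(i,j))/(2k)$ vanishes. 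The main obstacle is that the event $\tilde E$ only bounds $B_j$ from above, leaving its negative excursions unconstrained, so $e^X$ could in principle blow up on atypical paths. The resolution exploits the edge-concentration of $\nu_j$, combined with the automatic smallness of the bridge near the boundary ($|B_j| = \OO(\sqrt{j \wedge (k-j)})$ with high probability), which forces $\sum_j \nu_j |B_j| = \OO(1)$ on typical bridges; splitting $\tilde E$ into a typical and an atypical part and applying Cauchy-Schwarz with the bounded $\mathbb{E}_{\mathbb{Q}}[e^{2X}]$ on the atypical part yields the desired inequality and completes the proof.
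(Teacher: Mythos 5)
Your reduction is, up to repackaging, the same as the paper's: the Cameron--Martin tilt by $\phi_j=g(j)-g(j-1)$ is the probabilistic rewriting of the paper's direct expansion of the bridge Hamiltonian, and your Abel summation producing $\sum_j\eta_j\phi_j=\sum_j\nu_jB_j$ with $\nu_j=2g(j)-g(j-1)-g(j+1)\ge 0$ supported near the edges is exactly the paper's summation by parts leading to the weights $a_i$ in \eqref{eqn:interm2}. The deterministic bound $\sum_j\phi_j^2\ll 1$ and the identification of the remaining task as $\mathbb{E}_{\mathbb{Q}}[e^{X}\mathbf{1}_{\tilde E}]\le C\,\mathbb{Q}(\tilde E)$ with $X=-2\sum_j\nu_jB_j$ are all correct and match \eqref{eqn:exp1}--\eqref{eqn:exp2}. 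The problem is that this last inequality is the entire difficulty of the proposition, and your proposed tool for it does not close.

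Concretely, the Cauchy--Schwarz step on the atypical event loses a square root that cannot be recovered. You need the atypical contribution to be $\ll (y+1)(f_y(k)-w+1)k^{-3/2}e^{-w^2/k}$, a quantity that is polynomially small in $k$ even for $w\approx 0$ and exponentially small in $k$ when $|w|\asymp k$ (which is allowed, since $-c_1^{-1}k<w$). Writing $\mathbb{E}_{\mathbb{Q}}[e^X\mathbf{1}_{\tilde E\cap A^{\cc}}]\le \mathbb{E}_{\mathbb{Q}}[e^{2X}]^{1/2}\,\mathbb{Q}(\tilde E\cap A^{\cc})^{1/2}$, the best available bound on $\mathbb{Q}(\tilde E\cap A^{\cc})$ is of the form $\min\big(\mathbb{Q}(\tilde E),\,k^{-1/2}e^{-w^2/k}\mathbb{Q}(A^{\cc})\big)$, and its square root carries only $e^{-w^2/(2k)}$ and only half the ballot gain $\big((y+1)(f_y(k)-w+1)/k\big)^{1/2}$; no fixed typical/atypical threshold makes $\mathbb{Q}(A^{\cc})^{1/2}$ small enough to compensate uniformly in $w$ and $k$. (FKG-type inequalities also go the wrong way here: $e^X$ and the barrier event are both decreasing in $(B_j)$, so positive association gives a lower bound.) What is actually needed — and what the paper does — is to work conditionally on the endpoint throughout, so that $e^{-w^2/k}$ is factored out exactly rather than through Cauchy--Schwarz, and then to perform a layered decomposition over the value $u$ of the exponent, the location $r$ and depth $v\ge\kappa u r^{1/2+\varepsilon}$ of the forced downward excursion of $\overline W$; the Markov property at time $r$ lets one reapply the constant-barrier ballot bound \eqref{eqn: ballot webb} to the remaining path at full (first-power) strength, while the Gaussian cost $e^{-cv^2/r}$ of the excursion beats $e^{u}$. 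Your intuition about the edge concentration of $\nu_j$ and the smallness of the bridge near the boundary is the right one, but it must be implemented by this excursion/restart argument rather than by a single Cauchy--Schwarz.
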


\begin{proof} We abbreviate $W_j=\sum_{i\leq j} \mathcal{N}_i$.
Let $\mathbb{P}_n^{w}$ denote the distribution of $(W_1,\dots,W_k)$ conditionally to $W_k=w$, and $\mathbb{E}^w_k$ the corresponding expectation. 
In our range of parameters for any $x\in[w,w+1)$ we have $e^{-x^2/k}\asymp e^{-w^2/k}$. 
It is therefore enough to prove that 
uniformly in the described $f_y$, $y,w$, we have
\begin{equation}\label{eqn:enough2}
{\mathbb P}_k^x\Big(\bigcap_{j\leq k}\Big\{W_j \leq f_y(j)\Big\}\Big)\ll (y+1)\, (f_y(k)-x+1)\, k^{-1}.
\end{equation}
By a linear change of variables eliminating $\alpha$, we have
\begin{equation}
{\mathbb P}_k^x\Big(\bigcap_{j\leq k}\Big\{W_j \leq f_y(j)\Big\}\Big)
=
{\mathbb P}_k^{x-(f_y(k)-y)}\Big(\bigcap_{j\leq k}\Big\{W_j \leq g(j)+y\Big\}\Big)
.
\label{eqn:int22}
\end{equation}
We denote $\bar x=x-(f_y(k)-y)$.
There is a constant 
$c(k)$ independent of all other parameters such that
the above right-hand side
is 
\begin{equation}
c(k)\int_{u_j<y+g(j)}e^{-\sum_{i=1}^{k} (u_{i}-u_{i-1})^2}\prod_{j=1}^{k-1}\dd u_j
=
c(k)\int_{v_j<y} e^{-\sum_{i=1}^{k} (v_{i}-v_{i-1}+g(i)-g(i-1))^2}\prod_{j=1}^{k-1}\dd v_j,
\label{eqn:int11}
\end{equation}
where we use the conventions $u_0=v_0=0$, $u_k=v_k=\bar x$.
From Equation \eqref{eqn:g1}, we have $|g(i)-g(i-1)|\leq c_1^{-1} \min(i,k-i+1)^{\theta-1}$, and Equation (\ref{eqn:g2}) gives $0\leq 2g(i)-g(i-1)-g(i+1)\leq 2 c_1^{-1}\min(i,k-i+1)^{\theta-2}$. These bounds in the expansion of the Hamiltonian together with the assumption $0\leq \theta<1/2$ give
\begin{align}
\notag&\sum_{i=1}^k (v_{i}-v_{i-1}+g(i)-g(i-1))^2\\
=&\sum_{i=1}^k (v_{i}-v_{i-1})^2 +\OO(1)\notag
+2\sum_{i=1}^{k} (v_{i}-i\frac{\bar x}{k}-v_{i-1}+(i-1)\frac{\bar x}{k})(g(i)-g(i-1))\\
\geq&
\sum_{i=1}^k (v_{i}-v_{i-1})^2 +\OO(1)
+\sum_{i=1}^{k} a_i \left(v_i-i\frac{\bar x}{k}\right)\label{eqn:interm2}
\end{align}
where the constants $a_i$ satisfy $0\leq a_i\leq  2 c_1^{-1}\min(i,k-i+1)^{\theta-2}$.
In the last line, we summed by parts to express the sum in the variables $v_i-i\frac{\bar x}{k}$ and in the difference $2g(i)-g(i-1)-g(i+1)$.
Let $\overline{W}_j=W_j-j\frac{\bar x}{k}$. 
With equations (\ref{eqn:int22}), (\ref{eqn:int11}) and (\ref{eqn:interm2}),  Equation (\ref{eqn:enough2}) follows once it is shown that
$$
\mathbb{E}_k^{\bar x}\big[e^{-\sum_{j=1}^{k-1} a_j \overline{W}_j}\1_{\cap_{j\leq k}\{W_j \leq y)\}}\big]\ll \frac{(y+1)(f_y(k)-x+1)}{k}.
$$
As $ab\leq (a^2+b^2)/2$, the above inequality will follow from
\begin{align}
&\mathbb{E}_k^{\bar x}\big[e^{-2\sum_{j\leq k/2} a_j \overline{W}_j}\1_{\cap_{j\leq k}\{W_j \leq y)\}}\big]\ll \frac{(y+1)(f_y(k)-x+1)}{k},\label{eqn:exp1}\\
&\mathbb{E}_k^{\bar x}\big[e^{-2\sum_{j>k/2} a_j \overline{W}_j}\1_{\cap_{j\leq k}\{W_j \leq y)\}}\big]\ll \frac{(y+1)(f_y(k)-x+1)}{k}.\label{eqn:exp2}
\end{align}
We start with (\ref{eqn:exp1}). 
Suppose without loss of generality that $-2\sum_{j\leq k/2} a_j \overline{W}_j>1$. (On the event that this is $<1$, we can bound the exponential term by a constant, the estimate then follows by a standard ballot theorem with constant barrier as in \eqref{eqn: ballot webb}.) 
Let $\varepsilon=(1/2-\theta)/2>0$. 
Note that
there exists a constant $\kappa=\kappa(c_1)>0$ such that for any $u>1$, $-2\sum_{j\leq k/2} a_j \overline{W}_j\in[u,u+1]$ implies that there exists $1\leq r\leq k/2$ such that 
$\overline{W}_r<-\kappa u r^{\frac{1}{2}+\varepsilon}$. This observation together with the union bound gives
\begin{align}
&\mathbb{E}_k^{\bar x}\big[e^{-2\sum_{j\leq k/2} a_j \overline{W}_j}\1_{\cap_{j\leq k}\{W_j \leq y)\}}\big]\notag\\
\ll& \sum_{u\geq 1,r\leq k/2,v\geq \kappa u r^{\frac{1}{2}+\varepsilon}}e^{u}\,\mathbb{P}_k^{\bar x}\left(\{-\overline{W}_r\in[v,v+1]\}\cap_{j\leq k}\{W_j \leq y)\}\right)\notag\\
\ll& \sum_{u\geq 1,r\leq k/2,v\geq \kappa u r^{\frac{1}{2}+\varepsilon}}e^{u}\,\mathbb{P}_k^{\bar x}\left(-\overline{W}_r\in[v,v+1]\right)\sup_{a\in[v,v+1]}\mathbb{P}_{k-r}^{\bar x+a-r\frac{\bar x}{k}}\left(\cap_{1\leq j\leq k-r}\{W_j \leq y-r\frac{\bar x}{k}+a)\}\right)\label{eqn:interm3}
\end{align}
where we used the Markov property for the second inequality. To bound the first probability above, note that under $\mathbb{P}_k^{\bar x}$, the random variable $\overline{W}_r$ is centered, Gaussian with variance $r-\frac{r^2}{k}\asymp r$. For the second probability, 
we will rely on \cite[Lemma 6.2]{Web11}, which can be rephrased  as follows: Uniformly in $m,z_1\geq 1$, $z_2\leq z_1$, we have
\begin{equation}
\label{eqn: ballot webb}
\mathbb{P}_m^{z_2}(\cap_{j\leq m}\{W_j \leq z_1))\ll \frac{(z_1+1)(z_1-z_2+1)}{m}.
\end{equation}
This allows to bound (\ref{eqn:interm3}) with
$$
\mathbb{E}_k^{\bar x}\big[e^{-2\sum_{j\leq k/2} a_j \overline{W}_j}\1_{\cap_{j\leq k}\{W_j \leq y)\}}\big]
\ll \sum_{u\geq 1,1\leq r\leq k/2,v>\kappa u r^{\frac{1}{2}+\varepsilon}}e^{u-c\frac{v^2}{r}}\cdot 
\frac{(y-r\frac{\bar x}{k}+v+1)(y-\bar x+1)}{k}
$$
for some absolute $c>0$. 
The above sum over $v$ and then $u$ is $\ll e^{-c'r^{2\varepsilon}}$ for some $c'>0$ depending on $c_1$.
We conclude that uniformly in our parameters,  (\ref{eqn:interm3}) is bounded with
$$\sum_{1\leq r\leq k/2}e^{-c  r^{2\varepsilon}}
\frac{(y-r\frac{\bar x}{k}+1)(y-\bar x+1)}{k}
\ll\frac{(y+\frac{|\bar x|}{k}+1)(y-\bar x+1)}{k}.
$$
It follows from our hypotheses that $\bar x/k$ is uniformly bounded, so that the above equation gives (\ref{eqn:exp1}). Equation 
(\ref{eqn:exp2}) can be proved the same way, with $r$ now chosen in $[k/2,k]$ and the barrier event between times  $0$ and $r$. This concludes the proof of
(\ref{eqn:enough2}), and the lemma.
\end{proof}

\section{Discretization}
\label{se:discretization}
The lemmas of this section allow to reduce the study the maximum of a Dirichlet polynomial of a given length on a typical interval  to a finite set of points.
\begin{lemma}\label{lem:basicd}
  Let $\varepsilon > 0$ be given. Let $V$ be a smooth function with $V(x) = 1$ for $0 \leq x \leq 1$ and compactly supported in $[-\varepsilon, 1 + \varepsilon]$. 
  Let $D(s)$ be a Dirichlet polynomial of length $N$. Then, for any $t, h_0 \in \mathbb{R}$,
  $$
  D(\tfrac 12 + \ii t + \ii h_0) = \frac{1}{2 + \varepsilon} \sum_{h \in \frac{2\pi \mathbb{Z}}{(2 + \varepsilon) \log N}} D \Big ( \tfrac 12 + \ii t + \ii h \Big ) \widehat{V} \Big ( \frac{(h - h_0) \log N}{2\pi} \Big ). 
  $$
\end{lemma}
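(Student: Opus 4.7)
The plan is to interpret this identity as a Poisson-summation/sampling formula exploiting the fact that the function $f(h) := D(\tfrac 12 + \ii t + \ii h) = \sum_{n \leq N} c_n e^{-\ii h \log n}$, with $c_n = a_n n^{-1/2 - \ii t}$, is a trigonometric sum whose frequencies $-\log n$ all lie in $[-\log N, 0]$. Substituting this expansion of $D$ into the right-hand side and exchanging the finite sum over $n \leq N$ with the sum over the sampling points $h = k \Delta$, where $\Delta := \frac{2\pi}{(2+\varepsilon)\log N}$, reduces the claim by linearity to verifying, for each integer $1 \leq n \leq N$, the identity
\[
\frac{1}{2+\varepsilon}\sum_{k \in \mathbb{Z}} e^{-\ii k \Delta \log n}\,\widehat{V}\!\left(\frac{(k\Delta-h_0)\log N}{2\pi}\right) = e^{-\ii h_0 \log n}.
\]

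To handle this, I would write $\widehat{V}$ via Fourier inversion as an integral of $V(y)$ against $e^{\pm \ii y (h - h_0) \log N}$, interchange the $y$-integral with the sum over $k$, and apply the Dirac-comb form of Poisson summation $\sum_{k \in \mathbb{Z}} e^{2\pi \ii k x} = \sum_{m \in \mathbb{Z}} \delta(x - m)$. With the convention aligned so that the phases match, the inner geometric sum collapses the $y$-integral to the discrete set $y_m = m(2+\varepsilon) + \log n / \log N$, each contributing $V(y_m)$ with a Jacobian factor $(2+\varepsilon)$ that cancels the overall $1/(2+\varepsilon)$.

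The crucial calibration is the choice of the sampling spacing $\Delta$, which matches the band-limit of $f$ after accounting for the enlarged support of $V$: since $\log n / \log N \in [0,1]$, one has $y_m \geq (2+\varepsilon) - 1 = 1+\varepsilon$ for $m \geq 1$ and $y_m \leq -(2+\varepsilon) + 1 = -(1+\varepsilon)$ for $m \leq -1$, both of which lie outside the support $[-\varepsilon, 1+\varepsilon]$ of $V$. Hence only $m = 0$ survives, and there $y_0 = \log n / \log N \in [0,1]$, where $V(y_0) = 1$; the residual phase yields exactly $e^{-\ii h_0 \log n}$, as required.

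Main obstacle: there is no genuine difficulty beyond bookkeeping. One must carefully track the sign convention in the definition of $\widehat{V}$ used elsewhere in the paper (so that $V$ ends up evaluated at $y_m = m(2+\varepsilon) + \log n /\log N$ rather than at the reflected points), and then verify that the separation $2+\varepsilon$ between consecutive $y_m$ is strictly larger than the width $1+2\varepsilon$ of $\operatorname{supp}(V)$. This is precisely the Nyquist-type condition that forces only the $m=0$ term to contribute, and it is the reason the denominator in the definition of the sampling set is $(2+\varepsilon)\log N$ rather than $\log N$.
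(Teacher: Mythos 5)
Your argument is correct and is essentially the paper's proof: both reduce by linearity to a single frequency $n^{-\ii h}$, apply Poisson summation, and observe that with spacing $\frac{2\pi}{(2+\varepsilon)\log N}$ only the $m=0$ translate survives because $\log n/\log N\in[0,1]$ lands where $V\equiv 1$ while the other translates land outside $[-\varepsilon,1+\varepsilon]$. One small phrasing to fix: the relevant condition is that $[0,1]+m(2+\varepsilon)$ avoids $[-\varepsilon,1+\varepsilon]$ for all $m\neq 0$, which holds for every $\varepsilon>0$, rather than the stated "separation $2+\varepsilon$ exceeds the width $1+2\varepsilon$ of the support of $V$" (which would fail for $\varepsilon\geq 1$); your explicit check of the points $y_m$ in the third paragraph is the correct one.
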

\begin{proof}
  This proof is essentially a repetition of \cite[Proposition 2.7]{ArRadOui} with slight differences. 
    Let $G(x) = V(2\pi x / \log N)$, so that $\widehat{G}(x) := \frac{\log N}{2\pi} \widehat{V} (\frac{x \log N}{2\pi})$. 
 By Poisson summation, for any fixed $0\leq n\leq N$, we have
\begin{equation}\label{eqn:Pois}
    \sum_{k \in \mathbb{Z}}  n^{-\frac{2\pi \ii k}{(2 + \varepsilon) \log N}} \widehat{G} \Big ( \frac{2\pi k}{(2 + \varepsilon) \log N} - h_0 \Big ) = \sum_{\ell \in \mathbb{Z}} \int_{\mathbb{R}} n^{-\frac{2\pi \ii x}{(2 + \varepsilon) \log N}} \widehat{G} \Big ( \frac{2\pi x}{(2 + \varepsilon) \log N} - h_0 \Big ) e^{-2\pi \ii \ell x} \dd x.  
\end{equation}
For fixed $\ell$, by inverse Fourier transform the above integral is 
\begin{multline*}
  \frac{(2 + \varepsilon) \log N}{2\pi} \int_{\mathbb{R}} e^{-\ii x(\log n+(2+\varepsilon)\ell\log N)} \widehat{G}(x - h_0) {\rm d} x \\= \frac{(2 + \varepsilon) \log N}{2 \pi} e^{-\ii h_0(\log n+(2+\varepsilon)\ell\log N)}G\left(\frac{\log n+(2+\varepsilon)\ell\log N}{2\pi}\right).
\end{multline*}
From the compact support assumption on $V$, for $0\leq n\leq N$ the above right-hand side is nonzero only for $\ell=0$.
Equation (\ref{eqn:Pois}) can therefore be written as
$$
n^{-\ii h_0}=\frac{1}{2+\varepsilon}\sum_{h\in\frac{2\pi \mathbb{Z}}{(2 + \varepsilon) \log N}}
n^{-\ii h}
\widehat{V} \Big ( \frac{(h - h_0) \log N}{2\pi} \Big).
$$
This concludes the proof by linearity.
\end{proof}

The following is a particular case of  \cite[Corollary 2.8]{ArRadOui}.
\begin{lemma} \label{le:zetad}
  Let $\mathcal{T}_n$ be a set of $e^{-n - 100}$ well-spaced points in $[-2, 2]$ with $n=\log_2T$. There exists an absolute constant $C > 0$ such that for any $A>0$ and $v \geq 1$, 
  $$
  \mathbb{P}(\max_{|h| \leq 1} |\zeta(\tfrac 12 +\ii \tau + \ii h)| > v) \leq \mathbb{P}(\max_{h \in \mathcal{T}_n} |\zeta(\tfrac 12 +\ii \tau + \ii h)| > v / C) + \OO_A(e^{-A n})
  $$
\end{lemma}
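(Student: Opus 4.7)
The plan is to follow \cite[Corollary 2.8]{ArRadOui}: combine Lemma \ref{lem:basicd} with an approximation of $\zeta$ by a Dirichlet polynomial whose length is calibrated to the grid $\mathcal T_n$. Fix a smooth $V$ with $V\equiv 1$ on $[0,1]$ and compactly supported in $[-\varepsilon,1+\varepsilon]$, so that $\widehat V$ is of Schwartz class. Choose $N$ with $\log N \asymp e^{n+101}$, so that the grid $\frac{2\pi\mathbb Z}{(2+\varepsilon)\log N}\cap[-2,2]$ has spacing at most $e^{-n-100}/2$ and, after a harmless shift that is absorbed into the constant $C$, may be embedded in $\mathcal T_n$.

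Next, via a smoothed approximate functional equation at length $N$, one constructs a Dirichlet polynomial $D(s)=\sum_{m\leq N^{1+\oo(1)}}a(m)m^{-s}$ with $|a(m)|\ll 1$ such that, outside an exceptional set $\mathcal E$ with $\mathbb P(\mathcal E)\ll_A e^{-An}$,
\[
  \max_{|h|\leq 2}\bigl|\zeta(\tfrac 12+\ii\tau+\ii h)-D(\tfrac 12+\ii\tau+\ii h)\bigr|\leq T^{-10}.
\]
The $L^\infty$ control on $[-2,2]$ is extracted from pointwise estimates on a discrete lattice together with high-moment bounds for $|\zeta-D|^{2q}$ on $[T,2T]$; the exceptional set $\mathcal E$ encodes those $\tau$ for which these averages deviate. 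On $\mathcal E^\cc$, if $\max_{|h|\leq 1}|\zeta(\tfrac 12+\ii\tau+\ii h)|>v$, then $\max_{|h|\leq 1}|D(\tfrac 12+\ii\tau+\ii h)|>v/2$; pick $h_0\in[-1,1]$ attaining (approximately) this maximum.

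Apply Lemma \ref{lem:basicd} at $h_0$. The right-hand side is an absolutely convergent sum over the grid $\frac{2\pi\mathbb Z}{(2+\varepsilon)\log N}$. Split it into grid points inside and outside $[-2,2]$. The inside contribution is bounded by $C_1\max_{h\in\mathcal T_n}|D(\tfrac 12+\ii\tau+\ii h)|$, where $C_1=\sup_{x\in\mathbb R}\sum_{k\in\mathbb Z}|\widehat V(k-x)|<\infty$ by the Schwartz decay of $\widehat V$. The outside contribution is estimated by combining the trivial bound $|D|\ll T^{C}$ with $|\widehat V(x)|\ll_A(1+|x|)^{-A}$, giving an error $\ll_A e^{-An}$. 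Backtracking through the approximation of $\zeta$ by $D$ yields $\max_{h\in\mathcal T_n}|\zeta(\tfrac 12+\ii\tau+\ii h)|>v/C$ for a suitable absolute constant $C$, which is the stated inequality.

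The main obstacle is the $L^\infty$ Dirichlet-polynomial approximation off a set of measure $\OO_A(e^{-An})$: one must match $N$ to the scale $e^{n+101}$ (forcing $N$ larger than $T$ and hence the use of a smoothed approximate functional equation, where the off-diagonal dual sum is short), and simultaneously deploy large moments of $|\zeta|^{2q}$ and $|\zeta-D|^{2q}$ on short intervals, in the spirit of \cite{Harper2nd}, to ensure that the $O_A(e^{-An})$ decay for the exceptional event is achievable for every fixed $A$.
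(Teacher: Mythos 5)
There is a genuine gap in your treatment of the tail of the discretization sum. After applying Lemma \ref{lem:basicd}, the grid points with $|h|>2$ contribute $\sum_{|j|\gtrsim \log N}|D(\tfrac12+\ii\tau+\ii h_j)|\,|\widehat V(\cdot)|$, and you propose to bound this by the pointwise trivial bound $|D|\ll T^{C}$ times the Schwartz decay $|\widehat V(x)|\ll_A(1+|x|)^{-A}$. This cannot give $\OO_A(e^{-An})$ for fixed $A$: the $\widehat V$ arguments at the first excluded grid points are only of size $\asymp\log N\asymp e^{n}$, so each term is $\gg \exp(c e^{n})\cdot e^{-A(n+\OO(1))}$, which is enormous for any fixed $A$ (to win one would need $A\asymp e^{n}/n$, at which point the constants $C_A$ in the Schwartz decay are no longer uniform). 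The paper's proof instead controls the tail $\mathcal E(\tau)$ \emph{in expectation over $\tau$}: by Lemma \ref{lem: Transition} and Cauchy--Schwarz, $\mathbb{E}[|D(\tau+h)|]\ll e^{n/2}$ for each grid point, the superpolynomial decay of $\widehat V$ then makes $\mathbb{E}[\mathcal E(\tau)]\ll_A e^{-An}$, and Chebyshev produces the exceptional set. In other words, the $\OO_A(e^{-An})$ exceptional set comes from the tail of the grid sum, not from the approximation of $\zeta$ by a Dirichlet polynomial, which in the paper is the deterministic, pointwise Bombieri--Friedlander identity \eqref{eqn: zeta approx} with error $\OO(T^{-100})$.

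This also shows that your main construction is both unnecessary and counterproductive. You do not need a polynomial of actual length $\exp(e^{n+101})$ to use the fine grid: Lemma \ref{lem:basicd} applies with any $N$ at least the true length, so the length-$T$ polynomial of \eqref{eqn: zeta approx} can be discretized on $e^{-n-100}\mathbb{Z}$ directly. Worse, if $D$ really had length $T^{e^{101}}$, the factor $1+\OO(N/T)$ in Lemma \ref{lem: Transition} would be useless and the mean-value repair of the tail would fail; keeping the length equal to $T$ with coefficients $\ll 1$ is what makes $\mathbb{E}[|D|^2]\ll e^{n}$ and hence the whole argument work. Finally, the Harper-style $L^\infty$ approximation off an exceptional set that you flag as the ``main obstacle'' is asserted rather than proved, but it is in any case not needed.
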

\begin{proof}
  By \cite[Proposition 2]{BombieriFriedlander} for $t \in [T, 2T]$, the zeta function is well-approximated by a Dirichlet polynomial of length $T$:
  \begin{equation}
  \label{eqn: zeta approx}
  \zeta(\tfrac 12 + \ii t) = \sum_{n \leq T} \frac{1}{n^{1/2 + \ii t}} \, \Big ( 1 - \frac{\log n}{\log T} \Big )^{100} + \OO(T^{-100}) =: D(t) + \OO(T^{-100}). 
  \end{equation}
 Lemma \ref{lem:basicd} implies, for any $|h_0| \leq 1$, 
  $$
  |D(t + h_0)| \leq \sum_{h \in e^{-n - 100} \mathbb{Z}} |D(t + h)| \cdot \Big | \widehat{V} \Big ( \frac{(h - h_0) e^n}{2\pi} \Big ) \Big |,
  $$
  where $V$ is a smooth compactly supported function such that $V(x) = 1$ for $0 \leq x \leq 1$.  
  In particular, for any $|h_0| \leq 1$, we have
  $$
  |D(t + h_0)| \leq C \max_{h \in \mathcal{T}_n} |D(t + h)| + \mathcal{E}(t)
  $$
  with $C > 0$ an absolute constant and where
  $$
 \mathcal{E}(t) := \sum_{\substack{h \in e^{-n - 100} \mathbb{Z} \\ |h| > 2}} |D(t + h)| \cdot \Big | \widehat{V} \Big ( \frac{(h - h_0) e^n}{2\pi} \Big ) \Big |.
  $$
 Since $|h_0|\leq 1$, the $\widehat{V}$  term decays faster than any polynomial of $e^n$. Lemma \ref{lem: Transition} and the Cauchy-Schwarz inequality therefore give
 $$
 \mathbb{P}(\mathcal{E}(\tau) \geq 1) \leq \mathbb{E} [ \mathcal{E}(\tau) ] \ll_{A} e^{-A n} ,
 $$
 for any given $A > 0$. Putting it all together, we conclude that for $v \geq 1$, and all $T$ sufficiently large,
 $$
 \mathbb{P}(\max_{|h| \leq 1} |\zeta(\tfrac 12 + \ii\tau + \ii h)| > v) \leq \mathbb{P}(\max_{h \in \mathcal{T}_n} |\zeta(\tfrac 12 + \ii\tau + \ii h)| > v / (2C)) + \OO_A(e^{-A n})
 $$
 for any given $A > 0$.
\end{proof}

Lemma \ref{lem:basicd} implies the following discretization for the maximum of Dirichlet polynomials.  
\begin{lemma} \label{le:discretization}
  Let $\mathcal{I}$ be a finite set of indices. 
  Let $D_i$ with $i \in \mathcal{I}$ be a sequence of Dirichlet polynomial of length $ \leq N$.
  Then, for any $\ell\geq 1$, and any $A \geq 100$, 
 \begin{equation}
 \begin{aligned}
  \label{eqn: discretize}
  \max_{|h| \leq 2} \Big ( \sum_{i \in \mathcal{I}} |D_i(\tfrac 12 + \ii \tau + \ii h)|^2 \Big ) \ll_{A}
 & \sum_{|j| \leq 16 \log N} \Big ( \sum_{i \in \mathcal{I}} \Big | D_i \Big (\frac 12 +\ii \tau + \frac{2\pi \ii j}{8 \log N} \Big ) \Big |^{2} \Big ) \\ & + \sum_{|j| > 16 \log N} \frac{1}{1 + |j|^{A}} \Big ( \sum_{i \in \mathcal{I}} \Big |D_i \Big ( \frac 12 + \ii \tau + \frac{2\pi \ii j}{8 \log N} \Big ) \Big |^{2} \Big ).
\end{aligned}
\end{equation}
\end{lemma}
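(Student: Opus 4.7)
The plan is to apply Lemma~\ref{lem:basicd} with a suitable $\varepsilon$, then use Cauchy--Schwarz to pass from an absolute value to a square, and finally split the resulting sum according to whether $|j|$ is large or small. Concretely, I would fix a smooth function $V$ with $V(x)=1$ on $[0,1]$ and compactly supported on $[-6,7]$, and apply Lemma~\ref{lem:basicd} with $\varepsilon=6$ so that $(2+\varepsilon)\log N=8\log N$ matches the prescribed spacing $2\pi/(8\log N)$. This gives, for any $|h_0|\leq 2$,
\begin{equation*}
D_i\bigl(\tfrac12+\ii\tau+\ii h_0\bigr)=\frac{1}{8}\sum_{j\in\mathbb Z}D_i\Bigl(\tfrac12+\ii\tau+\frac{2\pi\ii j}{8\log N}\Bigr)\,\widehat V\Bigl(\frac{j}{8}-\frac{h_0\log N}{2\pi}\Bigr).
\end{equation*}

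Next I would apply the Cauchy--Schwarz inequality to this identity, using that $\sum_{j}\bigl|\widehat V(j/8-h_0\log N/(2\pi))\bigr|\ll 1$ uniformly in $h_0$ (because $\widehat V$ is rapidly decaying). This yields
\begin{equation*}
|D_i(\tfrac12+\ii\tau+\ii h_0)|^2\ll \sum_{j\in\mathbb Z}\Bigl|D_i\Bigl(\tfrac12+\ii\tau+\frac{2\pi\ii j}{8\log N}\Bigr)\Bigr|^2\,\Bigl|\widehat V\Bigl(\frac{j}{8}-\frac{h_0\log N}{2\pi}\Bigr)\Bigr|.
\end{equation*}
Summing in $i\in\mathcal I$ and taking the maximum over $|h_0|\leq 2$, I obtain
\begin{equation*}
\max_{|h_0|\leq 2}\sum_{i\in\mathcal I}|D_i(\tfrac12+\ii\tau+\ii h_0)|^2\ll \sum_{j\in\mathbb Z}\Bigl(\sum_{i\in\mathcal I}|D_i(\tfrac12+\ii\tau+2\pi\ii j/(8\log N))|^2\Bigr)\,W(j),
\end{equation*}
where $W(j):=\max_{|h_0|\leq 2}|\widehat V(j/8-h_0\log N/(2\pi))|$.

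Finally I would split the sum at $|j|=16\log N$. For $|j|\leq 16\log N$, I simply bound $W(j)\ll 1$, producing the first term on the right-hand side of \eqref{eqn: discretize}. For $|j|>16\log N$, since $|h_0|\leq 2$ I have $|h_0\log N/(2\pi)|\leq \log N/\pi<|j|/16$, hence $|j/8-h_0\log N/(2\pi)|\geq |j|/16$ uniformly in $h_0$; the rapid decay of $\widehat V$ (which comes from the smoothness of $V$) then yields $W(j)\ll_A(1+|j|)^{-A}$, producing the second term. The main obstacle is only bookkeeping: choosing $\varepsilon$ so that the lattice matches $2\pi/(8\log N)$ and verifying the separation $|j/8-h_0\log N/(2\pi)|\gtrsim |j|$ when $|j|>16\log N$; no analytic difficulty remains beyond the decay of $\widehat V$ supplied by Lemma~\ref{lem:basicd}.
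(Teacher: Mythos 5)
Your proposal is correct and rests on the same ingredients as the paper's proof: the sampling identity of Lemma~\ref{lem:basicd} plus the rapid decay of $\widehat V$, with the split at $|j|=16\log N$ handled exactly as in the paper. The only (minor) difference is that the paper applies Lemma~\ref{lem:basicd} directly to $|D_i|^2$, viewed as a generalized Dirichlet polynomial whose frequencies span an interval of length $2\log N$ (taking $\varepsilon=2$ to land on the lattice $2\pi\mathbb Z/(8\log N)$), whereas you apply it to $D_i$ itself with $\varepsilon=6$ and then pass to squares via Cauchy--Schwarz using $\sum_j|\widehat V(j/8-c)|\ll 1$; your route is equally valid and sidesteps the need to extend Lemma~\ref{lem:basicd} to products.
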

\begin{proof}
  We can apply Lemma \ref{lem:basicd}  to the Dirichlet polynomial $D_i^2$ (its proof  for Dirichlet polynomials of length at most $2N$ only requires minor changes in constants) we get, for any $|h| \leq 2$, 
  \begin{align*}
  |D_i(\tfrac 12 +\ii \tau + \ii h)|^2 \ll_{A}
 & \sum_{|j| \leq 16 \log N} \Big | D_i \Big (\frac 12 +\ii \tau + \frac{2\pi \ii j}{8 \log N} \Big ) \Big |^{2} \\ & + \sum_{|j| > 16 \log N} \frac{1}{1 + |j|^{A}} \Big ( \sum_{i \in \mathcal{I}} \Big |D_i \Big ( \frac 12 +\ii \tau + \frac{2\pi \ii j}{8 \log N} \Big ) \Big |^{2} \Big ),
 \end{align*}
  using the decay bound $\widehat{V}(x) \ll_{A} (1 + |x|)^{-A}$.
  Summing over $i \in \mathcal{I}$ and then taking the supremum over $|h| \leq 2$ yield the claim. 
\end{proof}
Since the zeta function is well-approximated by a Dirichlet polynomial of length $T$ as in \eqref{eqn: zeta approx}, Lemma \ref{le:discretization} can also be used to approximate the moments of the maximum of zeta on a short interval. We choose to prove this directly.
\begin{lemma}\label{le:zetada}
  We have
  $$
  \mathbb{E}[\max_{|h| \leq 1} |\zeta(\tfrac 12 + \ii \tau + \ii h)|^4] \ll e^{5n}.
  $$
\end{lemma}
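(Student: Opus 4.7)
The plan is to approximate $\zeta_\tau$ by a Dirichlet polynomial, discretize the max, and then apply the fourth moment bound (Lemma \ref{le:fourthmoment}). First, I would invoke the approximation \eqref{eqn: zeta approx} which gives, uniformly for $t\in[T,2T]$ and $|h|\leq 1$,
\[
\zeta(\tfrac12+\ii t+\ii h)=D(t+h)+\OO(T^{-100}), \qquad D(s):=\sum_{n\leq T}\frac{1}{n^{1/2+\ii s}}\Bigl(1-\frac{\log n}{\log T}\Bigr)^{100}.
\]
The error term contributes only $\OO(T^{-200})$ to the moment, so it suffices to bound $\mathbb{E}[\max_{|h|\leq 1}|D(\tau+h)|^4]$.

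Second, I would apply Lemma \ref{le:discretization} (with the single Dirichlet polynomial $D^2$, whose length is $\leq T^2$, so that $\log N=2e^n$) to the family $\mathcal{I}=\{1\}$. The maximum of $|D(\tfrac12+\ii\tau+\ii h)|^4$ over $|h|\leq 2$ is then dominated by
\[
\sum_{|j|\leq 32 e^n}\Bigl|D\Bigl(\tfrac12+\ii\tau+\tfrac{2\pi\ii j}{16 e^n}\Bigr)\Bigr|^{4}
\;+\;\sum_{|j|>32 e^n}\frac{1}{1+|j|^{A}}\Bigl|D\Bigl(\tfrac12+\ii\tau+\tfrac{2\pi\ii j}{16 e^n}\Bigr)\Bigr|^{4},
\]
with $A$ as large as we wish.

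Third, I would take expectations and use Lemma \ref{le:fourthmoment}. For each $j$ with $|j|\leq 32 e^n$ the shift $2\pi j/(16 e^n)$ has size $\OO(1)$, so extending the $\tau$-average from $[T,2T]$ to $[T/2,3T]$ (with a harmless constant factor) and invoking the fourth moment bound gives $\mathbb{E}[|D(\tfrac12+\ii\tau+\ii\cdot)|^4]\ll e^{4n}$; the approximation \eqref{eqn: zeta approx} transfers the bound from $\zeta$ to $D$ up to negligible error. Summing over $\asymp e^n$ values of $j$ yields the desired $\OO(e^{5n})$.

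The only genuinely delicate point (and the step I would write out most carefully) is the tail $|j|>32 e^n$. For such $j$ the shift $|2\pi j/(16 e^n)|$ can be as large as a power of $T$, so the naive fourth moment on $[T,2T]$ is no longer directly available. However, the factor $1/(1+|j|^A)$ can be taken with $A$ arbitrarily large, while moments of $|D(\tfrac12+\ii\tau+\ii v)|^4$ grow at most polynomially in $|v|$ (by the functional equation and the convexity bound, as in the second moment computation in Lemma \ref{le:fourthmoment}, or by the trivial $|D|\ll T^{1/2}$ estimate). Choosing $A$ sufficiently large (e.g.\ $A=10$) makes the tail contribution $\OO(1)$, which is absorbed into $e^{5n}$. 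Combining the three steps yields $\mathbb{E}[\max_{|h|\leq 1}|\zeta_\tau(h)|^4]\ll e^{5n}$.
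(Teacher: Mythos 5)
Your route is not the one the paper takes. The paper explicitly notes that the discretization route via \eqref{eqn: zeta approx} and Lemma \ref{le:discretization} is available, but ``chooses to prove this directly'': it uses subharmonicity of $|\zeta(\tfrac12+\ii t+\ii z)|^4$ to dominate the value at each point of the grid $\mathcal{T}_n$ by $e^{2n}$ times a local mean over a disc of radius $\asymp e^{-n}$, sums over the $\asymp e^n$ grid points, and then applies the off-critical-line fourth moment bound of Lemma \ref{le:fourthmoment} (the uniformity in $|\sigma-\tfrac12|\le 2e^{-n}$ is exactly what makes the local averaging harmless). The subharmonicity argument avoids the approximate functional equation and the tail of the sampling sum entirely, which is why the paper prefers it; your approach is viable but forces you to confront the tail, and your main-range computation ($\asymp e^n$ sample points, each contributing $\ll e^{4n}$ after translating the $\tau$-average) is correct.

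There is, however, a genuine gap in your treatment of the tail $|j|>32e^n$. In the intermediate range $32 e^n<|j|\ll Te^n$ the shift $v_j=2\pi j/(16e^n)$ satisfies $1\ll|v_j|\ll T$, and every pointwise bound you invoke (the trivial $|D|\ll T^{1/2}$, or convexity for $\zeta$) gives $|D|^4\ll T^{2}$ or $\ll T^{1+\e}(1+|v_j|)^{1+\e}$ --- in all cases at least a fixed positive power of $T$. Since $|j|$ in this range is only $\gtrsim\log T$, the weight $(1+|j|^A)^{-1}$ with any \emph{fixed} $A$ contributes only $(\log T)^{-A}$, which cannot absorb a power of $T$; so ``$A=10$ makes the tail $\OO(1)$'' fails there. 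The correct fix is the one the paper uses after \eqref{eqn: B0}: for all $j$ with $|v_j|\le T/2$, translate the range of integration so that $\mathbb{E}[|D(\tfrac12+\ii\tau+\ii v_j)|^4]\ll e^{4n}$ exactly as for $j=0$, whence this part of the tail is $\ll e^{4n}\sum_{|j|>32e^n}|j|^{-A}\ll e^{4n}(\log T)^{1-A}\ll1$; only for $|j|\gg Te^n$ (where $|j|^{-A}$ itself is a negative power of $T$) do you switch to the deterministic bound $|D|^4\ll T^2$. With that case split your proof closes.
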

\begin{proof}
As $\zeta$ is analytic, the function $|\zeta(\tfrac 12 + \ii t + \ii z)|^4$ is subharmonic in the region $|z| < \tfrac{1}{100}$. Therefore for $|h| \leq e^{-n}$ we have,
  $$
  |\zeta(\tfrac 12 + \ii t + \ii h)|^4 \ll e^{2n} \int_{|x|,|y| \leq 2 e^{-n}} |\zeta(\tfrac 12 + \ii t + x + \ii y)|^4 {\rm d} x {\rm d}y. 
  $$
  Summing the above over a grid of $e^{-n - 100}$ well-spaced point we conclude that,
  $$
  \max_{|h| \leq 1} |\zeta(\tfrac 12 + \ii \tau + \ii h)|^4 \ll e^{2n} \sum_{h \in \mathcal{T}_n} \int_{|x|, |y| \leq 2 e^{-n}} |\zeta(\tfrac 12 + \ii \tau + \ii h +  x + \ii y)|^4 {\rm d}x {\rm d}y.  
  $$
  Taking expectation on both sides and using Lemma \ref{le:fourthmoment} we obtain the desired result.
\end{proof}

\end{appendix}

\bibliographystyle{plain}
\bibliography{bib_ZetaMax}

\end{document}